\def\DateTime{23 November, 2015}
\def\Version{Version $1.5$}
\def\yes{\if00}
\def\ifquery{\yes}
\theoremstyle{plain}
\newtheorem{Theorem}{Theorem}[section]
\newtheorem{Proposition}[Theorem]{Proposition}
\newtheorem{Lemma}[Theorem]{Lemma}
\newtheorem{Corollary}[Theorem]{Corollary}
\newtheorem{Claim}{Claim}[Theorem]
\theoremstyle{definition}
\newtheorem{Definition}[Theorem]{Definition}
\theoremstyle{remark}
\newtheorem{Remark}[Theorem]{Remark}
\numberwithin{equation}{section}
\newcommand{\ZZ}{{\mathbb{Z}}}
\newcommand{\QQ}{{\mathbb{Q}}}
\newcommand{\RR}{{\mathbb{R}}}
\newcommand{\CC}{{\mathbb{C}}}
\newcommand{\PP}{{\mathbb{P}}}
\newcommand{\NN}{{\mathbb{N}}}
\newcommand{\AAA}{{\mathscr{A}}}
\newcommand{\LLL}{{\mathscr{L}}}
\newcommand{\MMM}{{\mathscr{M}}}
\newcommand{\RRR}{{\mathscr{R}}}
\newcommand{\XXX}{{\mathscr{X}}}
\newcommand{\Proj}{\operatorname{Proj}}
\newcommand{\aPic}{\widehat{\operatorname{Pic}}}
\newcommand{\Pic}{\operatorname{Pic}}
\newcommand{\Ker}{\operatorname{Ker}}
\newcommand{\Spec}{\operatorname{Spec}}
\newcommand{\ord}{\operatorname{ord}}
\newcommand{\lformal}{[\![}
\newcommand{\rformal}{]\!]}
\newcommand{\an}{\operatorname{an}}
\newcommand{\ndot}{\raisebox{.4ex}{.}}
\newcommand{\rest}[2]{\left.{#1}\right\vert_{{#2}}}
\newcommand{\quot}{\operatorname{quot}}
\newcommand{\OOO}{{\mathscr{O}}}
\def\query#1{\setlength\marginparwidth{65pt} 
\marginpar{\raggedright\fontsize{7.81}{9} 
\selectfont\upshape\hrule\smallskip 
#1\par\smallskip\hrule}}
\def\query#1{}
\definecolor{ruby}{rgb}{0.88, 0.07, 0.37}
\definecolor{coolblack}{rgb}{0.0, 0.18, 0.39}
\definecolor{darkspringgreen}{rgb}{0.09, 0.45, 0.27}
\begin{document}

\title[Extension property over a non-archimedean field]%
{Extension property of semipositive invertible sheaves over a non-archimedean field}
\author{Huayi Chen}
\email{huayi.chen@ujf-grenoble.fr}
\address{Institut Fourier, Universit\'e Grenoble Alpes, 100 rue des Math\'ematiques, 38402 Saint-Martin-d'H\`eres Cedex}
\author{Atsushi Moriwaki}
\email{moriwaki@math.kyoto-u.ac.jp}
\address{Department of Mathematics, Faculty of Science, Kyoto University, Kyoto, 606-8502, Japan}
\date{\DateTime\ (\Version)}
\subjclass[2010]{Primary 14C20; Secondary 14G40}

\begin{abstract}
In this article,
we prove an extension property of semipositively metrized ample invertible sheaves on a projective scheme
over a complete non-archimedean valued field. As an application, we establish a Nakai-Moishezon type criterion for adelically normed graded linear series.
\end{abstract}

\maketitle

\section*{Introduction}

Let $k$ be a field and $X$ be a projective scheme over $\Spec k$, equipped with an ample invertible 
{$\mathscr O_X$}-module $L$. If $Y$ is a closed subscheme of $X$, then for sufficiently positive integer $n$, any section $\ell$ of $L|_Y^{\otimes n}$ on $Y$ extends to a global section of $L^{\otimes n}$ on $X$. In other words, the restriction map $H^0(X,L^{\otimes n})\rightarrow H^0(Y,L|_Y^{\otimes n})$ is surjective. A simple proof of this result relies on Serre's vanishing theorem, which ensures that $H^1(X,\mathcal I_Y\otimes L^{\otimes n})=0$ for sufficiently positive integer $n$, where $\mathcal I_Y$ is the ideal sheaf of $Y$.

The metrized version (with $k=\mathbb C$) of this result has been widely studied in the literature and has divers applications in complex analytic geometry and in arithmetic geometry. We assume that the ample invertible sheaf $L$ is equipped with a continuous (with respect to the analytic topology) metric $|\ndot|_{h}$, which induces a continuous metric $|\ndot|_{h^n}$ on each tensor power sheaf $L^{\otimes n}$, where $n\in\mathbb N$, $n\geqslant 1$. The metric $|\ndot|_{h^n}$ leads to a supremum norm $\|\ndot\|_{h^n}$ on the global section space $H^0(X,L)$ such that \[\forall\,s\in H^0(X,L),\;\|s\|_{h^n}=\sup_{x\in X(\mathbb C)}|s|_{h^n}(x).\] Similarly, it induces a supremum norm $\|\ndot\|_{Y,h^n}$ on the space $H^0(Y,L|_Y^{\otimes n})$ with \[\|s\|_{Y,h^n}=\sup_{y\in Y(\mathbb C)}|s|_{h^n}(y).\] Note that for any section $s\in H^0(X,L^{\otimes n})$ one has $\|s{|_Y}\|_{Y,h^n}\leqslant\|s\|_{h^n}$. The metric extension problem consists of studying the extension of global sections of $L|_Y$ to those of $L$ with an estimation on the supremum norms. Note that a positivity condition on the metric $h$ is in general necessary to obtain interesting upper bounds. This problem has been studied by using H\"ormander's $L^2$ estimates (see \cite{Demailly} for example), under smoothness conditions on the metric. More recently, it has been proved (without any regularity condition) that, if the metric $|\ndot|_h$ is semi-positive, then for any $\epsilon>0$ and any section $l\in H^0(Y,L|_Y)$ there exist an integer $n\geqslant 1$ and $s\in H^0(X,L^{\otimes n})$ such that $s{|_Y}=l^{\otimes n}$ and that $\|s\|_{h^n}\leqslant e^{\epsilon n}\|s{|_Y}\|_{Y,h^n}$. 
We refer the readers to \cite{Randriam,MoSemiample} for more details.

The purpose of this article is to study the non-archimedean counterpart of the above problem. We will establish the following result (see Theorem \ref{thm:extension}
 and Corollary \ref{Cor:extension}).

\begin{Theorem}
Let $k$ be a field equipped with  a complete and non-archimedean absolute value $|\ndot|$ (which could be trivial). Let $X$ be 
a projective scheme over $\Spec k$ and
$L$ be an ample invertible sheaf on $X$, equipped with a continuous and \emph{semi-positive} metric $|\ndot|_h$. Let $Y$ be a closed 
subscheme of $X$ and $l\in H^0(Y,L|_Y)$. For any $\epsilon>0$ there exists an integer $n_0\geq 1$ such that, for any integer $n\geq n_0$, the section $l^{\otimes n}$ extends to a section $s\in H^0(X,L^{\otimes n})$ verifying $\|s\|_{h}\leq {e}^{\epsilon n}\|l\|_{Y,h}^n$.
\end{Theorem}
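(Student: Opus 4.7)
The plan is to reduce the problem, via a non-archimedean analogue of the Tian--Zelditch approximation, to an essentially algebraic extension question at the level of a quotient (``Fubini--Study'') metric arising from a suitable ultrametric norm on $V_m:=H^0(X,L^{\otimes m})$ for $m$ large, and then to solve that algebraic problem by combining Serre vanishing with a standard ultrametric lifting argument. Fixing $\delta=\epsilon/2$, I would first establish the existence of an integer $m\geq 1$ and of an ultrametric norm $\|\cdot\|$ on $V_m$ such that: (a) the evaluation $V_m \otimes_k \mathcal O_X \twoheadrightarrow L^{\otimes m}$ is surjective; (b) the quotient metric $|\cdot|_q$ induced on $L^{\otimes m}$ satisfies $\bigl|\log|\cdot|_q - m\log|\cdot|_h\bigr|\leq \delta m$ uniformly on $X^{\operatorname{an}}$; and (c) the multiplication map $\operatorname{Sym}^n V_m \to H^0(X,L^{\otimes mn})$ is surjective for all $n$ sufficiently large. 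This plays the role of the Bergman/Fubini--Study approximation of continuous psh metrics in complex geometry, and is where ampleness of $L$ and semi-positivity of $h$ combine essentially.

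For the algebraic extension, Serre's theorem guarantees that for $N$ sufficiently large, the restriction $H^0(X,L^{\otimes N}) \to H^0(Y,L|_Y^{\otimes N})$ is surjective. Combined with (c), this lets one lift $l^{\otimes mn}$ to some $\tilde s \in \operatorname{Sym}^n V_m$ via the composition $\operatorname{Sym}^n V_m \twoheadrightarrow H^0(X,L^{\otimes mn}) \twoheadrightarrow H^0(Y,L|_Y^{\otimes mn})$. The ultrametric nature of $\|\cdot\|$ allows one to choose $\tilde s$ whose $n$-th symmetric-power norm equals the quotient norm of $l^{\otimes mn}$ under this composition; by (b) restricted to $Y$, the latter is bounded by $\bigl(e^{\delta m}\|l\|_{Y,h}^m\bigr)^n$. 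Setting $s$ to be the image of $\tilde s$ in $H^0(X,L^{\otimes mn})$, another application of (b) on $X^{\operatorname{an}}$ yields $\|s\|_{h^{mn}} \leq e^{\delta mn}\|\tilde s\| \leq e^{2\delta mn}\|l\|_{Y,h}^{mn} = e^{\epsilon mn}\|l\|_{Y,h}^{mn}$. To pass from multiples of $m$ to all sufficiently large $n$, one fixes extensions of $l^{m+r}$ for $r=0,\ldots,m-1$ (which exist by Serre since $m+r\geq m$ can be taken large), multiplies each by the good extension of the leading $l^{m(q-1)}$ piece, and enlarges $n_0$ so that the exponential gain $e^{(\epsilon/2)mq}$ absorbs the finitely many resulting constants.

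The principal obstacle is the approximation step (a)--(c). In the complex case this follows from $L^2$ estimates or Bergman-kernel asymptotics, but these tools are unavailable in the non-archimedean setting, where the approximation must instead be constructed by algebraic or formal-geometric means --- combining the model-metric theory of Zhang and Gubler with a careful asymptotic analysis of spaces of global sections. Both the nontrivially valued case and the trivially valued case must be handled, the latter being more delicate since the Berkovich topology is coarser and standard model constructions degenerate.
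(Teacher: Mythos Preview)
Your reduction to a Fubini--Study approximation is in the right spirit, and the paper uses exactly this (Proposition~\ref{prop:semipos:approximation}) to pass from a general semipositive metric to a quotient metric. But the core of your argument contains a genuine gap: the claim that ``by (b) restricted to $Y$, the quotient norm of $l^{\otimes mn}$ under the composition $\operatorname{Sym}^n V_m \twoheadrightarrow H^0(Y,L|_Y^{\otimes mn})$ is bounded by $(e^{\delta m}\|l\|_{Y,h}^m)^n$'' does not follow. Condition (b) controls the \emph{pointwise} quotient metric $|\cdot|_q$ on $L^{\otimes m}$, hence the sup norm $\|l^{\otimes mn}\|_{Y,q^n}$; it says nothing about the quotient norm on the finite-dimensional vector space $H^0(Y,L|_Y^{\otimes mn})$ induced from $(\operatorname{Sym}^n V_m,\|\cdot\|)$. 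For any lift $\tilde s\in\operatorname{Sym}^n V_m$ one has $\|\tilde s\|\geq \sup_{y\in Y^{\mathrm{an}}}|\tilde s|_{q^n}(y)$, but the reverse inequality --- the existence of a lift whose norm is close to this supremum --- is precisely the extension property you are trying to prove. So the argument is circular at this step.

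The paper's proof breaks this circularity by passing (when $|\cdot|$ is nontrivial) to an integral model. After approximating $h^a$ by the model metric coming from a finitely generated lattice $\mathscr H\subset H^0(X,L^{\otimes a})$, one takes the Zariski closure $\mathscr X\subset\mathbb P(\mathscr H)$ with its ample $\mathscr L$. The key point (Theorem~\ref{thm:semiample:metrized:extension:ample}, via Corollary~\ref{cor:norm:less:1}) is that after rescaling so that $\|l\|_{Y,h}\leq 1$, the condition $|l|_{\mathscr L}\leq 1$ on $Y^{\mathrm{an}}$ forces $l$ to be \emph{almost integral}: there is a single $\beta\in\mathfrak o_k\setminus\{0\}$ with $\beta\,l^{\otimes n}\in H^0(\mathscr Y,\mathscr L^{\otimes n}|_{\mathscr Y})$ for \emph{all} $n$. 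Now Serre vanishing on the model $\mathscr X$ lifts $\beta\,l^{\otimes n}$ to an integral section $l_n\in H^0(\mathscr X,\mathscr L^{\otimes n})$, which automatically satisfies $\|l_n\|\leq 1$; dividing by $\beta$ gives the required extension with norm $\leq|\beta|^{-1}$, a constant independent of $n$. The trivially valued case is then handled by a scalar extension to $k(\!(T)\!)$ with a carefully chosen valuation, followed by a descent using orthogonal bases (Lemma~\ref{lem:extension:trivial:Laurent}). This integrality argument is the missing ingredient in your proposal.
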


The semi-positivity condition of the metric means that the metric $|\ndot|_h$ can be written as a uniform limit of Fubini-Study metrics. We will show that, if the absolute value $|\ndot|$ is non-trivial, then this condition is equivalent to the classical semi-positivity condition (namely uniform limit of nef model metrics, see Proposition \ref{prop:vanishing:mu:nef:big}) of Zhang \cite{ZhPos}, see also
\cite{Gub98,MoAdelDiv}, 
and compare with the complex analytic case \cite{Tian}. The advantage of the new definition is that it also works in the trivial valuation case, where the model metrics are too restrictive. We use an argument of extension of scalars to the ring of formal
Laurent series to obtain the result of the above theorem in the trivial valuation case.

As an application, we establish an adelic version of the arithmetic Nakai-Moishezon criterion as follows, see Theorem \ref{thm:Arith:Nakai:Moishezon} and Corollary \ref{cor:Arith:Nakai:Moishezon}
 {\it infra}. 

\begin{Theorem}
Let $X$ be a geometrically integral projective scheme over a number field $K$ and $L$ be an invertible sheaf on $X$. For any place $v$ of $K$, let $h_v$ be a continuous semipositive metric on the pull-back of $L$ on the analytic space $X_v^{\mathrm{an}}$, such that $(H^0(X,L^{\otimes n}),\{\|\ndot\|_{X_v,h_v^n}\})$ forms an adelically normed vector space over $K$ for any $n\in\mathbb N$ (see Definition \ref{Def:adelicallynormed}). Suppose that for any integral closed subscheme $Y$ of $X$, the restriction of $L$ on $Y$ is big and there exist a positive integer $n$ and a non-zero section $s\in H^0(Y,L|_Y^{\otimes n})$ such that $\|s\|_{Y_{v},h_v^n}\leqslant 1$ for any place $v$ of $K$, and that the inequality is strict when $v$ is an infinite place. Then for sufficiently positive integer $n$, the $\mathbb Q$-vector space $H^0(X,L^{\otimes n})$ has a basis $(\omega_1,\ldots,\omega_{r_n})$ with $\|\omega_i\|_{X_v,h_v^n}\leqslant 1$ for any place $v$, where the inequality is strict if $v$ is an infinite place. 
\end{Theorem}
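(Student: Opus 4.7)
The plan is induction on $\dim X$. In the base case $\dim X=0$, geometric integrality forces $X=\Spec K$ and the statement follows directly from the hypothesis. The induction step combines the main non-archimedean extension theorem of this paper, its archimedean counterpart from \cite{Randriam,MoSemiample}, and the restriction exact sequence coming from a small global section.

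For the induction step, apply the hypothesis to $X$ itself to obtain $n_1\geq 1$ and $s\in H^0(X,L^{\otimes n_1})$ with $\|s\|_{X_v,h_v^{n_1}}\leq 1$ at every place and with strict inequality at archimedean places. Let $Y\subset X$ be the closed subscheme cut out by $s$. After possibly passing to a finite extension of $K$ and descending at the end, one may assume each integral component of $Y$ is geometrically integral, of dimension strictly less than $\dim X$, and satisfies the hypotheses; the induction hypothesis then produces, for each sufficiently large $m$, a $\QQ$-basis $\ell_1,\ldots,\ell_{s_m}$ of $H^0(Y,L|_Y^{\otimes m})$ with $\|\ell_j\|_{Y_v,h_v^m}\leq 1$ everywhere, strict at archimedean places. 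For $m\gg 0$, Serre vanishing makes the sequence $0\to L^{\otimes(m-n_1)}\xrightarrow{\,\cdot\, s\,}L^{\otimes m}\to L^{\otimes m}|_Y\to 0$ exact on global sections, so $H^0(X,L^{\otimes m})$ is generated by $s\cdot H^0(X,L^{\otimes(m-n_1)})$ together with a choice of lifts of the $\ell_j$. Since multiplication by $s$ preserves the required norm bounds, an inner induction on $m$ reduces the task to constructing suitable lifts of each $\ell_j$.

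Applying this paper's extension theorem at each non-archimedean place and the Randriambololona--Moriwaki extension theorem at each archimedean place, for any $\epsilon>0$ and $k$ sufficiently large, $\ell_j^{\otimes k}$ extends to $\tilde\ell_j\in H^0(X,L^{\otimes km})$ with $\|\tilde\ell_j\|_{X_v,h_v^{km}}\leq e^{\epsilon k}\|\ell_j\|_{Y_v,h_v^m}^k$ at each place $v$ in a prescribed finite set $S$, while at the remaining places adelic normedness forces $\|\tilde\ell_j\|_{X_v,h_v^{km}}\leq 1$. The main obstacle is to absorb the loss factor $e^{\epsilon k}$ at the places of $S$ uniformly. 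We modify $\tilde\ell_j$ to $\omega_j=\alpha\, s^a\,\tilde\ell_j$, where $\alpha\in K^\times$ is supplied by strong approximation so that $|\alpha|_v\leq e^{-\epsilon k}$ at each non-archimedean place in $S$ and $|\alpha|_v\leq 1$ at all remaining non-archimedean places, and where $a$ is a positive integer of order $k$. Because $\|s\|_{X_v,h_v^{n_1}}<1$ strictly at each archimedean $v$, choosing $a$ large enough drives every archimedean norm $|\alpha|_v\cdot\|s\|_v^a\cdot\|\tilde\ell_j\|_v$ strictly below $1$, while by construction the non-archimedean norms remain bounded by $1$. The resulting sections $\omega_j\in H^0(X,L^{\otimes(km+an_1)})$ satisfy the required bounds, and together with products of $s$ with previously constructed basis sections give the desired basis, completing the induction.
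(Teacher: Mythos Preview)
Your approach has a genuine gap in the final step. You define $\omega_j=\alpha\,s^a\,\tilde\ell_j$ with $a\geq 1$, but $Y$ is by construction the zero scheme of $s$, so $s|_Y=0$ and hence $\omega_j|_Y=0$. Thus every $\omega_j$ already lies in $s\cdot H^0(X,L^{\otimes(km+an_1-n_1)})$, and the $\omega_j$ contribute nothing beyond ``products of $s$ with previously constructed basis sections''. They cannot help you span the cokernel of multiplication by $s$, and the proposed collection is never a basis of $H^0(X,L^{\otimes(km+an_1)})$.

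Even if you drop the factor $s^a$, a second problem remains: the extension theorem only lifts \emph{powers} $\ell_j^{\otimes k}$, producing sections $\tilde\ell_j\in H^0(X,L^{\otimes km})$ with $\tilde\ell_j|_Y=\ell_j^{\otimes k}$. The restrictions $\{\ell_j^{\otimes k}\}_j$ span only a small subspace of $H^0(Y,L|_Y^{\otimes km})$, so $s\cdot H^0(X,L^{\otimes(km-n_1)})$ together with the $\tilde\ell_j$ still fails to span $H^0(X,L^{\otimes km})$. The ``inner induction on $m$'' therefore never closes: at each stage you need honest lifts of a full basis of $H^0(Y,L|_Y^{\otimes m})$ in the \emph{same} degree $m$, with controlled norms at all places simultaneously, and the extension theorem does not provide this.

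The paper sidesteps exactly this difficulty. Rather than building bases directly, it controls the invariant $\lambda_{\QQ}$ of the normed graded $\ZZ$-algebra $\bigoplus_n(H^0(X,L^{\otimes n}),\|\ndot\|_{h^n})^{\mathrm{fin}}_{\leq 1}$ via the general estimate of Theorem~\ref{thm:lambda:estimate} (from \cite{MoFree}), whose only input is, for each closed subvariety $Y$, a \emph{single} section small in the \emph{quotient} norm $\|\ndot\|^{\quot}_{Y,n(Y)}$. This is precisely what the extension theorems (Theorem~\ref{thm:extension} here and \cite{MoSemiample} at infinity) supply: starting from the small section on $Y$ given by hypothesis~(b), one applies the extension theorem at the finitely many bad places to bound the quotient norm, after first adjusting by a suitable $\beta\in\mathcal O_K$ as in Lemma~\ref{lem:supp:given:primes}. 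The induction on $\dim X$ and the d\'evissage along a filtration $I_0\subsetneq\cdots\subsetneq I_r$ are carried out once and for all inside Theorem~\ref{thm:lambda:estimate}, where the bookkeeping of degrees and dimensions is handled by the inequality \eqref{eqn:thm:lambda:estimate:01} rather than by an explicit basis construction.
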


This result generalizes simultaneously \cite[Theorem 4.2]{ZhPos} and \cite[Theorem 4.2]{MoFree} since here we have a weaker assumption on the adelic metric on $L$. The main idea is to combine the estimation on normed Noetherian graded linear series developed in \cite{MoFree} and the non-archimedean extension property established in the current paper. In the archimedean case we also use the archimedean extension property proved in \cite{MoSemiample}.

The article is organized as follows. In the first section we introduce the notation of the article and prove some preliminary results, most of which concern finite dimensional normed vector spaces over a non-archimedean field. In the second section, we discuss some property of model metrics. In the third section, we study various properties of continuous metrics on an invertible sheaf, where an emphasis is made on the positivity of such metrics. In the fourth section, we prove the extension theorem. Finally, in the fifth and last section, we apply the extension property to prove a generalized arithmetic Nakai-Moishezon's criterion.

\section{Notation and preliminaries}

\subsection{Notation}

Throughout this paper, we fix the following notation.

\subsubsection{}
\label{Notations:01}
Fix a field $k$ with a non-archimedean absolute value $|\ndot|$. 
{ Unless otherwise noted, we assume that $k$ is complete.}
The valuation ring of $k$ and the maximal ideal of the valuation ring are denoted by $\mathfrak o_{k}$
and $\mathfrak m_{k}$, respectively, that is,
\[
\mathfrak o_{k} := \{ a \in k \mid |a| \leq 1 \}\quad\text{and}\quad
\mathfrak m_{k} :=  \{ x \in k \mid |x| < 1 \}.
\]
In the case where $|\ndot|$ is discrete,
we fix a uniformizing parameter $\varpi$ of $\mathfrak m_{k}$, that is,
$\mathfrak m_{k}  = \varpi \mathfrak o_{k}$.

\subsubsection{}
\label{Notations:02}
A norm $\|\ndot\|$ of a finite-dimensional vector space $V$ over the non-archimedean field $k$ is always assumed to be
ultrametric, that is, $\| x + y \| \leq \max \{ \|x\|, \|y\| \}$.
A pair $(V,\|\ndot\|)$ is called a normed finite-dimensional vector space over $k$.

\subsubsection{}
\label{Notations:04}
{
In Section~1 $\sim$ Section~4, we} 
fix an algebraic scheme $X$ over $\Spec k$, that is,
{$X$ is a scheme of finite type over $\Spec(k)$}.  
Let $X^{\mathrm{an}}$ be the analytification of $X$ in the sense of Berkovich \cite{Be}.
For $x \in X^{\mathrm{an}}$,  the residue field of the associated scheme point of $x$ is denoted by
$\kappa(x)$. Note that the seminorm $|\ndot|_x$ at $x$ yields an absolute value of $\kappa(x)$. By abuse of notation,
it is denoted by $|\ndot|_x$. Let $\hat{\kappa}(x)$ be the completion of $\kappa(x)$ with respect to
$|\ndot|_x$. The extension of $|\ndot|_x$ to $\hat{\kappa}(x)$ is also denoted by the same symbol 
$|\ndot|_x$. The valuation ring of $\hat{\kappa}(x)$ and the maximal ideal of the valuation ring are denoted by $\mathfrak o_{x}$ and $\mathfrak m_{x}$, respectively.
Let $L$ be an invertible sheaf on $X$.
For $x \in X^{\an}$, $L \otimes_{\OOO_X} \hat{\kappa}(x)$ is denoted by $L(x)$.

\subsubsection{}
\label{Notations:continuousmetric} By \emph{continuous metric} on $L$, we refer to a family $h=\{|\ndot|_h(x)\}_{x\in X^{\mathrm{an}}}$, where $|\ndot|_h(x)$ is a norm on $L\otimes_{\mathscr O_X}\hat{\kappa}(x)$ over $\hat{\kappa}(x)$ for each $x\in X^{\mathrm{an}}$, such that for any local basis $\omega$ of $L$ over a Zariski open subset $U$, $|\omega|_h(\ndot)$ is a continuous function on $U^{\mathrm{an}}$. 
{We assume that $X$ is projective.}
Given a continuous metric $h$ on $L$, we define a norm $\|\ndot\|_h$ on $H^0(X,L)$ such that
\[\forall\,s\in H^0(X,L),\quad\|s\|_h:=\sup_{x\in X^{\mathrm{an}}}|s|_h(x).\]
Similarly, if $Y$ is a closed 
subscheme of $X$, we define a norm $\|\ndot\|_{Y,h}$ on $H^0(Y,L)$ such that
\[\forall\,l\in H^0(Y,L),\quad\|l\|_{Y,h}:=\sup_{y\in Y^{\mathrm{an}}}|l|_h(y).\] 
Clearly one has \begin{equation}\label{Equ:normeboundebelowbyres}\|s\|_{h}\geqslant\|
{\rest{s}{Y}}\|_{Y,h}\end{equation} for any $s\in H^0(X,L)$.

\bigskip
\noindent
$\bullet$
In the following \ref{Notations:tensorproduct}, \ref{sec:metric:model} and
\ref{Notations:ZariskiClosure}, $X$ is always assumed to be projective.

\subsubsection{}
\label{Notations:tensorproduct}
Given a continuous metric $h$ on $L$, the metric induces for each integer $n\geqslant 1$ a continuous metric on $L^{\otimes n}$ which we denote by $h^n$: for any point $x\in X^{\mathrm{an}}$ and any local basis $\omega$ of $L$ over a Zariski open neighborhood of $x$ one has
\[|\omega^{\otimes n}|_{h^n}(x)=|\omega|_h(x)^n.\]
Note that for any section $s\in H^0(X,L)$ one has
$\|s^{\otimes n}\|_{h^n}=\|s\|_h^n$. By convention, $h^0$ denotes the trivial metric on $L^{\otimes 0}=\mathscr O_X$, namely $|\mathbf{1}|_{h^0}(x)=1$ for any $x\in X^{\mathrm{an}}$, where $\mathbf{1}$ denotes the section of unity of $\mathscr O_X$.

Conversely, given a continuous metric $g=\{|\ndot|_g(x)\}_{x\in X^{\mathrm{an}}}$ on $L^{\otimes n}$, 
there is a unique continuous metric $h$ on $L$ such that $h^n=g$. We denote by $g^{1/n}$ this metric. This observation allows to define  continuous metrics on an element in $\mathrm{Pic}(X)\otimes\mathbb Q$ as follows. Given $M\in\Pic(X)\otimes\mathbb Q$, we denote by $\Gamma(M)$ the subsemigroup of $\mathbb N_{\geq 1}$ of all positive integers $n$ such that $M^{\otimes n}\in\Pic(X)$. We call \emph{continuous metric} on $M$ any family $g=(g_n)_{n\in\Gamma(M)}$ with $g_n$ being a continuous metric on $M^{\otimes n}$, such that $g_n^m=g_{mn}$ for any $n\in\Gamma(M)$ and any $m\in\mathbb N_{\geq 1}$.  Note that the family $g=(g_n)_{n\in\Gamma(M)}$ is uniquely determined by any of its elements. In fact, given an element $n\in\Gamma(M)$, one has $g_m=g_{mn}^{1/n}=(g_n^m)^{1/n}$ for any $m\in\Gamma(M)$. In particular, for any positive rational number $p/q$, the family $g^{p/q}=(g_{Nnp}^{1/Nq})_{n\in\Gamma(M^{\otimes(p/q)})}$ is a continuous metric on $M^{\otimes(p/q)}$, where $N$ is a positive integer such that $M^{\otimes N}\in\Pic(X)$, and the metric $g^{p/q}$ does not depend on the choice of the positive integer $N$. If $L$ is an element of $\mathrm{Pic}(X)$, equipped with a continuous metric $g$. By abuse of notation, we use the expression $g$ to denote the metric family $(g^n)_{n\in\mathbb N_{\geqslant 1}}$, viewed as a continuous metric on the canonical image of $L$ in $\mathrm{Pic}(X)\otimes\mathbb Q$.

Let $M$ be an element in $\Pic(X)\otimes\mathbb Q$ equipped with a continuous metric $g=(g_n)_{n\in\Gamma(M)}$. By abuse of notation, for $n\in\Gamma(M)$ we also use the expression $g^n$ to denote the continuous metric $g_n$ on $M^{\otimes n}$.

\subsubsection{}
\label{sec:metric:model}
{Let $\mathscr X \to \Spec(\mathfrak o_k)$ be a 
projective and flat $\mathfrak o_k$-scheme such that
the generic fiber of $\mathscr X \to \Spec(\mathfrak o_k)$ is $X$.
We call it a \emph{model} of $X$.}
We denote by $\mathscr X_\circ:=\mathscr X\otimes_{\mathfrak o_k}(\mathfrak o_k/\mathfrak m_k)$ the central fiber of $\mathscr X \to \Spec(\mathfrak o_k)$.  By the valuative criterion of properness, for any point $x\in X^{\mathrm{an}}$, the canonical $k$-morphism $\Spec\hat{\kappa}(x)\rightarrow X$ extends in a unique way to an $\mathfrak o_k$-morphism of schemes $\mathscr P_x:\Spec\mathfrak o_x\rightarrow\mathscr X$. We denote by $r_{\mathscr X}(x)$ the image of $\mathfrak m_x\in\Spec\mathfrak o_x$ by the map $\mathscr P_x$. Thus we obtain a map $r_{\mathscr X}$ from $X^{\mathrm{an}}$ to $\mathscr X_\circ$, called the \emph{reduction map} of $\mathscr X$.

Let $\mathscr L$ be an element of $\Pic(\mathscr X) \otimes \QQ$
such that $\rest{\mathscr L}{X} = L$ in $\Pic(X) \otimes \QQ$. The 
{$\QQ$}-invertible sheaf $\mathscr L$ 
yields a continuous metric $|\ndot|_{\mathscr L}$ as follows.

{First we assume that $\mathscr L \in \Pic(\mathscr X)$ and $\rest{\mathscr L}{X} = L$ in $\Pic(X)$.}
For any $x \in X^{\mathrm{an}}$, let $\omega_x$ be a local basis of $\mathscr L$ around $r_{\mathscr X}(x)$ and
$\bar{\omega}_x$ the class of $\omega_x$ in 
${L(x)} := L \otimes_{\mathscr O_X} \hat{\kappa}(x)$.
For $l \in L \otimes_{\mathscr O_X} \hat{\kappa}(x)$, if we set $l = a_x \bar{\omega}_x$
($a_x \in \hat{\kappa}(x)$), 
then $|l|_{\mathscr L}(x) := | a_x |_x$.
Here we set $h := \{ |\ndot|_{\mathscr L}(x) \}_{x \in X^{\mathrm{an}}}$.
Note that $h$ is continuous because, for a local basis $\omega$ of $\mathscr L$ over an open set $\mathscr U$ of $\mathscr X$,
$|\omega|_{\mathscr L}(x) = 1$ for all $x \in r_{\mathscr X}^{-1}(\mathscr U_{\circ})$.
Moreover, 
\begin{equation}
\label{eqn:sec:metric:model:01}
|\ndot|_{h^n}(x) = |\ndot|_{\mathscr L^n}(x)
\end{equation}
for all $n \geq 0$ and $x \in X^{\mathrm{an}}$.
Indeed, if we set $l = a_x \bar{\omega}_x$ for 
$l \in {L(x)}$,
then $l^{\otimes n} = a_x^n \bar{\omega}_x^{\otimes n}$.
Thus
\[
|l^{\otimes n}|_{h^n}(x) = (|l|_{h}(x))^n = |a_x|_x^n = |l^{\otimes n}|_{\mathscr L^n}(x).
\]

In general, there are $\mathscr M \in \Pic(\mathscr X)$ and a positive integer $m$ such that
$\mathscr L^{\otimes m} = \mathscr M$ in $\Pic(\mathscr X) \otimes \QQ$
and $\rest{\mathscr M}{X} = L^{\otimes m}$ in $\Pic(X)$. Then
\[
|\ndot|_{\LLL}(x) := (|\ndot|_{\MMM}(x))^{1/m}.
\]
Note that the above definition does not depend on the choice of $\mathscr M$ and $m$.
Indeed, let $\mathscr M'$ and $m'$ be another choice.
As $\mathscr M^{\otimes m'} = \mathscr {M'}^{\otimes m}$ in $\Pic(\mathscr X) \otimes \QQ$,
there is a positive integer $N$ such that $\mathscr M^{\otimes Nm'} = \mathscr {M'}^{\otimes Nm}$
in $\Pic(\XXX)$, so that,
by using \eqref{eqn:sec:metric:model:01},
\[
(|\ndot|_{\MMM}(x))^{Nm'} = |\ndot|_{\MMM^{\otimes Nm'}}(x) = |\ndot|_{{\MMM'}^{\otimes Nm}}(x)
= (|\ndot|_{{\MMM'}}(x))^{Nm},
\]
as desired.

\subsubsection{}
\label{Notations:ZariskiClosure}
Let $\mathscr X$ be a model of $X$. As $\mathscr X$ is flat over $\mathfrak o_k$,
the natural homomorphism $\mathscr O_{\mathscr X} \to \mathscr O_X$ is injective.
Let $Y$ be a closed subscheme of $X$ and $I_Y \subseteq  \mathscr O_X$
the defining ideal sheaf of $Y$. 
Let $\mathscr I_{\mathscr Y}$ be the kernel of $\mathscr O_{\mathscr X} \to \mathscr O_{X}/I_Y$,
that is, 
$\mathscr I_{\mathscr Y} := \mathscr I_Y \cap \mathscr O_{\mathscr X}$.
Obviously $\mathscr I_{\mathscr Y} \otimes_{\mathfrak o_k} k = I_Y$, so that
if we set $\mathscr Y = \Spec(\mathscr O_{\mathscr X}/\mathscr I_{\mathscr Y})$,
then $\mathscr Y \times_{\Spec(\mathfrak o_k)} \Spec(k) = Y$.
Moreover, $\mathscr Y$ is flat over $\mathfrak o_k$ because
$\mathscr O_{\mathscr Y} \to \mathscr O_{Y}$ is injective.
Therefore, $\mathscr Y$ is a model of $Y$.
We say that $\mathscr Y$ is the {\em Zariski closure of $Y$ in $\mathscr X$}.

\subsection{Extension obstruction index}
In this subsection, we introduce an invariant to describe the obstruction to the extension property. Let $X$ be  
a projective scheme over $\Spec k$, $L$ be an invertible sheaf on $X$ equipped with a continuous metric $h$, and $Y$ be a closed 
subscheme of $X$. For any non-zero element $l$ of $H^0(Y,L|_Y)$, we denote by $\lambda_h(l)$ the following number (if there does not exist any section $s\in H^0(X,L^{\otimes n})$ extending $l^{\otimes n}$, then the infimum in the formula is defined to be $+\infty$ by convention)
\begin{equation}\label{Equ:lambdah}
\lambda_h(l)=\limsup_{n\rightarrow+\infty} \inf_{\begin{subarray}{c}
s\in H^0(X,L^{\otimes n})\\
{\rest{s}{Y}}
=l^{\otimes n}
\end{subarray}} {\bigg( \frac{\log\|s\|_{h^n}}{n}-\log\|l\|_{Y,h} \bigg)} \in [0,+\infty].\end{equation}
This invariant allows to describe in a numerically way the obstruction to the metric extendability of the section $l$. In fact, the following assertions are equivalent:
\begin{enumerate}[(a)]
\item $\lambda_h(l)=0$,
\item for any $\epsilon>0$, there exists $n_0\in\mathbb N_{\geq 1}$ such that, for any integer $n\geq n_0$, the element $l^{\otimes n}$ extends to a section $s\in H^0(X,L^{\otimes n})$ such that $\|s\|_h\leq e^{\epsilon n}\|l\|_{Y,h}^n$.
\end{enumerate}

The following proposition shows that, if $l^{\otimes n}$ extends to a global section of $L^{\otimes n}$ for sufficiently positive $n$ (it is the case notably when the line bundle $L$ is ample), then the limsup defining $\lambda_h(l)$ is actually a limit.

\begin{Proposition}\label{Pro:convergence}
For any integer $n\geqslant 1$, let 
\[a_n=\inf_{\begin{subarray}{c} 
s\in H^0(X,L^{\otimes n})\\
{\rest{s}{Y}}
=l^{\otimes n}
\end{subarray}} 
{\Big(}
\log\|s\|_{h^n}-n\log\|l\|_{Y,h}
{\Big)}.\]
Then the sequence $(a_n)_{n\geq 1}$ is sub-additive, namely one has $a_{m+n}\leq a_m+a_n$ for any $(m,n)\in\mathbb N_{\geq 1}$. In particular, if for sufficiently positive integer $n$, the section $l^n$ lies in the image of the restriction map $H^0(X,L^{\otimes n})\rightarrow H^0(Y,L|_Y^{\otimes n})$, 
then ``$\limsup$'' in \eqref{Equ:lambdah} can actually be replaced by ``$\lim$''.
\end{Proposition}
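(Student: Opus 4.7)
The strategy is the standard tensor product argument combined with Fekete's subadditive lemma. The key observation is that the continuous metric $h$ is constructed so that the induced family $\{h^n\}_{n\geq 0}$ is multiplicative under tensor products, and hence the supremum norms $\|\ndot\|_{h^n}$ are submultiplicative.

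More precisely, the plan is as follows. Let $m,n\geq 1$ be positive integers. Given sections $s\in H^0(X,L^{\otimes m})$ and $t\in H^0(X,L^{\otimes n})$ extending $l^{\otimes m}$ and $l^{\otimes n}$ respectively, the tensor product $s\otimes t\in H^0(X,L^{\otimes(m+n)})$ extends $l^{\otimes(m+n)}$, since restriction to $Y$ commutes with tensor products. By \ref{Notations:tensorproduct}, for any $x\in X^{\mathrm{an}}$ one has $|s\otimes t|_{h^{m+n}}(x)=|s|_{h^m}(x)\cdot|t|_{h^n}(x)$, hence
\[\|s\otimes t\|_{h^{m+n}}\leq \|s\|_{h^m}\cdot\|t\|_{h^n}.\]
Similarly, $\|l\|_{Y,h^{m+n}}^{\phantom{1}}=\|l\|_{Y,h}^{m+n}$. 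Taking logarithms in the inequality and subtracting $(m+n)\log\|l\|_{Y,h}$ yields
\[\log\|s\otimes t\|_{h^{m+n}}-(m+n)\log\|l\|_{Y,h}\leq \big(\log\|s\|_{h^m}-m\log\|l\|_{Y,h}\big)+\big(\log\|t\|_{h^n}-n\log\|l\|_{Y,h}\big).\]
Taking the infimum over admissible $s$ and $t$ on the right-hand side, and noting that $s\otimes t$ is an admissible candidate for the definition of $a_{m+n}$, gives $a_{m+n}\leq a_m+a_n$. The argument is valid even when some $a_i=+\infty$, in which case the inequality is trivial.

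For the second assertion, note that by the inequality \eqref{Equ:normeboundebelowbyres} applied to $L^{\otimes n}$ and $h^n$, every admissible $s$ satisfies $\|s\|_{h^n}\geq \|l^{\otimes n}\|_{Y,h^n}=\|l\|_{Y,h}^n$, hence $a_n\geq 0$ for all $n$. Under the assumption that $l^{\otimes n}$ lifts to some $s\in H^0(X,L^{\otimes n})$ for all $n$ sufficiently large, say for $n\geq N_0$, the sequence $(a_n)_{n\geq N_0}$ is a nonnegative subadditive sequence of real numbers. By Fekete's lemma, the limit $\lim_{n\to\infty}a_n/n$ exists and equals $\inf_{n\geq N_0}a_n/n\in[0,+\infty)$, so in particular ``$\limsup$'' in \eqref{Equ:lambdah} may be replaced by ``$\lim$''.

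The only delicate point to handle carefully is ensuring the tensor product $s\otimes t$ is indeed a valid extension, i.e.\ that the restriction map is compatible with tensor products; this is immediate from the functoriality of restriction. No serious obstacle is expected, and the proof reduces to unwinding the definitions.
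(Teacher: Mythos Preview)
Your proof is correct and follows essentially the same approach as the paper's own proof: form the tensor product of two extensions to exhibit subadditivity of $(a_n)$, then invoke Fekete's lemma together with the nonnegativity $a_n\geq 0$ from \eqref{Equ:normeboundebelowbyres}. The only cosmetic difference is that the paper handles the infimum step slightly more tersely, but the argument is the same.
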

\begin{proof}
By \eqref{Equ:normeboundebelowbyres}, one has $a_n\geq 0$ for any integer $n\geq 1$. Moreover, $a_n<+\infty$ if and only if $l^n$ lies in the image of the restriction map $H^0(X,L^{\otimes n})\rightarrow H^0(Y,L|_Y^{\otimes n})$. To verify the inequality $a_{m+n}\leq a_m+a_n$, it suffices to consider the case where both $a_m$ and $a_n$ are finite. Let $s_m$ and $s_n$ be respectively sections in $H^0(X,L^{\otimes m})$ and $H^0(X,L^{\otimes n})$ such that ${\rest{s_m}{Y}}=l^{\otimes m}$ and 
${\rest{s_n}{Y}}=l^{\otimes n}$, then the section $s=s_m\otimes s_n\in H^0(X,L^{\otimes(m+n)})$ verifies the relation ${\rest{s}{Y}}=l^{\otimes(n+m)}$. Moreover, one has \[\|s\|_h=\sup_{x\in X^{\mathrm{an}}}|s|_h(x)=
\sup_{x\in X^{\mathrm{an}}}|s_m|_h(x)\cdot|s_n|_h(x)\leqslant\|s_m\|_h\cdot\|s_n\|_h.\]
Since $s_m$ and $s_n$ are arbitrary, one has $a_{m+n}\leq a_m+a_n$. Finally, by Fekete's lemma, if $a_n<+\infty$ for sufficiently positive integer $n$, then the sequence $(a_n/n)_{n\geq 1}$ actually converges in $\mathbb R_+$. The proposition is thus proved.
\end{proof}

\begin{Corollary}\label{Cor:extension}
Assume that the invertible sheaf $L$ is ample, then the following conditions are equivalent.
\begin{enumerate}[(a)]
\item $\lambda_h(l)=0$,
\item for any $\epsilon>0$, there exists $n\in\mathbb N_{\geq 1}$ and a section $s\in H^0(X,L^{\otimes n})$ such that ${\rest{s}{Y}}=l^n$ and that $\|s\|_h\leq e^{\epsilon n}\|l\|_{Y,h}$.
\end{enumerate}
\end{Corollary}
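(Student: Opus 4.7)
The plan is to deduce this corollary directly from Proposition \ref{Pro:convergence} together with one consequence of ampleness. Specifically, since $L$ is ample, Serre's vanishing theorem guarantees that for all sufficiently large integers $n$ the restriction map $H^0(X,L^{\otimes n})\to H^0(Y,L|_Y^{\otimes n})$ is surjective, so the quantities $a_n$ of Proposition \ref{Pro:convergence} are finite from some $n_0$ onward. Combined with their subadditivity and Fekete's lemma, this upgrades the $\limsup$ defining $\lambda_h(l)$ to an actual limit, and moreover to an infimum, namely $\lambda_h(l)=\inf_{n\geq n_0}a_n/n$.

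The easy direction (a)$\Rightarrow$(b) will follow at once from the equivalence already recorded just before Proposition \ref{Pro:convergence}: the condition $\lambda_h(l)=0$ is equivalent to the statement that, for every $\epsilon>0$, every sufficiently large $n$ admits an extension $s\in H^0(X,L^{\otimes n})$ of $l^{\otimes n}$ with $\|s\|_{h^n}\leq e^{\epsilon n}\|l\|_{Y,h}^n$. This is strictly stronger than (b), which only asks for the existence of one such $n$, so one obtains (b) by keeping a single such $n$.

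The converse (b)$\Rightarrow$(a) will proceed as follows. Given $\epsilon>0$, condition (b) supplies an integer $n$ and a section $s$ with $a_n\leq\epsilon n$; by subadditivity, $a_{kn}\leq k\,a_n\leq kn\epsilon$ for every positive integer $k$, so $\inf_{N} a_N/N\leq\epsilon$, whence $\lambda_h(l)\leq\epsilon$ by the infimum formula above. Letting $\epsilon\to 0$ yields $\lambda_h(l)=0$. No step here is a genuine obstacle: the entire content lies in Proposition \ref{Pro:convergence}, and the remaining work is the quantifier manipulation ``one good $n$ plus subadditivity implies arbitrarily good $n$ asymptotically'', which is exactly what Fekete's lemma provides once ampleness has secured the eventual finiteness of $a_n$.
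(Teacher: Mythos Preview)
Your proposal is correct and follows essentially the same approach as the paper: both use ampleness to ensure the $a_n$ are eventually finite, then invoke the subadditivity from Proposition~\ref{Pro:convergence} and Fekete's lemma to convert the $\limsup$ into an actual limit. The paper's version is slightly more streamlined—it simply observes that condition~(b) is equivalent to $\liminf_{n\to\infty} a_n/n = 0$, and since $(a_n/n)$ converges this coincides with $\lambda_h(l)=\limsup a_n/n$—whereas you unpack the two implications separately and re-derive the ``one good $n$ propagates'' step by hand; but the underlying argument is the same.
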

\begin{proof}
We keep the notation of the previous proposition. By definition the second condition is equivalent to 
\begin{equation}\label{Equ:liminf=0}\liminf_{n\rightarrow+\infty}\frac{a_n}{n}=0.\end{equation}
Since $L$ is ample, Proposition~\ref{Pro:convergence} leads to the convergence of the sequence $(a_n/n)_{n\geq 1}$ in $\mathbb R_+$. Hence the condition \eqref{Equ:liminf=0} is equivalent to $\lambda_h(l)=0$.
\end{proof}

\subsection{Normed vector space over a non-archimedean field}
In this subsection, we recall several facts on (ultrametric) norms over a non-archimedean field.
Throughout this subsection, a norm on a vector space over a non-archimedean field is always assumed to be ultrametric.
{We also assume that $k$ is complete except in \S\ref{subsubsec:orthogonal:basis}.}

\subsubsection{Orthogonality of bases}
\label{subsubsec:orthogonal:basis}
In this subsubsection, $k$ is not necessarily complete. Let $V$ be a finite-dimensional vector space over $k$ and
$\|\ndot\|$ a norm of $V$ over $(k, |\ndot|)$. Let $r$ be the rank of $V$. We assume that $\|\ndot\|$ extends to a norm on $V\otimes_k\widehat{k}$, where $\widehat{k}$ denotes the completion of $(k,|\ndot|)$, on which the absolute value extends in a unique way. In particular, any $k$-linear isomorphism $k^r\rightarrow V$ is a homeomorphism, where we consider the product topology on $k^r$ (see \cite{Bourbaki81} \S I.2, no.3, Theorem 2 and the remark on the page I.15), and any vector subspace of $V$ is closed.

For a basis $\pmb{e} = (e_1, \ldots, e_r)$ of $V$, we set
\[
\forall\, (a_1, \ldots, a_r) \in \widehat{k}^r, \quad \| a_1 e_1 + \cdots + a_r e_r \|_{\pmb{e}} :=
\max \{ |a_1|, \ldots, |a_r| \},
\]
which yields an ultrametric norm on $V\otimes_k\widehat{k}$.
Note that the norms $\|\ndot\|_{\boldsymbol{e}}$ and $\|\ndot\|$ on $V$ are equivalent.

For $\alpha \in (0, 1]$,
a basis $(e_1, \ldots, e_r)$ of $V$ is called an {\em $\alpha$-orthogonal basis of $V$ with respect to $\|\ndot\|$} if
\[
\alpha \max \{ |a_1|\| e_1 \|, \ldots, |a_r|\| e_r \|\} \leq \| a_1 e_1 + \cdots + a_r e_r \|
\quad(\forall\, a_1, \ldots, a_r \in k).
\]
If $\alpha = 1$ (resp. $\alpha = 1$ and $\| e_1 \| = \cdots = \|e_r\| = 1$), 
then the above basis is called an {\em orthogonal basis of $V$} (resp. an {\em orthonormal basis of $V$}). We refer the readers to \cite[\S2.3]{Perez_Carcia_Schikhof} for more details on the orthogonality in the non-archimedean setting.
Let $(e'_1, \ldots, e'_r)$ be another basis of $V$. We say that {\em $(e_1, \ldots, e_r)$ is compatible with
$(e'_1, \ldots, e'_r)$} if $k e_1 + \cdots + k e_i = k e'_1 + \cdots + k e'_i$ for $i=1, \ldots, r$.

\begin{Proposition}
\label{prop:orthogonal:basis}
Fix a basis $(e'_1, \ldots, e'_r)$ of $V$.
For any $\alpha \in (0, 1)$, there exists an $\alpha$-orthogonal basis $(e_1, \ldots, e_r)$ of $V$ with respect to $\|\ndot\|$ such that 
$(e_1, \ldots, e_r)$ is compatible with $(e'_1, \ldots, e'_r)$.
Moreover, if the absolute value
$|\ndot|$ is discrete, then
there exists an orthogonal basis $(e_1, \ldots, e_r)$ of $V$ compatible with $(e'_1, \ldots, e'_r)$
(cf. \cite[Proposition~2.5]{BMPS}).
\end{Proposition}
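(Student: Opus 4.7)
The plan is to proceed by induction on $r=\rank V$. The case $r=1$ is immediate (set $e_1:=e'_1$). For $r\geqslant 2$, let $V':=ke'_1+\cdots+ke'_{r-1}$; the strategy is to extend an $\alpha'$-orthogonal basis of $V'$ (with $\alpha'$ slightly larger than the target $\alpha$) by a single vector $e_r\in e'_r+V'$ whose norm is close to the quotient norm of $\bar e'_r$ in $V/V'$. The extendability of $\|\ndot\|$ to $V\otimes_k\widehat{k}$ ensures that $V'$ is closed in $V$, so that $c:=\inf_{v'\in V'}\|e'_r+v'\|$ is strictly positive and defines a genuine norm on the $1$-dimensional space $V/V'$.

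Given $\alpha\in(0,1)$, I would fix $\alpha',\gamma\in(0,1]$ with $\alpha'\geqslant\alpha$, $\gamma\geqslant\alpha$, and $\gamma\alpha'\geqslant\alpha$ (for instance $\alpha'=\gamma=\sqrt{\alpha}$). By the inductive hypothesis there is an $\alpha'$-orthogonal basis $(e_1,\ldots,e_{r-1})$ of $V'$ compatible with $(e'_1,\ldots,e'_{r-1})$. Then pick $v'\in V'$ such that $\|e'_r+v'\|\leqslant \gamma^{-1}c$ and set $e_r:=e'_r+v'$. The compatibility condition $ke_1+\cdots+ke_i=ke'_1+\cdots+ke'_i$ holds for $i\leqslant r-1$ by induction and for $i=r$ by construction.

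To verify $\alpha$-orthogonality, I would decompose an arbitrary vector as $v=u+a_re_r$ with $u=\sum_{i<r}a_ie_i\in V'$. If $a_r=0$ the statement reduces to the inductive hypothesis. Otherwise, the factorization $v=a_r(a_r^{-1}u+e_r)$ together with $a_r^{-1}u+e_r\in e'_r+V'$ yields $\|v\|\geqslant|a_r|c\geqslant\gamma|a_r|\|e_r\|\geqslant\alpha|a_r|\|e_r\|$. For the lower bound $\|v\|\geqslant\alpha|a_i|\|e_i\|$ at indices $i<r$, I would compare $\|u\|$ and $\|a_re_r\|$: when they differ, the ultrametric strong triangle inequality gives $\|v\|=\max(\|u\|,\|a_re_r\|)\geqslant\|u\|\geqslant\alpha'\max_{i<r}|a_i|\|e_i\|$; when they coincide, the bound $\|v\|\geqslant\gamma|a_r|\|e_r\|=\gamma\|u\|$, combined with the inductive $\alpha'$-orthogonality of $u$, gives $\|v\|\geqslant\gamma\alpha'\max_{i<r}|a_i|\|e_i\|$. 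In either subcase $\|v\|\geqslant\alpha\max_{i<r}|a_i|\|e_i\|$ by the choice of parameters.

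In the discrete valuation case one refines this by showing that the infimum $c$ is actually attained, which permits $\gamma=1$ at every stage and hence produces a genuine orthogonal basis. The attainment follows from the discreteness of $|k^{\times}|$ combined with the orthogonality of the inductively constructed basis of $V'$, which force the set $\{\|e'_r+v'\|:v'\in V',\ \|v'\|\leqslant C\}$ to lie in a discrete subset of $\RR_{>0}$ for every $C$, so that the infimum is realized. The crux of the whole argument is the ``equal-norms'' subcase of the verification, where the ultrametric triangle inequality provides no direct lower bound and one must rely on the quotient-norm minimality of $e_r$; this is precisely the step responsible for the $\alpha'\to\alpha$ loss, and it is also where the discrete-valuation refinement is genuinely needed in order to reach $\alpha=1$.
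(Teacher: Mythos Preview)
Your argument for the first assertion is correct and follows the same inductive strategy as the paper, which also takes $\alpha'=\gamma=\sqrt{\alpha}$; your case split $\|u\|\neq\|a_re_r\|$ versus $\|u\|=\|a_re_r\|$ is a harmless variant of the paper's split $\|u\|\leqslant\|e_r\|$ versus $\|u\|>\|e_r\|$ (after normalizing $a_r=1$).

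For the discrete case, however, your justification that the infimum $c$ is attained is not quite right: orthogonality of the inductively constructed basis of $V'$ controls norms of elements \emph{of $V'$}, not of the coset $e'_r+V'$, so it does not by itself force $\{\|e'_r+v'\|:v'\in V'\}$ to be discrete. The paper instead appeals to a separate lemma: for any ultrametric norm on a finite-dimensional space over a discretely valued field, the whole value set $\{\|v\|:v\in V\setminus\{0\}\}$ is discrete in $\RR_{>0}$. The point is that if $\|v\|/\|v'\|\notin|k^\times|$ then $\|av+a'v'\|=\max(\|av\|,\|a'v'\|)$ for all $a,a'\in k$, so vectors representing distinct $|k^\times|$-cosets of the value set are linearly independent; hence there are at most $\dim_k V$ such cosets, and discreteness of $|k^\times|$ finishes it. This applies to all of $V$, in particular to $e'_r+V'$, and then the infimum is realized.
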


\begin{proof}
We prove it by induction on $\dim_k V$.
If $\dim_k V = 1$, then the assertion is obvious.
By the hypothesis of induction, there is a $\sqrt{\alpha}$-orthogonal basis $(e_1, \ldots, e_{r-1})$ of $V' := ke'_1 + \cdots + ke'_{r-1}$
with respect to $\|\ndot\|$ such that 
\[
ke_1 + \cdots + ke_{i} = ke'_1 + \cdots + k e'_i
\]
for $i=1, \ldots, r-1$.
Choose $v \in V \setminus V'$.
Since $V'$ is a closed subset of $V$, one has 
\[
\mathrm{dist}(v, V') := \inf \{ \| v - x \| : x \in V' \}>0.
\]
There then exists $y \in V'$ such that $\| v - y \| \leq (\sqrt{\alpha})^{-1} \mathrm{dist}(v, V')$.
We set $e_r = v - y$. Clearly $(e_1, \ldots, e_{r-1}, e_r)$ forms a basis of $V$.
It is sufficient to see that
\[
\| a_1 e_1 + \cdots + a_{r-1} e_{r-1} + e_r \| \geq \alpha \max \{ |a_1|\|e_1\|, \ldots, |a_{r-1}|\| e_{r-1} \|, \|e_{r}\| \}
\]
for all $a_1, \ldots, a_{r-1} \in k$. Indeed, as $\| e_{r} \| \leq (\sqrt{\alpha})^{-1} \| a_1 e_1 + \cdots + a_{r-1} e_{r-1} + e_r \|$,
we have
\[
\alpha \| e_{r} \| \leq  \sqrt{\alpha} \|e_r \| \leq \| a_1 e_1 + \cdots + a_{r-1} e_{r-1} + e_r \|.
\]
If $\| a_1 e_1 + \cdots + a_{r-1} e_{r-1} \| \leq \| e_r \|$, then
\begin{align*}
\| a_1 e_1 + \cdots + a_{r-1} e_{r-1} + e_r \| & \geq  \sqrt{\alpha} \|e_r \| \geq \sqrt{\alpha} \| a_1 e_1 + \cdots + a_{r-1} e_{r-1} \| \\
& \geq \sqrt{\alpha} \left( \sqrt{\alpha} \max \{ | a_1| \| e_1 \|, \ldots,  |a_{r-1}| \|e_{r-1} \| \} \right) \\
& = \alpha \max \{ | a_1| \| e_1 \|, \ldots,  |a_{r-1}| \|e_{r-1} \| \}.
\end{align*}
Otherwise, 
\begin{align*}
\| a_1 e_1 + \cdots + a_{r-1} e_{r-1} + e_r \| & = \| a_1 e_1 + \cdots + a_{r-1} e_{r-1} \| \\
& \geq  \sqrt{\alpha} \max \{ | a_1| \| e_1 \|, \ldots,  |a_{r-1}| \|e_{r-1} \| \} \\
& \geq \alpha \max \{ | a_1| \| e_1 \|, \ldots,  |a_{r-1}| \|e_{r-1} \| \},
\end{align*}
as required.
 
For the second assertion, it is sufficient to show {(1) in Lemma~\ref{lem:discrete:value:norm} below} because
it implies that the set $\{ \| v - x \| \mid x \in V' \}$ has the minimal value.
\end{proof}

\begin{Remark}
\label{rem:orthogonal:basis:non:complete}
We assume that $k$ is not complete. Let $\gamma \in \widehat{k} \setminus k$, we define a norm $\|\ndot\|_{\gamma}$ on $k^2$
by
\[
\forall (a, b) \in k^2,\quad
\|(a, b)\|_{\gamma} := | a + b \gamma |.
\]
Then there is no positive constant $C$ such that
$\| (a, b) \|_{\gamma} \geq C \max \{ |a|, |b| \}$ for all $a, b \in k$.
In particular, 
for any $\alpha \in (0,1]$, there is no $\alpha$-orthogonal basis of $k^2$ with respect to $\|\ndot\|_{\gamma}$.
Indeed, we assume the contrary.
We can find a sequence $\{ a_n \}$ in $k$ with $\lim_{n\to\infty} | a_n - \gamma | = 0$.
{On the other hand,}
\[
| a_n - \gamma |  = \| (a_n, -1 )\|_{\gamma} \geq C \max \{ |a_n|, 1 \} \geq C
\]
for all $n$. This is a contradiction. Note that the norm $\|\ndot\|_{\gamma}$ extends by continuity to a map $\widehat{k}^2\rightarrow\mathbb R_{\geqslant 0}$ sending $(a,b)\in \widehat{k}^2$ to $|a+b\gamma|$. But this map is a semi-norm instead of a norm. Therefore, the hypothesis that the $\|\ndot\|$ extends to a norm on $V\otimes_k\widehat{k}$ is essential.
\end{Remark}

{
\begin{Lemma}
\label{lem:discrete:value:norm}
\begin{enumerate}
\renewcommand{\labelenumi}{(\arabic{enumi})}
\item
We assume that $|\ndot|$ is discrete. Then
the set $\{ \| v \| \mid v \in V \setminus \{ 0 \} \}$ is discrete in $\mathbb R_{> 0}$
(cf. \cite[Proposition~2.5]{BMPS}).

\item
Let $M$ be a subspace of $V$ over $k$, and let $\|\ndot\|_{\hat{k}}$ be an ultrametric norm on $V \otimes_k \hat{k}$ such that
$\| \ndot \|_{\hat{k}}$ is an extension of $\|\ndot\|$.
{
Then, for $x \in V$ and $v' \in M \otimes_k \hat{k}$, there exists $v \in M$ with $\| v' + x \|_{\hat{k}} = \| v + x \|$.
In particular,
$\{ \| v + x \| \mid v \in M  \} = \{ \| v' + x \|_{\hat{k}} \mid v' \in M \otimes_k \hat{k} \}$
for any $x \in V$.
}
\end{enumerate}
\end{Lemma}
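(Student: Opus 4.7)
For part (1), the plan is to show that $R := \{\|v\| : v \in V \setminus \{0\}\}$ is a finite union of orbits under the multiplicative action of the value group $|k^{\times}|$ on $\mathbb{R}_{>0}$. Since $|k^{\times}|$ is discrete by assumption, each orbit $|k^{\times}|r$ is a discrete subset of $\mathbb{R}_{>0}$, and a finite union of such orbits remains discrete. Stability of $R$ under this action is immediate from $\|av\| = |a|\,\|v\|$ for $a \in k^{\times}$. The heart of the argument is a linear-independence trick: pick representatives $r_{1}, \dots, r_{s} \in R$ of pairwise distinct $|k^{\times}|$-orbits, and for each $i$ choose $v_{i} \in V$ with $\|v_{i}\| = r_{i}$. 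I claim the $v_{i}$ are $k$-linearly independent. Indeed, suppose $\sum_{i \in I} a_{i} v_{i} = 0$ for some nonempty $I \subseteq \{1, \dots, s\}$ with every $a_{i} \neq 0$. Then the positive reals $|a_{i}| r_{i}$ for $i \in I$ lie in pairwise disjoint $|k^{\times}|$-orbits, hence are pairwise distinct. The strict ultrametric inequality applied with a unique maximum term yields $\|\sum_{i \in I} a_{i} v_{i}\| = \max_{i \in I} |a_{i}| r_{i} > 0$, a contradiction. Therefore $s \leq \dim_{k} V$, and $R$ is discrete.

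For part (2), the plan is to combine density of $M$ in $M \otimes_{k} \hat{k}$ with the strict ultrametric property. Set $t := \|v' + x\|_{\hat{k}}$. If $t = 0$, then $-x = v'$ lies in $M \otimes_{k} \hat{k}$, viewed inside $V \otimes_{k} \hat{k}$; extending a $k$-basis of $M$ to a $k$-basis of $V$ exhibits the equality $V \cap (M \otimes_{k} \hat{k}) = M$, so $x \in M$ and the choice $v = -x$ works. If $t > 0$, use that $M$ is dense in $M \otimes_{k} \hat{k}$ (any $k$-basis of $M$ is an $\hat{k}$-basis of $M \otimes_{k} \hat{k}$, $k$ is dense in $\hat{k}$, and norms on finite-dimensional $\hat{k}$-vector spaces are equivalent) to select $v \in M$ with $\|v - v'\|_{\hat{k}} < t$. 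Writing $v + x = (v' + x) + (v - v')$ and using $\|v - v'\|_{\hat{k}} < t = \|v' + x\|_{\hat{k}}$, the strict ultrametric equality gives $\|v + x\|_{\hat{k}} = t$. Since $v + x \in V$ and $\|\cdot\|_{\hat{k}}$ restricts to $\|\cdot\|$ on $V$, one concludes $\|v + x\| = t$, and the second claim of (2) follows by applying this construction to each $v' \in M \otimes_{k} \hat{k}$.

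The main obstacle in (1) is to find a proof that avoids circularity with Proposition \ref{prop:orthogonal:basis}, whose second part derives the existence of an orthogonal basis in the discrete case from (1); the orbit-counting argument sketched above bypasses any use of orthogonal bases entirely and depends only on the basic ultrametric property combined with the discreteness of $|k^{\times}|$. The main subtlety in (2) is the degenerate case $t = 0$, which forces one to identify $V \cap (M \otimes_{k} \hat{k})$ with $M$; once this is settled, the case $t > 0$ is an immediate consequence of strict ultrametricity applied to a sufficiently close $k$-rational approximation of $v'$.
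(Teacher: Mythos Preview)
Your proof is correct and follows essentially the same approach as the paper: for (1) both you and the paper count $|k^{\times}|$-orbits in $\{\|v\|:v\neq 0\}$ and bound their number by $\dim_k V$ via the same linear-independence argument from strict ultrametricity, and for (2) both arguments reduce the case $t=0$ to the identity $V\cap(M\otimes_k\hat{k})=M$ and handle $t>0$ by approximating $v'$ by an element of $M$ close enough that the strict ultrametric inequality forces equality of norms. The only cosmetic difference is that the paper phrases the approximation via a convergent sequence rather than a single sufficiently close element.
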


\begin{proof}
(1) Let us consider a map $\beta : V \setminus \{ 0 \} \to \RR_{>0}/|k^{\times}|$
given by 
\[
\beta(v) = \text{the class of $\| v \|$ in $\RR_{>0}/|k^{\times}|$}.
\]
{For the assertion of (1), it}
is sufficient to see that $\beta(V \setminus \{ 0 \})$ is finite.
Let $\beta_1, \ldots, \beta_l$ be distinct elements of $\beta(V \setminus \{ 0 \})$.
We choose $v_1, \ldots, v_l \in V \setminus \{ 0 \}$ with
$\beta(v_i) = \beta_i$ for $i=1, \ldots, l$.
If $i\not=j$, then
$\Vert a_i v_i \Vert \not= \Vert a_j v_j \Vert$ for all $a_i, a_j \in k^{\times}$.
Therefore, 
we obtain
\[
\| a_1 v_1 + \cdots + a_l v_l \| = \max \{ \| a_1 v_1 \|, \ldots, \| a_1 v_l \| \}
\]
for all $a_1, \ldots, a_l \in k$. In particular, $v_1, \ldots, v_l$ are linearly independent.
Therefore, we have $\#(\beta(V \setminus \{ 0 \})) \leq \dim_k V$.

(2) 
Clearly we may assume that $\| v' + x \|_{\hat{k}} \not= 0$ because $(M \otimes_k \hat{k}) \cap V = M$.
Since any $k$-linear isomorphism $k^{e}\rightarrow M$ is a homeomorphism (where $e$ is the rank of $M$), we obtain that $M$ is dense in $M \otimes_k \hat{k}$.
Therefore, 
there exists a sequence $(v_n)_{n\in\mathbb N}$ in $M$ which converges to $v'$, so that $\lim_{n\to\infty} (v_n + x) = v' + x$.
Since $\|\ndot\|_{\hat{k}}$ is ultrametric and $\| v' + x \|_{\hat{k}} \not= 0$, we obtain that 
$\|v_n + x\|=\|v' + x\|_{\hat{k}}$ for sufficiently positive $n$,
as required.
\end{proof}
}

\begin{Proposition}
\label{cor:finite:gen:less:1}
We assume that $|\ndot|$ is discrete.
Then 
\[
(V, \|\ndot\|)_{\leq 1} := \{ v \in V \mid \| v \| \leq 1 \}
\]
is a finitely generated $\mathfrak o_k$-module.
\end{Proposition}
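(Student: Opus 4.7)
The plan is to reduce the statement to the existence of an orthogonal basis and then use discreteness of the value group to diagonalize the unit ball.

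First, I would invoke the second part of Proposition~\ref{prop:orthogonal:basis}: since $|\ndot|$ is discrete, $V$ admits an orthogonal basis $(e_1,\ldots,e_r)$ with respect to $\|\ndot\|$ (the standing hypothesis that $\|\ndot\|$ extends to a norm on $V\otimes_k\widehat{k}$ ensures that this proposition is applicable). Setting $c_i:=\|e_i\|$, orthogonality gives the identity
\[
\Bigl\| \sum_{i=1}^r a_i e_i \Bigr\| = \max_{1\leq i\leq r} |a_i|\, c_i
\quad\text{for every } (a_1,\ldots,a_r)\in k^r,
\]
so that the unit ball decomposes as the direct product
\[
(V,\|\ndot\|)_{\leq 1} = \bigoplus_{i=1}^r \{ a\in k \mid |a|\leq c_i^{-1}\}\cdot e_i.
\]

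Next, I would use the discreteness of $|\ndot|$ to identify each factor as a principal $\mathfrak o_k$-module. Let $\varpi$ be a uniformizer of $\mathfrak o_k$ as fixed in \ref{Notations:01}; since $|k^\times|=\{|\varpi|^n : n\in\ZZ\}$ is discrete in $\RR_{>0}$, for each $i$ there is a (unique) largest integer $m_i$ with $|\varpi|^{m_i}\leq c_i^{-1}$, and the ultrametric inequality together with discreteness forces
\[
\{ a\in k \mid |a|\leq c_i^{-1}\} = \varpi^{m_i}\mathfrak o_k.
\]
Therefore $(V,\|\ndot\|)_{\leq 1}=\bigoplus_{i=1}^r \varpi^{m_i}\mathfrak o_k\cdot e_i$ is a free $\mathfrak o_k$-module of rank $r$, generated by $(\varpi^{m_1}e_1,\ldots,\varpi^{m_r}e_r)$, which proves the proposition.

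There is essentially no obstacle here once the orthogonal basis is produced: the whole argument is a direct reduction to the one-dimensional case, where the claim reduces to the elementary fact that every closed ball in $(k,|\ndot|)$ centered at $0$ is principal when the valuation is discrete. The only subtle point worth flagging in the write-up is that the second (unperturbed) orthogonality statement in Proposition~\ref{prop:orthogonal:basis} is genuinely needed — a mere $\alpha$-orthogonal basis would only give that $(V,\|\ndot\|)_{\leq 1}$ is sandwiched between two finitely generated $\mathfrak o_k$-modules, which by itself would not yield the conclusion without further Noetherian input.
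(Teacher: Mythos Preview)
Your proof is correct and follows essentially the same approach as the paper's: both invoke the discrete case of Proposition~\ref{prop:orthogonal:basis} to obtain an orthogonal basis $(e_1,\ldots,e_r)$, then rescale each $e_i$ by an element of $k^\times$ chosen via discreteness so that the rescaled basis freely generates the unit ball. The paper phrases the rescaling as $\omega_i=\lambda_i^{-1}e_i$ with $|\lambda_i|=\inf\{|\lambda|:\|e_i\|\leq|\lambda|\}$, which amounts to your $\varpi^{m_i}e_i$ up to a unit.
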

\begin{proof}
By Proposition~\ref{prop:orthogonal:basis},
there is an orthogonal basis $(e_1, \ldots, e_r)$ of $V$ with respect to $\|\ndot\|$.
We choose $\lambda_i \in k^{\times}$ such that 
\[
|\lambda_i| =
\inf \{ |\lambda| \mid \text{$\lambda \in k^{\times}$ and $\| e_i \| \leq |\lambda|$ }\}.
\]
We set $\omega_i = \lambda_i^{-1} e_i$.
Note that $\omega_i \in (V, \|\ndot\|)_{\leq 1}$.
We assume that $\| v \| \leq 1$ for $v = a_1 \omega_1 + \cdots + a_r \omega_r$ ($a_1, \ldots, a_r \in k$). 
In order to see that $(\omega_1, \ldots, \omega_r)$ is a free basis of
$(V, \|\ndot\|)_{\leq 1}$, it is sufficient to show that $|a_i| \leq 1$.
Clearly we may assume that $a_i \not= 0$.
Since $(\omega_1, \ldots, \omega_r)$ is an orthogonal basis, $\| x \| \leq 1$ implies
$|a_i| \| \omega_i \| \leq 1$, that is, $\| e_i \| \leq |\lambda_i/a_i|$, and hence
$|\lambda_i | \leq |\lambda_i/a_i|$. Therefore, $|a_i| \leq 1$.
\end{proof}

\begin{Proposition}\label{Pro:quotientnorminf}
Assume that $|\ndot|$ is discrete. Let $V$ be a finite dimensional vector space over $k$ and $W$ be a quotient vector space of $V$. Denote by $\pi:V\rightarrow W$ be the projection map. We equip $V\otimes_k\widehat{k}$ with a norm $\|\ndot\|_V$ and $W\otimes_k\widehat{k}$ with the quotient norm $\|\ndot\|_W$. Then for any $y\in W$ 
{there is $x \in V$ such that $\pi(x) = y$ and $\| y \|_W = \| x \|_V$.}
\end{Proposition}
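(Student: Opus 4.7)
The plan is to produce an explicit optimal preimage by choosing an orthogonal basis of $V$ adapted to $\ker(\pi)$ and then reading the quotient norm off the orthogonality formula.

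Let $U=\ker(\pi)$, of dimension $s$, and $r=\dim_k V$. I would first pick any basis $(e'_1,\ldots,e'_r)$ of $V$ such that $(e'_1,\ldots,e'_s)$ is a basis of $U$. Since $|\ndot|$ is discrete and the restriction of $\|\ndot\|_V$ to $V$ is a norm on $V$ that extends (to the given norm) on $V\otimes_k\widehat{k}$, the second part of Proposition \ref{prop:orthogonal:basis} furnishes an orthogonal basis $(e_1,\ldots,e_r)$ of $V$ with respect to $\|\ndot\|_V|_V$, compatible with $(e'_1,\ldots,e'_r)$. Thus $(e_1,\ldots,e_s)$ is a basis of $U$ and $(\pi(e_{s+1}),\ldots,\pi(e_r))$ is a basis of $W$.

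The next step is to promote orthogonality from $k$-combinations to $\widehat{k}$-combinations. Given $(a_i)\in\widehat{k}^r$, approximate each $a_i$ by a sequence $(a_i^{(n)})$ in $k$; then $\sum_i a_i^{(n)}e_i\in V$ converges to $\sum_i a_ie_i$ in the normed space $V\otimes_k\widehat{k}$, and passing to the limit in the orthogonality identity $\|\sum_i a_i^{(n)}e_i\|_V=\max_i|a_i^{(n)}|\|e_i\|_V$ valid over $k$ yields
\[
\Bigl\| \sum_{i=1}^{r} a_i e_i \Bigr\|_V = \max_{1\leq i\leq r}|a_i|\,\|e_i\|_V
\]
for all $(a_i)\in\widehat{k}^r$. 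This uses only that $V$ is dense in the finite-dimensional $\widehat{k}$-normed space $V\otimes_k\widehat{k}$, i.e.\ that $\|\ndot\|_V$ is indeed a norm (not just a seminorm) extending its restriction to $V$.

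With this formula, writing $y=\sum_{i>s}b_i\pi(e_i)$ with $b_i\in k$, every lift $z\in V\otimes_k\widehat{k}$ of $y$ has the shape $z=\sum_{i\leq s}a_ie_i+\sum_{i>s}b_ie_i$ with $a_i\in\widehat{k}$, so
\[
\|z\|_V=\max\Bigl(\max_{i\leq s}|a_i|\,\|e_i\|_V,\;\max_{i>s}|b_i|\,\|e_i\|_V\Bigr).
\]
The infimum over the $a_i$ is then trivially attained at $a_1=\cdots=a_s=0$, giving $\|y\|_W=\max_{i>s}|b_i|\,\|e_i\|_V$. Setting $x:=\sum_{i>s}b_ie_i\in V$ yields $\pi(x)=y$ and, by orthogonality over $k$ applied in $V$, $\|x\|_V=\|y\|_W$. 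The only non-trivial ingredient is the existence of the orthogonal (not merely $\alpha$-orthogonal) basis, which is precisely where discreteness of $|\ndot|$ enters; the remaining continuity argument to extend orthogonality to $\widehat{k}$ is routine, so I do not foresee a serious obstacle.
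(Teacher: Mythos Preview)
Your proof is correct and takes a genuinely different route from the paper's. You construct an explicit optimal lift by choosing an orthogonal basis of $V$ adapted to $\ker(\pi)$ (using the discrete-valuation part of Proposition~\ref{prop:orthogonal:basis}), then extend the orthogonality identity to $\widehat{k}$-coefficients by a density/continuity argument, and read off the infimum directly as $\max_{i>s}|b_i|\,\|e_i\|_V$, attained at $x=\sum_{i>s}b_ie_i$. The paper instead argues non-constructively via Lemma~\ref{lem:discrete:value:norm}: part (2) of that lemma shows that the set $\{\|z\|_V : z\in V\otimes_k\widehat{k},\ \pi_{\widehat{k}}(z)=y\}$ coincides with $\{\|x\|_V : x\in V,\ \pi(x)=y\}$, and part (1) shows this set lies in the discrete subset $\{\|v\|_V : v\in V\setminus\{0\}\}\subset\RR_{>0}$, so its infimum is attained by some $x\in V$. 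Your approach gives more---an explicit formula for the quotient norm and a canonical lift---while the paper's is shorter and relies only on the soft density statement in Lemma~\ref{lem:discrete:value:norm}(2), which needs no orthogonal basis at all. Both approaches use discreteness of $|\ndot|$ in an essential way: you use it to get a genuinely orthogonal (not merely $\alpha$-orthogonal) basis, while the paper uses it to make the value set of the norm discrete.
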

{
\begin{proof} 
We may assume that $y\neq 0$ (the case where $y=0$ is trivial). We set $M = \Ker(\pi)$. 
Note that $M \otimes_k \hat{k} = \Ker(\pi_{\hat{k}})$.
By Lemma \ref{lem:discrete:value:norm}, $\{ \| v \| \mid v \in V \setminus \{ 0 \} \}$ is discrete in $\mathbb R_{>0}$ and
\begin{multline*}
\{ \| x' \|_V \mid \text{$x' \in V \otimes_k \hat{k}$ and $\pi_{\hat{k}}(x') = y$} \} = 
\{ \| x \|_V \mid \text{$x \in V$ and $\pi(x) = y$} \} \\
\subseteq \{ \| v \| \mid v \in V \setminus \{ 0 \} \}.
\end{multline*}
Thus the proposition follows.
\end{proof}}

\subsubsection{Scalar extension of norms}
Let $V'$ be a vector space over $k$ and $\|\ndot\|'$ a norm of $V'$.

\begin{Lemma}
For $\phi \in \mathrm{Hom}_k(V, V')$,
the set $\left\{ \frac{\| \phi(v) \|'}{\| v \|} \,\Big|\, v \in V \setminus \{ 0 \} \right\}$ is bounded from above.
\end{Lemma}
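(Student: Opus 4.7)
The plan is to reduce the boundedness of $\|\phi(v)\|'/\|v\|$ to a finite maximum over a basis of $V$, using the finite-dimensionality of $V$ together with the ultrametric property and the existence of an almost-orthogonal basis. Since $k$ is complete in this subsection, any norm on the finite-dimensional space $V$ trivially extends to $V \otimes_k \widehat{k} = V$, so Proposition~\ref{prop:orthogonal:basis} applies.

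Concretely, I would proceed as follows. First, if $V = 0$ the set is empty and hence vacuously bounded, so assume $r = \dim_k V \geq 1$. Fix some $\alpha \in (0,1)$ and invoke Proposition~\ref{prop:orthogonal:basis} to choose an $\alpha$-orthogonal basis $(e_1,\ldots,e_r)$ of $V$ with respect to $\|\ndot\|$. Given $v = a_1 e_1 + \cdots + a_r e_r \in V \setminus \{0\}$, the ultrametric inequality applied to the norm $\|\ndot\|'$ on $V'$ yields
\[
\|\phi(v)\|' = \Bigl\| \sum_{i=1}^{r} a_i \phi(e_i) \Bigr\|' \leq \max_{1\leq i \leq r} |a_i|\,\|\phi(e_i)\|',
\]
while the $\alpha$-orthogonality of $(e_1,\ldots,e_r)$ gives
\[
\|v\| \geq \alpha \max_{1\leq i \leq r} |a_i|\,\|e_i\|.
\]
Combining these two estimates, one obtains
\[
\frac{\|\phi(v)\|'}{\|v\|} \leq \frac{1}{\alpha} \max_{1 \leq i \leq r} \frac{\|\phi(e_i)\|'}{\|e_i\|},
\]
where by convention $\|\phi(e_i)\|'/\|e_i\|$ equals $0$ when $\phi(e_i) = 0$. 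The right-hand side is a finite real number depending only on $\alpha$, the basis, and $\phi$, which provides the desired upper bound.

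There is no real obstacle here: the argument is essentially the standard continuity-of-linear-maps-on-finite-dimensional-normed-spaces statement, adapted to the non-archimedean setting where equivalence of norms is furnished by the $\alpha$-orthogonal basis from Proposition~\ref{prop:orthogonal:basis}. The only minor subtlety worth flagging is that one cannot in general take $\alpha = 1$ when the absolute value $|\ndot|$ is not discrete, but any fixed $\alpha \in (0,1)$ suffices for the boundedness statement.
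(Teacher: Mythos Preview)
Your proof is correct and follows essentially the same approach as the paper: fix an $\alpha$-orthogonal basis via Proposition~\ref{prop:orthogonal:basis}, bound the numerator by the ultrametric inequality and the denominator by $\alpha$-orthogonality, and conclude with a finite constant. Your final bound $\alpha^{-1}\max_i \|\phi(e_i)\|'/\|e_i\|$ is in fact slightly sharper than the paper's $C_1/(\alpha C_2)$ with $C_1=\max_i\|\phi(e_i)\|'$ and $C_2=\min_i\|e_i\|$, but the argument is the same.
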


\begin{proof}
Fix $\alpha \in (0,1)$. Let $(e_1, \ldots, e_r)$ be an $\alpha$-orthogonal basis of $V$
(cf. Proposition~\ref{prop:orthogonal:basis}). We set
\[
C_1 = \max \{ \| \phi(e_1) \|', \ldots, \| \phi(e_r) \|' \}\quad\text{and}\quad
C_2 = \min \{ \| e_1 \|, \ldots, \| e_r \| \}.
\]
Then, for $v = a_1 e_1 + \cdots + a_r e_r \in V \setminus \{ 0 \}$,
\begin{align*}
\frac{\| \phi(v) \|'}{\| v \|} & \leq \frac{\max \{ |a_1|\| \phi(e_1) \|', \ldots, |a_r| \| \phi(e_r) \|'\}}{\alpha \max \{ |a_1|\|e_1 \|, \ldots, |a_r| \|e_r \|\}} \\
& \leq \frac{\max \{ |a_1|C_1, \ldots, |a_r| C_1\}}{\alpha \max \{ |a_1| C_2, \ldots, |a_r| C_2 \}} = \frac{C_1}{\alpha C_2},
\end{align*}
as desired.
\end{proof}

By the above lemma, we define $\| \phi \|_{\mathrm{Hom}_k(V, V')}$ to be
\[
\| \phi \|_{\mathrm{Hom}_k(V, V')} := \sup \left\{ \frac{\| \phi(v) \|'}{\| v \|} \mid v \in V \setminus \{ 0 \} \right\}.
\]
Note that $\| \ndot \|_{\mathrm{Hom}_k(V, V')}$ yields a norm on $\mathrm{Hom}_k(V, V')$.
We denote $\| \ndot \|_{\mathrm{Hom}_k(V, k)}$ by $\|\ndot\|^{\vee}$  (i.e. the case where $V' = k$ and $\|\ndot\|' = |\ndot|$).

\begin{Lemma}
\label{lem:HB}
Let $W$ be a subspace of $V$ and $\psi \in W^{\vee} := \mathrm{Hom}_k(W, k)$.
For any $\alpha \in (0,1)$, there is $\varphi \in V^{\vee} := \mathrm{Hom}_k(V, k)$
such that $\rest{\varphi}{W} = \psi$ and 
\[
\| \psi \|^{\vee} \leq \| \varphi \|^{\vee} \leq \alpha^{-1} \| \psi \|^{\vee}.
\]
\end{Lemma}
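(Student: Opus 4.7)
The plan is to reduce the problem to a suitable orthogonalization statement already proved in the paper. The lower bound $\|\psi\|^{\vee}\leq\|\varphi\|^{\vee}$ is immediate: any $\varphi\in V^{\vee}$ with $\rest{\varphi}{W}=\psi$ satisfies $|\varphi(w)|/\|w\|=|\psi(w)|/\|w\|$ for every $w\in W\setminus\{0\}$, so the supremum defining $\|\varphi\|^{\vee}$ is at least $\|\psi\|^{\vee}$. Only the upper bound requires work.

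For the upper bound, first I would pick a basis $(e_1',\ldots,e_s')$ of $W$ and complete it to a basis $(e_1',\ldots,e_r')$ of $V$, where $s=\dim_kW$ and $r=\dim_kV$. Applying Proposition~\ref{prop:orthogonal:basis} to this basis of $V$, I obtain an $\alpha$-orthogonal basis $(e_1,\ldots,e_r)$ of $V$ with respect to $\|\ndot\|$ such that $ke_1+\cdots+ke_i=ke_1'+\cdots+ke_i'$ for every $i$. In particular $(e_1,\ldots,e_s)$ is a basis of $W$, and the sub-basis is automatically $\alpha$-orthogonal in $W$ because the $\alpha$-orthogonality inequality over $V$ specializes to the analogous inequality over $W$ by setting the last $r-s$ coefficients equal to zero.

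Next I would define the extension $\varphi\in V^{\vee}$ simply by
\[
\varphi(a_1e_1+\cdots+a_re_r):=\psi(a_1e_1+\cdots+a_se_s).
\]
By construction $\rest{\varphi}{W}=\psi$. To estimate $\|\varphi\|^{\vee}$, take any nonzero $v=a_1e_1+\cdots+a_re_r\in V$. Using the triangle inequality in $W$, the definition of $\|\psi\|^{\vee}$, and then the $\alpha$-orthogonality of $(e_1,\ldots,e_r)$ in $V$, I would bound
\[
|\varphi(v)|\leq\|\psi\|^{\vee}\bigl\|a_1e_1+\cdots+a_se_s\bigr\|\leq\|\psi\|^{\vee}\max_{1\leq i\leq s}|a_i|\,\|e_i\|\leq\alpha^{-1}\|\psi\|^{\vee}\|v\|,
\]
where the last step uses $\alpha\max_{1\leq i\leq r}|a_i|\,\|e_i\|\leq\|v\|$. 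Taking the supremum over $v$ yields $\|\varphi\|^{\vee}\leq\alpha^{-1}\|\psi\|^{\vee}$, as required.

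The only real obstacle is the absence of a genuine orthogonal basis when the absolute value is not discrete; this is precisely why the loss factor $\alpha^{-1}$ appears in the statement. If $|\ndot|$ were discrete one could take $\alpha=1$ by the second part of Proposition~\ref{prop:orthogonal:basis}, obtaining an isometric extension in the spirit of classical Hahn--Banach; in the general case the $\alpha$-orthogonal basis approach bypasses the standard one-dimensional inductive argument (which would require an infimum that need not be attained) and directly yields the claimed estimate.
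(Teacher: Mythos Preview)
Your proof is correct and follows essentially the same approach as the paper: both invoke Proposition~\ref{prop:orthogonal:basis} to produce an $\alpha$-orthogonal basis $(e_1,\ldots,e_r)$ of $V$ whose first $s$ vectors span $W$, define $\varphi$ by discarding the last $r-s$ coordinates and applying $\psi$, and then combine the ultrametric inequality $\|a_1e_1+\cdots+a_se_s\|\leq\max_{i\leq s}|a_i|\,\|e_i\|$ with the $\alpha$-orthogonality bound $\alpha\max_{i\leq r}|a_i|\,\|e_i\|\leq\|v\|$ to obtain $|\varphi(v)|\leq\alpha^{-1}\|\psi\|^{\vee}\|v\|$. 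The packaging of the inequalities differs only cosmetically.
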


\begin{proof}
Let $(e_1, \ldots, e_r)$ be an $\alpha$-orthogonal basis of $V$ such that
$W = k e_1 + \cdots + k e_l$ (cf. Proposition~\ref{prop:orthogonal:basis}).
We define $\varphi \in V^{\vee}$ to be
\[
\varphi(a_1 e_1 + \cdots + a_r e_r) := \psi(a_1 e_1 + \cdots + a_l e_l)
\]
for $a_1, \ldots, a_r \in k$. Then $\rest{\varphi}{W} = \psi$. Moreover, note that
\begin{multline*}
\alpha \| a_1 e_1 + \cdots + a_l e_l \| \leq \alpha \max \{ |a_1|\| e_1 \|, \ldots, |a_l| \| e_l \| \} \\
\leq \alpha \max \{ |a_1|\| e_1 \|, \ldots, |a_r| \| e_r \| \} \leq \| a_1 e_1 + \cdots + a_r e_r \|,
\end{multline*}
so that
\[
\frac{|\varphi(a_1 e_1 + \cdots + a_r e_r)|}{\| a_1 e_1 + \cdots + a_r e_r \|} \leq
\alpha^{-1} \frac{|\psi(a_1 e_1 + \cdots + a_l e_l)|}{\| a_1 e_1 + \cdots + a_l e_l \|} \leq \alpha^{-1} \| \psi \|^{\vee}
\]
for all $a_1, \ldots, a_r \in k$ with $(a_1, \ldots, a_l) \not= (0, \ldots, 0)$.
Thus the assertion follows.
\end{proof}

\begin{Corollary}
\label{cor:isometry:double:dual}
The natural homomorphism $V \to (V^{\vee})^{\vee}$ is an isometry.
\end{Corollary}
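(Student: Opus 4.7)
The plan is to prove both inequalities $\|\iota(v)\|^{\vee\vee}\leq \|v\|$ and $\|\iota(v)\|^{\vee\vee}\geq \|v\|$, where $\iota:V\to (V^{\vee})^{\vee}$ denotes the natural evaluation map $v\mapsto (\varphi\mapsto \varphi(v))$. The first inequality is immediate from the definition of the dual norm: for any nonzero $\varphi\in V^{\vee}$ one has $|\varphi(v)|\leq \|\varphi\|^{\vee}\cdot\|v\|$, so taking the supremum over $\varphi$ gives $\|\iota(v)\|^{\vee\vee}\leq\|v\|$.

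For the reverse inequality, I would fix a nonzero $v\in V$ and construct, for any $\alpha\in(0,1)$, a functional $\varphi\in V^{\vee}$ whose value at $v$ is comparable to $\|v\|\cdot\|\varphi\|^{\vee}$ up to a factor $\alpha$. The natural choice is to work first on the one-dimensional subspace $W=kv$ and consider the linear form $\psi\in W^{\vee}$ defined by $\psi(av)=a$ for $a\in k$. A direct computation gives $\|\psi\|^{\vee}=1/\|v\|$ and $|\psi(v)|=1=\|v\|\cdot\|\psi\|^{\vee}$.

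Then I would invoke the non-archimedean Hahn--Banach extension established in Lemma \ref{lem:HB}: for any $\alpha\in(0,1)$, there is $\varphi\in V^{\vee}$ with $\rest{\varphi}{W}=\psi$ and $\|\varphi\|^{\vee}\leq \alpha^{-1}\|\psi\|^{\vee}=\alpha^{-1}/\|v\|$. Consequently,
\[
\|\iota(v)\|^{\vee\vee}\geq \frac{|\varphi(v)|}{\|\varphi\|^{\vee}}=\frac{|\psi(v)|}{\|\varphi\|^{\vee}}\geq \frac{1}{\alpha^{-1}/\|v\|}=\alpha\|v\|.
\]
Letting $\alpha\to 1^{-}$ yields $\|\iota(v)\|^{\vee\vee}\geq\|v\|$, completing the proof.

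There is no real obstacle here beyond applying the right ingredient; the main conceptual point is recognizing that the loss of a factor $\alpha$ in Lemma \ref{lem:HB} (coming from the use of $\alpha$-orthogonal rather than strictly orthogonal bases when the valuation is not discrete) is harmless for the isometry statement, since $\alpha$ can be taken arbitrarily close to $1$.
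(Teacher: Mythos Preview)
Your proof is correct and follows essentially the same approach as the paper: both establish the easy inequality $\|\iota(v)\|^{\vee\vee}\le\|v\|$ directly from the definition, and for the reverse inequality both apply Lemma~\ref{lem:HB} to the functional $\psi\in(kv)^{\vee}$ with $\psi(v)=1$, obtain an extension $\varphi$ with $\|\varphi\|^{\vee}\le\alpha^{-1}\|\psi\|^{\vee}$, and let $\alpha\to 1$.
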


\begin{proof}
We denote the norm of $(V^{\vee})^{\vee}$ by $\|\ndot\|'$, that is,
\[
\| v \|' = \sup \left\{ \frac{|\phi(v)|}{\| \phi \|^{\vee}} \mid \phi \in V^{\vee} \setminus \{ 0 \} \right\}.
\]
Note that $|\phi(v)| \leq \| v \| \|\phi\|^{\vee}$ for all $v \in V$ and $\phi \in V^{\vee}$.
In particular, $\| v \|' \leq \| v \|$. For $v \in V \setminus \{ 0 \}$, we set 
$W := k v$
and choose $\psi \in W^{\vee}$
with $\psi(v) = 1$. Then $\| \psi \|^{\vee} = 1/\| v \|$. For any $\alpha \in (0,1)$, by Lemma~\ref{lem:HB},
there is $\varphi \in V^{\vee}$ such that $\rest{\varphi}{W} = \psi$ and $\|\varphi\|^{\vee} \leq \alpha^{-1} \| \psi\|^{\vee}$.
As $|\varphi(v)|/\| \varphi \|^{\vee} \leq \| v \|'$, we have $\alpha \| v \| \leq \| v \|'$.
Thus we obtain $\| v \| \leq \| v \|'$ by taking $\alpha \to 1$.
\end{proof}

\begin{Definition}
\label{def:scalar:extension}
Let $k'$ be an extension field of $k$, and let $|\ndot|'$ be a complete absolute value of $k'$ which is an extension of $|\ndot|$.
We set $V_{k'} := V \otimes_k k'$.
Identifying $V_{k'}$ with 
\[
\mathrm{Hom}_k(\mathrm{Hom}_k(V, k), k'),
\]
we can give a norm $\|\ndot\|_{k'}$ of $V_{k'}$,
that is,
\[
\| v' \|_{k'} = \sup \left \{ \frac{| (\phi \otimes 1)(v') |'}{\| \phi \|^{\vee}} \,\Big|\, \phi \in V^{\vee} \right\}.
\]
The norm $\|\ndot\|_{k'}$ is called the {\em scalar extension of $\|\ndot\|$}.
Note that $\| v \otimes 1 \|_{k'} = \| v \|$ for $v \in V$. Indeed,
by Corollary~\ref{cor:isometry:double:dual},
\[
\| v \otimes 1 \|_{k'} = \sup \left \{ \frac{| \phi(v) |}{\| \phi \|^{\vee}} \,\Big|\, \phi \in V^{\vee} \right\} = \| v \|.
\]
By definition, if $\|\ndot\|_1$ and $\|\ndot\|_2$ are two norms on $V$ such that $\|\ndot\|_1\leqslant\|\ndot\|_2$, then one has $\|\ndot\|_1^\vee\geqslant\|\ndot\|_2$ and hence $\|\ndot\|_{1,k'}\leqslant\|\ndot\|_{2,k'}$.
\end{Definition}

\begin{Proposition}
\label{prop:scalar:extension:orthogonal}
For $\alpha \in (0, 1]$, let $(e_1, \ldots, e_r)$ be an $\alpha$-orthogonal basis of $V$ with respect to $\|\ndot\|$.
Then $(e_1 \otimes 1, \ldots, e_r \otimes 1)$ also yields an 
$\alpha$-orthogonal basis of $V_{k'}$ with respect to $\|\ndot\|_{k'}$. In particular, $\|\ndot\|_{k'}$ is actually the largest ultrametric norm on $V_{k'}$ extending $\|\ndot\|$.
\end{Proposition}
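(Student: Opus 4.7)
The plan is to establish the $\alpha$-orthogonality of $(e_1 \otimes 1, \ldots, e_r \otimes 1)$ by constructing enough dual functionals, and then to deduce the maximality statement by letting the orthogonality constant tend to $1$.

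\textbf{Step 1 ($\alpha$-orthogonality).} For each $i \in \{1, \ldots, r\}$, I introduce the coordinate functional $\phi_i \in V^\vee$ defined by $\phi_i(b_1 e_1 + \cdots + b_r e_r) = b_i$. The $\alpha$-orthogonality of $(e_1, \ldots, e_r)$ gives, for any non-zero $v = \sum_j b_j e_j \in V$, the bound
\[
\|v\| \geq \alpha \max_j |b_j| \|e_j\| \geq \alpha |b_i| \|e_i\|,
\]
from which $\|\phi_i\|^\vee \leq (\alpha \|e_i\|)^{-1}$. Given $v' = \sum_i a_i' (e_i \otimes 1) \in V_{k'}$, the extended functional $\phi_i \otimes 1$ sends $v'$ to $a_i'$, so the definition of $\|\cdot\|_{k'}$ yields
\[
\|v'\|_{k'} \geq \frac{|(\phi_i \otimes 1)(v')|'}{\|\phi_i\|^\vee} \geq \alpha |a_i'|' \|e_i\| = \alpha |a_i'|' \|e_i \otimes 1\|_{k'},
\]
using the identity $\|e_i \otimes 1\|_{k'} = \|e_i\|$ recorded in Definition~\ref{def:scalar:extension}. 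Taking the maximum over $i$ gives the $\alpha$-orthogonality.

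\textbf{Step 2 (maximality).} The norm $\|\cdot\|_{k'}$ is itself ultrametric, being the supremum of the functions $v' \mapsto |(\phi \otimes 1)(v')|'/\|\phi\|^\vee$ for $\phi \in V^\vee \setminus \{0\}$, each of which is an ultrametric seminorm on $V_{k'}$. Now let $\|\cdot\|''$ be any ultrametric norm on $V_{k'}$ with $\|v \otimes 1\|'' = \|v\|$ for all $v \in V$. For an arbitrary $\beta \in (0,1)$, Proposition~\ref{prop:orthogonal:basis} produces a $\beta$-orthogonal basis $(f_1, \ldots, f_r)$ of $V$ with respect to $\|\cdot\|$. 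Expanding $v' = \sum_i b_i' (f_i \otimes 1)$, the ultrametric inequality gives
\[
\|v'\|'' \leq \max_i |b_i'|' \|f_i \otimes 1\|'' = \max_i |b_i'|' \|f_i\|,
\]
while applying Step~1 to the basis $(f_i)$ (with constant $\beta$) yields
\[
\|v'\|_{k'} \geq \beta \max_i |b_i'|' \|f_i\|.
\]
Combining these, $\|v'\|'' \leq \beta^{-1} \|v'\|_{k'}$; letting $\beta \to 1^-$ gives $\|v'\|'' \leq \|v'\|_{k'}$, so $\|\cdot\|_{k'}$ is the largest ultrametric norm on $V_{k'}$ extending $\|\cdot\|$.

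\textbf{Main obstacle.} The only subtle point is the maximality claim: Step~1 loses a factor $\alpha$ when $\alpha < 1$, so the bound extracted from the fixed basis $(e_1, \ldots, e_r)$ is not sharp by itself. The key is to vary the basis so that the orthogonality constant approaches $1$, which is precisely what Proposition~\ref{prop:orthogonal:basis} supplies.
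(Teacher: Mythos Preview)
Your proof is correct and follows essentially the same approach as the paper: both Step~1 and Step~2 match the paper's argument (the paper's dual basis $e_i^\vee$ is your coordinate functional $\phi_i$, and the paper's ``since $\alpha$ is arbitrary'' is your $\beta\to 1^-$ via Proposition~\ref{prop:orthogonal:basis}). Your explicit remark that $\|\cdot\|_{k'}$ is itself ultrametric is a helpful addition that the paper leaves implicit.
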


\begin{proof}
Let $(e_1^{\vee}, \ldots, e_r^{\vee})$ be the dual basis of $(e_1, \ldots, e_r)$.
For $a_1, \ldots, a_r \in k$ with $a_i \not= 0$,
\[
\frac{|(e_i^{\vee})(a_1 e_1 + \cdots + a_r e_r)|}{\| a_1 e_1 + \cdots + a_r e_r \|} \leq
\frac{|a_i|}{\alpha \max \{ |a_1| \|e_1\|, \ldots, |a_r| \| e_r \| \}} \leq
\frac{|a_i|}{\alpha |a_i| \| e_i \|} = \frac{1}{\alpha \| e_i \|},
\]
and hence $\| e_i^{\vee} \|^{\vee} \leq (\alpha \| e_i \|)^{-1}$.
Therefore, for $a'_1, \ldots, a'_r \in k'$,
\begin{align*}
\| a'_1 e_1 + \cdots + a'_r e_r \|_{k'} & \geq \frac{|(e_i^{\vee} \otimes 1)(a'_1 e_1 + \cdots + a'_r e_r)|'}{\| e_i^{\vee} \|^{\vee}} \\
& =
\frac{|a'_i|'}{\| e_i^{\vee} \|^{\vee}} \geq \frac{|a'_i|'}{(\alpha \| e_i \|)^{-1}} = \alpha |a'_i|' \| e_i \|.
\end{align*}
Thus we have the first assertion. 

Assume that $\|\ndot\|'$ is another  ultrametric norm on $V_{k'}$ extending $\|\ndot\|$. If $(e_1,\ldots,e_r)$ is an $\alpha$-orthogonal basis of $V$, where $\alpha\in(0,1)$, by the first assertion of the proposition, we have
\[\begin{split}\forall\,(a_1',\ldots,a_r')\in k',\quad\alpha\|a_1'e_1+&\cdots+a_r'e_r\|'\leqslant\alpha\max_{i\in\{1,\ldots,r\}}(|a_i|'\|e_i\|')\\
&=\alpha\max_{i\in\{1,\ldots,r\}}(|a_i|'\|e_i\|)\leqslant\|a_1'e_1+\cdots+a_r'e_r\|_{k'}.
\end{split}
\]
Since $\alpha$ is arbitrary, we obtain $\|\ndot\|'\leqslant\|\ndot\|_{k'}$.
\end{proof}

\begin{Lemma}
\label{lem:scalar:extensions}
Let $k''$ be an extension field of $k'$, and let $|\ndot|''$ be a complete absolute value of $k''$ as an extension of $|\ndot|'$.
We set $V_{k''} := V \otimes_{k} k''$. Note that $V_{k''} = V_{k'} \otimes_{k'} k''$.
Let $\|\ndot\|_{k''}$ (resp. $\|\ndot\|_{k', k''}$) be a norm of $V_{k''}$ obtained by the scalar extension of $\|\ndot\|$ on $V$
(resp. the scalar extension of $\|\ndot\|_{k'}$ on $V_{k'}$). 
Then $\|\ndot\|_{k''} = \|\ndot\|_{k', k''}$.
\end{Lemma}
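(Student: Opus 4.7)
My plan is to reduce the assertion to Proposition~\ref{prop:scalar:extension:orthogonal}, which guarantees that scalar extension of a norm along a complete valued field extension preserves $\alpha$-orthogonal bases together with the norms of their basis vectors. The idea is to transport the same $\alpha$-orthogonal basis of $(V,\|\ndot\|)$ into $V_{k''}$ by two routes (directly along $k\hookrightarrow k''$, and in two stages along $k\hookrightarrow k'\hookrightarrow k''$), and to squeeze both candidate norms $\|\ndot\|_{k''}$ and $\|\ndot\|_{k',k''}$ between the same upper and lower bounds on each basis expansion; sending $\alpha\to 1$ then yields equality.

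In detail, for $\alpha\in(0,1)$ I would use Proposition~\ref{prop:orthogonal:basis} to choose an $\alpha$-orthogonal basis $(e_1,\ldots,e_r)$ of $V$ with respect to $\|\ndot\|$. Proposition~\ref{prop:scalar:extension:orthogonal} applied successively to $k\hookrightarrow k'$ and $k'\hookrightarrow k''$ shows that the family $(e_1\otimes 1,\ldots,e_r\otimes 1)$ is an $\alpha$-orthogonal basis of $V_{k''}=V_{k'}\otimes_{k'}k''$ with respect to $\|\ndot\|_{k',k''}$, and that $\|e_i\otimes 1\|_{k',k''}=\|e_i\otimes 1\|_{k'}=\|e_i\|$ for each $i$. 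The same proposition applied directly to $k\hookrightarrow k''$ gives that the same family $(e_1\otimes 1,\ldots,e_r\otimes 1)$ is also an $\alpha$-orthogonal basis of $V_{k''}$ with respect to $\|\ndot\|_{k''}$, with $\|e_i\otimes 1\|_{k''}=\|e_i\|$.

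Given these two parallel descriptions, for any $v=\sum_{i=1}^{r}a_i''(e_i\otimes 1)\in V_{k''}$ the ultrametric triangle inequality yields the common upper bound $\max_{i}|a_i''|''\,\|e_i\|$ for both $\|v\|_{k''}$ and $\|v\|_{k',k''}$, while $\alpha$-orthogonality yields the common lower bound $\alpha\max_{i}|a_i''|''\,\|e_i\|$. Hence $\alpha\|v\|_{k''}\leq\|v\|_{k',k''}\leq\alpha^{-1}\|v\|_{k''}$, and the conclusion follows on letting $\alpha\to 1$. No substantive obstacle arises here: all the non-formal content is already packaged in Proposition~\ref{prop:scalar:extension:orthogonal}, and the argument amounts to the observation that this proposition can be iterated, after which a standard squeeze-and-limit finishes the proof.
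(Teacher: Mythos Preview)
Your argument is correct. You squeeze both norms between $\alpha\max_i|a_i''|''\|e_i\|$ and $\max_i|a_i''|''\|e_i\|$ using an $\alpha$-orthogonal basis transported along each route via Proposition~\ref{prop:scalar:extension:orthogonal}, and then let $\alpha\to 1$; the fact that the basis depends on $\alpha$ is harmless since the resulting inequality $\alpha\|v\|_{k''}\leq\|v\|_{k',k''}\leq\alpha^{-1}\|v\|_{k''}$ is basis-free.

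The paper proceeds differently, using the second assertion of Proposition~\ref{prop:scalar:extension:orthogonal} (that $\|\ndot\|_{k'}$ is the \emph{largest} ultrametric norm on $V_{k'}$ extending $\|\ndot\|$) rather than the first. Concretely: since $\|\ndot\|_{k',k''}$ extends $\|\ndot\|$, maximality gives $\|\ndot\|_{k',k''}\leq\|\ndot\|_{k''}$; restricting this to $V_{k'}$ and combining with another application of maximality shows that the restriction of $\|\ndot\|_{k''}$ to $V_{k'}$ equals $\|\ndot\|_{k'}$, whence $\|\ndot\|_{k''}\leq\|\ndot\|_{k',k''}$ by maximality over $k'$. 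Your approach is more explicit and computational, while the paper's is more conceptual (a universal-property style argument); both ultimately rest on Proposition~\ref{prop:scalar:extension:orthogonal}, but on different halves of it. The paper's route avoids the limiting step in $\alpha$, whereas yours makes the mechanism (control via orthogonal bases) more transparent.
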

\begin{proof}
Since $\|\ndot\|_{k',k''}$ is an ultrametric norm on $V_{k''}$ extending $\|\ndot\|$, by Proposition \ref{prop:scalar:extension:orthogonal} we have $\|\ndot\|_{k',k''}\leqslant\|\ndot\|_{k''}$. Moreover, since the restriction of $\|\ndot\|_{k''}$ on $V_{k'}$ (which we denote by $\|\ndot\|'$) extends the norm $\|\ndot\|$ on $V$, by the same reason we have $\|\ndot\|'\leqslant\|\ndot\|_{k'}$ and hence $\|\ndot\|'=\|\ndot\|_{k'}$. Therefore, still by Proposition \ref{prop:scalar:extension:orthogonal} we have $\|\ndot\|_{k''}\leqslant\|\ndot\|_{k',k''}$ and hence $\|\ndot\|_{k''}=\|\ndot\|_{k',k''}$.
\end{proof}

\begin{Lemma}
\label{lem:scalar:extensions:quotient}
Let $f : V \to W$ be a surjective homomorphism of finite-dimensional vector spaces over $k$.
Let $\|\ndot\|_V$ and $\|\ndot\|_W$ be norms of $V$ and $W$, respectively.
We assume that $\dim_k W = 1$ and $\|\ndot\|_W$ is the quotient norm of $\|\ndot\|_V$ induced by the surjective map
$f : V \to W$. We set $V_{k'} := V \otimes_k k'$ and $W_{k'} := W \otimes_k k'$.
Let $\|\ndot\|_{V, k'}$ and $\|\ndot\|_{W, k'}$ be the norms of $V_{k'}$ and $W_{k'}$ obtained by
the scalar extensions of $\|\ndot\|_V$ and $\|\ndot\|_{W}$, respectively.
Then $\|\ndot\|_{W, k'}$ is the quotient norm of $\|\ndot\|_{V, k'}$ in terms of the surjection
$f_{k'}:= f \otimes \mathrm{id}_{k'} : V_{k'} \to W_{k'}$.
\end{Lemma}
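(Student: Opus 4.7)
The plan is to prove the two inequalities $q \leq \|\ndot\|_{W,k'}$ and $q \geq \|\ndot\|_{W,k'}$ separately, where $q$ denotes the quotient norm of $\|\ndot\|_{V,k'}$ on $W_{k'}$ induced by $f_{k'}$. The hypothesis $\dim_k W=1$ lets me fix a basis $w_0\in W$ once and for all, so that both norms on $W_{k'}$ are determined by their value on $w_0$ and the computation reduces to a single scalar estimate.

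For the upper bound $q\leq\|\ndot\|_{W,k'}$, I would use the definition of the quotient norm on $W$: given $\epsilon>0$, pick $v\in V$ with $f(v)=w_0$ and $\|v\|_V\leq\|w_0\|_W+\epsilon$. For any $c\in k'$ the element $v\otimes c\in V_{k'}$ is a lift of $c\,w_0$, and since $\|v\otimes c\|_{V,k'}=|c|'\|v\|_V$ (Definition~\ref{def:scalar:extension}), we get $q(c\,w_0)\leq |c|'(\|w_0\|_W+\epsilon)$. Letting $\epsilon\to 0$ and using $\|c\,w_0\|_{W,k'}=|c|'\|w_0\|_W$ from Proposition~\ref{prop:scalar:extension:orthogonal} (applied to the one-element orthogonal basis $w_0$), this gives the desired inequality.

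For the lower bound, I will produce a single dual functional that witnesses the correct size for \emph{every} lift. Let $\psi\in W^\vee$ be defined by $\psi(w_0)=1$, so that $\|\psi\|^{\vee}=1/\|w_0\|_W$. Set $\phi:=\psi\circ f\in V^\vee$. The key computation is that $f^\vee$ is isometric, i.e.\ $\|\phi\|^{\vee}=\|\psi\|^{\vee}$: indeed
\[
\|\phi\|^{\vee}=\sup_{v\neq 0}\frac{|\psi(f(v))|}{\|v\|_V}=\sup_{w\in W\setminus\{0\}}\frac{|\psi(w)|}{\inf_{f(v)=w}\|v\|_V}=\sup_{w\neq 0}\frac{|\psi(w)|}{\|w\|_W}=\|\psi\|^{\vee},
\]
where we used the very definition of $\|\ndot\|_W$ as the quotient norm. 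Now for any $v'\in V_{k'}$ with $f_{k'}(v')=c\,w_0$, the identity $\phi\otimes 1=(\psi\otimes 1)\circ f_{k'}$ gives $(\phi\otimes 1)(v')=(\psi\otimes 1)(c\,w_0)=c$, so
\[
\|v'\|_{V,k'}\geq\frac{|(\phi\otimes 1)(v')|'}{\|\phi\|^{\vee}}=|c|'\cdot\|w_0\|_W=\|c\,w_0\|_{W,k'}.
\]
Taking the infimum over such $v'$ yields $q(c\,w_0)\geq\|c\,w_0\|_{W,k'}$, which combined with the first step finishes the proof. I expect no serious obstacle here; the only delicate point is the verification that $f^\vee$ is isometric, but this is immediate from the definition of the quotient norm on a one-dimensional quotient.
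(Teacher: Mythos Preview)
Your proof is correct. The easy inequality $q\leq\|\ndot\|_{W,k'}$ is the same as in the paper, but your lower bound takes a genuinely different route: you exploit the very definition of the scalar extension norm as a supremum over $V^\vee$ and produce a single witness $\phi=\psi\circ f$, after checking that pullback along a quotient map is isometric on duals. The paper instead chooses, for each $\epsilon>0$, an $e^{-\epsilon}$-orthogonal basis $(e_1,\ldots,e_r)$ of $V$ with $e_2,\ldots,e_r$ spanning $\Ker f$ and $f(e_1)=e$ (Proposition~\ref{prop:orthogonal:basis}), and then uses Proposition~\ref{prop:scalar:extension:orthogonal} to bound $\|e_1+a_2'e_2+\cdots+a_r'e_r\|_{V,k'}$ below by $e^{-\epsilon}\|e_1\|\geq e^{-\epsilon}\|e\|_W$ for all $a_i'\in k'$.

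Your argument is shorter and conceptually cleaner: it does not invoke approximate orthogonal bases at all, and it makes transparent why the one-dimensionality of $W$ is enough (a single functional already computes the norm on $W$). The paper's approach, on the other hand, is in line with how the rest of the section handles scalar extensions, and would adapt more readily if one wanted a version with $\dim_k W>1$ (since compatible $\alpha$-orthogonal bases pass to quotients in the obvious way), whereas your dual-functional trick is specifically tailored to rank one.
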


\begin{proof}
Let $\|\ndot\|'_{W_{k'}}$ be the quotient norm of $\|\ndot\|_{V, k'}$ with respect to the surjection
$f_{k'} : V_{k'} \to W_{k'}$. 
Let $e$ be an non-zero element of $W$. As $\| e \|_{W, k'} = \| e \|_{W}$, it is sufficient to show that 
$\| e \|'_{W_{k'}} = \| e \|_{W}$.
Note that
\[
\{ v \in V \mid f(v) = e \} \subseteq \{ v' \in V_{k'} \mid f_{k'}(v') = e \},
\]
so that we have $\| e \|_W \geq \| e \|'_{W_{k'}}$. In the following, we prove the inequality
 $\| e \|_W \leq \| e \|'_{W_{k'}}$.
For $\epsilon > 0$, let
$(e_1, \ldots, e_r)$ be an $e^{-\epsilon}$-orthogonal basis of $V$ such that
$(e_2, \ldots, e_r)$ forms a basis of $\Ker(f)$.
Clearly we may assume that $f(e_1) = e$. Then
\begin{align*}
\| e \|'_{W_{k'}} & = \inf \{ \| e_1 + a'_2 e_2 + \cdots + a'_r e_r \|_{V, k'} \mid a'_2, \ldots, a'_r \in k' \} \\
& \geq \inf \{ e^{-\epsilon} \max \{ \| e_1 \|,  |a'_2|' \| e_2\|_{V}, \ldots,   |a'_r|' \| e_r \|_{V} \} \mid a'_2, \ldots, a'_r \in k' \} \\
& \geq e^{-\epsilon} \| e_1 \| \geq e^{-\epsilon} \| e \|_{W}.
\end{align*}
Therefore, we have $\| e \|'_{W_{k'}}  \geq \| e \|_{W}$ by  taking $\epsilon \to 0$.
\end{proof}

\begin{Lemma}
\label{lem:extension:trivial:Laurent}
We assume that the absolute value $|\ndot|$ of $k$ is trivial.
Let $(V, \|\ndot\|)$ be a finite-dimensional normed vector space over $(k,|\ndot|)$.
Then we have the following:
\begin{enumerate}
\renewcommand{\labelenumi}{(\arabic{enumi})}
\item
The set $\{ \| v \| \mid v \in V \}$ is a finite set.

\item
Let $k'$ be a field and $|\ndot|'$ a
complete and non-trivial absolute value of $k'$ such that $k \subseteq k'$ and
$|\ndot|'$ is an extension of $|\ndot|$.
Let $\mathfrak o_{k'}$ be the valuation ring of $(k', |\ndot|')$ and $\mathfrak m_{k'}$ the maximal ideal of $\mathfrak o_{k'}$.
We assume the following:
\begin{enumerate}
\renewcommand{\labelenumii}{(\roman{enumii})}
\item
The natural map $k \to \mathfrak o_{k'}$ induces
an isomorphism $k \overset{\sim}{\longrightarrow} \mathfrak o_{k'}/\mathfrak m_{k'}$.

\item
If an equation $|a'|' = \|v\|/\| v'\|$ holds for some $a' \in {k'}^{\times}$ and $v, v' \in V \setminus \{ 0 \}$,
then $\| v \| = \| v' \|$.
\end{enumerate}
Let $\|\ndot\|'$ be a norm of $V_{k'} := V \otimes_k k'$ over $(k', |\ndot|')$
such that
$\| v \| = \| v \otimes 1 \|'$ for all $v \in V$.
If $(e_1, \ldots, e_r)$ is an orthogonal basis of $(V, \|\ndot\|)$,
then $(e_1, \ldots, e_r)$ forms an orthogonal basis of $(V_{k'}, \|\ndot\|')$.
In particular, $\|\ndot\|' = \|\ndot\|_{k'}$.
\end{enumerate}
\end{Lemma}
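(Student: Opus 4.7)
For part (1), I would note that the trivial absolute value is discrete with $|k^\times|=\{1\}$, so the argument in Lemma \ref{lem:discrete:value:norm}(1) applies directly: if $v_1,\ldots,v_l\in V\setminus\{0\}$ have pairwise distinct norms, then since every $|a_i|=1$ for $a_i\in k^\times$, no nontrivial $k$-linear combination $\sum a_i v_i$ can cancel in the ultrametric inequality, so $v_1,\ldots,v_l$ are $k$-linearly independent and $l\leqslant\dim_k V$. Hence $\{\|v\|\mid v\in V\}$ is finite.

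For part (2), the strategy is to show that an orthogonal basis $(e_1,\ldots,e_r)$ of $(V,\|\ndot\|)$ remains orthogonal in $(V_{k'},\|\ndot\|')$; once this is established, the equality $\|\ndot\|'=\|\ndot\|_{k'}$ follows immediately from Proposition \ref{prop:scalar:extension:orthogonal}, which characterizes $\|\ndot\|_{k'}$ by exactly the same orthogonality property. The ultrametric inequality together with $\|e_i\|'=\|e_i\|$ gives the easy direction $\|\sum a_i'e_i\|'\leqslant\max_i|a_i'|'\|e_i\|$; the content of the lemma is the reverse inequality.

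Given $v=\sum_{i=1}^r a_i'e_i\in V_{k'}$, let $M=\max_i |a_i'|'\|e_i\|$ and let $S$ be the set of indices attaining this maximum. For $i,j\in S$, the ratio $|a_i'/a_j'|'$ equals $\|e_j\|/\|e_i\|$, so hypothesis (ii) forces all the $\|e_i\|$ with $i\in S$ to coincide with a common value $\sigma$, and correspondingly all $|a_i'|'$ with $i\in S$ to coincide with $\mu:=M/\sigma$. Dividing $v$ by some $a_{j_0}'$ with $j_0\in S$ reduces us to the case $a_{j_0}'=1$ and the target $\|v\|'\geqslant\sigma$. Hypothesis (i) then allows one to write each unit $a_i'$ ($i\in S$) as $c_i+\pi_i$ with $c_i\in k^\times$ (because $a_i'$ is a unit of $\mathfrak o_{k'}$) and $\pi_i\in\mathfrak m_{k'}$. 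Decomposing $v=u+w$ with $u:=\sum_{i\in S}c_ie_i\in V$ and $w$ the remainder, the orthogonality of $(e_i)$ in $V$ together with the triviality of $|\ndot|$ on $k$ gives $\|u\|=\sigma$, while an ultrametric estimate on $w$ combined with $|\pi_i|'<1$ for $i\in S$ and $|a_i'|'\|e_i\|<\sigma$ for $i\notin S$ gives $\|w\|'<\sigma$. The strict ultrametric triangle inequality then yields $\|v\|'=\|u\|'=\sigma$, completing the orthogonality argument.

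The main technical point requiring care is the reduction to the maximizing block $S$ and the systematic use of condition (ii) to force a common norm $\sigma$ inside $S$: without this rigidification, different elements of the orthogonal basis could have norms in different cosets of $|k'^\times|$, and a cancellation in $V_{k'}$ could then push $\|\ndot\|'$ strictly below $\|\ndot\|_{k'}$. Conditions (i) and (ii) are exactly what is needed for the comparison $\|w\|'<\|u\|'$ to survive after extension of scalars, so that the standard ultrametric argument closes.
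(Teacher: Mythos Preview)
Your proof is correct and uses the same core ingredients as the paper (condition (i) to lift units of $\mathfrak o_{k'}$ to $k$, condition (ii) to prevent collisions between distinct norm strata, and the strict ultrametric inequality), but the organization differs.

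For (1), the paper takes an orthogonal basis $(e_1,\ldots,e_r)$ and observes directly that $\|a_1e_1+\cdots+a_re_r\|\in\{0,\|e_1\|,\ldots,\|e_r\|\}$; you instead invoke the linear-independence argument underlying Lemma~\ref{lem:discrete:value:norm}(1). Both are fine.

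For (2), the paper proceeds in two stages: it first treats the special case where all $\|e_i\|$ coincide (choosing $\omega$ with $|\omega|'=\max_j|a'_j|'$ and decomposing each $a'_j=a_j\omega+b'_j$ via (i)), and then handles the general case by partitioning the basis into blocks $I_1,\ldots,I_b$ of constant norm, applying the special case to each block-sum $x_s=\sum_{l\in I_s}a'_le_l$, and finally using (ii) to show that the nonzero $x_s$ have pairwise distinct $\|\ndot\|'$-norms, so that $\|x\|'=\max_s\|x_s\|'$. Your argument collapses these two stages into one: you identify the set $S$ of maximizing indices up front, use (ii) to force $\|e_i\|$ to be constant on $S$, normalize so the coefficients on $S$ are units, lift them to $k$ via (i), and bound the remainder. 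This is a bit more streamlined; the paper's two-step layout has the minor expository advantage of separating cleanly the role of (i) (lifting within a single norm stratum) from the role of (ii) (keeping different strata from interfering).
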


\begin{proof}
(1) Let $(e_1, \ldots, e_r)$ be an orthogonal basis of $(V, \|\ndot\|)$
(cf. Proposition~\ref{prop:orthogonal:basis}).
Then 
\[
\| a_1 e_1 + \cdots + a_r e_r \| = \max \{ |a_1|\|e_1\|, \ldots, |a_r|\|e_r\| \}
\]
for all $a_1, \ldots, a_r \in k$, so that
\[
\| a_1 e_1 + \cdots + a_r e_r \| \in \{ 0, \|e_1\|, \ldots, \|e_r\| \}.
\]

\bigskip
(2)
First we assume that 
\[
\| e_1 \| = \cdots = \| e_r \| = c.
\] 
Then, for any $v \in V$,
\[
\| v \| = \begin{cases}
c & \text{if $v \not= 0$}, \\
0 & \text{if $v = 0$}.
\end{cases}
\]
Let us see that 
\[
\| a'_1 e_1 + \cdots + a'_r e_r \|' = c \max \{ |a'_1|', \ldots, |a'_r|' \}
\]
for $a'_1, \ldots, a'_r \in k'$.
Clearly we may assume that 
\[
(a'_1, \ldots, a'_r) \not= (0, \ldots, 0).
\]
We set $\gamma := \max \{ |a'_1|', \ldots, |a'_r|' \}$. We fix $\omega \in k'$ with
$|\omega|' = \gamma$.
By the assumption (i),
for each $j=1,\ldots,r$, we can find $a_j \in k$ and $b'_j \in k'$ such that
\[
a'_j = a_j \omega + b'_j
\quad\text{and}\quad
| b'_j |' <  \gamma.
\]
Note that
\[
a'_1 e_1 + \cdots + a'_r e_r = \omega \left( \sum\nolimits_{j=1}^r a_{j} e_j \right) +
b'_1 e_1 + \cdots + b'_r e_r.
\]
Moreover, as $\sum_{j=1}^r a_{j} e_j \not= 0$, we have
\begin{align*}
\left\| \omega \left( \sum\nolimits_{j=1}^r a_{j} e_j \right) \right\|'
& =  \gamma \left\| \sum\nolimits_{j=1}^r a_{j} e_j  \right\| = c \gamma \\
\intertext{and}
\| b'_1 e_1 + \cdots + b'_r e_r \|' & \leq
c \max \{ |b'_1|', \ldots, |b'_r|' \} < c \gamma.
\end{align*}
Therefore,
\[
\| a'_1 e_1 + \cdots + a'_r e_r \|' = c \gamma = c \max \{ |a'_1|', \ldots, |a'_r|' \}.
\]

\medskip
In general, we take positive numbers $c_1 < \cdots < c_b$ and non-empty subsets $I_1, \ldots, I_b$ of 
$\{ 1, \ldots, r \}$
such that $\{ \| e_l \| \mid l \in I_s \} = \{ c_s \}$ for $s =1, \ldots, b$ and $I_1 \cup \cdots \cup I_b = \{ 1, \ldots, r \}$.
Note that $I_s \cap I_{s'} = \emptyset$ for $s \not= s'$.
Let us consider 
\[
x  = a'_1 e_1 + \cdots + a'_r e_r = \sum_{s=1}^ b x_s
\in V_{k'} \quad(a'_1, \ldots, a'_r \in k'),
\]
where $x_s = \sum_{l \in I_s} a'_l e_l$.
Note that $(e_l)_{l \in I_s}$ forms an orthogonal basis of $\bigoplus_{l \in I_s} k e_l$ and
$\| e_l \| = c_s$ for all $l \in I_s$.
Therefore, 
by the above observation,
\[
 \left\| x_s \right\|' = c_s \max_{l \in I_s} \{ | a'_l |' \} 
 = \max_{l \in I_s} \{ \| a'_l e_l \|' \},
\]
so that it is sufficient to see that
\[
\| x \|' = \max_{s=1, \ldots, b} \left\{ \left\| x_s \right\|' \right\}.
\]
Clearly we may assume that $x \not= 0$. We set 
\[
\Sigma := \left\{ s \in \{ 1, \ldots, b \} \mid x_s \not= 0 \right\}.
\]
For $s, s' \in \Sigma$ with $s \not= s'$, we have
$\| x_s \|' \not= \| x_{s'} \|'$.
Indeed, we choose $l_s \in I_s$ and $l_{s'} \in I_{s'}$ with
$\left\| x_s \right\|' = \| a'_{l_s} e_{l_s} \|'$
and $\left\| x_{s'} \right\|' = \| a'_{l_{s'}} e_{l_{s'}} \|'$.
If $\|  x_s \|' = \|  x_{s'}\|'$, then 
\[
\left|a'_{l_s}/a'_{l_{s'}}\right|' = \| e_{l_{s'}} \|/\| e_{l_s} \|,
\]
so that, by the assumption (ii), $\| e_{l_{s'}} \| = \| e_{l_s} \|$,
which is a contradiction.
Therefore,
\[
\| x \|' = \left\| \sum\nolimits_{s \in \Sigma} x_s \right\|' = \max_{s \in \Sigma} \{ \| x_s \|' \}
= \max_{s =1, \ldots, b} \{ \| x_s \|' \},
\]
as required.
\end{proof}

\begin{Remark}
\label{rem:lem:extension:trivial:Laurent}
We assume that $|\ndot|'$ is discrete and
\[
|a'|' = \exp(-\alpha \ord_{\mathfrak o_{k'}}(a'))\qquad(a' \in k')
\]
for $\alpha \in \mathbb R_{>0}$.
If 
\[
\alpha \not\in \bigcup_{v, v' \in V \setminus \{ 0 \}} \mathbb Q(\log \| v \| - \log \| v' \|),
\]
then the assumption (ii) holds.
Indeed, we suppose that 
$|a'|' = \| v \|/\|v' \|$ for some $a' \in {k'}^{\times}$ and $v, v' \in V \setminus \{ 0 \}$.
Then
\[
-\alpha \ord_{\mathfrak o_{k'}}(a') = \log \| v \| -  \log \| v' \|,
\]
so that $\ord_{\mathfrak o_{k'}}(a') = 0$, and hence $\| v \| = \| v' \|$,
as required.
\end{Remark}

\subsubsection{Lattices and norms}
From now on and until the end of the 
subsection, we assume that $|\ndot|$ is non-trivial.
Let $\mathscr V$ be an $\mathfrak o_k$-submodule of $V$.
We say that $\mathscr V$ is a {\em lattice of $V$} if
$\mathscr V \otimes_{\mathfrak o_k} k = V$ and
\[
\sup \{ \| v \|_0 \mid v \in \mathscr V \} < \infty
\]
for some norm $\|\ndot\|_0$ of $V$.
Note that the condition $\sup \{ \| v \|_0 \mid v \in \mathscr V \} < \infty$ does not depend on the choice of
the norm $\|\ndot\|_0$ {since all norms on $V$ are equivalent}.
For a lattice $\mathscr V$ of $V$, we define $\|\ndot\|_{\mathscr V}$ to be
\[
\| v \|_{\mathscr V} :=\inf \{ |a|^{-1} \mid \text{$a \in k^{\times}$ and $av \in \mathscr V$} \}.
\]
Note that $\|\ndot\|_{\mathscr V}$ forms a norm of $V$. Moreover, for a norm $\|\ndot\|$ of $V$,
\[
(V, \|\ndot\|)_{\leq 1} := \{ v \in V \mid \| v \| \leq 1 \}
\]
is a lattice of $V$.

\begin{Proposition}
\label{prop:free:basis:orthonormal}
Let $\mathscr V$ be a lattice of $V$. We assume that, as an $\mathfrak o_k$-module, $\mathscr V$ admits a free basis $(e_1, \ldots, e_r)$.
Then $(e_1, \ldots, e_r)$ is an orthonormal basis of $V$ with respect to $\|\ndot\|_{\mathscr V}$.
\end{Proposition}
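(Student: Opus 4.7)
The plan is to compute $\|\ndot\|_{\mathscr V}$ explicitly on the basis and on an arbitrary vector in two short steps, relying only on the definition
\[
\| v \|_{\mathscr V} = \inf\{ |a|^{-1} \mid a \in k^{\times},\ av \in \mathscr V \}
\]
together with the assumption that $(e_1,\ldots,e_r)$ is a free $\mathfrak o_k$-basis of $\mathscr V$.

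First I would show that $\| e_i \|_{\mathscr V} = 1$ for each $i$. The inequality $\| e_i \|_{\mathscr V} \leq 1$ is immediate by taking $a = 1$. For the reverse inequality, any $a \in k^{\times}$ with $a e_i \in \mathscr V$ must have a decomposition $a e_i = \sum_{j} b_j e_j$ with $b_j \in \mathfrak o_k$; comparing coefficients in the free $\mathfrak o_k$-basis gives $a = b_i$, so $|a| \leq 1$, and hence $|a|^{-1} \geq 1$.

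Next, for $v = a_1 e_1 + \cdots + a_r e_r \in V$ with $v \neq 0$, I would prove $\| v \|_{\mathscr V} = M$, where $M := \max_i |a_i|$. For any $a \in k^{\times}$ with $a v \in \mathscr V$, the decomposition $av = \sum_i (a a_i) e_i$ combined with freeness of $(e_1,\ldots,e_r)$ forces $a a_i \in \mathfrak o_k$ for every $i$, whence $|a| \cdot M \leq 1$ and so $|a|^{-1} \geq M$. This gives the lower bound $\| v \|_{\mathscr V} \geq M$. For the matching upper bound, observe that $M \in |k^{\times}|$ because $M = |a_{i_0}|$ for some $i_0$ with $a_{i_0} \neq 0$; this is the only place the non-triviality of the absolute value is genuinely used, since it guarantees $|k^{\times}|$ is non-trivial and closed under inversion, so I can choose $a \in k^{\times}$ with $|a| = M^{-1}$. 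Then $|a a_i| = M^{-1} |a_i| \leq 1$ for every $i$, so $a v \in \mathscr V$, giving $\| v \|_{\mathscr V} \leq |a|^{-1} = M$.

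Combining the two steps yields $\| a_1 e_1 + \cdots + a_r e_r \|_{\mathscr V} = \max_i |a_i| = \max_i |a_i| \cdot \| e_i \|_{\mathscr V}$, which is exactly the orthonormality condition from \S\ref{subsubsec:orthogonal:basis} with $\alpha = 1$. There is no real obstacle here; the only point requiring attention is the existence of $a \in k^{\times}$ with $|a| = M^{-1}$, which is why the standing non-triviality hypothesis on $|\ndot|$ enters the argument.
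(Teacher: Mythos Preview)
Your argument is correct and is essentially the paper's proof spelled out in more detail: both reduce to the equivalence $av\in\mathscr V \Leftrightarrow \max_i|a_i|\le|a|^{-1}$, from which $\|v\|_{\mathscr V}=\max_i|a_i|$ follows at once (your separate Step~1 for $\|e_i\|_{\mathscr V}=1$ is just the special case $v=e_i$ of Step~2 and could be omitted). One small correction to your commentary: non-triviality of $|\ndot|$ is not what is needed to find $a\in k^\times$ with $|a|=M^{-1}$, since $M=|a_{i_0}|$ already lets you take $a=a_{i_0}^{-1}$ regardless; the standing non-triviality hypothesis is rather what ensures $\|\ndot\|_{\mathscr V}$ is a norm at all (otherwise $\|0\|_{\mathscr V}=\inf_{a\in k^\times}|a|^{-1}=1$).
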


\begin{proof}
For $v = a_1 e_1 + \cdots + a_r e_r \in V$ and $a \in k^{\times}$,
\begin{align*}
av \in \mathscr V & \Longleftrightarrow  \text{$aa_i \in \mathfrak o_k$ for all $i=1, \ldots, r$} \\
& \Longleftrightarrow  \text{$|a_i| \leq |a|^{-1}$ for all $i=1, \ldots, r$} \\
& \Longleftrightarrow  \text{$\max \{|a_1|, \ldots, |a_r| \} \leq |a|^{-1}$},
\end{align*}
so that $\| v \|_{\mathscr V} = \max \{|a_1|, \ldots, |a_r| \}$.
\end{proof}

Let us consider the following lemmas.

\begin{Lemma}
\label{lem:subgroup:R}
A subgroup $G$ of $(\mathbb R, +)$ is either discrete or dense in $\mathbb R$.
\end{Lemma}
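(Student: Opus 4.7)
The plan is to consider $G_+ := G \cap \mathbb{R}_{>0}$ and distinguish two cases according to the value of $\alpha := \inf G_+$ (with the convention $\inf \emptyset = +\infty$, which covers the trivial case $G = \{0\}$, clearly discrete). The dichotomy in the statement will correspond exactly to $\alpha > 0$ versus $\alpha = 0$.

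First I would treat the case $\alpha > 0$. The key claim is that $\alpha \in G$, hence $G = \alpha \mathbb{Z}$, which is discrete. To prove $\alpha \in G$, I would argue by contradiction: if $\alpha \notin G$, then by the definition of the infimum I can choose two elements $a, b \in G_+$ with $\alpha < a < b < 2\alpha$, whereupon $b - a \in G_+$ and $b - a < \alpha$, contradicting the definition of $\alpha$. Once $\alpha \in G$ is established, for any $g \in G$ the Euclidean division $g = n\alpha + r$ with $n \in \mathbb{Z}$ and $0 \leq r < \alpha$ forces $r = g - n\alpha \in G \cap [0, \alpha)$, hence $r = 0$ again by the definition of $\alpha$; this proves $G \subseteq \alpha\mathbb{Z}$, and the reverse inclusion is immediate.

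Next I would treat the case $\alpha = 0$. Here I must show that $G$ is dense in $\mathbb{R}$, i.e. that for every $x \in \mathbb{R}$ and every $\varepsilon > 0$ there exists $g \in G$ with $|g - x| < \varepsilon$. Since $\inf G_+ = 0$, I can pick some $g_0 \in G_+$ with $0 < g_0 < \varepsilon$. The subgroup $g_0 \mathbb{Z}$ is contained in $G$, and its consecutive elements are at distance $g_0 < \varepsilon$, so choosing $n = \lfloor x/g_0 \rfloor$ gives $|ng_0 - x| < g_0 < \varepsilon$, as required.

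There is essentially no obstacle here; the only point requiring care is the proof that $\alpha \in G$ when $\alpha > 0$, which is the standard ``difference trick'' for showing that a non-trivial discrete additive subgroup of $\mathbb{R}$ is generated by its smallest positive element. The two cases are exhaustive and mutually exclusive, yielding the claimed dichotomy.
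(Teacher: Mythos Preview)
Your proof is correct and follows essentially the same approach as the paper: both set $\alpha=\inf(G\cap\mathbb R_{>0})$, use Euclidean division to show $G=\alpha\mathbb Z$ in the discrete case, and use arbitrarily small positive elements of $G$ to prove density otherwise. The only cosmetic difference is that the paper splits on whether $\alpha$ is attained (and in the non-attained case produces small elements via differences $\delta_n-\delta_{n+1}$ of a sequence decreasing to $\alpha$), whereas you split on $\alpha>0$ versus $\alpha=0$ and first prove that $\alpha>0$ forces $\alpha\in G$; your organization is marginally more direct but the substance is identical.
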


\begin{proof}
Clearly we may assume that $G \not= \{ 0 \}$, so that $G \cap \mathbb R_{>0} \not= \emptyset$.
We set $\delta = \inf (G \cap \mathbb R_{>0})$. 
If $\delta \in G \cap \mathbb R_{>0}$, then $G = \mathbb Z \delta$.
Indeed, for $g \in G$, let $n$ be an integer such that
$n \leq g/\delta < n+1$. Thus $0 \leq g - n\delta < \delta$, and hence $g = n \delta$.
Therefore, $G$ is discrete.

Next we assume that $\delta \not\in G \cap \mathbb R_{>0}$.
Then there is a sequence $\{ \delta_n \}_{n=1}^{\infty}$ 
in $G \cap \mathbb R_{>0}$ such that $\delta_{n} > \delta_{n+1}$ for all $n$ and
$\lim_{n\to\infty} \delta_n = \delta$.
If we set $a_n = \delta_{n} - \delta_{n+1}$, then $a_n \in G \cap \mathbb R_{>0}$ and $\lim_{n\to\infty} a_n = 0$.
For an open interval $(\alpha, \beta)$  of $\mathbb R$ ($\alpha < \beta$), we choose $a_n$ and an integer $m$ such that
$a_n < \beta - \alpha$ and $m < \beta/a_n \leq m+1$.  Then we have
$m a_n < \beta$ and
\[
\alpha < \beta - a_n \leq (m+1) a_n - a_n = ma_n,
\]
so that $ma_n \in (\alpha, \beta) \cap G$. Thus $G$ is dense.
\end{proof}

\begin{Lemma}
\label{lem:lattice:norm}
Let $\|\ndot\|$ be a norm of $V$ and $\mathscr V := (V, \|\ndot\|)_{\leq 1}$.
Then 
\[
\| v \|_{\mathscr V} = \inf \{ |b| \mid \text{$b \in k^{\times}$ and $\| v \| \leq |b|$} \}.
\]
Moreover, $\|\ndot\| \leq \|\ndot\|_{\mathscr V}$ and $\|v\|_{\mathscr V} < |\alpha| \|v\|$
for all $\alpha \in k^{\times}$ with $|\alpha |> 1$ and $v\in V\setminus\{0\}$.
\end{Lemma}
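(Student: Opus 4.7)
The plan is to dispatch the three assertions in order, each being essentially elementary once the previous one is in hand. The first identity is a direct unpacking of definitions: for $a \in k^{\times}$, the condition $av \in \mathscr V$ is equivalent to $\|av\| = |a|\|v\| \leq 1$, i.e.\ $\|v\| \leq |a|^{-1}$. The substitution $b = a^{-1}$ gives a bijection between $\{ a \in k^{\times} : av \in \mathscr V \}$ and $\{ b \in k^{\times} : \|v\| \leq |b| \}$, and the two infima coincide.

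The inequality $\|\ndot\| \leq \|\ndot\|_{\mathscr V}$ then follows at once: every $|b|$ appearing in the second infimum satisfies $|b| \geq \|v\|$, so the infimum is bounded below by $\|v\|$. (When $v = 0$ both sides vanish, the non-emptiness of the infimum set being guaranteed by the hypothesis that $|\ndot|$ is non-trivial.)

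The only point requiring a genuine argument is the strict inequality $\|v\|_{\mathscr V} < |\alpha|\|v\|$, and this is where the non-triviality of $|\ndot|$ enters. Given $\alpha \in k^{\times}$ with $|\alpha| > 1$ and $v \in V \setminus \{0\}$, I would exhibit an element $b \in k^{\times}$ whose absolute value lies in the interval $[\|v\|, |\alpha|\|v\|)$. The sequence $(|\alpha|^n)_{n \in \mathbb Z}$ is strictly increasing in $n$, with $|\alpha|^n \to 0$ as $n \to -\infty$ and $|\alpha|^n \to +\infty$ as $n \to +\infty$, so there exists a unique smallest integer $n^*$ with $|\alpha|^{n^*} \geq \|v\|$. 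By minimality, $|\alpha|^{n^*-1} < \|v\|$, hence $|\alpha|^{n^*} < |\alpha|\|v\|$. Setting $b := \alpha^{n^*}$, the first identity yields $\|v\|_{\mathscr V} \leq |b| < |\alpha|\|v\|$, as required.

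I do not anticipate any real obstacle. The only conceptual subtlety is locating $b \in k^{\times}$ with $|b|$ in a half-open interval when $|k^{\times}|$ could a priori be discrete; the multiplicative argument above circumvents any density question by exploiting the given $\alpha$ with $|\alpha| > 1$, and works uniformly whether $|k^{\times}|$ is dense in $\mathbb R_{>0}$ or not.
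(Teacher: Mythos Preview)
Your proof is correct. Parts (1) and (2) coincide with the paper's argument essentially verbatim.

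For the strict inequality in (3), the approaches differ. The paper first picks $\epsilon>0$ with $|\alpha|^{-1}e^{\epsilon}<1$, then uses the identity in (1) to choose $b\in k^{\times}$ with $\|v\|\leq|b|\leq e^{\epsilon}\|v\|_{\mathscr V}$, and derives a contradiction from the assumption $\|v\|\leq|b\alpha^{-1}|$ to conclude $\|v\|_{\mathscr V}\leq|b|<|\alpha|\|v\|$. Your argument is more direct: rather than approximating the infimum and arguing by contradiction, you simply observe that the cyclic subgroup $\{|\alpha|^{n}:n\in\mathbb Z\}\subseteq|k^{\times}|$ must meet the half-open interval $[\|v\|,|\alpha|\|v\|)$, since consecutive terms differ by the factor $|\alpha|$. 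This is cleaner and makes transparent why no density hypothesis on $|k^{\times}|$ is needed---the element $\alpha$ given in the statement already supplies enough of the value group. The paper's route has the minor advantage of not singling out a particular subgroup, but both are equally elementary.

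One small remark on your parenthetical in (2): for $v=0$ the set $\{b\in k^{\times}:\|v\|\leq|b|\}$ equals $k^{\times}$ and is non-empty regardless of non-triviality; what non-triviality actually guarantees is that the infimum of $|k^{\times}|$ is $0$, so that $\|0\|_{\mathscr V}=0$. This is what you mean, and it is correct, but the phrasing could be sharpened.
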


\begin{proof}
The first assertion is obvious because, for $a \in k^{\times}$, 
$a v \in \mathscr V$ if and only if $\| v \| \leq |a|^{-1}$.

For $v \in V$,
let $a \in k^{\times}$ with $av \in \mathscr V$. Then $\| a v \| \leq 1$, that is,
$\|  v \| \leq |a|^{-1}$, and hence $\| v \| \leq \| v \|_{\mathscr V}$.

Finally we consider the second inequality, that is,
$\| v \|_{\mathscr V} < |\alpha| \| v \|$ for $v \in V\setminus\{0\}$.
As $|\alpha|^{-1} < 1$, there is $\epsilon > 0$ with $|\alpha|^{-1} e^{\epsilon} < 1$.
By the first assertion, we can choose $b \in k^{\times}$ such that $\| v \| \leq |b| \leq e^{\epsilon} \| v \|_{\mathscr V}$.
If $\| v \| \leqslant |b \alpha^{-1}|$, then
\[
\| v \|_{\mathscr V} \leq |b| |\alpha|^{-1} \leq e^{\epsilon} \| v \|_{\mathscr V} |\alpha|^{-1}.
\]
Thus $1 \leq e^{\epsilon} |\alpha|^{-1}$. This is a contradiction, so that $\| v \| > |b \alpha^{-1}|$.
Therefore,
\[
\| v \|_{\mathscr V} \leq |b| < |\alpha|\| v \|,
\]
as required.
\end{proof}

\begin{Proposition}
\label{prop:norm:lattice:discrete}
We assume that $|\ndot|$ is discrete. Then we have the following:
\begin{enumerate}
\renewcommand{\labelenumi}{(\arabic{enumi})}
\item
Every lattice $\mathscr V$ of $V$ is a finitely generated $\mathfrak o_k$-module.

\item
If we set $\mathscr V := (V, \|\ndot\|)_{\leq 1}$ for a norm of $\|\ndot\|$ of $V$,
then $\|v\| \leq \|v\|_{\mathscr V} < |\varpi|^{-1} \|v\|$ for $v\in V\setminus\{0\}$.
\end{enumerate}
\end{Proposition}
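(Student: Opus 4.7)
The proposition has two parts; both should follow quickly from material already established in the excerpt, so the plan is mostly a matter of bookkeeping.

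For part (1), the plan is to show that every lattice sits inside a scalar multiple of the unit ball of some norm, and then invoke noetherianness. First I would fix an arbitrary norm $\|\ndot\|_0$ on $V$. By the definition of a lattice, $C := \sup_{v \in \mathscr V} \|v\|_0 < \infty$. Since $|\ndot|$ is discrete, there is a positive integer $n$ with $|\varpi|^{-n} \geq C$, and thus $\mathscr V \subseteq \varpi^{-n}\bigl(V, \|\ndot\|_0\bigr)_{\leq 1}$. By Proposition~\ref{cor:finite:gen:less:1}, the module $(V, \|\ndot\|_0)_{\leq 1}$ is finitely generated over $\mathfrak o_k$, so the same holds for its scalar multiple $\varpi^{-n}(V, \|\ndot\|_0)_{\leq 1}$. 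Since $\mathfrak o_k$ is a discrete valuation ring and hence noetherian, every $\mathfrak o_k$-submodule of a finitely generated module is again finitely generated. Applying this to $\mathscr V$ yields the claim.

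For part (2), both inequalities are immediate consequences of Lemma~\ref{lem:lattice:norm}. The first inequality $\|v\| \leq \|v\|_{\mathscr V}$ is precisely the lower bound statement in that lemma. For the second one, I take $\alpha = \varpi^{-1} \in k^{\times}$, which satisfies $|\alpha| = |\varpi|^{-1} > 1$; the upper bound in Lemma~\ref{lem:lattice:norm} then gives $\|v\|_{\mathscr V} < |\alpha|\|v\| = |\varpi|^{-1}\|v\|$ for every $v \in V \setminus \{0\}$.

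Neither step presents any real obstacle: the only slightly delicate point is making sure the discreteness hypothesis is used properly in (1), namely to produce an integer $n$ with $|\varpi|^{-n} \geq C$ so that we can enclose $\mathscr V$ in a finitely generated module. Everything else is a direct citation of the preceding lemmas and propositions.
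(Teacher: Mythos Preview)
Your proof is correct and follows essentially the same approach as the paper: both parts invoke exactly the lemmas you cite (Proposition~\ref{cor:finite:gen:less:1} plus noetherianness of $\mathfrak o_k$ for (1), and Lemma~\ref{lem:lattice:norm} with $\alpha=\varpi^{-1}$ for (2)). The only cosmetic difference is that for (1) the paper embeds $\mathscr V$ directly into $(V,\|\ndot\|_{\mathscr V})_{\leq 1}$ using the lattice's own induced norm rather than scaling an auxiliary norm by $\varpi^{-n}$, but this is the same idea.
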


\begin{proof}
{%
(1) By Proposition~\ref{cor:finite:gen:less:1}, $(V, \|\ndot\|_{\mathscr V})_{\leq 1}$ is 
a finitely generated $\mathfrak o_k$-module. Moreover, note that $\mathscr V \subseteq (V, \|\ndot\|_{\mathscr V})_{\leq 1}$.
Thus we have (1) because $\mathfrak o_k$ is noetherian.}

(2) follows from Lemma~\ref{lem:lattice:norm}.
\end{proof}

\begin{Proposition}
\label{prop:norm:lattice:non:discrete}
We assume that $|\ndot|$ is not discrete.
If we set $\mathscr V := (V, \|\ndot\|)_{\leq 1}$ for a norm of $\|\ndot\|$ of $V$,
then $\|\ndot\| = \|\ndot\|_{\mathscr V}$.
\end{Proposition}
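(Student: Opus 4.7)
The plan is to combine the two inequalities already packaged in Lemma~\ref{lem:lattice:norm} with the density of the value group. First, Lemma~\ref{lem:lattice:norm} gives directly that $\|v\| \leq \|v\|_{\mathscr V}$ for all $v \in V$, so only the reverse inequality $\|v\|_{\mathscr V} \leq \|v\|$ needs to be established.

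For this, I would exploit the other conclusion of Lemma~\ref{lem:lattice:norm}: for any $\alpha \in k^{\times}$ with $|\alpha| > 1$ and any $v \in V \setminus \{0\}$, one has $\|v\|_{\mathscr V} < |\alpha| \|v\|$. The goal is therefore to make $|\alpha|$ arbitrarily close to $1$ from above. This is where I invoke Lemma~\ref{lem:subgroup:R}: the image $\log |k^{\times}|$ is a subgroup of $(\mathbb R, +)$, and since $|\ndot|$ is non-trivial and not discrete (by hypothesis), the group $\log|k^{\times}|$ is non-zero and not of the form $\mathbb Z \delta$, so by Lemma~\ref{lem:subgroup:R} it is dense in $\mathbb R$. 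In particular, for any $\epsilon > 0$ there exists $\alpha \in k^{\times}$ with $1 < |\alpha| < 1 + \epsilon$.

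Fixing such an $\alpha$ for each $\epsilon > 0$ and any $v \in V\setminus\{0\}$, we obtain
\[
\|v\|_{\mathscr V} < |\alpha|\|v\| < (1+\epsilon)\|v\|.
\]
Letting $\epsilon \to 0$ yields $\|v\|_{\mathscr V} \leq \|v\|$, and combined with the opposite inequality we conclude that $\|\ndot\| = \|\ndot\|_{\mathscr V}$ (the case $v = 0$ being trivial). There is no real obstacle here; the whole argument is essentially the observation that the strict inequality in Lemma~\ref{lem:lattice:norm} becomes sharp precisely when $|k^{\times}|$ accumulates at $1$, which is guaranteed by the non-discreteness hypothesis via Lemma~\ref{lem:subgroup:R}.
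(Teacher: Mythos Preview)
Your proof is correct and follows essentially the same approach as the paper's own argument: both use Lemma~\ref{lem:lattice:norm} for the sandwich inequalities $\|v\| \leq \|v\|_{\mathscr V} < |\alpha|\|v\|$ and then invoke Lemma~\ref{lem:subgroup:R} to produce elements $\alpha \in k^{\times}$ with $|\alpha| > 1$ arbitrarily close to $1$. The paper phrases this via a sequence $\{\beta_n\}$ with $|\beta_n| \to 1$, but the content is identical.
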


\begin{proof}
By Lemma~\ref{lem:subgroup:R},
we can find a sequence $\{ \beta_n \}_{n=1}^{\infty}$ such that
$|\beta_n | > 1$ and $\lim_{n\to\infty} |\beta_n| = 1$. On the other hand, by Lemma~\ref{lem:lattice:norm},
\[
\|\ndot\| \leq \|\ndot\|_{\mathscr V} \leq |\beta_n| \|\ndot\|.
\]
Therefore the assertion follows.
\end{proof}

\begin{Proposition}
\label{prop:non:discrete:approx}
We assume that the absolute value $|\ndot|$ is not discrete. 
Let $\|\ndot\|$ be a norm of $V$ and $\mathscr V := (V, \|\ndot\|)_{\leq 1}$.
For any $\epsilon > 0$, there is a sub-lattice $\mathscr V'$ of $\mathscr V$ such that
$\mathscr V'$ is finitely generated over $\mathfrak o_k$ and
$\|\ndot\| \leq \|\ndot\|_{\mathscr V'} \leq e^{\epsilon} \|\ndot\|$.
\end{Proposition}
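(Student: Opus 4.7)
The plan is to build $\mathscr V'$ as the $\mathfrak o_k$-span of a suitably rescaled almost-orthogonal basis. The non-discreteness of $|\ndot|$ is used exactly to make the rescaling almost isometric, so that the auxiliary orthogonality losses (both from $\alpha$-orthogonality and from the fact that $\mathscr V'$ is a free lattice) can be absorbed into the factor $e^{\epsilon}$.

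First I would note that, since $|\ndot|$ is non-trivial and non-discrete, Lemma~\ref{lem:subgroup:R} forces $\log|k^{\times}|$ to be dense in $\RR$; equivalently $|k^{\times}|$ is dense in $\RR_{>0}$. Fix $\delta\in(0,1)$ with $\delta^{-2}\leq e^{\epsilon}$ (take e.g.\ $\delta=e^{-\epsilon/2}$). By Proposition~\ref{prop:orthogonal:basis} choose a $\delta$-orthogonal basis $(e_1,\ldots,e_r)$ of $V$ with respect to $\|\ndot\|$. Using the density of $|k^{\times}|$, for each $i$ pick $\lambda_i\in k^{\times}$ with
\[
\delta \leq |\lambda_i|\,\| e_i\| \leq 1,
\]
and set $f_i:=\lambda_i e_i$. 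Since scaling does not affect $\delta$-orthogonality, $(f_1,\ldots,f_r)$ remains a $\delta$-orthogonal basis of $V$ with $\|f_i\|\in[\delta,1]$, and in particular $f_i\in\mathscr V$.

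Next I would define
\[
\mathscr V':=\mathfrak o_k f_1+\cdots+\mathfrak o_k f_r,
\]
which is visibly a finitely generated $\mathfrak o_k$-submodule of $\mathscr V$ with $\mathscr V'\otimes_{\mathfrak o_k}k=V$, hence a sub-lattice. By Proposition~\ref{prop:free:basis:orthonormal}, $(f_1,\ldots,f_r)$ is an orthonormal basis of $(V,\|\ndot\|_{\mathscr V'})$, so for every $v=a_1f_1+\cdots+a_rf_r\in V$ one has $\|v\|_{\mathscr V'}=\max_i|a_i|$.

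Finally I would establish the two-sided estimate. For the lower bound $\|v\|\leq \|v\|_{\mathscr V'}$, the ultrametric inequality together with $\|f_i\|\leq 1$ gives
\[
\|v\|\leq \max_i |a_i|\,\|f_i\|\leq \max_i |a_i|=\|v\|_{\mathscr V'}.
\]
For the upper bound, $\delta$-orthogonality of $(f_1,\ldots,f_r)$ and $\|f_i\|\geq\delta$ yield
\[
\|v\|\geq \delta\max_i |a_i|\,\|f_i\|\geq \delta^2\max_i|a_i|=\delta^2\|v\|_{\mathscr V'},
\]
so $\|v\|_{\mathscr V'}\leq \delta^{-2}\|v\|\leq e^{\epsilon}\|v\|$, which is the desired inequality. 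The only step that really uses non-discreteness is the choice of the scalars $\lambda_i$; everything else is formal from the preceding lemmas. I do not expect a substantive obstacle, since all the required tools (existence of $\delta$-orthogonal bases, density of $|k^{\times}|$, and the explicit description of $\|\ndot\|_{\mathscr V'}$ on a free lattice) are already in hand.
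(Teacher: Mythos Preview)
Your proof is correct and follows essentially the same construction as the paper: take an $e^{-\epsilon/2}$-orthogonal basis, rescale its members by scalars from $k^{\times}$ so that their norms land in $[e^{-\epsilon/2},1]$, and let $\mathscr V'$ be the free $\mathfrak o_k$-module they generate. The only cosmetic difference is that the paper obtains the lower bound $\|\ndot\|\leq\|\ndot\|_{\mathscr V'}$ by first invoking Proposition~\ref{prop:norm:lattice:non:discrete} to identify $\|\ndot\|$ with $\|\ndot\|_{\mathscr V}$ and then using $\mathscr V'\subseteq\mathscr V$, whereas you deduce it directly from $\|f_i\|\leq 1$ and the ultrametric inequality.
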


\begin{proof}
Let $(e_1, \ldots, e_r)$ be an $e^{-\epsilon/2}$-orthogonal basis of $V$ with respect to $\|\ndot\|$
(cf. Proposition~\ref{prop:orthogonal:basis}).
As $\|\ndot\| = \|\ndot\|_{\mathscr V}$ by Proposition~\ref{prop:norm:lattice:non:discrete},
we can find $\lambda_i \in k^{\times}$ such that $\|e_i\| \leq |\lambda_i| \leq e^{\epsilon/2} \| e_i \|$
for each $i$. We set $\omega_i := \lambda_i^{-1} e_i$ ($i=1, \ldots, r$) 
and $\mathscr V' := \mathfrak o_k \omega_1 + \cdots + \mathfrak o_k \omega_r$. 
Note that $\omega_i \in \mathscr V$ for all $i$, that is, $\mathscr V'$ is a sub-lattice of $\mathscr V$ and
$\mathscr V'$ is finitely generated over $\mathfrak o_k$.
For $c_1, \ldots, c_r \in k$, by Proposition~\ref{prop:free:basis:orthonormal}, 
\begin{align*}
\| c_1 e_1 + \cdots + c_r e_r \|_{\mathscr V'} &= \| c_1 \lambda_1 \omega_1 + \cdots +
c_r \lambda_r \omega_r \|_{\mathscr V'} = \max \{ |c_1 \lambda_1|, \ldots, |c_r \lambda_r| \} \\
& \leq e^{\epsilon/2} \{ |c_1| \| e_1 \|, \ldots, |c_r| \| e_r \| \} \leq e^{\epsilon} \| c_1 e_1 + \cdots + c_r e_r \|,
\end{align*}
so that we have $\|\ndot\|_{\mathscr V'} \leq e^{\epsilon} \|\ndot\|$.
\end{proof}

\section{Seminorm and integral extension}

Let $\mathscr A$ be a {finitely generated $\mathfrak o_k$-algebra, which contains $\mathfrak o_k$ as a subring}.
We set  
$A := \mathscr A \otimes_{\mathfrak o_k} k$.
Note that $A$ coincides with the localization of $\mathscr A$ with respect to 
${S} := \mathfrak o_k \setminus \{ 0 \}$.
Let $\Spec(A)^{\mathrm{an}}$ be the analytification of
$\Spec(A)$, that is,  
the set of all seminorms 
of $A$ over the absolute value of $k$. 
For $x \in \Spec(A)^{\mathrm{an}}$, 
let $\mathfrak o_x$ and $\mathfrak m_x$ be the valuation ring of $(\hat{\kappa}(x), |\ndot|_x)$ and
the maximal ideal of $\mathfrak o_x$, respectively {(see \S\ref{Notations:04} for the definition of $\hat{\kappa}(x)$)}.
We denote the natural homomorphism $A \to \hat{\kappa}(x)$ by $\varphi_x$.
It is easy to see that the following are equivalent:
\begin{enumerate}
\renewcommand{\labelenumi}{(\arabic{enumi})}
\item
$\Spec(\hat{\kappa}(x)) \to \Spec(A)$ extends to $\Spec(\mathfrak o_x) \to \Spec(\mathscr A)$, that is,
there is a ring homomorphism $\tilde{\varphi}_x : \mathscr A \to \mathfrak o_x$ such that
the following diagram is commutative:
\[
\begin{CD}
\mathscr A @>{\tilde{\varphi}_x}>> \mathfrak o_x \\
@VVV @VVV \\
A  @>{\varphi_x}>> \hat{\kappa}(x)
\end{CD}
\]

\item $|a|_x \leq 1$ for all $a \in \mathscr A$.
\end{enumerate}
Moreover, under the above conditions,
the image of $\mathfrak m_x$ of $\Spec(\mathfrak o_x)$ is given by
$\tilde{\varphi}_x^{-1}(\mathfrak m_x) = (\mathscr A, |\ndot|_x)_{< 1}$, and
$(\mathscr A, |\ndot|_x)_{< 1} \in \Spec(\mathscr A)_{\circ}$, where
\[
\begin{cases}
(\mathscr A, |\ndot|_x)_{< 1} := \{ a \in \mathscr A \mid |a|_x < 1 \}, \\
\Spec(\mathscr A)_{\circ} := \{ P \in \Spec(\mathscr A) \mid P \cap \mathfrak o_k = \mathfrak m_k \}.
\end{cases}
\]
Let $\Spec(A)^{\mathrm{an}}_{\mathscr A}$ be the set of all $x \in \Spec(A)^{\mathrm{an}}$ such that the above condition (2) is satisfied.
The map $r_{\mathscr A} : \Spec(A)^{\mathrm{an}}_{\mathscr A} \to \Spec(\mathscr A)_{\circ}$ given by
\[
x \mapsto (\mathscr A, |\ndot|_x)_{< 1}
\]
is called the reduction map 
(cf. \S\ref{sec:metric:model}).
Note that the reduction map  
is surjective (cf. \cite[Proposition~2.4.4]{Be} or \cite[4.13 and Proposition~4.14]{Gub}).

\begin{Theorem}
\label{thm:seminorm:integral:extension}
If we set
$\mathscr B := \{ \alpha \in A \mid \text{$\alpha$ is integral over $\mathscr A$} \}$,
then 
\[
\mathscr B = \bigcap_{x \in \Spec(A)^{\mathrm{an}}_{\mathscr A}} (A, |\ndot|_x)_{\leq 1},
\]
where $(A, |\ndot|_x)_{\leq 1} := \{ \alpha \in A \mid |\alpha|_x \leq 1 \}$.
\end{Theorem}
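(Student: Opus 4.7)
The plan is to prove the two inclusions separately. For $\mathscr B\subseteq\bigcap_{x\in\Spec(A)^{\mathrm{an}}_{\mathscr A}}(A,|\ndot|_x)_{\leq 1}$, I would argue by a standard ultrametric estimate: given a monic relation $\alpha^n+a_{n-1}\alpha^{n-1}+\cdots+a_0=0$ with $a_i\in\mathscr A$, the assumption $|\alpha|_x>1$ for some $x\in\Spec(A)^{\mathrm{an}}_{\mathscr A}$ would force $|\alpha|_x^n=\bigl|{-}\sum_{i<n}a_i\alpha^i\bigr|_x\leq\max_i|a_i|_x|\alpha|_x^i\leq|\alpha|_x^{n-1}$, which is absurd.

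For the reverse inclusion, suppose $\alpha\in A\setminus\mathscr B$; the goal is to produce $x\in\Spec(A)^{\mathrm{an}}_{\mathscr A}$ with $|\alpha|_x>1$. Since a nilpotent element is automatically integral, $\alpha$ is not nilpotent, and $A[1/\alpha]$ is non-zero. I would then introduce the $\mathfrak o_k$-subalgebra $\mathscr A':=\mathscr A[1/\alpha]$ of $A[1/\alpha]$ generated by the image of $\mathscr A$ and by $1/\alpha$; it is a finitely generated $\mathfrak o_k$-algebra satisfying $\mathscr A'\otimes_{\mathfrak o_k}k=A[1/\alpha]$.

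The crucial algebraic input is that $1/\alpha$ is not a unit in $\mathscr A'$: otherwise its inverse $\alpha$ would admit an expression of the form $a_0+a_1/\alpha+\cdots+a_n/\alpha^n$ in $A[1/\alpha]$ with $a_i\in\mathscr A$, and after clearing denominators in $A[1/\alpha]$ and then multiplying by a further power of $\alpha$ to absorb the kernel of the localization map $A\to A[1/\alpha]$, this would yield a monic polynomial equation for $\alpha$ over $\mathscr A$ valid already in $A$, contradicting $\alpha\notin\mathscr B$. Consequently $1/\alpha$ lies in some maximal ideal $\mathfrak m$ of $\mathscr A'$. I claim further that $\mathfrak m\in\Spec(\mathscr A')_{\circ}$: if instead $\mathfrak m\cap\mathfrak o_k=\{0\}$, the extended ideal $\mathfrak m A[1/\alpha]$ would be a proper ideal of $A[1/\alpha]$ containing $1/\alpha$, contradicting the fact that $1/\alpha$ is a unit in $A[1/\alpha]$.

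The conclusion follows from the surjectivity of the reduction map $r_{\mathscr A'}:\Spec(A[1/\alpha])^{\mathrm{an}}_{\mathscr A'}\to\Spec(\mathscr A')_{\circ}$ recalled immediately before the statement. Choosing a preimage $x$ of $\mathfrak m$, the corresponding seminorm $|\ndot|_x$ satisfies $|\ndot|_x\leq 1$ on $\mathscr A'$, hence on $\mathscr A$, together with $|1/\alpha|_x<1$, i.e. $|\alpha|_x>1$. Pulling $|\ndot|_x$ back along the structural homomorphism $A\to A[1/\alpha]$ produces an element of $\Spec(A)^{\mathrm{an}}_{\mathscr A}$ at which $|\alpha|_x>1$, completing the proof. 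The only delicate step is the non-unit claim, where the ``clearing of denominators'' must be carried out carefully in order to handle the possible zero-divisor behaviour of $\alpha$ in $A$; once this is in place the rest of the argument is essentially a formal application of the cited surjectivity theorem.
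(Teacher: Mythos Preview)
Your argument is correct and follows the same overall strategy as the paper: pass to the finitely generated $\mathfrak o_k$-algebra $\mathscr A[1/\alpha]$, show that $1/\alpha$ is not a unit there, pick a maximal ideal containing it, verify that this ideal lies over $\mathfrak m_k$, and invoke surjectivity of the reduction map.

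The one genuine difference is in how the possible zero-divisor behaviour of $\alpha$ is handled. The paper first reduces to the case where $\mathscr A$ is an integral domain: it shows that if $a$ were integral modulo every minimal prime of $A$ then it would be integral over $\mathscr A$, so one may pass to $\mathscr A/\mathfrak q$ for a suitable prime $\mathfrak q$ and take $b=a^{-1}$ in the fraction field, where clearing denominators is automatic. You instead stay in the general ring, work in the localization $A[1/\alpha]$, and absorb the kernel of $A\to A[1/\alpha]$ by multiplying by an extra power of $\alpha$; this yields a monic relation in $A$ directly. Your route is a bit more hands-on at that one step but avoids the preliminary reduction; the paper's route makes the algebra cleaner at the cost of an extra paragraph up front. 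A second minor difference: to show the chosen maximal ideal lies over $\mathfrak m_k$, the paper exhibits an explicit nonzero element of $\mathfrak o_k$ in the ideal (namely a denominator $s$ with $a=a'/s$), whereas you argue by contradiction via the localization $A[1/\alpha]=S^{-1}\mathscr A'$. Both are valid; your argument implicitly uses that $\mathfrak o_k$, being a rank-one valuation ring, has no primes strictly between $(0)$ and $\mathfrak m_k$, which is fine.
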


\begin{proof}
First let us see that 
$\mathscr B \subseteq  (A, |\ndot|_x)_{\leq 1}$
for all
$x \in \Spec(A)^{\mathrm{an}}_{\mathscr A}$.
If $a \in \mathscr B$, then there are $a_1, \ldots, a_n \in \mathscr A$ such that
$a^n + a_1 a^{n-1} + \cdots + a_n = 0$. 
We assume that $|a|_x > 1$.
Then
\begin{align*}
|a|_x^n & = |a^n|_x = | a_1 a^{n-1} + \cdots + a_n |_x \leq
\max_{i=1, \ldots, n} \{ |a_i|_x |a|_x^{n-i}\} \\
& \leq \max_{i=1, \ldots, n} \{ |a|_x^{n-i}\} = |a|_x^{n-1},
\end{align*}
so that $|a|_x \leq 1$, which is a contradiction.

Let $a \in A$ such that $a$ is not integral over $\mathscr A$. {We show that there exists a prime ideal $\mathfrak q$ of $\mathscr A$ such that the canonical image of $a$ in $A/S^{-1}\mathfrak q$ is not integral over $\mathscr A/\mathfrak q$.
In fact, since $A$ is a $k$-algebra of finite type, it is a noetherian ring. In particular, it admits only finitely many minimal prime ideals $S^{-1}\mathfrak p_1,\ldots,S^{-1}\mathfrak p_n$, where $\mathfrak p_1,\ldots,\mathfrak p_n$ are prime ideals of $\mathscr A$ which do not intersect $S=\mathfrak o_k\setminus\{0\}$. Assume that, for any $i\in\{1,\ldots,n\}$, $f_i$ is a monic polynomial in $(\mathscr A/\mathfrak p_i)[T]$ such that $f_i(\lambda_i)=0$, where $\lambda_i$ is the class of $a$ in $A/S^{-1}(\mathfrak p_i)$. Let $F_i$ be a monic polynomial in $\mathscr A[T]$ whose reduction modulo $\mathfrak p_i[T]$ {coincides} with $f_i$. One has 
$F_i({a})\in S^{-1}\mathfrak p_i$ for any $i\in\{1,\ldots,n\}$. Let $F$ be the product of the polynomials $F_1,\ldots,F_n$. Then $F({a})$ belongs to the intersection $\bigcap_{i=1}^nS^{-1}\mathfrak p_i$, hence is nilpotent, which implies that $a$ is integral over $\mathscr A$. To show that there exists $x\in\Spec(A)^{\mathrm{an}}_{\mathscr A}$ such that $|a|_x>1$ we may replace $\mathscr A$ (resp. $A$) by $\mathscr A/\mathfrak q$ (resp. $A/S^{-1}\mathfrak q$) and hence assume that $\mathscr A$ is an integral domain without loss of generality.}

We set $b = a^{-1}$.
Let us see that 
\[
b\mathscr A[b] \cap \mathfrak o_k \not= \{ 0 \}\quad\text{and}\quad
1 \not\in b\mathscr A[b].
\]
We set $a = a'/s$ for some $a' \in \mathscr A$ and 
$s \in {S}$.
Then $s = b a' \in b\mathscr A[b] \cap \mathfrak o_k$, so that $b\mathscr A[b] \cap \mathfrak o_k \not= \{ 0 \}$.
Next we assume that $1 \in b \mathscr A[b]$. Then 
\[
1 = a'_1 b + a'_2 b^2 + \cdots + a'_{n'} b^{n'}
\]
for some
$a'_1, \ldots, a'_{n'} \in \mathscr A$, so that
$a^{n'} = a'_1 a^{n'-1} + \cdots + a'_{n'}$, which is a contradiction.

Let $\mathfrak p$ be the maximal ideal of $\mathscr A[b]$ such that $b\mathscr A[b] \subseteq \mathfrak p$.
As $\mathfrak p \cap \mathfrak o_k \not= \{ 0 \}$ and
$\mathfrak p \cap \mathfrak o_k \subseteq \mathfrak m_k$, we have
$\mathfrak p \cap \mathfrak o_k = \mathfrak m_k$, and hence $\mathfrak p \in \Spec(\mathscr A[b])_{\circ}$.
Note that $\mathscr A[b]$ is finitely generated over $\mathfrak o_k$ and $\mathscr A[b] \otimes_{\mathfrak o_k} k = A[b]$. 
Thus, since the reduction map 
\[
r_{\mathscr A[b]} : \Spec(A[b])^{\mathrm{an}}_{\mathscr A[b]} \to \Spec(\mathscr A[b])_{\circ}
\]
is surjective,
there is $x \in  \Spec(A[b])^{\mathrm{an}}_{\mathscr A[b]}$ such that
$r_{\mathscr A[b]}(x) = \mathfrak p$. Clearly $x \in \Spec(A)^{\mathrm{an}}_{\mathscr A}$.
As $b \in \mathfrak p$, we have $|b|_x < 1$, so that $|a|_x > 1$ because $ab = 1$.
Therefore,
\[
a \not\in  \bigcap_{x \in \Spec(A)^{\mathrm{an}}_{\mathscr A}} (A, |\ndot|_x)_{\leq 1},
\]
as required.
\end{proof}

We assume that $X$ is projective.
Let $\mathscr X \to \Spec(\mathfrak o_k)$ be a 
{flat and projective scheme} over $\Spec\mathfrak o_k$ such that
the generic fiber of $\mathscr X \to \Spec(\mathfrak o_k)$ is $X$. 
Let $\mathscr L$ be an invertible sheaf on $\mathscr X$ such that
$\rest{\mathscr L}{X} = L$. We set $h := \{ |\ndot|_{\mathscr L}(x) \}_{x \in X^{\mathrm{an}}}$.
For the definition of the metric $|\ndot|_{\mathscr L}(x)$ at $x$,
see \S\ref{sec:metric:model}.

\begin{Corollary}
\label{cor:norm:less:1}
Fix $l \in H^0(X, L)$.
If $|l|_{\mathscr L}(x) \leq 1$ for all $x \in X^{\mathrm{an}}$, then
there is $s \in \mathfrak o_k \setminus \{ 0 \}$ such that $s l^{\otimes n} \in H^0(\mathscr X, \mathscr L^{\otimes n})$ for all $n \geq 0$.
\end{Corollary}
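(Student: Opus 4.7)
The plan is to apply Theorem~\ref{thm:seminorm:integral:extension} locally on an affine trivializing cover of $\mathscr L$, and then use the integrality relations thereby produced to extract, by a denominator-clearing argument, a single $s \in \mathfrak o_k \setminus \{0\}$ that works uniformly in $n$.

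First I would choose a finite cover $\mathscr X = \bigcup_{i=1}^{N} \mathscr U_i$ by affine opens $\mathscr U_i = \Spec(\mathscr A_i)$ on each of which $\mathscr L$ admits a trivializing section $\omega_i$; such a cover exists because $\mathscr X$ is quasi-compact and $\mathscr L$ is invertible. On $\mathscr U_i$ I write $\rest{l}{\mathscr U_{i,\eta}} = f_i \omega_i$ with $f_i \in A_i := \mathscr A_i \otimes_{\mathfrak o_k} k$. For any $x \in \Spec(A_i)^{\mathrm{an}}_{\mathscr A_i}$, viewed as a point of $X^{\mathrm{an}}$ whose reduction $r_{\mathscr X}(x)$ lies in $\mathscr U_{i,\circ}$, the definition of the model metric in \S\ref{sec:metric:model} gives $|l|_{\mathscr L}(x) = |f_i|_x$. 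The hypothesis $|l|_{\mathscr L}(x) \leq 1$ on all of $X^{\mathrm{an}}$ therefore forces $|f_i|_x \leq 1$ for every $x \in \Spec(A_i)^{\mathrm{an}}_{\mathscr A_i}$, and by Theorem~\ref{thm:seminorm:integral:extension} the element $f_i$ is integral over $\mathscr A_i$, so satisfies a monic relation
\[
f_i^{n_i} + a_{i,1} f_i^{n_i-1} + \cdots + a_{i,n_i} = 0 \quad (a_{i,j} \in \mathscr A_i).
\]

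Next, since $A_i$ is the localization of $\mathscr A_i$ at $\mathfrak o_k \setminus \{0\}$, for each $j \in \{1, \ldots, n_i - 1\}$ I can write $f_i^j = b_{i,j}/t_{i,j}$ with $b_{i,j} \in \mathscr A_i$ and $t_{i,j} \in \mathfrak o_k \setminus \{0\}$. Setting
\[
s := \prod_{i=1}^{N} \prod_{j=1}^{n_i - 1} t_{i,j} \in \mathfrak o_k \setminus \{0\},
\]
one has $s f_i^j \in \mathscr A_i$ for $0 \leq j \leq n_i - 1$ and all $i$. Rewriting $f_i^n = -\sum_{k=1}^{n_i} a_{i,k} f_i^{n-k}$ for $n \geq n_i$ and inducting on $n$ then yields $s f_i^n \in \mathscr A_i$ for every $n \geq 0$ and every $i$. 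Consequently $s\, l^{\otimes n}$ is a global section of $L^{\otimes n}$ whose restriction to each $\mathscr U_i$ equals $(s f_i^n)\omega_i^{\otimes n} \in \Gamma(\mathscr U_i, \mathscr L^{\otimes n})$, hence glues to an element of $H^0(\mathscr X, \mathscr L^{\otimes n})$.

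The main obstacle is the passage from pointwise integrality of each $f_i$, which in principle only controls one $n$ at a time, to a single denominator $s$ valid for all $n$ simultaneously. This is precisely what the monic relation buys us: once $s$ clears denominators of the finitely many powers $f_i^0, f_i^1, \ldots, f_i^{n_i-1}$, the integrality equation propagates the property to all higher powers without introducing further denominators, and the finiteness of the cover ensures that only finitely many such clearances are needed.
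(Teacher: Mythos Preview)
Your proof is correct and follows essentially the same approach as the paper's: an affine trivializing cover, application of Theorem~\ref{thm:seminorm:integral:extension} on each piece to get integrality of the local coefficients $f_i$, and then a denominator-clearing induction to obtain a uniform $s$. The only cosmetic difference is that the paper packages your inductive denominator-clearing step as a separate Lemma~\ref{lem:integral:quasi:integral}, whereas you carry it out inline.
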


\begin{proof}
Let $\mathscr X = \bigcup_{i=1}^N \mathscr \Spec(\mathscr A_i)$ be an affine open covering of $\mathscr X$ with the following properties:
\begin{enumerate}
\renewcommand{\labelenumi}{(\arabic{enumi})}
\item
$\mathscr A_i$ is a finitely generated algebra over $\mathfrak o_k$ for every $i$.

\item 
$\Spec(\mathscr A_i)_{\circ} \not= \emptyset$ for all $i$.

\item
There is a basis $\omega_i$ of $\mathscr L$ over $\Spec(\mathscr A_i)$ for every $i$.
\end{enumerate}
We set $l = a_i \omega_i$ for some $a_i \in A_i := \mathscr A_i \otimes_{\mathfrak o_k} k$. By our assumption, $|a_i|_x \leq 1$ for all 
$x \in \Spec(A_i)^{\mathrm{an}}_{\mathscr A_i}$.
Therefore, by  
Theorem~\ref{thm:seminorm:integral:extension}, $a_i$ is integral over $\mathscr A_i$, so that, by the following
Lemma~\ref{lem:integral:quasi:integral},
we can find 
$s_i \in {S}$ 
such that $s_i a_i^n \in \mathscr A_i$ for all $n \geq 0$.
We set $s = s_1 \cdots s_N$. Then, as $s a_i^n \in \mathscr A_i$ for all $n \geq 0$ and $i=1, \ldots, N$, we have
the assertion.
\end{proof}

\begin{Lemma}
\label{lem:integral:quasi:integral}
Let $A$ be {a commutative ring} and $S$ a multiplicatively closed subset of $A$, {which consists of regular elements of $A$}.
If $t \in {S^{-1}A}$ and $t$ is integral over $A$, then there is $s \in S$ such that
$s t^n \in A$ for all $n \geq 0$.
\end{Lemma}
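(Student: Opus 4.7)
The plan is to exploit the fact that integrality of $t$ over $A$ is equivalent to the subring $A[t]$ of $S^{-1}A$ being a finitely generated $A$-module. Concretely, if $t$ satisfies a monic relation $t^n + a_1 t^{n-1} + \cdots + a_n = 0$ with $a_i \in A$, then $A[t] = A \cdot 1 + A \cdot t + \cdots + A \cdot t^{n-1}$ as an $A$-submodule of $S^{-1}A$. (One checks this by induction on the degree: every $t^m$ with $m \ge n$ can be rewritten as $-(a_1 t^{m-1} + \cdots + a_n t^{m-n})$, eventually reducing the exponent below $n$.)

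Next, I would clear denominators of the finitely many generators. Each element $t^i$ for $0 \le i \le n-1$ lies in $S^{-1}A$, so I can write $t^i = c_i/s_i$ with $c_i \in A$ and $s_i \in S$. Using that $S$ is multiplicatively closed, set $s := s_0 s_1 \cdots s_{n-1} \in S$. Then $s \cdot t^i \in A$ for every $i \in \{0,\ldots,n-1\}$; here the assumption that elements of $S$ are regular is not needed for this step, but it is used implicitly to make the localization map $A \to S^{-1}A$ injective so that we may regard $A$ as a subring of $S^{-1}A$ and read the containment $s t^i \in A$ inside $S^{-1}A$.

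Finally, for arbitrary $m \ge 0$, the equality $A[t] = \sum_{i=0}^{n-1} A\cdot t^i$ yields a relation $t^m = \sum_{i=0}^{n-1} \alpha_i^{(m)} t^i$ with $\alpha_i^{(m)} \in A$. Multiplying by the common $s$ produces
\[
s t^m = \sum_{i=0}^{n-1} \alpha_i^{(m)} \bigl( s t^i \bigr) \in A,
\]
which is the desired conclusion. There is no real obstacle in this argument; the only subtlety is ensuring that the same $s$ works uniformly for all $m$, and this is exactly what clearing the denominators of a finite generating set of $A[t]$ as an $A$-module buys us. The regularity hypothesis on $S$ plays only the auxiliary role of letting us identify $A$ with its image in $S^{-1}A$.
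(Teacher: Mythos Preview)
Your proof is correct and follows essentially the same approach as the paper: both arguments choose $s \in S$ clearing the denominators of $1, t, \ldots, t^{r-1}$ and then use the monic relation to handle all higher powers. The only cosmetic difference is that the paper writes out the induction $st^n = a_1(st^{n-1}) + \cdots + a_r(st^{n-r})$ explicitly, whereas you package it as the statement that $A[t]$ is generated as an $A$-module by $1,\ldots,t^{r-1}$.
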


\begin{proof}
As $t$ is integral over $A$, there are $a_1, \ldots, a_{r-1} \in A$ such that
\[
t^r = a_1t^{r-1} + \cdots + a_{r-1} t + a_r.
\]
We choose $s \in S$ such that $st^i \in A$ for $i=0, \ldots, r-1$.
By induction on $n$, we prove that $st^n \in A$ for all $n \geq 0$.
Note that 
\[
t^{n} = a_1 t^{n-1} + \cdots + a_{r-1} t^{n - r + 1} + a_r t^{n-r}.
\]
Thus, if $s t^i \in A$ for $i=0, \ldots, n-1$, then
$s t^n \in A$ because
\[
s t^n = a_1 (s t^{n-1}) + \cdots + a_{r-1} (s t^{n - r + 1}) + a_r (s t^{n-r}).
\]
\end{proof}

\section{Continuous metrics of invertible sheaves}

In this section, we consider several properties of continuous metrics of invertible sheaves.
Let $h = \{ |\ndot|_h(x) \}_{x \in X^{\mathrm{an}}}$ and
$h' = \{ |\ndot|_{h'}(x) \}_{x \in X^{\mathrm{an}}}$ be continuous metrics of $L^{\mathrm{an}}$
(cf. \S\ref{Notations:continuousmetric}).
As ${L(x)} := L \otimes_{\mathscr O_X} \hat{\kappa}(x)$ is a $1$-dimensional vector space over $\hat{\kappa}(x)$,
$h + h' :=  \{ |\ndot|_h(x) + |\ndot|_{h'}(x) \}_{x \in X^{\mathrm{an}}}$ forms a continuous metric of $L^{\mathrm{an}}$.
Indeed, we can find a continuous positive function $\varphi$ on $X^{\mathrm{an}}$ such that $|\ndot|_{h'}(x) = \varphi(x) |\ndot|_h(x)$ 
for any $x \in X^{\mathrm{an}}$. Thus \[h + h' =  \{ (1 + \varphi(x))|\ndot|_{h}(x) \}_{x \in X^{\mathrm{an}}}\] is a continuous metric of $L^{\mathrm{an}}$.

\begin{Lemma}
\label{lem:exist:cont:metric}
There is a continuous metric of $L^{\mathrm{an}}$.
\end{Lemma}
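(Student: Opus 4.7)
The plan is to reduce to the globally generated case and build a Fubini--Study type metric from global sections. Since $X$ is projective over $\operatorname{Spec} k$, I fix a very ample invertible sheaf $A$ on $X$, and observe that for $n$ sufficiently large both $A^{\otimes n}$ and $L \otimes A^{\otimes n}$ are very ample, hence generated by their global sections. Writing $L = (L \otimes A^{\otimes n}) \otimes (A^{\otimes n})^{\vee}$ and noting that the tensor-product and dual operations produce continuous metrics from continuous metrics (since in a local basis they amount to pointwise multiplication and inversion of strictly positive continuous functions, and continuity of a metric is a local property by \S\ref{Notations:continuousmetric}), it suffices to construct a continuous metric on any invertible sheaf generated by global sections.

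So suppose $M$ is an invertible sheaf on $X$ and $s_0, \ldots, s_N \in H^0(X, M)$ generate $M$. I define a metric $h$ on $M$ as follows: on any Zariski open $U \subseteq X$ over which $M$ admits a basis $\omega$, write $s_j = f_j \omega$ with $f_j \in \mathscr O_X(U)$, and set
\[
|\omega|_h(x) := \Bigl(\max_{0 \leq j \leq N} |f_j(x)|_x\Bigr)^{-1} \qquad (x \in U^{\mathrm{an}}).
\]
The maximum is strictly positive: since the $s_j$ generate $M$, they have no common zero on $X$, so at the scheme-theoretic point underlying $x$ at least one $f_j$ is a unit, which translates into $|f_j(x)|_x > 0$. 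Independence of the choice of $\omega$ is immediate (a change of basis multiplies all $f_j$ by the same unit), and the local formulas agree on overlaps because they present the same quotient norm induced by the surjection $\mathscr O_X^{N+1} \twoheadrightarrow M$ determined by $(s_0, \ldots, s_N)$.

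Continuity on $U^{\mathrm{an}}$ follows directly from the definition of the Berkovich analytic topology: each map $x \mapsto |f_j(x)|_x$ is continuous, hence so is the maximum, and the reciprocal of a strictly positive continuous function is continuous. Combining this with the reduction in the first paragraph yields a continuous metric on $L$. I do not expect any serious obstacle; the only mild point to verify carefully is that the local recipe is well-defined and glues, which is handled by recognising $|\cdot|_h$ as the quotient metric from the surjection $\mathscr O_X^{N+1} \twoheadrightarrow M$.
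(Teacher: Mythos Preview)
Your argument is correct for projective $X$, but it takes a genuinely different route from the paper. The paper's proof is purely analytic: it picks an affine open covering $X = \bigcup_i U_i$ with local bases $\omega_i$ of $L$, defines local metrics $h_i$ by $|\omega_i|_{h_i}(x) = 1$, and then glues them using a continuous partition of unity $\{\rho_i\}$ on $X^{\mathrm{an}}$ (which exists because $X^{\mathrm{an}}$ is paracompact), setting $|\cdot|_h(x) = \sum_i \rho_i(x)\,|\cdot|_{h_i}(x)$. Your construction instead produces a Fubini--Study/quotient metric, which is essentially the object the paper develops later in \S\ref{subsec:semipositive} and Corollary~\ref{coro:cont:quot}; in that sense your proof is more structured and even yields a \emph{semipositive} metric on each globally generated piece, at the price of invoking Serre-type ampleness results.

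One caveat: the lemma is placed \emph{before} the running projectivity hypothesis of \S3.1, and the standing assumption from \S\ref{Notations:04} is only that $X$ is an algebraic scheme (of finite type over $k$). Your opening line ``Since $X$ is projective'' therefore imports an assumption the lemma does not make; the partition-of-unity proof works in this greater generality, whereas your reduction to globally generated sheaves via an ample $A$ does not. For the uses of the lemma in this paper the projective case suffices, but if you want to match the stated generality you should either add the partition-of-unity argument for non-projective $X$ or explicitly restrict the statement.
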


\begin{proof}
Let us choose an affine open covering $X = \bigcup_{i=1}^N U_i$ together with a local basis $\omega_i$ of $L$ on each $U_i$.
Let $h_i$ be a metric of $L^{\mathrm{an}}$ over $U_i^{\mathrm{an}}$ given by
$|\omega_i|_{h_i}(x) = 1$ for $x \in U_i^{\mathrm{an}}$. As $X^{\mathrm{an}}$ is paracompact (locally compact and $\sigma$-compact),
we can find a partition of unity $\{ \rho_i \}_{i=1, \ldots, N}$ of continuous functions on $X^{\mathrm{an}}$
such that $\mathrm{supp}(\rho_i) \subseteq U_i^{\mathrm{an}}$ for all $i$.
If we set $|\ndot|_h(x) = \sum_{i=1}^N \rho_i(x) |\ndot|_{h_i}(x)$, then $h=\{|\ndot|_h(x)\}_{x\in X^{\mathrm{an}}}$ yields a continuous metric of $L^{\mathrm{an}}$.
\end{proof}

\subsection{Extension theorem for a metric arising from a model}
{We assume that $X$ is projective.}
Let $\mathscr X\rightarrow\Spec\mathfrak o_k$ be a model of $X$. We let $\mathscr L$ be an invertible sheaf on $\mathscr X$ such that $\rest{\mathscr L}{X}=L$. We have seen in \S\ref{sec:metric:model} that $\mathscr L$ induces a continuous metric $h=\{|\ndot|_{\mathscr L}(x)\}_{x\in X^{\mathrm{an}}}$ of $L^{\mathrm{an}}$. 

\begin{Theorem}
\label{thm:semiample:metrized:extension:ample}
We assume that $|\ndot|$ is non-trivial and
$\mathscr L$ is an ample invertible sheaf.
Fix a closed subscheme
$Y$ of $X$, $l \in H^0(Y, \rest{L}{Y})$
and a positive number $\epsilon$.
Then there are a positive integer $n$ and
$s \in H^0(X, L^{\otimes n})$ such that $\rest{s}{Y} = l^{\otimes n}$ and 
\[
\| s \|_{h^n} \leq e^{n\epsilon} \left( \| l \|_{Y, h}\right)^n.
\]
\end{Theorem}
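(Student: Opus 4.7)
The strategy is to reduce the metric extension problem to an integral extension problem on the model $\mathscr X$, using Corollary \ref{cor:norm:less:1} together with Serre's vanishing theorem for the ample sheaf $\mathscr L$. Assuming $l \neq 0$, the first step is a rescaling: since $|\ndot|$ is non-trivial, Lemma \ref{lem:subgroup:R} applied to $\log|k^\times|$ shows that $|k^\times| \subset \mathbb R_{>0}$ is either dense or of the form $|\varpi|^{\mathbb Z}$. In either case, for any $\epsilon > 0$ one can choose an integer $N \geq 1$ and $\alpha \in k^\times$ satisfying
\[
\|l\|_{Y,h}^N \leq |\alpha|^{-1} \leq e^{N\epsilon/2}\,\|l\|_{Y,h}^N,
\]
where in the discrete case one needs $N$ large enough that $e^{N\epsilon/2} \geq |\varpi|^{-1}$. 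Setting $\tilde l := \alpha\, l^{\otimes N} \in H^0(Y, L|_Y^{\otimes N})$, the left inequality gives $|\tilde l|_{h^N}(y) \leq 1$ for every $y \in Y^{\mathrm{an}}$.

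Let $\mathscr Y \subseteq \mathscr X$ be the Zariski closure of $Y$ (see \S\ref{Notations:ZariskiClosure}); then $\mathscr L|_{\mathscr Y}^{\otimes N}$ induces the metric $h^N$ on $L|_Y^{\otimes N}$. Applying Corollary \ref{cor:norm:less:1} to $\tilde l$ on $Y$ with this model, we obtain $s_0 \in \mathfrak o_k \setminus \{0\}$ such that $s_0\,\tilde l^{\otimes m} \in H^0(\mathscr Y, \mathscr L|_{\mathscr Y}^{\otimes Nm})$ for every $m \geq 0$. Because $\mathscr L$ is ample on the projective $\mathfrak o_k$-scheme $\mathscr X$, Serre's vanishing theorem yields $H^1(\mathscr X, \mathscr I_{\mathscr Y} \otimes \mathscr L^{\otimes Nm}) = 0$ for $m$ sufficiently large, so the restriction map $H^0(\mathscr X, \mathscr L^{\otimes Nm}) \to H^0(\mathscr Y, \mathscr L|_{\mathscr Y}^{\otimes Nm})$ is surjective. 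Lift $s_0\tilde l^{\otimes m} = s_0\alpha^m\, l^{\otimes Nm}$ to $t \in H^0(\mathscr X, \mathscr L^{\otimes Nm})$ and set $s := t/(s_0\alpha^m) \in H^0(X, L^{\otimes Nm})$; by construction $s|_Y = l^{\otimes Nm}$. Since $t$ is a section on the model $\mathscr X$, one has $|t|_{h^{Nm}}(x) \leq 1$ for every $x \in X^{\mathrm{an}}$, whence
\[
\|s\|_{h^{Nm}} \leq |s_0|^{-1}\,|\alpha|^{-m} \leq |s_0|^{-1}\, e^{Nm\epsilon/2}\,\|l\|_{Y,h}^{Nm}.
\]
Taking $m$ large enough that $|s_0|^{-1} \leq e^{Nm\epsilon/2}$ and setting $n := Nm$ yields the required bound.

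The principal obstacle is the rescaling step in the discrete-valuation case: the value group $|k^\times| = |\varpi|^{\mathbb Z}$ has a fixed multiplicative gap $|\varpi|^{-1}$ that may be much larger than $e^\epsilon$, so $\alpha$ cannot in general be chosen with $|\alpha|^{-1}$ within a factor $e^\epsilon$ of $\|l\|_{Y,h}$. Passing to the tensor power $l^{\otimes N}$ dilutes the gap to $|\varpi|^{-1/N}$, which tends to $1$ as $N$ grows; this is precisely why the theorem asserts the existence of \emph{some} integer $n$ (namely $n = Nm$) rather than a bound valid for all sufficiently large $n$.
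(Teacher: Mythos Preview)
Your proof is correct and follows essentially the same route as the paper's own argument: rescale $l$ by a tensor power and a scalar to make its sup-norm $\leq 1$ (handling the discrete and dense value-group cases separately), invoke Corollary~\ref{cor:norm:less:1} on the Zariski closure $\mathscr Y$ to obtain integral sections up to a fixed denominator, lift via the surjectivity of restriction for high powers of the ample $\mathscr L$, and absorb the two error factors into $e^{n\epsilon}$ by taking $m$ large. The only cosmetic difference is that you phrase the surjectivity step via Serre vanishing for $\mathscr I_{\mathscr Y}\otimes\mathscr L^{\otimes Nm}$, whereas the paper simply asserts surjectivity of $H^0(\mathscr X,\mathscr L^{\otimes am})\to H^0(\mathscr Y,\mathscr L|_{\mathscr Y}^{\otimes am})$ for large $m$; the content is the same.
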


\begin{proof}
Clearly, we may assume that $l \not= 0$.
Let $\mathscr Y$ be the Zariski closure of $Y$ in $\mathscr X$ (cf. 
\S\ref{Notations:ZariskiClosure}).

\begin{Claim}
\label{claim:thm:semiample:metrized:extension:ample:01}
There are a positive integer $a$ and $\alpha \in k^{\times}$ such that
\[
e^{-a \epsilon/2} \leq \| \alpha l^{\otimes a} \|_{Y,h^{a}} \leq 1.
\]
\end{Claim}

\begin{proof}
First we assume that $|\ndot|$ is discrete. We take a positive integer $a$ such that
$e^{-\epsilon a/2} \leq |\varpi|$. We also choose $\alpha \in k^{\times}$ such that
\[
|\alpha^{-1}| = \min \{ |\gamma| \mid \text{$\gamma \in k^{\times}$ and $\|l^{\otimes a} \|_{Y, h^a} \leq |\gamma|$} \}.
\]
Then, as $\|l^{\otimes a} \|_{Y, h^a} \leq |\alpha^{-1}| \leq |\varpi|^{-1} \|l^{\otimes a} \|_{Y, h^a}$, we have
\[
e^{-a \epsilon/2} \leq |\varpi| \leq \|\alpha l^{\otimes a} \|_{Y, h^a} \leq 1.
\]

Next we assume that $|\ndot|$ is not discrete. In this case,
$|k^{\times}|$ is dense in $\mathbb R_{>0}$ by Lemma~\ref{lem:subgroup:R},  
so that we can choose $\beta \in k^{\times}$ such that
\[
e^{-\epsilon/2} \leq \|l \|_{Y, h}/|\beta| \leq 1.
\] 
Thus if we set $\alpha = \beta^{-1}$ and $a = 1$,
we have the assertion.
\end{proof}

By Corollary~\ref{cor:norm:less:1}, there is $\beta \in \mathfrak o_K \setminus \{ 0 \}$
such that 
\[
\beta (\alpha l^{\otimes a})^{\otimes m} \in H^0(\mathscr Y, \rest{\mathscr L^{\otimes am}}{\mathscr Y})
\]
for all $m \geq 0$.
We choose a positive integer $m$ such that $|\beta|^{-1} \leq e^{am\epsilon/2}$ and
\[
H^0(\mathscr X, \mathscr L^{\otimes am}) \to H^0(\mathscr Y, \rest{\mathscr L^{\otimes am}}{\mathscr Y})
\]
is surjective, so that we can find $l_m \in H^0(\mathscr X, \mathscr L^{\otimes am})$ such that
$\rest{l_m}{\mathscr Y} = \beta (\alpha l^{\otimes a})^{\otimes m}$. 
Note that $\| l_m \|_{h^{am}} \leq 1$. Thus, if we set
$s = \beta^{-1} \alpha^{-m} l_m$, then $\rest{s}{\mathscr Y} = l^{\otimes am}$ and
\begin{align*}
\| s \|_{h^{am}} & = |\beta|^{-1} |\alpha|^{-m} \| l_m \|_{h^{am}} \leq e^{am\epsilon/2} |\alpha|^{-m} \\
& \leq
e^{am\epsilon/2} |\alpha|^{-m}  \left( e^{a\epsilon/2}\| \alpha l^{\otimes a} \|_{Y,h^{a}}\right)^m
= e^{am\epsilon} \left( \| l \|_{Y,h}\right)^{am},
\end{align*}
as required.
\end{proof}

\subsection{Quotient metric}
Let $V$ be a finite-dimensional vector space over $k$.
We assume that there is a surjective homomorphism 
\[
\pi : V \otimes_k \mathscr O_{X} \to L.
\]
For each $e \in V$, $\pi(e \otimes 1)$ yields a global section of $L$, that is,
$\pi(e \otimes 1) \in H^0(X, L)$.
We denote it by $\tilde{e}$.
Let $\|\ndot\|$ be a norm of $V$ and $\overline{V} := (V, \|\ndot\|)$.
Let $\|\ndot\|_{\hat{\kappa}(x)}$ be a norm of $V \otimes_k \hat{\kappa}(x)$ obtained by
the scalar extension of $\|\ndot\|$ (cf. Definition~\ref{def:scalar:extension}).
Let $|\ndot|_{\overline{V}}^{\mathrm{quot}}(x)$ be the quotient norm of 
$L(x) := L \otimes \hat{\kappa}(x)$ induced by
$\|\ndot\|_{\hat{\kappa}(x)}$ and the surjective homomorphism
$V \otimes_k \hat{\kappa}(x) \to {L(x)}$.

\begin{Lemma}
\label{lem:cont:quot}
Let $h$ be a continuous metric of $L^{\mathrm{an}}$ (cf. Lemma~\ref{lem:exist:cont:metric}).
Let $(e_0, \ldots, e_r)$ be an orthogonal basis of $V$
with respect to $\Vert\ndot\Vert$. 
Then, for $s \in H^0(X, L)$,
\[
|s|_{\overline{V}}^{\mathrm{quot}}(x) = \frac{|s|_h(x)}{{\displaystyle \max_{i=0, \ldots, r} \left\{ \frac{|\tilde{e}_i|_h(x) }{\| e_i \|}\right\}}}
\]
on $X^{\mathrm{an}}$.
\end{Lemma}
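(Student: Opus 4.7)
The plan is to reduce the computation of the quotient norm at each point $x\in X^{\mathrm{an}}$ to an explicit finite-dimensional calculation in $\hat{\kappa}(x)$. First, by Proposition~\ref{prop:scalar:extension:orthogonal}, the family $(e_0\otimes 1,\ldots,e_r\otimes 1)$ is an orthogonal basis of $V\otimes_k\hat{\kappa}(x)$ with respect to the scalar extension norm $\|\ndot\|_{\hat{\kappa}(x)}$, so that
\[
\Bigl\|\sum_{i=0}^{r} a_i(e_i\otimes 1)\Bigr\|_{\hat{\kappa}(x)} = \max_{i=0,\ldots,r} |a_i|_x\,\|e_i\|
\]
for every $a_0,\ldots,a_r\in\hat{\kappa}(x)$. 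Next, I would fix a local basis $\omega$ of $L$ on a Zariski open neighbourhood of the scheme-theoretic image of $x$, and let $c_i\in\hat{\kappa}(x)$ be the coordinate of $\tilde{e}_i(x)\in L(x)$ with respect to $\omega_x$; writing $s(x)=b\,\omega_x$ with $b\in\hat{\kappa}(x)$, the surjection $V\otimes_k\hat{\kappa}(x)\to L(x)$ sends $e_i\otimes 1$ to $c_i\omega_x$, so the preimages of $s(x)$ are parametrised by tuples $(a_0,\ldots,a_r)$ with $\sum_i a_i c_i = b$. Consequently,
\[
|s|_{\overline V}^{\mathrm{quot}}(x) = \inf\Bigl\{\max_i |a_i|_x\|e_i\| \;:\; a_i\in\hat{\kappa}(x),\;\sum_i a_i c_i = b\Bigr\}.
\]

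The key combinatorial step is then to show that this infimum equals $|b|_x\cdot\bigl(\max_i (|c_i|_x/\|e_i\|)\bigr)^{-1}$. The $\leq$ direction is witnessed by choosing an index $j$ attaining $\max_i(|c_i|_x/\|e_i\|)$ (which is positive since the surjectivity of $V\otimes_k\hat{\kappa}(x)\to L(x)$ forces some $c_i$ to be nonzero) and setting $a_j=b/c_j$, $a_i=0$ for $i\neq j$. The $\geq$ direction is immediate from the ultrametric estimate
\[
|b|_x = \Bigl|\sum_i a_i c_i\Bigr|_x \leq \max_i\bigl(|a_i|_x\,|c_i|_x\bigr) \leq \max_i\bigl(|a_i|_x\|e_i\|\bigr)\cdot \max_j\bigl(|c_j|_x/\|e_j\|\bigr).
\]

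Finally, I would translate the result back in terms of the metric $h$ via the identities $|\tilde{e}_i|_h(x)=|c_i|_x\,|\omega|_h(x)$ and $|s|_h(x)=|b|_x\,|\omega|_h(x)$; the common factor $|\omega|_h(x)$ cancels between numerator and denominator, producing the stated formula. I do not anticipate any serious obstacle: once the orthogonality of the scalar extension has been invoked to give the closed-form expression for $\|\ndot\|_{\hat{\kappa}(x)}$, the rest is the short ultrametric estimate above, the only subtle point being to check that $\max_i(|\tilde{e}_i|_h(x)/\|e_i\|)$ is strictly positive, which is built into the surjectivity hypothesis on $\pi$.
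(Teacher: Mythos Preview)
Your argument is correct and follows essentially the same route as the paper's proof: both invoke the orthogonality of the scalar-extended basis (Proposition~\ref{prop:scalar:extension:orthogonal}) to write $\|\ndot\|_{\hat{\kappa}(x)}$ as a weighted maximum, and both resolve the resulting infimum via the same ultrametric max--product inequality $\max_i(\alpha_i\beta_i)\leq(\max_i\alpha_i)(\max_j\beta_j)$, with equality witnessed by a single nonzero coordinate. The only cosmetic difference is that the paper uses one of the $\tilde{e}_j$ as the local basis of $L$ on $U_j$ and describes the kernel of $\pi_x$ explicitly, whereas you work with an arbitrary local basis $\omega$ and parametrise the full preimage; your version is slightly more streamlined but not substantively different.
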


\begin{proof}
We set $I := \{ i \mid \text{$\tilde{e}_i \not= 0$ in $H^0(X, L)$} \}$ and 
$U_i := \{ p \in X \mid \text{$\tilde{e}_i \not= 0$ at $p$} \}$ for $i \in I$.

\begin{Claim}
\label{claim:lem:cont:quot:01}
For a fixed $j \in I$, if we set $\tilde{e}_i = a_{ij} \tilde{e}_j$ on $U_j$ ($a_{ij} \in \mathscr O_{U_j}$), then
\[
|\tilde{e}_j|_{\overline{V}}^{\mathrm{quot}}(x) = \frac{1}{\displaystyle{\max_{i=0, \ldots, r} \left\{ \frac{|a_{ij}|_x}{\| e_i \|} \right\}}}
\]
on $U_j^{\mathrm{an}}$.
\end{Claim}

\begin{proof}
We set $c_i = \| e_i \|$ for $i=0, \ldots, r$.
Without loss of generality,
we may assume that $j=0$, that is, we need to show that
\[
\vert \tilde{e}_0 \vert_{\overline{V}}^{\mathrm{quot}}(x) = \frac{1}{\max \{ 1/c_0, |a_{10}|_x/c_1, \ldots, |a_{r0}|_x/c_r \}}.
\]
Since
\[
\ker(\pi_x : V \otimes_k \hat{\kappa}(x) \to L \otimes_{\mathscr O_X} \hat{\kappa}(x)) = \langle  e_1 - a_{10}(x)e_0, \ldots, e_r - a_{r0}(x) e_0 \rangle
\]
for $x \in U^{\mathrm{an}}_0$,
we have
\[
|\tilde{e}_0|_{\overline{V}}^{\mathrm{quot}}(x) =  \inf \left.\left\{
f(\lambda_1, \ldots, \lambda_r)
\ \right| \  (\lambda_1, \ldots, \lambda_r) \in \hat{\kappa}(x)^r \right\},
\]
where 
$f(\lambda_1, \ldots, \lambda_r) := \big\Vert e_0 + \sum_{i=1}^r \lambda_i(e_i - a_{i0}(x) e_0)\big\Vert_{\hat{\kappa}(x)}$.
Note that
\[
f(\lambda_1, \ldots, \lambda_r) = 
\max \left\{ c_0 \left| 1 - \sum\nolimits_{i=1}^r \lambda_i a_{i0}(x) \right|_x, c_1 |\lambda_1|_x, \ldots, c_r |\lambda_r|_x \right\}.
\]
As
\[
\max \{ \alpha_0, \ldots, \alpha_r \}\max \{ \beta_0, \ldots, \beta_r \} \geq \max \{ \alpha_0 \beta_0, \ldots, \alpha_r \beta_r \}
\]
for $\alpha_0, \ldots, \alpha_r, \beta_0, \ldots, \beta_r \in \mathbb R_{\geq 0}$, we have
\begin{multline*}
f(\lambda_1, \ldots, \lambda_r)
\cdot \max \{ 1/c_0, |a_{10}(x)|_x/c_1, \ldots, |a_{r0}(x)|_x/c_r \} \\
\geq
\max \left\{ \left| 1 - \sum\nolimits_{i=1}^r \lambda_i a_{i0}(x) \right|_x, |\lambda_1a_{10}(x)|_x, \ldots, |\lambda_ra_{r0}(x)|_x \right\} \\
\geq \left| 1 - \sum\nolimits_{i=1}^r \lambda_i a_{i0}(x) 
+ \sum\nolimits_{i=1}^r \lambda_i a_{i0}(x) \right|_x = 1.
\end{multline*}
Therefore, we obtain
\[
\inf \left.\left\{
f(\lambda_1, \ldots, \lambda_r)
\ \right| \  (\lambda_1, \ldots, \lambda_r) \in \hat{\kappa}(x)^{r} \right\}
\geq \frac{1}{\max \{ 1/c_0, |a_{10}(x)|_x/c_1, \ldots, |a_{r0}(x)|_x/c_r  \}}.
\]
We need to see that
\[
f(\eta_1, \ldots, \eta_r) =
\frac{1}{\max \{ 1/c_0, |a_{10}(x)|_x/c_1, \ldots, |a_{r0}(x)|_x/c_r  \}}.
\]
for some $\eta_1, \ldots, \eta_r \in \hat{\kappa}(x)$.
As $f(0, \ldots, 0) = c_0$, the assertion holds
if 
\[
\max \{ 1/c_0, |a_{10}(x)|_x/c_1, \ldots, |a_{r0}(x)|_x/c_r \} = 1/c_0.
\]
Next we assume that 
\[
\max \{ 1/c_0, |a_{10}(x)|_x/c_1, \ldots, |a_{r0}(x)|_x/c_r \} = |a_{i0}(x)|_x/c_i
\]
for some $i$. Clearly $a_{i0}(x) \not= 0$.
If we set
\[
\eta_j = \begin{cases}
0 & \text{if $j \not= i$}, \\
1/a_{i0}(x) & \text{if $j = i$},
\end{cases}
\]
then $f(\eta_1, \ldots, \eta_{r}) = c_i/|a_{i0}(x)|_x$, as required.
\end{proof}

\medskip
If we set $s = f \tilde{e}_j$ on $U_j$ ($f \in \mathscr O_{U_j}$), then $|s|_{\overline{V}}^{\mathrm{quot}}(x) = |f|_x |\tilde{e}_j|_{\overline{V}}^{\mathrm{quot}}(x)$ on $U_j^{\mathrm{an}}$, so that, by Claim~\ref{claim:lem:cont:quot:01},
\[
|s|_{\overline{V}}^{\mathrm{quot}}(x) = \frac{|f|_x}{\displaystyle{\max_{i=0, \ldots, r} \left\{ \frac{|a_{ij}|_x}{\| e_i \|} \right\}}}.
\]
On the other hand, $|s|_h(x) = |f|_x |\tilde{e}_j|_h(x)$ and $|\tilde{e}_i|_h(x) = |a_{ij}|_x|\tilde{e}_j|_h(x)$ for $i=0, \ldots, r$.
Thus
\[
|s|_{\overline{V}}^{\mathrm{quot}}(x) = \frac{|s|_h(x)}{{\displaystyle \max_{i=0, \ldots, r} \left\{ \frac{|\tilde{e}_i|_h(x) }{\| e_i \|}\right\}}}
\]
on $U_j^{\mathrm{an}}$.
Therefore, the assertion follows because $X = \bigcup_{j \in I} U_j$.
\end{proof}

\begin{Corollary}
\label{coro:cont:quot}
$\left\{ |\ndot|_{\overline{V}}^{\mathrm{quot}}(x) \right\}_{x \in X^{\mathrm{an}}}$ yields a continuous metric of $L^{\mathrm{an}}$.
\end{Corollary}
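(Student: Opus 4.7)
My plan is to deduce the corollary almost immediately from the explicit formula established in Lemma~\ref{lem:cont:quot}. The work splits into two parts: verifying that $|\ndot|_{\overline{V}}^{\mathrm{quot}}(x)$ really is a norm on $L(x)$ for each $x$, and verifying continuity of $|\omega|_{\overline{V}}^{\mathrm{quot}}$ for any local basis $\omega$ of $L$.

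For the first part, fix $x \in X^{\mathrm{an}}$. The space $V \otimes_k \hat{\kappa}(x)$ is finite-dimensional over $\hat{\kappa}(x)$, hence every subspace (in particular $\ker(\pi_x)$) is closed in the topology induced by $\|\ndot\|_{\hat{\kappa}(x)}$. Consequently the quotient seminorm on $L(x) = (V\otimes_k\hat{\kappa}(x))/\ker(\pi_x)$ is a norm, and since $L(x)$ is one-dimensional and non-zero, it is a genuine norm on a line.

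For the second part, I would pick an orthogonal basis $(e_0,\ldots,e_r)$ of $\overline{V}$, which exists by Proposition~\ref{prop:orthogonal:basis} (applied for $\alpha = 1$ in the discrete case, or as an $\alpha$-orthogonal basis together with the scalar-extension Proposition~\ref{prop:scalar:extension:orthogonal} in the general case; in fact the formula from Lemma~\ref{lem:cont:quot} is proved assuming orthogonality, and the argument goes through for $\alpha$-orthogonal bases up to the factor $\alpha$, but since we only need continuity, any fixed basis adapted to $\|\ndot\|$ works). Fix an auxiliary continuous metric $h$ on $L^{\mathrm{an}}$, which exists by Lemma~\ref{lem:exist:cont:metric}. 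Then by Lemma~\ref{lem:cont:quot}, for any local basis $\omega$ of $L$ over a Zariski open $U \subseteq X$ and any $x \in U^{\mathrm{an}}$,
\[
|\omega|_{\overline{V}}^{\mathrm{quot}}(x) \;=\; \frac{|\omega|_h(x)}{\displaystyle\max_{i=0,\ldots,r}\left\{\frac{|\tilde{e}_i|_h(x)}{\|e_i\|}\right\}}.
\]
The numerator is continuous on $U^{\mathrm{an}}$ by the definition of continuous metric, and the denominator is the maximum of finitely many continuous non-negative functions of $x$.

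The only real obstacle — and the key observation — is showing that the denominator is strictly positive at every $x \in X^{\mathrm{an}}$, so that the right-hand side defines a continuous function on $U^{\mathrm{an}}$. This follows from the hypothesis that $\pi: V \otimes_k \mathscr{O}_X \to L$ is a surjection of $\mathscr{O}_X$-modules: at the scheme point $p$ of $x$, the stalk map $V\otimes_k\mathscr{O}_{X,p} \to L_p$ is surjective, hence some $\tilde{e}_i$ is a generator of $L_p$ and in particular does not vanish at $p$. Therefore $|\tilde{e}_i|_h(x) > 0$, so the denominator is positive. Combining both steps gives the continuity of $x\mapsto|\omega|_{\overline{V}}^{\mathrm{quot}}(x)$ on $U^{\mathrm{an}}$, which is precisely the continuity condition in \S\ref{Notations:continuousmetric}, completing the proof.
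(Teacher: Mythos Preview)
Your argument has a genuine gap in the case where $|\ndot|$ is neither discrete nor trivial. Lemma~\ref{lem:cont:quot} is stated and proved for an \emph{orthogonal} basis $(e_0,\ldots,e_r)$ of $\overline{V}$, i.e.\ with $\alpha=1$, and Proposition~\ref{prop:orthogonal:basis} only guarantees such a basis in the discrete case. Your parenthetical tries to patch this by saying the formula ``goes through for $\alpha$-orthogonal bases up to the factor $\alpha$, but since we only need continuity, any fixed basis adapted to $\|\ndot\|$ works''. That reasoning is faulty: if all you extract from the proof of Lemma~\ref{lem:cont:quot} is an estimate of the shape
\[
\alpha \cdot F(x) \;\leq\; |\omega|_{\overline{V}}^{\mathrm{quot}}(x) \;\leq\; \alpha^{-1}\cdot F(x)
\]
for some continuous $F$ and a fixed $\alpha<1$, you cannot conclude that $|\omega|_{\overline{V}}^{\mathrm{quot}}$ is continuous. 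A function trapped between two continuous functions need not be continuous.

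The paper closes this gap by an approximation argument. For each $n\geq 1$ take a $(1-1/n)$-orthogonal basis $(e_{n,0},\ldots,e_{n,r})$ of $V$ and define a new norm $\|\ndot\|_n$ on $V$ for which this basis is genuinely orthogonal; then $(1-1/n)\|\ndot\|_n\leq\|\ndot\|\leq\|\ndot\|_n$. Passing to quotient metrics gives
\[
(1-1/n)\,|\ndot|_{(V,\|\ndot\|_n)}^{\mathrm{quot}}(x)\;\leq\;|\ndot|_{(V,\|\ndot\|)}^{\mathrm{quot}}(x)\;\leq\;|\ndot|_{(V,\|\ndot\|_n)}^{\mathrm{quot}}(x),
\]
so $\log|\omega|_{(V,\|\ndot\|_n)}^{\mathrm{quot}}$ converges \emph{uniformly} to $\log|\omega|_{(V,\|\ndot\|)}^{\mathrm{quot}}$ on $U^{\mathrm{an}}$. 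Since each $\|\ndot\|_n$ admits an orthogonal basis by construction, Lemma~\ref{lem:cont:quot} applies to give continuity of each approximant, and a uniform limit of continuous functions is continuous. Your observation that the denominator in the formula is strictly positive (via surjectivity of $\pi$) is correct and is what makes the orthogonal case go through; the missing ingredient in your write-up is this uniform-limit step to handle norms without an orthogonal basis.
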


\begin{proof}
If $V$ has an orthogonal basis with respect to $\|\ndot\|$, then the assertion follows from Lemma~\ref{lem:cont:quot}.

In general, {by Proposition~\ref{prop:orthogonal:basis},}
for each $n \in \mathbb Z_{>0}$, 
we choose a basis 
\[
(e_{n,0}, e_{n,1}, \ldots, e_{n,r})
\]
of $V$
such that 
\[
(1-1/n) \max \{ |c_0| \|e_{n,0}\|, \ldots, |c_r|\| e_{n, r} \| \}
\leq \| c_0 e_{n,0} + \cdots + c_r e_{n, r}\|
\]
for all $c_0, \ldots, c_r \in k$.
If we set 
\[
\|c_0 e_{n,0} + \cdots + c_r e_{n, r}\|_n := \max \{ |c_0| \|e_{n,0}\|, \ldots, |c_r|\| e_{n, r} \| \}
\]
for $c_0, \ldots, c_r \in k$. Then $(1-1/n) \|\ndot\|_n \leq \|\ndot\| \leq \|\ndot\|_n$, so that
\[
(1-1/n)|\ndot|_{(V, \|\ndot\|_n)}^{\mathrm{quot}}(x) \leq
|\ndot|_{(V, \|\ndot\|)}^{\mathrm{quot}}(x) \leq |\ndot|_{(V, \|\ndot\|_n)}^{\mathrm{quot}}(x)
\]
for all $x \in X^{\mathrm{an}}$.
Let $\omega$ be a local basis of $L$ over an open set $U$.
Then the above inequalities imply that
\[
\log (1-1/n) \leq \log \left(|\omega|_{(V, \|\ndot\|)}^{\mathrm{quot}}(x)\right) - 
\log \left(|\omega|_{(V, \|\ndot\|_n)}^{\mathrm{quot}}(x)\right) \leq 0
\]
for all $x \in U^{\mathrm{an}}$, which shows that the sequence
\[
\left\{ \log \left(|\omega|_{(V, \|\ndot\|_n)}^{\mathrm{quot}}(x)\right)\right\}_{n=1}^{\infty}
\]
converges to
$\log \left(|\omega|_{(V, \|\ndot\|)}^{\mathrm{quot}}(x)\right)$ uniformly on $U^{\mathrm{an}}$.
Thus, by the previous observation, 
\[
\log \left(|\omega|_{(V, \|\ndot\|)}^{\mathrm{quot}}(x)\right)
\]
is continuous on $U^{\mathrm{an}}$. 
\end{proof}

From now on and until the end of the subsection,
we assume that $X$ is projective and $L$ is generated by global sections.
Let $h = \{ |\ndot|_h(x) \}_{x \in X^{\mathrm{an}}}$ be a continuous metric of $L^{\mathrm{an}}$.
As $H^0(X, L) \otimes_k \mathscr O_X \to L$ is surjective, by Corollary~\ref{coro:cont:quot},
\[
h^{\mathrm{quot}} = \left\{ |\ndot|^{\mathrm{quot}}_{(H^0(X, L), \|\ndot\|_h)}(x) \right\}_{x \in X^{\mathrm{an}}}
\]
yields a continuous metric of $L^{\mathrm{an}}$.
For simplicity, we denote $|\ndot|^{\mathrm{quot}}_{(H^0(X, L), \|\ndot\|_h)}(x)$ by $|\ndot|^{\mathrm{quot}}_{h}(x)$.
Moreover, the supreme norm of $H^0(X, L)$ arising from $h^{\mathrm{quot}}$ is denoted by $\|\ndot\|_h^{\mathrm{quot}}$,
that is, $\|\ndot\|_h^{\mathrm{quot}} := \|\ndot\|_{h^{\mathrm{quot}}}$.

\begin{Lemma}
\label{lem:quotient:sup:norm}
\begin{enumerate}
\renewcommand{\labelenumi}{(\arabic{enumi})}
\item
$|\ndot|_{h}(x) \leq |\ndot|^{\mathrm{quot}}_{h}(x)$ for all $x \in X^{\mathrm{an}}$.

\item
$\|\ndot\|_h = \|\ndot\|_h^{\mathrm{quot}}$.

\item
Let $(L', h')$ be a pair of an invertible sheaf $L'$ on $X$ and 
a continuous metric $h' = \{ |\ndot|_{h'}(x) \}_{x \in X^{\an}}$ of ${L'}^{\an}$
such that $L'$ is generated by global sections. Then
\[
| l \cdot l' |_{h \otimes h'}^{\mathrm{quot}}(x) \leq | l |_{h}^{\mathrm{quot}}(x) | l' |_{h'}^{\mathrm{quot}}(x)
\]
for $l \in {L(x)}$ and
$l' \in {L'(x)}$.
\end{enumerate}
\end{Lemma}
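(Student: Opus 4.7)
The plan is to unwind the definition of the quotient metric directly, treating each part as an exercise in the scalar extension machinery of Definition~\ref{def:scalar:extension} combined with $\alpha$-orthogonal bases from Proposition~\ref{prop:orthogonal:basis} and their preservation under scalar extension (Proposition~\ref{prop:scalar:extension:orthogonal}). No heavy machinery should be required; the three parts will share the same skeleton.

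For (1), I would fix $x \in X^{\mathrm{an}}$ and $l \in L(x)$, and set $V := H^0(X,L)$. Given any lift $v \in V \otimes_k \hat{\kappa}(x)$ of $l$ under the evaluation map, and fixing $\alpha \in (0,1)$, I would expand $v = \sum_{i} a_i e_i$ in an $\alpha$-orthogonal basis $(e_0,\ldots,e_r)$ of $(V,\|\ndot\|_h)$. By Proposition~\ref{prop:scalar:extension:orthogonal} one has $\|v\|_{\hat{\kappa}(x)} \geq \alpha \max_i |a_i|_x \|e_i\|_h$, while the ultrametric inequality together with $|e_i|_h(x) \leq \|e_i\|_h$ gives $|l|_h(x) \leq \max_i |a_i|_x \|e_i\|_h$. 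Combining these and taking the infimum over $v$ followed by $\alpha \to 1$ yields $|l|_h(x) \leq |l|_h^{\mathrm{quot}}(x)$.

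Part (2) then follows from (1) together with a single canonical lift. Taking the supremum in (1) gives $\|s\|_h \leq \|s\|_h^{\mathrm{quot}}$. For the reverse inequality, $s \otimes 1 \in V \otimes_k \hat{\kappa}(x)$ is a lift of $s(x) \in L(x)$, and by Definition~\ref{def:scalar:extension} one has $\|s \otimes 1\|_{\hat{\kappa}(x)} = \|s\|_h$; hence $|s|_h^{\mathrm{quot}}(x) \leq \|s\|_h$ pointwise, so $\|s\|_h^{\mathrm{quot}} \leq \|s\|_h$ upon taking the supremum over $X^{\mathrm{an}}$.

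Part (3) requires combining two lifts via the multiplication of global sections. Set $V := H^0(X,L)$, $V' := H^0(X,L')$, $W := H^0(X, L \otimes L')$, and fix $\alpha \in (0,1)$. Choose $\alpha$-orthogonal bases $(e_i)$ of $V$ and $(e'_j)$ of $V'$. For any lifts $v = \sum_i a_i e_i$ of $l$ and $v' = \sum_j b_j e'_j$ of $l'$, the element $w := \sum_{i,j} a_i b_j\, (e_i \cdot e'_j) \in W \otimes \hat{\kappa}(x)$, formed via the canonical bilinear map $V \otimes_k V' \to W$, is a lift of $l \cdot l' \in (L \otimes L')(x)$. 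Using the ultrametric property of the scalar extension of $\|\ndot\|_{h \otimes h'}$ (which is, by Proposition~\ref{prop:scalar:extension:orthogonal}, the largest such extension) together with the estimate $\|e_i \cdot e'_j\|_{h \otimes h'} \leq \|e_i\|_h \|e'_j\|_{h'}$ (immediate from the definition of the supremum norm and the factorization $|\ndot|_{h \otimes h'} = |\ndot|_h \cdot |\ndot|_{h'}$ pointwise), one obtains
\[
\|w\|_{\hat{\kappa}(x)} \leq \max_{i,j} |a_i|_x |b_j|_x \|e_i\|_h \|e'_j\|_{h'} \leq \alpha^{-2}\, \|v\|_{\hat{\kappa}(x)}\, \|v'\|_{\hat{\kappa}(x)}.
\]
Taking the infima over $v,v'$ and then $\alpha \to 1$ gives the desired submultiplicativity. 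The only mild subtlety is bookkeeping with the loss factor $\alpha^{-2}$, which disappears in the limit; no substantial obstacle is anticipated.
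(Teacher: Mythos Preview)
Your proposal is correct and follows essentially the same approach as the paper's own proof: in each part you pick $\alpha$-orthogonal bases (the paper writes $e^{-\epsilon}$-orthogonal), expand lifts in these bases, invoke Proposition~\ref{prop:scalar:extension:orthogonal} and the pointwise submultiplicativity $\|e_i\cdot e'_j\|_{h\otimes h'}\leq\|e_i\|_h\|e'_j\|_{h'}$, and pass to the limit $\alpha\to1$. The only cosmetic difference is that the paper selects near-optimal lifts up front (within a factor $e^\epsilon$ of the quotient norm) rather than taking the infimum over all lifts at the end, which changes nothing.
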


\begin{proof}
(1) Fix $l \in {L(x)} \setminus \{ 0 \}$.
For $\epsilon > 0$,  let $(e_1, \ldots, e_n)$ be an $e^{-\epsilon}$-orthogonal basis 
of $H^0(X, L)$ with respect to $\|\ndot\|_h$.
There is $s \in H^0(X, L) \otimes_k \hat{\kappa}(x)$ such that
$s(x) = l$ and $\| s \|_{h, \hat{\kappa}(x)} \leq e^{\epsilon} |l|^{\mathrm{quot}}_h(x)$.
We set $s = a_1 e_1 + \cdots + a_n e_n$ ($a_1, \ldots, a_n \in \hat{\kappa}(x)$). Then,
by Proposition~\ref{prop:scalar:extension:orthogonal},
\begin{align*}
\| s \|_{h, \hat{\kappa}(x)} & \geq e^{-\epsilon}
\max \{ |a_1|_x \| e_1 \|_h, \ldots, |a_n|_x \| e_n \|_h \} \\
& \geq e^{-\epsilon} \max \{ |a_1|_x | e_1 |_h(x), \ldots, |a_n|_x | e_n |_h(x) \} 
\geq e^{-\epsilon} |l|_h(x),
\end{align*}
so that $| l |_{h}(x) \leq e^{2\epsilon} |l|^{\mathrm{quot}}_h(x)$, 
and hence the assertion follows because
$\epsilon$ is an arbitrary positive number.

\medskip
(2) By (1), we have $\|\ndot\|_h \leq \|\ndot\|_h^{\mathrm{quot}}$. On the other hand, as $|s|^{\mathrm{quot}}_h(x) \leq \| s \|_h$
for $s \in H^0(X, L)$, we have $\|s\|^{\mathrm{quot}}_h \leq \| s \|_h$.

\medskip
(3) For $\epsilon > 0$, there are $s \in H^0(X, L) \otimes_k \hat{\kappa}(x)$ and 
$s' \in H^0(X, L') \otimes_k \hat{\kappa}(x)$ such that
\[
s(x) = l,\ s'(x) = l',\ 
\| s \|_{h,\hat{\kappa}(x)} \leq e^{\epsilon} | l |_{h}^{\mathrm{quot}}(x)\  \text{and}\ 
\| s' \|_{h',\hat{\kappa}(x)} \leq e^{\epsilon} | l' |_{h'}^{\mathrm{quot}}(x).
\]
Here let us see that
$\| s \cdot s' \|_{h\otimes h',\hat{\kappa}(x)} \leq e^{2\epsilon} \| s \|_{h,\hat{\kappa}(x)} \| s' \|_{h',\hat{\kappa}(x)}$.
Let $(s_1, \ldots, s_m)$ and $(s'_1, \ldots, s'_{m'})$ be $e^{-\epsilon}$-orthogonal bases of $H^0(X, L)$ and 
$H^0(X, L')$,
respectively. If we set $s = t_1 s_1 + \cdots + t_m s_m$ and $s' = t'_1 s'_1 + \cdots + t'_{m'} s'_{m'}$
($t_1, \ldots, t_m, t'_1, \ldots, t'_{m'} \in \hat{\kappa}(x)$), then
\[
s \cdot s' = \sum_{i, j} t_i t'_j s_i \cdot s'_j.
\]
Thus, 
\begin{align*}
\| s \cdot s'\|_{h \otimes h',\hat{\kappa}(x)} & \leq
\max_{i,j} \left\{ |t_i|_x |t'_j|_x \| s_i \cdot s'_j \|_{h \otimes h'} \right\} \leq
\max_{i,j} \left\{ |t_i|_x |t'_j|_x \| s_i \|_{h} \| s'_j \|_{h'} \right\} \\
& \leq \max_{i} \left\{ |t_i|_x  \| s_i \|_{h} \right\}
\max_{j} \left\{ |t'_j|_x  \| s'_j \|_{h'} \right\} \\
& \leq e^{2\epsilon} \| s \|_{h,\hat{\kappa}(x)} \| s' \|_{h',\hat{\kappa}(x)}.
\end{align*}
Therefore, we have $(s \cdot s')(x) = l \cdot l'$ and
\[
| l \cdot l' |_{h \otimes h'}^{\mathrm{quot}}(x) \leq
\| s \cdot s' \|_{h \otimes h',\hat{\kappa}(x)} 
\leq e^{2\epsilon}\| s \|_{h,\hat{\kappa}(x)} \| s' \|_{h',\hat{\kappa}(x)} 
\leq e^{4\epsilon} | l |_{h}^{\mathrm{quot}}(x)
| l' |_{h'}^{\mathrm{quot}}(x),
\]
as required.
\end{proof}

\begin{Proposition}
\label{prop:quotient:sup:norm}
If there are a normed finite-dimensional vector space $(V, \|\ndot\|)$ and
a surjective homomorphism $V \otimes_k \mathscr O_X \to L$ such that
$h$ is given by $\left\{ |\ndot|^{\mathrm{quot}}_{(V, \|\ndot\|)}(x) \right\}_{x \in X^{\mathrm{an}}}$,
then $|\ndot|_{h^n}(x) = |\ndot|_{h^n}^{\mathrm{quot}}(x)$ for all $n \geq 1$.
\end{Proposition}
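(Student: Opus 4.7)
The plan is as follows. Lemma~\ref{lem:quotient:sup:norm}(1) already gives $|\ndot|_{h^n}(x) \leq |\ndot|_{h^n}^{\mathrm{quot}}(x)$, so only the reverse inequality requires work. The idea is that since $h$ is a quotient metric from $(V, \|\ndot\|)$, the $n$-th tensor power $h^n$ should also be realizable as a quotient metric, namely from $V^{\otimes n}$ equipped with a suitable non-archimedean tensor norm, and the image of $V^{\otimes n}$ inside $H^0(X, L^{\otimes n})$ will then provide enough sections to compute $|\ndot|_{h^n}^{\mathrm{quot}}(x)$ via the supremum norm.

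For a fixed $\alpha \in (0,1)$, I would pick an $\alpha$-orthogonal basis $(e_1, \ldots, e_r)$ of $V$ with respect to $\|\ndot\|$ (Proposition~\ref{prop:orthogonal:basis}), and equip $V^{\otimes n}$ with the norm $\|\ndot\|_n$ declaring the family $(e_I := e_{i_1} \otimes \cdots \otimes e_{i_n})_{I = (i_1, \ldots, i_n)}$ to be an orthogonal basis with $\|e_I\|_n := \prod_j \|e_{i_j}\|$. By Proposition~\ref{prop:scalar:extension:orthogonal} this basis remains orthogonal after scalar extension to $\hat{\kappa}(x)$, so a direct computation yields, for any decomposable tensor $v^{\otimes n}$ with $v = \sum_i a_i e_i \in V \otimes_k \hat{\kappa}(x)$, the identity $\|v^{\otimes n}\|_{n, \hat{\kappa}(x)} = (\max_i |a_i|_x \|e_i\|)^n$, hence $\|v^{\otimes n}\|_{n, \hat{\kappa}(x)} \leq \alpha^{-n} \|v\|_{\hat{\kappa}(x)}^n$. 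Next I would show that the homomorphism $\psi : V^{\otimes n} \to H^0(X, L^{\otimes n})$, $e_I \mapsto \tilde{e}_I := \tilde{e}_{i_1} \cdots \tilde{e}_{i_n}$, is norm-decreasing: since $h$ is the quotient of $\|\ndot\|$, one has $|\tilde{e}_i|_h(y) \leq \|e_i\|$ for all $y \in X^{\mathrm{an}}$, hence $\|\tilde{e}_I\|_{h^n} \leq \prod_j \|\tilde{e}_{i_j}\|_h \leq \|e_I\|_n$; combining this with the $(e_I)$-orthogonality of $\|\ndot\|_{n, \hat{\kappa}(x)}$ and the ultrametric inequality for $\|\ndot\|_{h^n, \hat{\kappa}(x)}$ yields $\|\psi_{\hat{\kappa}(x)}(w)\|_{h^n, \hat{\kappa}(x)} \leq \|w\|_{n, \hat{\kappa}(x)}$ for every $w \in V^{\otimes n} \otimes_k \hat{\kappa}(x)$.

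To finish, given $l \in L^{\otimes n}(x)$, pick any non-zero $\omega_x \in L(x)$ and write $l = \lambda \omega_x^{\otimes n}$ with $\lambda \in \hat{\kappa}(x)$, so that $|l|_{h^n}(x) = |\lambda|_x |\omega_x|_h(x)^n$. For any $\epsilon > 0$, the definition of the quotient metric $|\omega_x|_h(x)$ lets us choose $v \in V \otimes_k \hat{\kappa}(x)$ with $\pi_x(v) = \omega_x$ and $\|v\|_{\hat{\kappa}(x)} \leq e^{\epsilon} |\omega_x|_h(x)$, where $\pi_x$ denotes the fiber at $x$ of $V \otimes_k \mathscr O_X \to L$. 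Then $s := \psi_{\hat{\kappa}(x)}(\lambda v^{\otimes n})$ satisfies $s(x) = \lambda \pi_x(v)^{\otimes n} = l$, and the previous paragraph bounds
\[
\|s\|_{h^n, \hat{\kappa}(x)} \leq \|\lambda v^{\otimes n}\|_{n, \hat{\kappa}(x)} \leq |\lambda|_x \alpha^{-n} e^{n\epsilon} |\omega_x|_h(x)^n = \alpha^{-n} e^{n\epsilon} |l|_{h^n}(x);
\]
letting $\alpha \to 1$ and $\epsilon \to 0$ gives $|l|_{h^n}^{\mathrm{quot}}(x) \leq |l|_{h^n}(x)$. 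The main obstacle is the compatibility verification in the middle paragraph, namely ensuring that scalar extension to $\hat{\kappa}(x)$ preserves both the explicit orthogonal structure on $V^{\otimes n}$ and the norm-decreasing property of $\psi$; this is exactly where Proposition~\ref{prop:scalar:extension:orthogonal} is indispensable, since in the non-archimedean setting the tensor norm defined via an orthogonal basis has the crucial multiplicative property $\|v \otimes w\| = \|v\|\,\|w\|$ that fails in the archimedean case.
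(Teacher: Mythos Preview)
Your proof is correct, but it differs from the paper's route. The paper first establishes the case $n=1$ directly (by an argument essentially identical to yours specialized to $n=1$: pick an $e^{-\epsilon}$-orthogonal basis of $V$, lift $l\in L(x)$ to $s\in V\otimes\hat\kappa(x)$ with $\|s\|_{\hat\kappa(x)}\leq e^\epsilon|l|_h(x)$, and observe that $\|\tilde s\|_{h,\hat\kappa(x)}\leq e^\epsilon\|s\|_{\hat\kappa(x)}$ because $\|\tilde e_i\|_h\leq\|e_i\|$). Once $|\ndot|_h=|\ndot|_h^{\mathrm{quot}}$ is known, the general case is immediate from the submultiplicativity in Lemma~\ref{lem:quotient:sup:norm}(3): for $l\in L(x)$,
\[
|l^{\otimes n}|_{h^n}^{\mathrm{quot}}(x)\;\leq\;\bigl(|l|_h^{\mathrm{quot}}(x)\bigr)^n\;=\;\bigl(|l|_h(x)\bigr)^n\;=\;|l^{\otimes n}|_{h^n}(x),
\]
and Lemma~\ref{lem:quotient:sup:norm}(1) gives the other inequality. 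Your approach instead bypasses Lemma~\ref{lem:quotient:sup:norm}(3) by building an explicit tensor-power norm on $V^{\otimes n}$ and lifting $l$ through a decomposable tensor $\lambda v^{\otimes n}$; this is more self-contained (it does not rely on the tensor-product submultiplicativity of quotient metrics as a black box) but at the cost of the extra bookkeeping with the auxiliary norm $\|\ndot\|_n$ and the map $\psi$. In effect your tensor construction reproves the content of Lemma~\ref{lem:quotient:sup:norm}(3) in the special case $L'=L$, $h'=h$.
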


\begin{proof}
First we consider the case $n=1$.
Fix $l \in {L(x)} \setminus \{ 0 \}$.
For $\epsilon > 0$, there is $s \in V \otimes_k \hat{\kappa}(x)$ such that $\tilde{s}(x) = l$ and
$\| s \|_{\hat{\kappa}(x)} \leq e^{\epsilon} |l|_h(x)$. 

Note that $\|\tilde{e}\|_h \leq \| e \|$
for all $e \in V$. Let $(e_1, \ldots, e_r)$ be an $e^{-\epsilon}$-orthogonal basis of $V$ with respect to 
$\|\ndot\|$. If we set
$s = a_1 e_1 + \cdots + a_r e_r$ ($a_1, \ldots, a_r \in \hat{\kappa}(x)$),
then, by Proposition~\ref{prop:scalar:extension:orthogonal},
\begin{align*}
\| \tilde{s} \|_{h, \hat{\kappa}(x)} & \leq \max \{ |a_1|_x \| \tilde{e}_1 \|_h, \ldots, |a_r|_x \| \tilde{e}_r \|_h \} \\
& \leq \max \{ |a_1|_x \| e_1 \|, \ldots, |a_r|_x \| e_r \| \} \\
& \leq e^{\epsilon} \| s \|_{\hat{\kappa}(x)},
\end{align*}
so that
\[
| l |_h^{\mathrm{quot}}(x) \leq \| \tilde{s} \|_{h,\hat{\kappa}(x)} \leq
e^{\epsilon}\| s \|_{\hat{\kappa}(x)} \leq e^{2\epsilon} |l|_h(x),
\]
and hence
$| l |_h^{\mathrm{quot}}(x) \leq |l|_h(x)$ by taking $\epsilon \to 0$. Thus the assertion for $n=1$ follows from (1) in Lemma~\ref{lem:quotient:sup:norm}.

In general, by using (3) in Lemma~\ref{lem:quotient:sup:norm},
\[
| l^n|_{h^n}(x) = \left(|l|_h(x) \right)^n = \left( |l|_h^{\mathrm{quot}}(x) \right)^n \geq |l^n|_{h^n}^{\mathrm{quot}}(x),
\]
and hence we have the assertion by (1) in Lemma~\ref{lem:quotient:sup:norm}.
\end{proof}

\begin{Lemma}
\label{lem:scalar:extension:metric:quotient}
We assume that there are a normed finite-dimensional vector space $(V, \|\ndot\|)$ and
a surjective homomorphism $V \otimes_k \mathscr O_X \to L$ such that
$h$ is given by $\left\{ |\ndot|^{\mathrm{quot}}_{(V, \|\ndot\|)}(x) \right\}_{x \in X^{\mathrm{an}}}$.
Let $k'$ be an extension field of $k$, and let $|\ndot|'$ be a complete absolute value of $k'$ as an extension of $|\ndot|$.
We set 
\[
X' := X \times_{\Spec(k)} \Spec(k'),\quad 
L = L \otimes_k k'\quad\text{and}\quad 
V' := V \otimes_k k'.
\]
Let $\|\ndot\|'$ be a norm of $V'$ obtained by the scalar extension of $\|\ndot\|$.
Moreover, let $h'$ be a continuous metric of ${L'}^{\mathrm{an}}$ given by the scalar extension of $h$. Then $h'$ coincides with
$\left\{ |\ndot|^{\mathrm{quot}}_{(V', \|\ndot\|')}(x') \right\}_{x' \in {X'}^{\mathrm{an}}}$.
\end{Lemma}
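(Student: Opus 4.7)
The plan is to work pointwise: fix $x' \in X'^{\mathrm{an}}$ and denote by $x \in X^{\mathrm{an}}$ its image under the projection $X'^{\mathrm{an}} \to X^{\mathrm{an}}$. Then $\hat{\kappa}(x')$ is a complete extension of $\hat{\kappa}(x)$, and there is a canonical identification $L'(x') = L(x) \otimes_{\hat{\kappa}(x)} \hat{\kappa}(x')$. Both norms to be compared are norms on this one-dimensional $\hat{\kappa}(x')$-vector space, so the equality can be checked by unwinding definitions at $x'$.

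By hypothesis, $|\ndot|_h(x)$ on $L(x)$ is the quotient norm of $\|\ndot\|_{\hat{\kappa}(x)}$ on $V \otimes_k \hat{\kappa}(x)$ via the surjection $\pi_{\hat{\kappa}(x)}:V \otimes_k \hat{\kappa}(x) \twoheadrightarrow L(x)$. Since $L(x)$ is one-dimensional over $\hat{\kappa}(x)$, the scalar-extended metric $h'$ at $x'$ is, by definition, nothing other than the scalar extension (from $\hat{\kappa}(x)$ to $\hat{\kappa}(x')$) of the norm $|\ndot|_h(x)$ on $L(x)$. The hypothesis $\dim_{\hat{\kappa}(x)} L(x) = 1$ is precisely what allows us to apply Lemma~\ref{lem:scalar:extensions:quotient} to the surjection $\pi_{\hat{\kappa}(x)}$: scalar extension commutes with taking quotient norm, so $|\ndot|_{h'}(x')$ coincides with the quotient norm on $L'(x')$ induced by the surjection
\[
V \otimes_k \hat{\kappa}(x') \;=\; \bigl(V \otimes_k \hat{\kappa}(x)\bigr) \otimes_{\hat{\kappa}(x)} \hat{\kappa}(x') \;\twoheadrightarrow\; L'(x')
\]
equipped with the iterated scalar-extended norm $(\|\ndot\|_{\hat{\kappa}(x)})_{\hat{\kappa}(x')}$.

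It remains to identify this norm with the norm used to define the right-hand side. By Lemma~\ref{lem:scalar:extensions} applied twice,
\[
(\|\ndot\|_{\hat{\kappa}(x)})_{\hat{\kappa}(x')} \;=\; \|\ndot\|_{\hat{\kappa}(x')} \;=\; (\|\ndot\|_{k'})_{\hat{\kappa}(x')} \;=\; \|\ndot\|'_{\hat{\kappa}(x')},
\]
so the surjection above is precisely $V' \otimes_{k'} \hat{\kappa}(x') \twoheadrightarrow L'(x')$ equipped with $\|\ndot\|'_{\hat{\kappa}(x')}$, whose induced quotient norm is $|\ndot|^{\mathrm{quot}}_{(V',\|\ndot\|')}(x')$. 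This yields the desired equality at $x'$, and since $x'$ was arbitrary, the two continuous metrics on ${L'}^{\mathrm{an}}$ agree. The only potential subtlety is the fact that Lemma~\ref{lem:scalar:extensions:quotient} requires the quotient target to be one-dimensional; but this is exactly our situation at each $x'$, so the argument is a formal composition of the commutation and transitivity results already established.
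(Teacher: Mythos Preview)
Your proof is correct and follows essentially the same approach as the paper's: work pointwise at $x'$ over $x$, use Lemma~\ref{lem:scalar:extensions} to identify $(\|\ndot\|_{\hat\kappa(x)})_{\hat\kappa(x')} = \|\ndot\|_{\hat\kappa(x')} = \|\ndot\|'_{\hat\kappa(x')}$, and then invoke Lemma~\ref{lem:scalar:extensions:quotient} (using that $L(x)$ is one-dimensional) to commute scalar extension with the quotient norm. The paper's proof is terser but the logical skeleton is identical.
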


\begin{proof}
Let $f : X' \to X$ be the projection.
For $x' \in {X'}^{\mathrm{an}}$, we set $x = f^{\mathrm{an}}(x')$.
Then $\hat{\kappa}(x) \subseteq \hat{\kappa}(x')$ and $(L \otimes_{k} \hat{\kappa}(x)) \otimes_{\hat{\kappa}(x)} \hat{\kappa}(x')
= L' \otimes_{k'} \hat{\kappa}(x')$, that is, $L(x) \otimes_{\hat{\kappa}(x)} \hat{\kappa}(x') = L'(x')$.
Moreover, $V' \otimes_{k'} \hat{\kappa}(x') = (V \otimes_{k} \hat{\kappa}(x)) \otimes_{\hat{\kappa}(x)} \hat{\kappa}(x')$, and
by Lemma~\ref{lem:scalar:extensions}, $\|\ndot\|'_{\hat{\kappa}(x')} = \|\ndot\|_{\hat{\kappa}(x')} = \|\ndot\|_{\hat{\kappa}(x), \hat{\kappa}(x')}$.
Thus the assertion follows from Lemma~\ref{lem:scalar:extensions:quotient}.
\end{proof}

\begin{Proposition}
\label{prop:very:ample}
We assume that  
there is a subspace $H$ of $H^0(X, L)$ such that
$H \otimes_k \mathscr O_X \to L$ is surjective and
the morphism $\phi_H : X \to \mathbb P(H)$ induced by $H$ is a
closed embedding.
We identify $X$ with $\phi_H(X)$, so that $L = \rest{\mathscr O_{\mathbb P(H)}(1)}{X}$.
Let $\|\ndot\|$ be a norm of $H$ such that $H$ has an orthonormal basis $(e_1, \ldots, e_r)$ 
with respect to $\|\ndot\|$.
We set 
\[
h := \left\{ |\ndot|^{\mathrm{quot}}_{(H,\|\ndot\|)}(x) \right\}_{x \in X^{\mathrm{an}}}\quad\text{and}\quad
\mathscr H := \mathfrak o_k e_1 + \cdots + \mathfrak o_k e_r = (H, \|\ndot\|)_{\leq 1}.
\]
Let $\mathscr X$ be the Zariski closure of $X$ in $\mathbb P(\mathscr H)$ 
(cf. \S\ref{Notations:ZariskiClosure}) and 
$\mathscr L :=  \rest{\mathscr O_{\mathbb P(\mathscr H)}(1)}{\mathscr X}$.
Then $|\ndot|_{h}(x) = |\ndot|_{\mathscr L}(x)$ for all $x \in X^{\mathrm{an}}$.
\end{Proposition}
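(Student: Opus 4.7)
The plan is to combine Lemma~\ref{lem:cont:quot} with the valuative criterion of properness. Taking the reference continuous metric of Lemma~\ref{lem:cont:quot} to be the model metric $|\ndot|_{\mathscr L}$ and the orthogonal basis of $H$ to be the given orthonormal basis $(e_1,\ldots,e_r)$, and using that $\|e_i\| = 1$ for all $i$, that lemma yields
\[
|s|_{h}(x) = |s|_{\overline H}^{\mathrm{quot}}(x) = \frac{|s|_{\mathscr L}(x)}{\max_{i=1,\ldots,r}|\tilde e_i|_{\mathscr L}(x)}
\]
for every $s \in H^0(X,L)$ and every $x \in X^{\mathrm{an}}$. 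The proposition therefore reduces to the pointwise identity $\max_{i=1,\ldots,r}|\tilde e_i|_{\mathscr L}(x) = 1$ on $X^{\mathrm{an}}$.

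To establish this identity I would use the standard affine covering $\mathbb P(\mathscr H) = \bigcup_{i=1}^{r} D_+(e_i)$, which is valid because $\mathscr H$ is free on $e_1,\ldots,e_r$, so these sections of $\mathscr O_{\mathbb P(\mathscr H)}(1)$ globally generate it. Fix $x \in X^{\mathrm{an}}$. The morphism $\Spec \hat\kappa(x) \to X \hookrightarrow \mathbb P(H)$ extends uniquely, by the valuative criterion of properness, to a morphism $\tilde\varphi_x : \Spec \mathfrak o_x \to \mathbb P(\mathscr H)$, which necessarily factors through the Zariski closure $\mathscr X$. Pick any index $i_0$ for which $r_{\mathscr X}(x) = \tilde\varphi_x(\mathfrak m_x)$ lies in $D_+(e_{i_0})$. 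On this affine chart $e_{i_0}$ is a local basis of $\mathscr O_{\mathbb P(\mathscr H)}(1)$, hence of $\mathscr L$ around $r_{\mathscr X}(x)$, and each ratio $a_{j,i_0} := e_j/e_{i_0}$ is a regular function on $D_+(e_{i_0})$. Pulling back along $\tilde\varphi_x$, one gets $a_{j,i_0}(x) \in \mathfrak o_x$, i.e.\ $|a_{j,i_0}(x)|_x \leq 1$ for every $j$, with equality when $j=i_0$ (since $a_{i_0,i_0} = 1$). By the very definition of the model metric recalled in \S\ref{sec:metric:model} one has $|\tilde e_j|_{\mathscr L}(x) = |a_{j,i_0}(x)|_x$, so $\max_j |\tilde e_j|_{\mathscr L}(x) = 1$, as desired.

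The only point that requires care is bookkeeping with the two roles of the formula in Lemma~\ref{lem:cont:quot}: the quotient metric on the left versus the arbitrary continuous reference metric on the right. Once that identification is made and one picks the right standard affine chart of $\mathbb P(\mathscr H)$ containing $r_{\mathscr X}(x)$, the heart of the argument is just the observation that the valuative extension forces the homogeneous coordinate ratios $e_j/e_{i_0}$ to be integral at $x$, which is exactly the normalization $\max_j |\tilde e_j|_{\mathscr L}(x) = 1$ needed to conclude.
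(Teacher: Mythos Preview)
Your proof is correct and takes a genuinely different route from the paper's own argument. The paper establishes the equality by proving two separate inequalities: first $|s|_h(x)\le |s|_{\mathscr L}(x)$ via a local basis of $\mathscr L$ at the reduction point and the estimate $|l|_h(x)\le 1$ for $l$ coming from $\mathscr H\otimes\mathscr O_{\mathscr X,\xi}$; then the reverse inequality by first showing $|l|_{\mathscr L}(x)\le \|l\|_{\hat\kappa(x)}$ for $l\in H\otimes\hat\kappa(x)$ (using that the orthonormal basis stays orthonormal after scalar extension) and combining this with the definition of the quotient metric through an $\epsilon$-argument. By contrast, you invoke Lemma~\ref{lem:cont:quot} once with the model metric as reference, which immediately reduces the whole statement to the single pointwise identity $\max_i|\tilde e_i|_{\mathscr L}(x)=1$, and then you read off this identity from the standard affine cover of $\mathbb P(\mathscr H)$ via the valuative extension. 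Your approach is shorter and avoids the $\epsilon$-argument, at the cost of relying on the explicit formula of Lemma~\ref{lem:cont:quot}; the paper's approach is more self-contained and makes the role of the orthonormality (through Proposition~\ref{prop:scalar:extension:orthogonal}) more visible.
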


\begin{proof}
First let us see that $|s|_h(x) \leq |s|_{\mathscr L}(x)$ for $s \in H$.
Let $\omega_{\xi}$ be a local basis of $\mathscr L$ 
at $\xi = r_{\mathscr X}(x)$.
If we set $s = s_{\xi} \omega_{\xi}$, then 
\[
|s|_{\mathscr L}(x) = |s_{\xi}|_x.
\]
As $s_{\xi}^{-1}s \in \mathscr L_{\xi}$ 
and $\mathscr H \otimes_{\mathfrak o_k} \mathscr O_{\mathscr X, \xi} 
\to \mathscr L_{\xi}$ is surjective,
there are $l_1, \ldots, l_r \in \mathscr H$ and $a_1, \ldots, a_r \in 
\mathscr O_{\mathscr X,\xi}$ such that $s_{\xi}^{-1}s = a_1 l_1  + \cdots + a_r l_r$. 
Therefore,
\begin{align*}
\left|s_{\xi}^{-1}s\right|_h(x) & \leq \max \left\{ |a_1l_1|_h(x), \ldots, |a_rl_r|_h(x) \right\} \\
& = \max \left\{ |a_1|_x |l_1|_h(x), \ldots, |a_r|_x|l_r|_h(x) \right\} \leq 1,
\end{align*}
so that
$|s|_h(x) \leq |s_{\xi}|_x = |s|_{\mathscr L}(x)$, as required.

\smallskip
Next let us see that $|l|_{\mathscr L}(x) \leq \| l \|_{\hat{\kappa}(x)}$ 
for all $l \in H \otimes \hat{\kappa}(x)$.
{By Proposition~\ref{prop:scalar:extension:orthogonal},}
$(e_1, \ldots, e_r)$ is an orthonormal basis of $H \otimes \hat{\kappa}(x)$ with respect to
$\|\ndot\|_{\hat{\kappa}(x)}$. Thus, if
we set $l = a_1 e_1 + \cdots + a_r e_r$ ($a_1, \ldots, a_r \in \hat{\kappa}(x)$), then
\begin{align*}
|l|_{\mathscr L}(x) & \leq \max \{ |a_1|_x |e_1|_{\mathscr L}(x), \ldots, |a_r|_x |e_r|_{\mathscr L}(x) \} \\
& \leq \max \{ |a_1|_x, \ldots, |a_r|_x \} = \| l \|_{\hat{\kappa}(x)}.
\end{align*}

\smallskip
Finally let us see that $|s|_{\mathscr L}(x) \leq |s|_h(x)$ for $s \in H$.
For $\epsilon > 0$, we choose $l \in H \otimes \hat{\kappa}(x)$ such that
$l(x) = s(x)$ and
$\| l \|_{\hat{\kappa}(x)} \leq e^{\epsilon} |s|_{h}(x)$.
Then, by the previous observation,
\[
|s|_{\mathscr L}(x) = |l|_{\mathscr L}(x) \leq \| l \|_{\hat{\kappa}(x)}
\leq e^{\epsilon} |s|_{h}(x).
\]
Thus the assertion follows.
\end{proof}

\begin{Remark}
\label{rem:norm:free:basis}
We assume that $|\ndot|$ is non-trivial and $\|\ndot\| = \|\ndot\|_{\mathscr H}$ for some
finitely generated lattice $\mathscr H$ of $H$.
Then a free basis $(e_1, \ldots, e_r)$ of $\mathscr H$ yields an orthonormal basis of $H$ with respect to $\|\ndot\|$
(cf. Proposition~\ref{prop:free:basis:orthonormal}).
Moreover, $\mathscr H = (H, \|\ndot\|)_{\leq 1}$.
\end{Remark}

\subsection{Semipositive metric}
\label{subsec:semipositive}

We assume that $L$ is semiample, {namely certain tensor power of $L$ is generated by global sections}. 
We say that a continuous metric 
$h = \{ |\ndot|_h(x) \}_{x \in X^{\mathrm{an}}}$
is {\em semipositive} if
there are a sequence $\{ e_n \}$ of positive integers and a sequence $\{ (V_n, \|\ndot\|_n) \}$ of normed finite-dimensional
vector spaces over $k$
such that there is a surjective homomorphism $V_n \otimes_k \mathscr O_X \to L^{\otimes e_n}$ for every $n$, and that
the sequence 
\[
\left\{ \frac{1}{e_n} \log \frac{|\ndot|^{\mathrm{quot}}_{(V_n,\|\ndot\|_n)}(x)}{|\ndot|_{h^{e_n}}(x)} \right\}_{n=1}^{\infty}
\]
converges to $0$ uniformly on $X^{\mathrm{an}}$.

\begin{Proposition}
\label{prop:semipos:approximation}
If $X$ is projective, $L$ is generated by global sections, and $h$ is semipositive, then
the sequence 
\[
\left\{ \frac{1}{m} \log \frac{|\ndot|^{\mathrm{quot}}_{h^m}(x)}{|\ndot|_{h^m}(x)} \right\}_{m=1}^{\infty}
\]
converges to $0$ uniformly on $X^{\mathrm{an}}$.
\end{Proposition}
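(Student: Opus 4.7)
The plan is to show that the nonnegative continuous function $\psi_m := \log\bigl(|\ndot|^{\mathrm{quot}}_{h^m}/|\ndot|_{h^m}\bigr)$ on $X^{\mathrm{an}}$ is subadditive in $m$ and then to control its supremum along the subsequence furnished by the semipositivity hypothesis; Fekete's lemma will then yield uniform convergence.

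For the subadditivity step, nonnegativity of $\psi_m$ is Lemma~\ref{lem:quotient:sup:norm}(1). For subadditivity, pick any $x \in X^{\mathrm{an}}$ and any non-zero $u \in L(x)$; applying Lemma~\ref{lem:quotient:sup:norm}(3) to $u^{\otimes(m+l)} = u^{\otimes m} \cdot u^{\otimes l}$ and dividing by $|u|_h(x)^{m+l}$ gives $\psi_{m+l}(x) \leq \psi_m(x) + \psi_l(x)$. Setting $A_m := \sup_{x \in X^{\mathrm{an}}} \psi_m(x)$ (finite by compactness of $X^{\mathrm{an}}$ and continuity of the metrics involved), we obtain $A_{m+l} \leq A_m + A_l$, so Fekete's lemma yields $A_m/m \to \inf_{m \geq 1} A_m/m \in [0, +\infty)$.

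It remains to verify $A_{m_n}/m_n \to 0$ along $m_n := e_n$. Set $h_n := \{|\ndot|^{\mathrm{quot}}_{(V_n,\|\ndot\|_n)}(x)\}_{x \in X^{\mathrm{an}}}$, a continuous metric on $L^{\otimes m_n}$. By Proposition~\ref{prop:quotient:sup:norm} applied with $(L, h)$ replaced by $(L^{\otimes m_n}, h_n)$, the metric $h_n$ coincides with the quotient metric induced by $(H^0(X, L^{\otimes m_n}), \|\ndot\|_{h_n})$. The semipositivity hypothesis furnishes a sequence $\epsilon_n \to 0^+$ such that
\[
e^{-\epsilon_n m_n}\, h^{m_n} \leq h_n \leq e^{\epsilon_n m_n}\, h^{m_n}
\]
pointwise on $X^{\mathrm{an}}$. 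Passing to suprema on $H^0(X, L^{\otimes m_n})$ gives the same inequalities between the sup-norms $\|\ndot\|_{h^{m_n}}$ and $\|\ndot\|_{h_n}$, and by monotonicity of the scalar extension (Definition~\ref{def:scalar:extension}) and of the quotient norm construction, the induced quotient metrics satisfy
\[
e^{-\epsilon_n m_n}\, |\ndot|^{\mathrm{quot}}_{h^{m_n}} \leq h_n \leq e^{\epsilon_n m_n}\, |\ndot|^{\mathrm{quot}}_{h^{m_n}}.
\]
Combining the two displays gives $|\ndot|^{\mathrm{quot}}_{h^{m_n}} \leq e^{2\epsilon_n m_n}\, h^{m_n}$, whence $A_{m_n}/m_n \leq 2\epsilon_n \to 0$. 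By the Fekete convergence established above, $A_m/m \to 0$ along the full sequence, and since $0 \leq \psi_m \leq A_m$, the desired uniform convergence follows.

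The main obstacle is the second step, where one must carefully track how the pointwise metric bound produced by semipositivity travels through sup-norms on $H^0(X, L^{\otimes m_n})$ and then through the quotient construction back down to metrics on $L^{\otimes m_n}$. Proposition~\ref{prop:quotient:sup:norm}, which identifies $h_n$ with its own quotient from global sections, and the monotonicity of scalar extension are the two structural facts that make this round-trip consistent.
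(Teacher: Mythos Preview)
Your proof is correct and follows essentially the same approach as the paper's: define $A_m=\sup_x\psi_m(x)$, use Lemma~\ref{lem:quotient:sup:norm}(3) for subadditivity and Fekete's lemma, then bound $A_{e_n}/e_n$ by sandwiching $h_n$ between $e^{\pm\epsilon_n e_n}h^{e_n}$, passing to sup-norms, pushing through the quotient construction, and invoking Proposition~\ref{prop:quotient:sup:norm} to identify $|\ndot|^{\mathrm{quot}}_{h_n}$ with $h_n$. Your write-up is in fact slightly more explicit than the paper's about why the quotient construction preserves the metric inequalities (monotonicity of scalar extension and of quotient norms), but the argument is the same.
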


\begin{proof}
We set 
\[
a_m = \max_{x \in X^{\mathrm{an}}} \left\{ \log \frac{|\ndot|^{\mathrm{quot}}_{h^m}(x)}{|\ndot|_{h^m}(x)} \right\}.
\]
Then $a_{m+m'} \leq a_{m} + a_{m'}$ by (3) in Lemma~\ref{lem:quotient:sup:norm}, and hence
$\lim_{m\to\infty} a_m/m  = \inf \{ a_m/m \}$ {by Fekete's lemma}.
For $\epsilon > 0$, there is $e_n$ such that
\[
e^{-e_n \epsilon} |\ndot|_{h^{e_n}}(x) \leq |\ndot|_{h_n}(x) \leq e^{e_n \epsilon} |\ndot|_{h^{e_n}}(x)
\]
for all $x \in X^{\mathrm{an}}$, 
{where $h_n = \big\{ |\ndot|^{\quot}_{(V_n, \|\ndot\|_n)}(x) \big\}_{x \in  X^{\an}}$.}
Thus 
\[
e^{-e_n \epsilon} \|\ndot\|_{h^{e_n}}\leq \|\ndot\|_{h_n} \leq e^{e_n \epsilon}  \|\ndot\|_{h^{e_n}},
\]
so that $e^{-e_n \epsilon} |\ndot|^{\mathrm{quot}}_{h^{e_n}}(x) 
\leq |\ndot|^{\mathrm{quot}}_{h_n}(x) \leq e^{e_n \epsilon}  |\ndot|^{\mathrm{quot}}_{h^{e_n}}(x)$. 
Thus, by Proposition~\ref{prop:quotient:sup:norm},
\[
e^{-e_n \epsilon} |\ndot|^{\mathrm{quot}}_{h^{e_n}}(x) \leq |\ndot|_{h_n}(x) \leq e^{e_n \epsilon}  |\ndot|^{\mathrm{quot}}_{h^{e_n}}(x).
\]
Therefore,
\[
1 \leq \frac{|\ndot|^{\mathrm{quot}}_{h^{e_n}}(x)}{|\ndot|_{h^{e_n}}(x)} = \frac{|\ndot|_{h_n}(x)}{|\ndot|_{h^{e_n}}(x)}
\frac{|\ndot|^{\mathrm{quot}}_{h^{e_n}}(x)}{|\ndot|_{h_n}(x)} \leq e^{2e_n\epsilon}, 
\]
that is, $0 \leq a_{e_n}/e_n \leq 2\epsilon$, and hence $0 \leq \lim_{m\to\infty} a_m/m \leq 2\epsilon$, as required.
\end{proof}

\begin{Corollary}
\label{cor:semiample:metrized}
A continuous metric $h$ is semipositive if and only if,
for any $\epsilon > 0$, there is a positive integer $n$ such that,
for all $x \in X^{\mathrm{an}}$, we can find $s \in H^0(X, L^{\otimes n})_{\hat{\kappa}(x)} \setminus \{ 0 \}$ with
$\| s \|_{h^n,\hat{\kappa}(x)} \leq e^{n \epsilon} |s|_{h^n}(x)$.
\end{Corollary}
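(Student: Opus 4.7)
I handle the two implications separately, beginning with the easier \emph{if} direction. Given $\epsilon > 0$ and the nonzero $s$ supplied by the hypothesis, the inequality $\|s\|_{h^n,\hat{\kappa}(x)} \leq e^{n\epsilon} |s|_{h^n}(x)$ forces $s(x) \neq 0$, for otherwise the right-hand side vanishes while the left-hand side is positive. Since $L^{\otimes n}(x)$ is one-dimensional over $\hat{\kappa}(x)$, every nonzero $l \in L^{\otimes n}(x)$ equals $c\,s(x)$ for some $c \in \hat{\kappa}(x)$; the element $cs \in H^0(X,L^{\otimes n})_{\hat{\kappa}(x)}$ is then a preimage of $l$ with $\|cs\|_{h^n,\hat{\kappa}(x)} = |c|_x \|s\|_{h^n,\hat{\kappa}(x)} \leq e^{n\epsilon} |cs|_{h^n}(x) = e^{n\epsilon} |l|_{h^n}(x)$, so $|l|^{\quot}_{h^n}(x) \leq e^{n\epsilon} |l|_{h^n}(x)$. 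Combined with the opposite inequality from Lemma~\ref{lem:quotient:sup:norm}(1), this gives $|\ndot|_{h^n}(x) \leq |\ndot|^{\quot}_{h^n}(x) \leq e^{n\epsilon} |\ndot|_{h^n}(x)$ uniformly in $x$. Extracting a sequence $n_k$ corresponding to $\epsilon_k \to 0$ and setting $e_k := n_k$, $V_k := H^0(X, L^{\otimes n_k})$, $\|\ndot\|_k := \|\ndot\|_{h^{n_k}}$ then verifies the definition of semipositivity directly (the required surjections $V_k \otimes \mathscr O_X \to L^{\otimes e_k}$ exist because the hypothesis already forces global generation at every point).

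For the \emph{only if} direction, let $h$ be semipositive and fix $\epsilon > 0$. Choose an index $n$ such that $N := e_n$ satisfies $|\ndot|^{\quot}_{(V,\|\ndot\|)}(y) \leq e^{N\epsilon/4} |\ndot|_{h^N}(y)$ for all $y \in X^{\mathrm{an}}$, where $(V, \|\ndot\|) := (V_n, \|\ndot\|_n)$ with defining surjection $\pi : V \otimes_k \mathscr O_X \to L^{\otimes N}$. Composition with the unit yields a $k$-linear map $\sigma: V \to H^0(X, L^{\otimes N})$, $v \mapsto \pi(v \otimes 1)$. For $v \in V$ and $y \in X^{\mathrm{an}}$, the element $v \otimes 1 \in V \otimes \hat{\kappa}(y)$ is a preimage of $\sigma(v)(y) = \pi_y(v)$ of scalar-extension norm $\|v\|$ by Definition~\ref{def:scalar:extension}, so $|\sigma(v)|^{\quot}_{(V,\|\ndot\|)}(y) \leq \|v\|$ and hence $|\sigma(v)|_{h^N}(y) \leq e^{N\epsilon/4}\|v\|$; passing to the supremum gives $\|\sigma(v)\|_{h^N} \leq e^{N\epsilon/4}\|v\|$. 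By Proposition~\ref{prop:scalar:extension:orthogonal} applied to $\alpha$-orthogonal bases of $(V, \|\ndot\|)$ and letting $\alpha \to 1$, this operator bound propagates to the scalar extension $\sigma_{\hat{\kappa}(x)}$, namely $\|\sigma_{\hat{\kappa}(x)}(v)\|_{h^N,\hat{\kappa}(x)} \leq e^{N\epsilon/4} \|v\|_{\hat{\kappa}(x)}$ for $v \in V \otimes \hat{\kappa}(x)$.

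Now for each $x \in X^{\mathrm{an}}$, pick any nonzero $l \in L^{\otimes N}(x)$, and (approximating the infimum in the quotient norm) choose $v \in V \otimes \hat{\kappa}(x)$ with $\pi_x(v) = l$ and $\|v\|_{\hat{\kappa}(x)} \leq e^{\delta} |l|^{\quot}_{(V,\|\ndot\|)}(x)$ for a small $\delta > 0$. Setting $s := \sigma_{\hat{\kappa}(x)}(v)$ yields $s(x) = l \neq 0$ and
\[
\|s\|_{h^N,\hat{\kappa}(x)} \leq e^{N\epsilon/4} \|v\|_{\hat{\kappa}(x)} \leq e^{N\epsilon/4+\delta} |l|^{\quot}_{(V,\|\ndot\|)}(x) \leq e^{N\epsilon/2+\delta} |l|_{h^N}(x) = e^{N\epsilon/2+\delta} |s|_{h^N}(x).
\]
Choosing $\delta \leq N\epsilon/2$ delivers $\|s\|_{h^N,\hat{\kappa}(x)} \leq e^{N\epsilon} |s|_{h^N}(x)$ as required.

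The main technical obstacle is the transfer of the operator-norm estimate for $\sigma$ from $V$ to its scalar extension on $V \otimes \hat{\kappa}(x)$; this must be carried out through the $\alpha$-orthogonality machinery of Proposition~\ref{prop:scalar:extension:orthogonal} together with a limiting argument as $\alpha \to 1$, and it is also what forces the repeated passage to $\epsilon$-approximations at several stages of the argument.
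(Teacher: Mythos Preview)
Your proof is correct. The ``if'' direction is essentially identical to the paper's argument: the given $s$ witnesses that the quotient metric $|\ndot|^{\quot}_{h^n}$ is within $e^{n\epsilon}$ of $|\ndot|_{h^n}$, and one reads off semipositivity directly from the definition with $(V_k,\|\ndot\|_k) = (H^0(X,L^{\otimes n_k}),\|\ndot\|_{h^{n_k}})$.

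The ``only if'' direction, however, follows a genuinely different route from the paper. The paper invokes Proposition~\ref{prop:semipos:approximation} to obtain directly that $|\ndot|^{\quot}_{h^n}$ (the quotient metric built from $(H^0(X,L^{\otimes n}),\|\ndot\|_{h^n})$ itself) is uniformly close to $|\ndot|_{h^n}$; after that, producing the required $s$ is immediate from the definition of the quotient norm. You instead bypass Proposition~\ref{prop:semipos:approximation} and work straight from the datum $(V,\|\ndot\|)$ in the definition of semipositivity: you factor through the comparison map $\sigma : V \to H^0(X,L^{\otimes N})$, establish the operator bound $\|\sigma(v)\|_{h^N}\leq e^{N\epsilon/4}\|v\|$, and then push this bound to the scalar extension $\sigma_{\hat{\kappa}(x)}$ via the $\alpha$-orthogonality machinery of Proposition~\ref{prop:scalar:extension:orthogonal}. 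This last step is the real content of your argument, and it is valid: for any $\alpha\in(0,1)$ an $\alpha$-orthogonal basis of $V$ remains $\alpha$-orthogonal in $V_{\hat{\kappa}(x)}$, giving $\|\sigma_{\hat{\kappa}(x)}(v)\|_{h^N,\hat{\kappa}(x)}\leq \alpha^{-1}e^{N\epsilon/4}\|v\|_{\hat{\kappa}(x)}$, and letting $\alpha\to 1$ removes the spurious factor. The trade-off is that your argument is more self-contained (it does not rely on the subadditivity and Fekete-lemma argument inside Proposition~\ref{prop:semipos:approximation}), at the cost of a somewhat longer and more delicate chain of $\epsilon$-bookkeeping; the paper's version is shorter precisely because that work has already been packaged into Proposition~\ref{prop:semipos:approximation}.
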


\begin{proof}
First we assume that $h$ is semipositive.
By using Proposition~\ref{prop:semipos:approximation}, we can find a positive integer $n$ such that
$L^{\otimes n}$ is generated by global sections and 
\[
|\ndot|_{h^n}(x) \leq |\ndot|^{\mathrm{quot}}_{h^n}(x) \leq e^{n\epsilon/2} |\ndot|_{h^n}(x)
\]
for all $x \in X^{\mathrm{an}}$. On the other hand, there is $s \in H^0(X, L^{\otimes n})_{\hat{\kappa}(x)} \setminus \{ 0 \}$
such that $ \| s \|_{h^n, \hat{\kappa}(x)} \leq e^{n\epsilon/2} |s|^{\mathrm{quot}}_{h^n}(x)$.
Thus,
\[
\| s \|_{h^n,\hat{\kappa}(x)} \leq e^{n\epsilon/2} |s|^{\mathrm{quot}}_{h^n}(x) \leq e^{n\epsilon} 
|{s}|_{h^n}(x).
\]

\medskip
Next we consider the converse.
For a positive integer $m$, 
there is a positive integer $e_m$ such that,
for any $x \in X^{\mathrm{an}}$, we can find $s \in H^0(X, L^{\otimes e_m})_{\hat{\kappa}(x)} \setminus \{ 0 \}$ with
$\| s \|_{h^{e_m},\hat{\kappa}(x)} \leq e^{e_m/m} |s|_{h^{e_m}}(x)$.
Clearly $L^{\otimes e_m}$ is generated by global sections.
Moreover,
\[
|s|_{h^{e_m}}(x) \leq |s|^{\mathrm{quot}}_{(H^0(X, L^{\otimes e_m}), \|\ndot\|_{h^{e_m}})}(x) \leq e^{e_m/m} |s|_{h^{e_m}}(x),
\]
that is,
\[
0 \leq \frac{1}{e_m}\log \left( \frac{|\ndot|^{\mathrm{quot}}_{(H^0(X, L^{\otimes e_m}), \|\ndot\|_{h^{e_m}})}(x)}{|\ndot|_{h^{e_m}}(x)} \right)
\leq \frac{1}{m}.
\]
Thus $h$ is semipositive.
\end{proof}

\begin{Corollary}
\label{prop:semipositive:limit}
Let $h$ be a continuous metric of $L^{\an}$.
If there are a sequence $\{ e_n \}$ of positive integers and
a sequence $\{ h_n \}$ of metrics such that
$h_n$ is a semipositive metric of $(L^{\otimes e_n})^{\an}$ for each $n$ and
\[
\frac{1}{e_n}\log \frac{|\ndot|_{h_n}(x)}{|\ndot|_{h^{e_n}}(x)}
\]
converges to $0$ uniformly as $n \to \infty$, then $h$ is semipositive.
\end{Corollary}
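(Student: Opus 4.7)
The plan is to verify that $h$ satisfies the characterization of semipositivity given in Corollary~\ref{cor:semiample:metrized}. Fix $\epsilon > 0$. By the uniform convergence hypothesis, we can pick an index $n$ large enough that
\[
\bigl| \tfrac{1}{e_n}\log(|\ndot|_{h_n}(x)/|\ndot|_{h^{e_n}}(x)) \bigr| < \epsilon/4
\]
for every $x \in X^{\an}$. This is equivalent to the pointwise comparison $e^{-e_n\epsilon/4}\,|\ndot|_{h^{e_n}}(x) \leq |\ndot|_{h_n}(x) \leq e^{e_n\epsilon/4}\,|\ndot|_{h^{e_n}}(x)$, which after taking $m$-th tensor powers becomes $e^{-e_nm\epsilon/4}\,|\ndot|_{h^{e_nm}}(x) \leq |\ndot|_{h_n^m}(x) \leq e^{e_nm\epsilon/4}\,|\ndot|_{h^{e_nm}}(x)$ for every positive integer $m$. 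Taking the supremum over $X^{\mathrm{an}}$ yields the analogous two-sided bound between the supremum norms $\|\ndot\|_{h_n^m}$ and $\|\ndot\|_{h^{e_nm}}$ on $H^0(X,L^{\otimes e_nm})$, and, by the scalar-extension formalism of Definition~\ref{def:scalar:extension} and Lemma~\ref{lem:scalar:extensions}, the same bounds hold for the extensions of these norms to $H^0(X,L^{\otimes e_nm})\otimes_k\hat{\kappa}(x)$.

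Next, since $h_n$ is by hypothesis a semipositive metric on $L^{\otimes e_n}$, the forward direction of Corollary~\ref{cor:semiample:metrized} applied to $(L^{\otimes e_n}, h_n)$ with parameter $\epsilon/2$ furnishes an integer $m \geq 1$ such that, for every $x \in X^{\an}$, there exists a non-zero $s \in H^0(X,L^{\otimes e_nm})_{\hat{\kappa}(x)}$ satisfying $\|s\|_{h_n^m,\hat{\kappa}(x)} \leq e^{m\epsilon/2}\,|s|_{h_n^m}(x)$.

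Combining the two comparisons gives
\[
\|s\|_{h^{e_nm},\hat{\kappa}(x)} \leq e^{e_nm\epsilon/4}\,\|s\|_{h_n^m,\hat{\kappa}(x)} \leq e^{e_nm\epsilon/4 + m\epsilon/2}\,|s|_{h_n^m}(x) \leq e^{e_nm\epsilon/2 + m\epsilon/2}\,|s|_{h^{e_nm}}(x).
\]
Since $e_n \geq 1$, one has $e_nm\epsilon/2 + m\epsilon/2 \leq e_nm\epsilon$, so $\|s\|_{h^{e_nm},\hat{\kappa}(x)} \leq e^{e_nm\epsilon}\,|s|_{h^{e_nm}}(x)$. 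This is precisely the criterion of Corollary~\ref{cor:semiample:metrized} for $h$ with the power $e_nm$ and parameter $\epsilon$, so $h$ is semipositive.

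There is no serious obstacle in this argument: the proof is a routine bookkeeping exercise, with the only subtlety being the verification that the uniform convergence hypothesis propagates from pointwise metrics to supremum norms and their scalar extensions, which is immediate from the two-sided pointwise bound and the fact that scalar extension is monotone in the norm (as noted at the end of Definition~\ref{def:scalar:extension}).
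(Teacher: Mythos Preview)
Your proof is correct and follows essentially the same approach as the paper's: both use the characterization in Corollary~\ref{cor:semiample:metrized} in both directions, sandwiching $h^{e_n}$ and $h_n$ and transferring the semipositivity bound from $h_n$ to $h$. The only cosmetic differences are that the paper splits $\epsilon$ into thirds (applying Corollary~\ref{cor:semiample:metrized} to $(L^{\otimes e_n},h_n)$ with parameter $e_n\epsilon/3$ rather than $\epsilon/2$) and so avoids the final appeal to $e_n\geq 1$, and that you are more explicit about why the supremum-norm comparison passes to the scalar extensions.
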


\begin{proof}
For a positive number $\epsilon > 0$,
choose a positive integer $n$ such that
\[
e^{-\epsilon e_n/3} h^{e_n} \leq h_n \leq e^{\epsilon e_n/3} h^{e_n}.
\]
As $h_n$ is semipositive, by Corollary~\ref{cor:semiample:metrized},
there is a positive integer $m$ such that,
for all $x \in X^{\mathrm{an}}$, we can find $s \in H^0(X, L^{\otimes me_n})_{\hat{\kappa}(x)} \setminus \{ 0 \}$ with
$\| s \|_{h_n^m,\hat{\kappa}(x)} \leq e^{me_n \epsilon/3} |s|_{h_n^m}(x)$, so that
\[
\| s \|_{h^{m e_n},\hat{\kappa}(x)} \leq e^{\epsilon me_n/3} \| s \|_{h_n^m,\hat{\kappa}(x)} \leq e^{2me_n \epsilon/3} |s|_{h_n^m}(x)
\leq e^{me_n \epsilon} |s|_{h^{me_n}}(x).
\]
Therefore, the assertion follows from Corollary~\ref{cor:semiample:metrized}.
\end{proof}

\subsection{The functions $\sigma$ and $\mu$ on $X^{\an}$}
\label{subsec:sigma:and:mu}
Throughout this subsection, we assume that $X$ is projective.
Let $\aPic_{C^0}(X)$
denote the group of isomorphism classes of pairs $(L, h)$ consisting
of an invertible sheaf $L$ on $X$ and
a continuous metric $h$ of $L^{\an}$. 
Fix $\overline{L} = (L, h) \in \aPic_{C^0}(X)$. 
We assume that $L$ is generated by global sections.
We define $\sigma_{\overline{L}}(x)$ to be 
\[
\sigma_{\overline{L}}(x) := \log \left( \frac{|\ndot|_{h}^{\quot}(x)}{|\ndot|_h(x)} \right).
\]

\begin{Lemma}
\label{lemm:basic:prop:sigma}
For $\overline{L}$ and $\overline{L}' \in \aPic_{C^0}(X)$ such that both $L$ and $L'$ are generated by global sections,
we have the following:
\begin{enumerate}
\renewcommand{\labelenumi}{(\arabic{enumi})}
\item
$\sigma_{\overline{L}} \geq 0$ on $X^{\an}$.

\item
$\sigma_{\overline{L} \otimes \overline{L}'}(x) \leq \sigma_{\overline{L}}(x) + \sigma_{\overline{L}'}(x)$
for $x \in X^{\an}$.

\item
If $\overline{L} \simeq \overline{L}'$, then $\sigma_{\overline{L}} = \sigma_{\overline{L}'}$
on $X^{\an}$.
\end{enumerate}
\end{Lemma}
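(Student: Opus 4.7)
The plan is to deduce all three items directly from Lemma~\ref{lem:quotient:sup:norm}, exploiting that for each $x\in X^{\an}$ both $|\ndot|_h(x)$ and $|\ndot|_h^{\quot}(x)$ are norms on the one-dimensional $\hat{\kappa}(x)$-vector space $L(x)$, so their ratio is a well-defined positive real number independent of the choice of a non-zero test vector.

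First, (1) is nothing but the pointwise inequality $|\ndot|_h(x)\leq|\ndot|_h^{\quot}(x)$ established in (1) of Lemma~\ref{lem:quotient:sup:norm}; taking logarithms gives $\sigma_{\overline{L}}(x)\geq 0$ at every $x\in X^{\an}$.

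For (2), I would fix $x\in X^{\an}$, pick any non-zero $l\in L(x)$ and $l'\in L'(x)$, and combine two facts: the original metric on $L\otimes L'$ is multiplicative, i.e.\ $|l\cdot l'|_{h\otimes h'}(x)=|l|_h(x)\cdot|l'|_{h'}(x)$, while the quotient metric is only submultiplicative, namely $|l\cdot l'|_{h\otimes h'}^{\quot}(x)\leq|l|_h^{\quot}(x)\cdot|l'|_{h'}^{\quot}(x)$ by (3) of Lemma~\ref{lem:quotient:sup:norm} (this uses that both $L$ and $L'$ are generated by global sections, so the definition of the quotient metric via $H^0$ applies). Dividing the second relation by the first and taking logarithms yields (2).

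For (3), an isomorphism $\overline{L}\simeq\overline{L}'$ in $\aPic_{C^0}(X)$ is an isomorphism $\phi:L\to L'$ of invertible sheaves that is a pointwise isometry: $|\phi(l)|_{h'}(x)=|l|_h(x)$ for every $x$ and every $l\in L(x)$. Such $\phi$ induces a $k$-linear bijection $H^0(X,L)\to H^0(X,L')$ which is an isometry for the supremum norms $\|\ndot\|_h$ and $\|\ndot\|_{h'}$, and, after scalar extension to each $\hat{\kappa}(x)$, sends the surjection $H^0(X,L)\otimes_k\hat{\kappa}(x)\twoheadrightarrow L(x)$ to the analogous surjection for $L'$. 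Hence the quotient metrics transport through $\phi$ as well, giving $|\phi(l)|_{h'}^{\quot}(x)=|l|_h^{\quot}(x)$; therefore the ratios defining $\sigma_{\overline{L}}$ and $\sigma_{\overline{L}'}$ agree. There is no real obstacle here; the only thing to keep track of is that everything is insensitive to the choice of representative non-zero vector in $L(x)$, which is automatic since all involved quantities are norms on a one-dimensional space.
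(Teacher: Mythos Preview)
Your proof is correct and follows exactly the paper's approach: the paper's proof reads ``(1) and (3) are obvious. (2) follows from (3) in Lemma~\ref{lem:quotient:sup:norm},'' and you have simply unpacked each of these one-line justifications, invoking the same parts of Lemma~\ref{lem:quotient:sup:norm} for (1) and (2) and spelling out the transport-through-isomorphism argument for (3).
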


\begin{proof}
(1) and (3) are obvious.
(2) follows from (3) in Lemma~\ref{lem:quotient:sup:norm}.
\end{proof}

We assume that $L$ is semiample.
We set 
\[
\NN(L) := \left\{ n \in \ZZ_{\geq 1} \mid \text{$L^{\otimes n}$ is generated by global sections} \right\}.
\]
Note that $\NN(L) \not= \emptyset$ and
$\NN(L)$ forms a subsemigroup of $\ZZ_{\geq 1}$ with respect to the addition of $\ZZ_{\geq 1}$.
For $x \in X^{\an}$,
we define $\mu_{\overline{L}}(x)$ to be
\[
\mu_{\overline{L}}(x) :=
\inf \left\{\left. \frac{\sigma_{\overline{L}^{\otimes n}}(x)}{n} \ \right|\ n \in \NN(L) \right\}.
\]
Note that $\mu_{\overline{L}}$ is upper-semicontinuous on $X^{\an}$
because $\sigma_{\overline{L}^{\otimes n}}$ is continuous for all $n \in \NN(L)$.
We set 
\[
\aPic_{C^0}^{+}(X) := \{ (L, h) \in \aPic_{C^0}(X) \mid \text{$L$ is semiample} \}.
\]
Note that $\aPic_{C^0}^{+}(X)$ forms a semigroup with respect to $\otimes$.

\begin{Lemma}
\label{lemm:basic:sigma:mu}
Let $\overline{L} = (L, h)$ and $\overline{L}' = (L', h')$ be elements of $\aPic^{+}_{C^0}(X)$. 
Then we have the following:
\begin{enumerate}
\renewcommand{\labelenumi}{(\arabic{enumi})}
\item
$\mu_{\overline{L}} \geq 0$ on $X^{\an}$.

\item 
${\displaystyle \mu_{\overline{L}}(x) = \lim\limits_{\substack{n\to\infty \\ n \in \NN(L)}} \frac{\sigma_{\overline{L}^{\otimes n}}(x)}{n}}$
for $x \in X^{\an}$.

\item
$\mu_{\overline{L}\otimes\overline{L}'}(x) \leq \mu_{\overline{L}}(x) + \mu_{\overline{L}'}(x)$ for 
$x \in X^{\an}$.

\item
If $\overline{L} \simeq \overline{L}'$, then $\mu_{\overline{L}} = \mu_{\overline{L}'}$ on 
$X^{\an}$.

\item
For $n \geq 0$, $\mu_{\overline{L}^{\otimes n}} = n \mu_{\overline{L}}$ on 
$X^{\an}$.
\end{enumerate}
\end{Lemma}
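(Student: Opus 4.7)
The plan is to derive each statement from the elementary properties of $\sigma$ in Lemma~\ref{lemm:basic:prop:sigma} combined with a Fekete-type argument; once (2) is established, assertions (3)--(5) become formal manipulations.

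For (1), each term $\sigma_{\overline{L}^{\otimes n}}(x)/n$ with $n \in \NN(L)$ is nonnegative by Lemma~\ref{lemm:basic:prop:sigma}(1), hence its infimum $\mu_{\overline{L}}(x)$ is also. For (2), note that the sequence $a_n := \sigma_{\overline{L}^{\otimes n}}(x)$ is subadditive on the semigroup $\NN(L)$ by Lemma~\ref{lemm:basic:prop:sigma}(2). Since $\NN(L)$ is a non-empty subsemigroup of $\ZZ_{\geq 1}$, it contains all sufficiently large multiples of $d := \gcd \NN(L)$. Applying the classical Fekete lemma to the subadditive sequence $(a_{dk})_k$ gives $\lim_{k \to \infty} a_{dk}/(dk) = \inf_k a_{dk}/(dk)$. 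A short comparison argument---using $a_{nk}/(nk) \leq a_n/n$ for $n \in \NN(L)$ and $k \geq 1$ (iterated subadditivity), together with the fact that $nk$ eventually falls in the Fekete tail $d\ZZ_{\geq 1} \cap \NN(L)$---identifies this limit with $\mu_{\overline{L}}(x)$ and upgrades the convergence to arbitrary sequences $n_i \to \infty$ in $\NN(L)$.

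For (3), any $n \in \NN(L) \cap \NN(L')$ belongs to $\NN(L \otimes L')$, and Lemma~\ref{lemm:basic:prop:sigma}(2) yields
\[
\frac{\sigma_{(\overline{L} \otimes \overline{L}')^{\otimes n}}(x)}{n} \leq \frac{\sigma_{\overline{L}^{\otimes n}}(x)}{n} + \frac{\sigma_{\overline{L}'^{\otimes n}}(x)}{n}.
\]
Letting $n \to \infty$ through the cofinal subset $\NN(L) \cap \NN(L')$ of $\NN(L \otimes L')$ and applying (2) on each side gives the claim. Assertion (4) is immediate from Lemma~\ref{lemm:basic:prop:sigma}(3). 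For (5), the case $n = 0$ reduces to $\mu = 0$ since $h^0$ is the trivial metric and $\sigma_{\overline{L}^{\otimes 0}} \equiv 0$. For $n \geq 1$, the identification $\NN(L^{\otimes n}) = \{ m \in \ZZ_{\geq 1} : nm \in \NN(L) \}$ allows one to rewrite
\[
\mu_{\overline{L}^{\otimes n}}(x) = \inf_{m \in \NN(L^{\otimes n})} \frac{\sigma_{\overline{L}^{\otimes nm}}(x)}{m} = n \cdot \inf_{\substack{N \in \NN(L) \\ n \mid N}} \frac{\sigma_{\overline{L}^{\otimes N}}(x)}{N},
\]
and since $\{N \in \NN(L) : n \mid N\}$ is cofinal in $\NN(L)$, (2) identifies the last infimum with $\mu_{\overline{L}}(x)$.

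The main technical subtlety is the Fekete step on the semigroup $\NN(L)$ rather than on all of $\ZZ_{\geq 1}$: one must verify that subadditivity along $\NN(L)$ alone forces convergence of $a_n/n$ along every cofinal sequence in $\NN(L)$, with limit equal to $\inf_{n \in \NN(L)} a_n/n$. This is resolved by reducing to the classical Fekete lemma on the Frobenius tail $d\ZZ_{\geq m_0} \subseteq \NN(L)$ via the elementary inequality $a_{nk}/(nk) \leq a_n/n$ for $n \in \NN(L)$, $k \geq 1$.
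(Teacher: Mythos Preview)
Your proof is correct and takes essentially the same approach as the paper: each part follows from the corresponding property of $\sigma$ in Lemma~\ref{lemm:basic:prop:sigma}, with (2) handled by Fekete's lemma and (3)--(5) deduced by passing to limits along suitable cofinal subsets. The paper is terser---it simply invokes ``Fekete's lemma'' for (2) without discussing the semigroup $\NN(L)$, and for (5) it computes along a single subsequence $(mn_0)_m$ with $n_0\in\NN(L)$ fixed---but your more explicit treatment of the Frobenius tail and cofinality is a welcome clarification rather than a different argument.
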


\begin{proof}
(1) follows from (1) in Lemma~\ref{lemm:basic:prop:sigma}.

(2) Since $\sigma_{\overline{L}^{\otimes(n+n')}}(x) \leq \sigma_{\overline{L}^{\otimes n}}(x)+ 
\sigma_{\overline{L}^{\otimes n'}}(x)$ for $n, n' \in \NN(L)$ 
by (2) in Lemma~\ref{lemm:basic:prop:sigma},
the assertion follows from Fekete's lemma.

(3) and (4) follow from (2) and (3) in Lemma~\ref{lemm:basic:prop:sigma} together with (2), respectively.

(5) If $n=0$, then the assertion is obvious, so that we may assume that $n \geq 1$.
We fix $n_0 \in \NN(L)$. Then $n_0 \in \NN(L^{\otimes n})$. Thus, by (2),
\[
\mu_{\overline{L}^{\otimes n}}(x) = \lim_{m\to\infty} \frac{\sigma_{L^{\otimes mn_0n}}(x)}{mn_0} =
n \lim_{m\to\infty} \frac{\sigma_{L^{\otimes mn_0n}}(x)}{mn_0n} = n \mu_{\overline{L}}(x).
\]
\end{proof}

We let $\aPic_{C^0}(X)_{\QQ}$ be the quotient space of $\aPic_{C^0}(X) \otimes_{\ZZ} \QQ$ by the $\mathbb Q$-vector subspace generated by $(\mathscr O_X,\{e^{-\lambda}|\ndot|^0_x\})-\lambda(\mathscr O_X,\{|\cdot|_x^0\})$, where $\{|\cdot|_x^0\}$ denotes the trivial continuous metric on $\mathscr O_X$. Note that $\aPic_{C^0}(X)_{\QQ}$ {can be identified} with the $\mathbb Q$-vector space of all pairs $(L,h)$, where $L$ is an element of $\widehat{\mathrm{Pic}}(X)\otimes\mathbb Q$ and $h$ is a continuous metric on $L$ (see \S\ref{Notations:tensorproduct}).
{Moreover, we set}
\[
\aPic_{C^0}^{+}(X)_{\QQ} := \{ (L, h) \in \aPic_{C^0}(X)_{\QQ} \mid \text{$L$ is semiample} \}.
\]
Let $\iota : \aPic_{C^0}(X) \to \aPic_{C^0}(X)_{\QQ}$ be the canonical homomorphism.
For $\overline{L} \in \aPic^{+}_{C^0}(X)_{\QQ}$, we choose a positive integer $n$ and 
$\overline{L}_n \in \aPic^{+}_{C^0}(X)$
with $\iota(\overline{L}_n) = \overline{L}^{\otimes n}$.
Then $\mu_{\overline{L}_n}(x)/n$ does not depend on the choice of $n$ and $\overline{L}_n$.
Indeed,
let us choose another $n' \in \ZZ_{\geq 1}$ and $\overline{L}_{n'} \in \aPic_{C^0}^+(X)$ with
$\iota(\overline{L}_{n'}) = \overline{L}^{\otimes n'}$. 
As $\iota(\overline{L}_n^{\otimes n'}) = \iota(\overline{L}_{n'}^{\otimes n}) = \overline{L}^{\otimes nn'}$,
there is a positive integer $m$ such that $\overline{L}_{n}^{\otimes mn'} = \overline{L}_{n'}^{\otimes mn}$.
By (5) in Lemma~\ref{lemm:basic:sigma:mu},
\[
mn' \mu_{\overline{L}_{n}}(x) = \mu_{\overline{L}_{n}^{\otimes mn'}}(x) = \mu_{\overline{L}_{n'}^{\otimes mn}}(x) = mn \mu_{\overline{L}_{n'}}(x),
\]
that is, $\mu_{\overline{L}_{n}}(x)/n = \mu_{\overline{L}_{n'}}(x)/n'$, as required. 
By abuse of notation,
it is also denoted by $\mu_{\overline{L}}(x)$.

\begin{Lemma}
\label{lemm:mu:QQ}
For $\overline{L}, \overline{L}' \in \aPic^{+}_{C^0}(X)_{\QQ}$,
we have the following:
\begin{enumerate}
\renewcommand{\labelenumi}{(\arabic{enumi})}
\item
$\mu_{\overline{L} \otimes \overline{L}'}(x) 
\leq \mu_{\overline{L}}(x) + \mu_{\overline{L}'}(x)$
for $x \in X^{\an}$.

\item
For $a \in \QQ_{\geq 0}$, $\mu_{\overline{L}^{\otimes a}} = a \mu_{\overline{L}}$
on $X^{\an}$.

\item
Let $\overline{L}_1, \ldots, \overline{L}_r$ be elements of $\aPic_{C_0}(X)_{\QQ}$.
We assume that there are open intervals $I_1, \ldots, I_r$ of $\RR$ 
such that
\[
\overline{L} \otimes \overline{L}_1^{\otimes t_1} \otimes \cdots \otimes \overline{L}_r^{\otimes t_r}
\in \aPic^{+}_{C^0}(X)_{\QQ}
\]
for all $(t_1, \ldots, t_r) \in (I_1 \times \cdots \times I_r) \cap \QQ^r$. Then, for a fixed 
$x \in X^{\an}$, 
there is a continuous function $f :  I_1 \times \cdots \times I_r \to \RR$ such that
\[
f(t_1, \ldots, t_r) = \mu_{\overline{L} \otimes \overline{L}_1^{\otimes t_1} \otimes \cdots \otimes \overline{L}_r^{\otimes t_r}}(x)
\]
for all $(t_1, \ldots, t_r) \in (I_1 \times \cdots \times I_r) \cap \QQ^r$.
\end{enumerate}
\end{Lemma}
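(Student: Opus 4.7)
My plan for (1) and (2) is to reduce them to the integer-level statements of Lemma~\ref{lemm:basic:sigma:mu} by passing to a common denominator. For (1), pick positive integers $N_1, N_2$ and integer lifts $\overline{L}_{N_1}, \overline{L}'_{N_2}\in\aPic^{+}_{C^0}(X)$ of $\overline{L}^{\otimes N_1}$ and $(\overline{L}')^{\otimes N_2}$ respectively. Setting $N:=N_1 N_2$, the element $\overline{L}_{N_1}^{\otimes N_2}\otimes (\overline{L}'_{N_2})^{\otimes N_1}\in\aPic^{+}_{C^0}(X)$ is an integer lift of $(\overline{L}\otimes\overline{L}')^{\otimes N}$, so assertions (3) and (5) of Lemma~\ref{lemm:basic:sigma:mu}, together with the definition of $\mu$ on $\aPic^{+}_{C^0}(X)_{\QQ}$, yield (1) after dividing by $N$. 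For (2), write $a=p/q$ with $p, q\in\ZZ_{\geq 0}$ and $q>0$; for a positive integer $N_1$ admitting a lift $\overline{L}_{N_1}$ of $\overline{L}^{\otimes N_1}$, the element $\overline{L}_{N_1}^{\otimes p}$ lifts $(\overline{L}^{\otimes a})^{\otimes N_1 q}$, so (5) of Lemma~\ref{lemm:basic:sigma:mu} divided by $N_1 q$ gives the stated identity.

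For (3), set $\overline{M}(t):=\overline{L}\otimes\overline{L}_1^{\otimes t_1}\otimes\cdots\otimes\overline{L}_r^{\otimes t_r}$, which by hypothesis lies in $\aPic^{+}_{C^0}(X)_{\QQ}$ for every rational $t\in I_1\times\cdots\times I_r$, and define $g(t):=\mu_{\overline{M}(t)}(x)$ on these rational points. The central observation is the identity
\[
\overline{M}(\lambda t+(1-\lambda)t')=\overline{M}(t)^{\otimes\lambda}\otimes\overline{M}(t')^{\otimes(1-\lambda)}
\]
in the $\QQ$-vector space $\aPic_{C^0}(X)_{\QQ}$, which, combined with (1) and (2) above, gives
\[
g(\lambda t+(1-\lambda)t')\leq\lambda g(t)+(1-\lambda)g(t')
\]
for every $\lambda\in[0,1]\cap\QQ$ and all rational $t,t'$ in the box. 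Thus $g$ is a convex function on the rational points of $I_1\times\cdots\times I_r$.

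The main obstacle is to extend $g$ continuously to the whole real box. Nonnegativity of $g$ follows from assertion (1) of Lemma~\ref{lemm:basic:sigma:mu}. For local upper-boundedness, given any $t_0\in I_1\times\cdots\times I_r$, I enclose it in a closed rational subcube $B\subset I_1\times\cdots\times I_r$; since every rational point of $B$ is a rational convex combination of its $2^r$ rational corners, the rational convexity of $g$ forces $g$ to be bounded above on $B\cap\QQ^r$ by the maximum of $g$ at those corners. Invoking the classical fact that a convex function on the rational points of an open convex set which is locally bounded is automatically locally Lipschitz, and hence extends uniquely to a continuous convex function on that set, I obtain the desired $f$ on $I_1\times\cdots\times I_r$. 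This extension step, while entirely elementary convex analysis, is the most substantive part of the argument.
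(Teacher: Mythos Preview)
Your argument is correct and follows the same route as the paper: parts (1) and (2) reduce to the integer-level statements (3) and (5) of Lemma~\ref{lemm:basic:sigma:mu} via a common denominator, and part (3) proceeds by establishing the convexity inequality
\[
g(\lambda t+(1-\lambda)t')\leq\lambda g(t)+(1-\lambda)g(t')
\]
on rational points and then extending to the real box. The only difference is presentational: the paper simply cites \cite[Corollary~1.3.2]{MoArLin} for the extension step, whereas you sketch the standard convex-analysis argument (nonnegativity, local upper-boundedness via rational corners of a subcube, hence local Lipschitz continuity and unique continuous extension). Incidentally, the paper calls $f_0$ ``concave'' despite displaying the convexity inequality; your terminology is the correct one, though of course either convex or concave suffices for the continuous extension.
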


\begin{proof}
(1) and (2) are consequences of (3) and (5) in Lemma~\ref{lemm:basic:sigma:mu}, respectively.

(3) We set 
\[
f_0(t_1, \ldots, t_r) := \mu_{\overline{L} \otimes \overline{L}_1^{\otimes t_1} \otimes \cdots \otimes \overline{L}_r^{\otimes t_r}}(x)
\]
for $(t_1, \ldots, t_r) \in (I_1 \times \cdots \times I_r) \cap \QQ^r$.
By (1) and (2), for $\lambda \in [0,1] \cap \QQ$ and
$(t_1, \ldots, t_r), (t'_1, \ldots, t'_r) \in (I_1 \times \cdots \times I_r) \cap \QQ^r$,
we have  
\begin{multline*}
f_0(\lambda(t_1, \ldots, t_r) + (1-\lambda)(t'_1, \ldots, t'_r)) \\
\hspace{-5em}= \mu_{(\overline{L} \otimes \overline{L}_1^{\otimes t_1} \otimes \cdots \otimes \overline{L}_r^{\otimes t_r})^{\otimes \lambda} \otimes  (\overline{L} \otimes \overline{L}_1^{\otimes t'_1} \otimes \cdots \otimes \overline{L}_r^{\otimes t'_r})^{\otimes (1-\lambda)}}(x) \\
\leq \lambda \mu_{\overline{L} \otimes \overline{L}_1^{\otimes t_1} \otimes \cdots \otimes \overline{L}_r^{\otimes t_r}}(x) + (1-\lambda) \mu_{\overline{L} \otimes \overline{L}_1^{\otimes t'_1} \otimes \cdots \otimes \overline{L}_r^{\otimes t'_r}}(x) \\  
= \lambda f_0(t_1, \ldots, t_r) + (1-\lambda) f_0(t'_1, \ldots, t'_r),
\end{multline*}
that is,
$f_0$ is concave on $(I_1 \times \cdots \times I_r) \cap \QQ^r$. Therefore, the assertion (3) follows from
\cite[Corollary~1.3.2]{MoArLin}.
\end{proof}

Let $(L, h)$ be an element of $\aPic_{C^0}^{+}(X)_{\QQ}$.
We say that $h$ is semipositive if there is a positive integer $n$ such that
$L^{\otimes n} \in \Pic(X)$ and $h^n$ is semipositive.
The following characterization of the semipositivity of $h$ is 
a consequence of Proposition~\ref{prop:semipos:approximation}.

\begin{Proposition}
\label{prop:characterization:semiample:metrized}
For $\overline{L} = (L, h) \in \aPic^{+}_{C^0}(X)_{\QQ}$,
$h$ is semipositive if and only if
$\mu_{\overline{L}} = 0$ on $X^{\an}$.
\end{Proposition}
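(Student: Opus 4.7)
The plan is to first establish the equivalence for integral elements $\overline{L} \in \aPic^{+}_{C^0}(X)$ and then to reduce the $\QQ$-case to this by passage to a tensor power clearing the denominator. For the integral case the characterization of semipositivity provided by Corollary \ref{cor:semiample:metrized} is more convenient than the definition itself.

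For the direct implication in the integral case, suppose $h$ is semipositive. For a fixed $\epsilon>0$ let $n\in\NN(L)$ be as in Corollary \ref{cor:semiample:metrized}: at every $x\in X^{\an}$ there is a nonzero $s\in H^0(X,L^{\otimes n})_{\hat{\kappa}(x)}$ with $\|s\|_{h^n,\hat{\kappa}(x)}\leq e^{n\epsilon}|s|_{h^n}(x)$. Since $L^{\otimes n}(x)$ is one-dimensional over $\hat{\kappa}(x)$, the ratio $|\ndot|_{h^n}^{\quot}(x)/|\ndot|_{h^n}(x)$ is constant on the nonzero fiber, so the chain
\[
|s(x)|_{h^n}^{\quot}(x)\leq\|s\|_{h^n,\hat{\kappa}(x)}\leq e^{n\epsilon}|s(x)|_{h^n}(x)
\]
forces $\sigma_{\overline{L}^{\otimes n}}(x)/n\leq\epsilon$ pointwise. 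Thus $\mu_{\overline{L}}(x)\leq\epsilon$ for every $x$, and combined with Lemma \ref{lemm:basic:sigma:mu}(1) and the arbitrariness of $\epsilon$ this yields $\mu_{\overline{L}}\equiv 0$.

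The converse is the substantive step. Assume $\mu_{\overline{L}}\equiv 0$ and fix $\epsilon>0$. At each $x$ pick $n_x\in\NN(L)$ with $\sigma_{\overline{L}^{\otimes n_x}}(x)/n_x<\epsilon/2$; continuity of $\sigma_{\overline{L}^{\otimes n_x}}$ extends this to an open neighborhood $U_x$ of $x$. Since $X$ is projective, $X^{\an}$ is compact, so finitely many $U_{x_1},\ldots,U_{x_k}$ suffice, with associated exponents $n_1,\ldots,n_k\in\NN(L)$. Letting $N$ be any common multiple of the $n_i$ (still in $\NN(L)$), iterated subadditivity from Lemma \ref{lemm:basic:prop:sigma}(2) gives $\sigma_{\overline{L}^{\otimes N}}(y)/N\leq\sigma_{\overline{L}^{\otimes n_i}}(y)/n_i<\epsilon$ on each $U_{x_i}$, hence $|\ndot|_{h^N}^{\quot}(x)\leq e^{N\epsilon}|\ndot|_{h^N}(x)$ uniformly on $X^{\an}$. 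Using the definition of the quotient norm as an infimum over lifts in $H^0(X,L^{\otimes N})_{\hat{\kappa}(x)}$, one then produces at each $x$ a nonzero lift $s$ of any prescribed $l\in L^{\otimes N}(x)$ satisfying $\|s\|_{h^N,\hat{\kappa}(x)}\leq e^{N\epsilon+\delta}|s|_{h^N}(x)$ with $\delta>0$ arbitrarily small. Since both $\epsilon$ and $\delta$ may be made as small as we wish, the criterion of Corollary \ref{cor:semiample:metrized} is met and $h$ is semipositive.

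For the $\QQ$-case, fix any integer $n\geq 1$ with $L^{\otimes n}\in\Pic(X)$. By the very definition of $\QQ$-semipositivity, $h$ is semipositive if and only if $h^n$ is. Furthermore, Lemma \ref{lemm:mu:QQ}(2) gives $\mu_{\overline{L}^{\otimes n}}=n\mu_{\overline{L}}$, so $\mu_{\overline{L}}\equiv 0$ if and only if $\mu_{\overline{L}^{\otimes n}}\equiv 0$, and the $\QQ$-statement follows from the integral case applied to $\overline{L}^{\otimes n}$. The main obstacle is the converse of the integral case, where pointwise vanishing of $\mu_{\overline{L}}$ must be promoted to a uniform estimate on $\sigma_{\overline{L}^{\otimes N}}/N$ for a single $N$; this is exactly where compactness of $X^{\an}$ (guaranteed by projectivity), continuity of each $\sigma_{\overline{L}^{\otimes n}}$, and the subadditivity of the family are combined in an essential way.
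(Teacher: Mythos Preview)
Your argument is correct and fleshes out what the paper states in a single line (``a consequence of Proposition~\ref{prop:semipos:approximation}''). In particular, your compactness argument for the converse in the integral case --- choosing $n_x$ pointwise, extending by continuity, passing to a finite subcover, and taking a common multiple $N$ to obtain a uniform bound $\sigma_{\overline L^{\otimes N}}/N<\epsilon$ --- is precisely the step the paper suppresses; it is genuinely needed because $\mu_{\overline L}=0$ is a $\sup_x\inf_n$ statement while semipositivity demands an $\inf_n\sup_x$ bound, and these are not interchangeable without compactness.

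One minor slip in the $\QQ$-reduction: you assert that ``by the very definition of $\QQ$-semipositivity, $h$ is semipositive if and only if $h^n$ is'' for a \emph{fixed} $n$. The definition is existential (there exists \emph{some} $n$ with $L^{\otimes n}\in\Pic(X)$ and $h^n$ semipositive), so only the implication ``$h^n$ semipositive $\Rightarrow$ $h$ is $\QQ$-semipositive'' is immediate from the definition; the reverse implication for your fixed $n$ already uses the integral equivalence you just proved. The clean fix is to keep the existential quantifier throughout:
\[
h\text{ is }\QQ\text{-semipositive}\ \Longleftrightarrow\ \exists\,n:\ h^n\text{ semipositive}\ \Longleftrightarrow\ \exists\,n:\ \mu_{\overline L^{\otimes n}}=0\ \Longleftrightarrow\ \mu_{\overline L}=0,
\]
the last step by Lemma~\ref{lemm:mu:QQ}(2).
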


We assume that $|\ndot|$ is non-trivial.
Let $\XXX$ be a model of $X$ over $\Spec(\mathfrak o_k)$.
Let $L \in \Pic(X) \otimes \QQ$ and $\LLL \in \Pic(\XXX) \otimes \QQ$
with $\rest{\LLL}{X} = L$.
Let $m$ be a positive integer such that $L^{\otimes m} \in \Pic(X)$.
Then we define $\overline{L} = (L, h)$ to be
\[
(L, h) := \left(L^{\otimes m}, \left\{ |\ndot|_{\LLL^{\otimes m}}(x) \right\}_{x \in X^{\an}}
\right)^{\otimes 1/m}.
\]

\begin{Proposition}
\label{prop:vanishing:mu:nef:big}
If $L$ is ample and $\LLL$ is nef,
then $h$ is semipositive.
\end{Proposition}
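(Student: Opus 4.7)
The strategy is to invoke Proposition~\ref{prop:characterization:semiample:metrized}, thereby reducing the task to verifying $\mu_{\overline{L}}\equiv 0$ on $X^{\an}$. The plan is to perturb $\overline{L}$ along an ample direction inside $\aPic_{C^0}^+(X)_{\QQ}$, argue that every perturbed metric is semipositive, and then exploit the continuity of $\mu$ in the $\QQ$-parameter supplied by Lemma~\ref{lemm:mu:QQ}(3) to pass to the limit $t\to 0^+$.

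Since $\mathscr X$ is projective over $\Spec(\mathfrak o_k)$, I pick any ample invertible sheaf $\mathscr A$ on $\mathscr X$ and set $\overline{A}:=(\mathscr A|_X,\{|\ndot|_{\mathscr A}(x)\})\in\aPic_{C^0}(X)$. For every rational $t>0$, the $\QQ$-line bundle $\mathscr L\otimes\mathscr A^{\otimes t}$ on $\mathscr X$ is ample by the Kleiman--Nakai criterion (nef plus ample is ample). Fix $N$ sufficiently divisible so that $\mathscr M:=(\mathscr L\otimes\mathscr A^{\otimes t})^{\otimes N}$ is a very ample element of $\Pic(\mathscr X)$. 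Then $\mathscr H:=H^0(\mathscr X,\mathscr M)$ is a finitely generated torsion-free module over the valuation ring $\mathfrak o_k$, hence free, and the flatness and projectivity of $\mathscr X$ make $\mathscr X$ the Zariski closure of $X$ inside $\PP(\mathscr H)$. Proposition~\ref{prop:very:ample} then identifies the model metric $|\ndot|_{\mathscr M}$ with the quotient metric $|\ndot|^{\mathrm{quot}}_{(H,\|\ndot\|_{\mathscr H})}$, where $H:=\mathscr H\otimes_{\mathfrak o_k}k$. By the definition of semipositivity in \S\ref{subsec:semipositive}, the continuous metric $h_t$ on $L\otimes\mathscr A|_X^{\otimes t}$ induced by $\mathscr L\otimes\mathscr A^{\otimes t}$ is therefore semipositive, and Proposition~\ref{prop:characterization:semiample:metrized} gives $\mu_{\overline{L}\otimes\overline{A}^{\otimes t}}\equiv 0$ on $X^{\an}$ for every rational $t>0$.

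To finish, I apply Lemma~\ref{lemm:mu:QQ}(3) with $\overline{L}_1=\overline{A}$. Because both $L$ and $\mathscr A|_X$ are ample on the projective scheme $X$, openness of the ample cone ensures that $L\otimes\mathscr A|_X^{\otimes t}$ remains ample (in particular semiample) for every real $t$ in some open interval $I\subseteq\RR$ containing $0$, so that $\overline{L}\otimes\overline{A}^{\otimes t}\in\aPic_{C^0}^+(X)_{\QQ}$ for every $t\in I\cap\QQ$. Fix $x\in X^{\an}$: Lemma~\ref{lemm:mu:QQ}(3) produces a continuous function $f:I\to\RR$ with $f(t)=\mu_{\overline{L}\otimes\overline{A}^{\otimes t}}(x)$ for $t\in I\cap\QQ$. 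The preceding paragraph shows $f\equiv 0$ on $I\cap\QQ_{>0}$, and continuity then forces $f(0)=\mu_{\overline{L}}(x)=0$. Since $x$ is arbitrary, $\mu_{\overline{L}}\equiv 0$ on $X^{\an}$, and hence $h$ is semipositive.

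The main obstacle is the central step: showing that an ample $\QQ$-model produces a semipositive metric. It rests on the freeness of the lattice of global sections of a very ample power (automatic because finitely generated torsion-free modules over a valuation ring are free) and on the identification of $\mathscr X$ with the Zariski closure of $X$ in $\PP(\mathscr H)$, which is exactly what is needed to feed Proposition~\ref{prop:very:ample}. The remaining ingredients—Kleiman--Nakai on projective $\mathfrak o_k$-schemes, openness of the ample cone on $X$, and Lemma~\ref{lemm:mu:QQ}(3)—are essentially formal once this identification is in place.
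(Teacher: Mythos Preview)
Your proof is correct and follows essentially the same route as the paper's own argument: reduce to the ample model case via Proposition~\ref{prop:very:ample} (identifying the model metric with a quotient metric through the very ample embedding), then perturb the nef model by a small positive multiple of an ample $\mathscr A$, and pass to the limit using the continuity statement in Lemma~\ref{lemm:mu:QQ}(3). The only cosmetic difference is that the paper separates the ample case as a standalone first step, whereas you fold it directly into the perturbation argument; you also spell out why $\mathscr X$ coincides with the Zariski closure of $X$ in $\PP(\mathscr H)$ (via flatness), which the paper uses implicitly when invoking Proposition~\ref{prop:very:ample}.
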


\begin{proof}
First we assume that 
$\LLL$ is ample. We choose a positive integer $n$ such that
$\LLL^{\otimes n} \in \Pic(\XXX)$ and $\LLL^{\otimes n}$ is very ample.
Then we have an embedding $\iota : \XXX \to \PP(H^0(\XXX, \LLL^{\otimes n}))$ and
$\LLL^{\otimes n} = \iota^{*}(\OOO_{\PP(H^0(\XXX, \LLL^{\otimes n}))}(1))$.
Let $(e_1, \ldots, e_r)$ be a free basis of $H^0(\XXX, \LLL^{\otimes n})$.
We define a norm $\|\ndot\|$ of $H^0(X, L^{\otimes n})$ to be
\[
\| a_1 e_1 + \cdots + a_r e_r \| := \max \{ |a_1|, \ldots, |a_r| \}.
\]
Note that $(H^0(X,L^{\otimes n}), \|\ndot\|)_{\leq 1} = H^0(\XXX, \LLL^{\otimes n})$, so that,
by Proposition~\ref{prop:very:ample}, we have
$|\ndot|^{\mathrm{quot}}_{(H,\|\ndot\|)}(x) = |\ndot|_{\mathscr L^{\otimes n}}(x)$
for $x \in X^{\an}$. Thus $h$ is semipositive.

\medskip
In general, let $\AAA$ be an ample invertible sheaf on $\XXX$ and $A := \rest{\AAA}{X}$.
We choose $\delta \in \QQ_{>0}$ such that $L \otimes A^{\otimes a}$ is ample
for all $a \in (-\delta, \delta) \cap \QQ$.
Note that 
\[
\overline{L} \otimes \left(A, |\ndot|_{\AAA}\right)^{\otimes \epsilon} 
= \left(L \otimes A^{\otimes \epsilon}, |\ndot|_{\LLL \otimes \AAA^{\otimes \epsilon}} \right),
\]
so that $\mu_{\overline{L} \otimes \left(A, |\ndot|_{\AAA}\right)^{\otimes \epsilon}} = 0$ 
for $\epsilon \in (0, \delta) \cap \QQ$
by the previous observation together with Proposition~\ref{prop:characterization:semiample:metrized}.
On the other hand, by (3) in Lemma~\ref{lemm:mu:QQ},
\[
\mu_{\overline{L}}(x) = \lim_{\substack{\epsilon \downarrow 0 \\ \epsilon \in \QQ}} 
\mu_{\overline{L} \otimes (A, |\ndot|_{\AAA})^{\otimes \epsilon}}(x).
\]
Therefore, $\mu_{\overline{L}} = 0$, and hence $h$ is semipositive
by Proposition~\ref{prop:characterization:semiample:metrized}.
\end{proof}

\begin{Remark}
Assume that the absolute value $|\ndot|$ is non-trivial. Let $L$ be an ample invertible sheaf on $X$, equipped with a semipositive continuous metric $h$. Then there exists a sequence $\{(\mathscr X_n,\mathscr L_n)\}_{n\geqslant 1}$, where $\mathscr X_n$ is a model of $X$ and $\mathscr L_n$ is 
a nef invertible sheaf on $\mathscr X_n$ such that $\mathscr L_n|_X=L^{\otimes n}$ and that $h_n=(|\ndot|_{\mathscr L_n}(x)^{1/n})_{x\in X^{\mathrm{an}}}$ converges uniformly to $h$. This follows from Proposition \ref{prop:semipos:approximation}
 and the comparison between quotient metrics and model metrics (via the embedding into the projective spaces of lattices). Combining with Proposition \ref{prop:vanishing:mu:nef:big} and Corollary~\ref{cor:semiample:metrized}, we obtain that, in the non-trivial valuation case, our semipositivity coincides with that of Zhang \cite{ZhPos} and 
Moriwaki \cite{MoAdelDiv}. 
We refer the readers to \cite[\S6]{Gub-Kun} and to \cite[\S6.8]{Cham-Ducros} for the descriptions of the semipositivity in terms of plurisubharmonic currents.
Note that their semipositivity is also equivalent to our semipositivity.
\end{Remark}

\section{Extension theorem}

{Throughout this section, we assume that $X$ is projective.}
Let us begin with a special case of the extension theorem.
The general extension theorem is a consequence of the special case.

\begin{Theorem}
\label{thm:extension:special}
We assume that   
$L$ is very ample.
Let $\|\ndot\|$ be a norm of $H^0(X, L)$ and $h$ a continuous metric of $L^{\mathrm{an}}$ given by
$\big\{ |\ndot|_{(H^0(X,L),\|\ndot\|)}^{\mathrm{quot}}(x) \big\}_{x \in X^{\mathrm{an}}}$.
Let $Y$ be a 
{closed subscheme} 
of $X$ and $l \in H^0(Y, \rest{L}{Y})$.
Then, for any $\epsilon > 0$,
there are a positive integer $n$ and $s \in H^0(X, {L}^{\otimes n})$ such that
$\rest{s}{Y} = {l}^{\otimes n}$ and $\| s \|_{{h}^{\otimes n}} \leq e^{n\epsilon} (\| l \|_{Y, h})^n$.
\end{Theorem}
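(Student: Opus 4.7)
My plan is to treat first the case where $|\ndot|$ is non-trivial and non-discrete by a lattice-approximation argument reducing to Theorem~\ref{thm:semiample:metrized:extension:ample}, and then to derive the remaining cases (non-trivial discrete, and trivial valuation) by scalar extension followed by a descent of the quotient-norm estimate.

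Assuming first that $|\ndot|$ is non-trivial and not discrete, I set $V := H^0(X,L)$ and invoke Proposition~\ref{prop:non:discrete:approx} to find a finitely generated sub-lattice $\mathscr V \subseteq (V,\|\ndot\|)_{\leqslant 1}$ with $\|\ndot\| \leqslant \|\ndot\|_{\mathscr V} \leqslant e^{\epsilon}\|\ndot\|$. Since $\mathfrak o_k$ is a valuation ring, $\mathscr V$ is free, and by Proposition~\ref{prop:free:basis:orthonormal} any of its free bases is an orthonormal basis of $V$ with respect to $\|\ndot\|_{\mathscr V}$. Because $L$ is very ample, Proposition~\ref{prop:very:ample} then identifies the quotient metric $h_{\mathscr V}$ of $(V,\|\ndot\|_{\mathscr V})$ with the model metric $|\ndot|_{\mathscr L}$, where $\mathscr X$ is the Zariski closure of $X$ in $\mathbb P(\mathscr V)$ and $\mathscr L := \rest{\mathscr O_{\mathbb P(\mathscr V)}(1)}{\mathscr X}$, an ample invertible sheaf. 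Theorem~\ref{thm:semiample:metrized:extension:ample} will then produce $n \geqslant 1$ and $s \in H^0(X,L^{\otimes n})$ with $\rest{s}{Y} = l^{\otimes n}$ and $\|s\|_{h_{\mathscr V}^n} \leqslant e^{n\epsilon}\|l\|_{Y,h_{\mathscr V}}^n$. Since the quotient-metric operation is monotone in the ambient norm, the inequalities $\|\ndot\| \leqslant \|\ndot\|_{\mathscr V} \leqslant e^{\epsilon}\|\ndot\|$ transfer pointwise to $|\ndot|_h \leqslant |\ndot|_{h_{\mathscr V}} \leqslant e^{\epsilon}|\ndot|_h$ on $X^{\mathrm{an}}$, and passing to $n$-th tensor powers and suprema yields $\|s\|_{h^n} \leqslant e^{2n\epsilon}\|l\|_{Y,h}^n$; replacing $\epsilon$ by $\epsilon/2$ initially settles this case.

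For the non-trivial discrete and the trivial cases, I will choose a complete valued field extension $(k',|\ndot|')$ of $(k,|\ndot|)$ with $|\ndot|'$ non-trivial and $|k'^{\times}|$ dense in $\mathbb R_{>0}$: the completion of an algebraic closure of $k$ in the non-trivial discrete case, and in the trivial case the completion of an algebraic closure of $k((t))$, where $k((t))$ carries a $t$-adic absolute value of generic parameter so that Remark~\ref{rem:lem:extension:trivial:Laurent} applies (which, combined with Lemmas~\ref{lem:extension:trivial:Laurent} and \ref{lem:scalar:extensions}, guarantees the expected behavior of the intermediate scalar extension). Setting $X' := X \otimes_k k'$, $L' := L \otimes_k k'$, and letting $\|\ndot\|_{k'}$ be the scalar extension of $\|\ndot\|$, Lemma~\ref{lem:scalar:extension:metric:quotient} shows that the scalar extension $h'$ of $h$ is the quotient metric on $L'$ induced by $\|\ndot\|_{k'}$. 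Corollary~\ref{cor:isometry:double:dual} together with the surjectivity of $X'^{\mathrm{an}} \to X^{\mathrm{an}}$ implies $\|s \otimes 1\|_{h'^n} = \|s\|_{h^n}$ for $s \in H^0(X,L^{\otimes n})$, and similarly for the $Y$-sup norms. Applying the first step over $k'$ will yield $s' \in H^0(X',L'^{\otimes n})$ extending $l^{\otimes n} \otimes 1$ with $\|s'\|_{h'^n} \leqslant e^{n\epsilon}\|l\|_{Y,h}^n$.

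To descend I will consider the restriction map $\rho : H^0(X,L^{\otimes n}) \to H^0(Y,\rest{L^{\otimes n}}{Y})$ and endow $\mathrm{Im}(\rho)$ with the quotient norm $\|\ndot\|_{\rho}$ induced by $\|\ndot\|_{h^n}$. By Lemma~\ref{lem:scalar:extensions:quotient}, its scalar extension to $\mathrm{Im}(\rho) \otimes_k k'$ coincides with the analogous quotient norm induced by $\|\ndot\|_{h'^n}$, and the fact that $\rho'(s') = l^{\otimes n} \otimes 1$ forces this latter norm to satisfy $\|l^{\otimes n} \otimes 1\|_{\rho,k'} \leqslant \|s'\|_{h'^n} \leqslant e^{n\epsilon}\|l\|_{Y,h}^n$. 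Corollary~\ref{cor:isometry:double:dual} then yields $\|l^{\otimes n}\|_{\rho} = \|l^{\otimes n} \otimes 1\|_{\rho,k'} \leqslant e^{n\epsilon}\|l\|_{Y,h}^n$, so choosing $s \in H^0(X,L^{\otimes n})$ extending $l^{\otimes n}$ with $\|s\|_{h^n}$ arbitrarily close to this infimum will prove the theorem after a final adjustment of $\epsilon$. The main obstacle will be the trivial valuation case, where one must construct and analyze the scalar extension carefully via Lemma~\ref{lem:extension:trivial:Laurent} and Remark~\ref{rem:lem:extension:trivial:Laurent} in order to preserve the quotient-metric formalism through the base change to $k'$.
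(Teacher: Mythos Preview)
Your non-trivial non-discrete step is essentially the same as the paper's argument for that sub-case, and is correct. However, your descent strategy for the discrete and trivial valuation cases has a genuine gap.

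The descent hinges on the identity
\[
\|\ndot\|_{h'^{\,n}} \;=\; \big(\|\ndot\|_{h^{n}}\big)_{k'}
\quad\text{on } H^0(X',L'^{\otimes n}),
\]
i.e.\ that the sup norm after base change is the scalar extension of the sup norm. You invoke Lemma~\ref{lem:scalar:extensions:quotient} to pass the quotient-norm estimate back to $k$, but that lemma compares the scalar extension of a quotient norm with the quotient of the \emph{scalar extension} norm on $V_{k'}$; it says nothing unless you first know that $\|\ndot\|_{h'^{\,n}}$ is that scalar extension. All you get for free (Proposition~\ref{prop:scalar:extension:orthogonal}) is the inequality $\|\ndot\|_{h'^{\,n}}\leqslant(\|\ndot\|_{h^{n}})_{k'}$, since the scalar extension is the \emph{largest} ultrametric extension. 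This inequality points the wrong way: it gives $\|l^{\otimes n}\otimes 1\|_{\text{quot of }\|\ndot\|_{h'^n}}\leqslant \|l^{\otimes n}\|_{\rho}$, whereas you need the reverse to bound $\|l^{\otimes n}\|_{\rho}$ by $\|s'\|_{h'^n}$. (A secondary point: Lemma~\ref{lem:scalar:extensions:quotient} is stated only for one-dimensional quotients; the general case is true and provable by the same $\alpha$-orthogonal-basis method, but you should say so.)

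In the trivial case the paper secures the missing identity via Lemma~\ref{lem:extension:trivial:Laurent}(2), but that lemma requires the residue field of $k'$ to equal $k$ and a genericity condition on the value group; it applies to $k'=k(\!(T)\!)$, not to the completion of its algebraic closure. Your plan to factor through $k(\!(T)\!)$ does not help, because Lemma~\ref{lem:extension:trivial:Laurent} is stated for trivial base valuation only, so it does not cover the second step $k(\!(T)\!)\to \widehat{\overline{k(\!(T)\!)}}$; and over $k(\!(T)\!)$ itself your ``first step'' is inapplicable since the valuation there is discrete. The paper avoids this trap by treating the discrete case directly (passing to a power $L^{\otimes a}$ with $|\varpi|^{-1}\le e^{a\epsilon/2}$, then using the lattice $(H^0(X,L^{\otimes a}),\|\ndot\|_{h^a})_{\leqslant 1}$), so that the entire non-trivial case is settled without scalar extension; then in the trivial case it extends only to $k(\!(T)\!)$ (discrete, hence covered) and descends by the explicit orthogonal-basis argument licensed by Lemma~\ref{lem:extension:trivial:Laurent}. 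You should either adopt that decomposition, or else supply an independent proof that sup norms commute with arbitrary scalar extensions in this setting.
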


\begin{proof}
First we assume that $|\ndot|$ is non-trivial.
Let us begin with the following:

\begin{Claim}
\label{claim:thm:extension:special:01}
There are a positive integer $a$ and a finitely generated lattice $\mathscr H$ of $H^0(X, L^{\otimes a})$
such that
\[
\|\ndot\|_{h^a} \leq \|\ndot\|_{\mathscr H} \leq e^{a\epsilon/2} \|\ndot\|_{h^a}.
\]
\end{Claim}

\begin{proof}
First we assume that $|\ndot|$ is discrete.
We choose a positive integer $a$ such that $|\varpi|^{-1} \leq e^{a \epsilon/2}$.
We set 
$\mathscr H := \{ s \in H^0(X, L^{\otimes a}) \mid \| s \|_{h^{a}} \leq 1 \}$.
Note that $\mathscr H$ is a finitely generated lattice of $H^0(X, L^{\otimes a})$  
by Proposition~\ref{prop:norm:lattice:discrete}. 
As $\|\ndot\|_{h^{a}} \leq \|\ndot\|_{\mathscr H} \leq |\varpi|^{-1} \|\ndot\|_{h^{a}}$ 
by Proposition~\ref{prop:norm:lattice:discrete}, we have 
the assertion.

Next we assume that $|\ndot|$ is not discrete.
By Proposition~\ref{prop:norm:lattice:non:discrete}, 
there is a lattice $\mathscr V$ of $H^0(X, L)$ such that $\|\ndot\|_h = \|\ndot\|_{\mathscr V}$.
By Proposition~\ref{prop:non:discrete:approx}, 
there is a finitely generated lattice $\mathscr H$ of $H^0(X, L)$ such that
$\mathscr H \subseteq \mathscr V$ and 
$\|\ndot\|_h \leq \|\ndot\|_{\mathscr H} \leq e^{\epsilon/2} \|\ndot\|_h$, as desired.
\end{proof}

Let $\mathscr X$ be the Zariski closure of $X$ in $\mathbb P(\mathscr H)$ 
(cf. \S\ref{Notations:ZariskiClosure}) and
$\mathscr L = \rest{\mathscr O_{\mathbb P(\mathscr H)}(1)}{\mathscr X}$.
Moreover, let $h'$ be a continuous metric of $(L^{\otimes a})^{\mathrm{an}}$ given by
\[
\big\{ |\ndot|^{\mathrm{quot}}_{(H, \|\ndot\|_{\mathscr H})}(x) \big\}_{x \in X^{\mathrm{an}}}.
\]
Then, by Proposition~\ref{prop:very:ample} and Remark~\ref{rem:norm:free:basis}, 
$|\ndot|_{h'} = |\ndot|_{\mathscr L}$.
Therefore, by virtue of Theorem~\ref{thm:semiample:metrized:extension:ample},
there are a positive integer $m$ and $s \in H^0(X, L^{\otimes am})$ such that
$\rest{s}{Y} = l^{\otimes am}$ and
\begin{equation}
\label{eqn:thm:extension:special:02}
\| s \|_{{h'}^m} \leq e^{am\epsilon/2} (\| l^{\otimes a} \|_{Y, h'})^m.
\end{equation}
As $\|\ndot\|_{h^{a}} \leq \|\ndot\|_{\mathscr H} \leq  e^{a \epsilon/2} \|\ndot\|_{h^{a}}$, we have 
\[
|\ndot|^{\mathrm{quot}}_{h^a}(x) \leq |\ndot|_{h'}(x) \leq e^{a \epsilon/2} |\ndot|^{\mathrm{quot}}_{h^a}(x)
\]
for all $x \in X^{\mathrm{an}}$. Therefore, by Proposition~\ref{prop:quotient:sup:norm},
\begin{equation}
\label{eqn:thm:extension:special:03}
|\ndot|_{h^a}(x) \leq |\ndot|_{h'}(x) 
\leq e^{a \epsilon/2} |\ndot|_{h^a}(x)
\end{equation}
for all $x \in X^{\mathrm{an}}$. In particular, $|\ndot|_{h^{am}}(x) \leq |\ndot|_{{h'}^m}(x)$.
Therefore, 
\begin{equation}
\label{eqn:thm:extension:special:04}
\| s \|_{h^{am}} \leq \| s \|_{{h'}^m}.
\end{equation}
On the other hand, by using \eqref{eqn:thm:extension:special:03},
\begin{equation}
\label{eqn:thm:extension:special:05}
\| l^{\otimes a} \|_{Y, h'}
\leq e^{a\epsilon/2} \sup \{ |l^{\otimes a} |_{h^a}(y) \mid y \in Y^{\mathrm{an}} \} \leq e^{a \epsilon/2} (\| l \|_{Y, h})^a.
\end{equation}
Thus the assertion follows from \eqref{eqn:thm:extension:special:02},
\eqref{eqn:thm:extension:special:04} and \eqref{eqn:thm:extension:special:05}.

\bigskip
Next we assume that $|\ndot|$ is trivial.
Clearly we may assume that $l \not= 0$.
Let $k'$ be the field $k(\!(T)\!)$ of formal Laurent power series over $k$, that is,
the quotient field of the ring $k\lformal T \rformal$ of formal power series over $k$.
We set 
\[
\Sigma := \bigcup_{i=0}^{\infty} \left( \bigcup_{s, s' \in H^0(X, L^{\otimes i}) \setminus \{ 0 \}}
\mathbb Q \left(\log \| s \|_{h^i} - \log \| s' \|_{h^i} \right) \right).
\]
As $\left\{ \| s \|_{h^i} \mid s \in H^0(X, L^{\otimes i}) \setminus \{ 0 \} \right\}$
is a finite set
by (1) in Lemma~\ref{lem:extension:trivial:Laurent}, we have
$\#(\Sigma) \leq \aleph_0$. Therefore, we can find $\alpha  \in \mathbb R_{>0} \setminus \Sigma$.
Here we consider an absolute value $|\ndot|'$ of $k'$ given by 
\[
|\phi(T)|' := \exp(-\alpha \ord(\phi(T)))\quad(\phi(T) \in k').
\]
We set 
\[
X' := X \times_{\Spec(k)} \Spec(k'),\quad Y' := Y \times_{\Spec(k)} \Spec(k')\quad\text{and}\quad
L' = L \otimes_k k'.
\]
Note that $H^0(X',L') = H^0(X,L) \otimes_{k} k'$. 
Let $h'$ be a continuous metric of ${L'}^{\mathrm{an}}$ given by the scalar extension of $h$.
Then, by Lemma~\ref{lem:scalar:extension:metric:quotient}, $h'$ is given by
\[
\big\{ |\ndot|_{(H^0(X',L'),\|\ndot\|_{k'})}^{\mathrm{quot}}(x') \big\}_{x' \in {X'}^{\mathrm{an}}},
\]
where $\|\ndot\|_{k'}$ is the scalar extension of $\|\ndot\|$.
Moreover, for $s \in H^0(X, L)$, 
$|s|_{h'}(x') = |s|_h(p^{\mathrm{an}}(x'))$ for $x' \in {X'}^{\mathrm{an}}$,
where $p : X' \to X$ is the projection. Note that $p^{\mathrm{an}} : {X'}^{\mathrm{an}} \to X^{\mathrm{an}}$
is surjective. Therefore, $\| s \|_{h'} = \| s \|_h$ for all $s \in H^0(X, L)$.

By the previous observation,
there are a positive integer $n$ and $s' \in H^0(X', {L'}^{\otimes n})$ such that
\[
\rest{s'}{Y'} = {l}^{\otimes n}\quad\text{and}\quad
\| s' \|_{{h'}^{n}} \leq e^{n\epsilon} (\| l \|_{Y', h'})^n = e^{n\epsilon} (\| l \|_{Y, h})^n.
\]
Note that, for a positive integer $d$,
\[
{s'}^{\otimes d} \in H^0(X', {L'}^{\otimes dn}),\quad
\rest{{s'}^{\otimes d}}{Y'} = {l}^{\otimes dn}\quad\text{and}\quad 
\| {s'}^{\otimes d} \|_{{h'}^{dn}} \leq e^{dn\epsilon} (\| l \|_{Y, h})^{dn}.
\]
Thus we may assume that $H^0(X, L^{\otimes n}) \to H^0(Y, \rest{L}{Y}^{\otimes n})$ is surjective.
Let $(e_1, \ldots, e_r)$ be an orthogonal basis of $H^0(X, L^{\otimes n})$ with respect to
$\|\ndot\|_{h^{n}}$ such that
$(e_{t+1}, \ldots, e_{r})$ forms a basis of 
$\Ker(H^0(X, L^{\otimes n}) \to H^0(Y, \rest{L}{Y}^{\otimes n}))$ (cf. Proposition~\ref{prop:orthogonal:basis}).
We set 
\[
s' = a_1(T) e_1 + \cdots + a_{t}(T) e_{t} + a_{t+1}(T) e_{t+1} + \cdots + a_{r}(T) e_r
\]
for some $a_1(T), \ldots, a_r(T) \in k' = k(\!(T)\!)$.
As $\rest{s'}{Y'} = l^{\otimes n} \in H^0(Y, \rest{L}{Y}^{\otimes n})$ 
and $(\rest{e_1}{Y}, \ldots, \rest{e_t}{Y})$ forms a basis of
$H^0(Y, \rest{L}{Y}^{\otimes n})$, we have $a_1(T), \ldots, a_t(T) \in k$.
Note that 
\[
\alpha \not\in \bigcup_{s, s' \in H^0(X, L^{\otimes n}) \setminus \{ 0 \}} 
\mathbb Q \left( \log \|s\|_{h^{n}} - \log \|s'\|_{h^{n}}\right),
\]
so that, by (2) in Lemma~\ref{lem:extension:trivial:Laurent} and
Remark~\ref{rem:lem:extension:trivial:Laurent}, $(e_1, \ldots, e_r)$ forms an orthogonal
basis of $H^0(X', {L'}^{\otimes n})$ with respect to $\|\ndot\|_{{h'}^{n}}$. 
Therefore, if we set $s = a_1 e_1 + \cdots + a_{t} e_{t}$,
then $s \in H^0(X, L^{\otimes n})$, $\rest{s}{Y} = l^{\otimes n}$ and
\begin{align*}
\| s\|_{h^{n}} & = \max \{ |a_1|\| e_1 \|_{h^{n}}, \ldots, |a_t|\| e_t \|_{h^{n}} \} \\
& \leq \max \left\{ |a_1|\| e_1 \|_{h^{n}}, \ldots, |a_t|\| e_t \|_{h^{n}},
|a_{t+1}(T)|'\|e_{t+1}\|_{h^{n}}, \ldots, |a_r(T)|' \| e_r \|_{h^{n}} \right\} \\
& = \| s' \|_{{h'}^{n}} \leq e^{n\epsilon} (\| l \|_{Y, h})^n,
\end{align*}
as required.
\end{proof}

\begin{Theorem}
\label{thm:extension}
We assume that 
$L$ is ample and $h$ is a  semipositive continuous metric of $L^{\mathrm{an}}$.
Fix a closed subscheme $Y$, $l \in H^0(Y, \rest{L}{Y})$ and $\epsilon \in \mathbb R_{>0}$.
Then there is a positive integer ${n_0}$ such that,
for all $n \geq {n_0}$, we can find $s \in H^0(X, L^{\otimes n})$ with
\[
\rest{s}{Y} = l^{\otimes n}\quad\text{and}\quad
\Vert s \Vert_{h^{n}} \leq e^{n \epsilon} (\Vert l \Vert_{Y, h})^n.
\]
\end{Theorem}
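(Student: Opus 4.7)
The plan is to reduce the theorem to the very-ample, quotient-metric case already treated in Theorem~\ref{thm:extension:special}, using the semipositivity approximation from Proposition~\ref{prop:semipos:approximation} as a bridge, and then to upgrade ``there exists some $n$'' to ``for all $n \geq n_0$'' by combining subadditivity (Proposition~\ref{Pro:convergence}) with Fekete's lemma.

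Fix $\epsilon > 0$. Because $L$ is ample, I may choose an integer $m_0$ so large that $L^{\otimes m}$ is very ample for every $m \geq m_0$ and so that the restriction map $H^0(X, L^{\otimes n}) \to H^0(Y, L|_Y^{\otimes n})$ is surjective for every $n \geq m_0$. In particular, the quantities
\[
a_n := \inf_{\substack{s \in H^0(X, L^{\otimes n}) \\ s|_Y = l^{\otimes n}}} \bigl(\log\|s\|_{h^n} - n\log\|l\|_{Y, h}\bigr)
\]
of Proposition~\ref{Pro:convergence} are finite for every $n \geq m_0$.

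Next I would invoke semipositivity. By Proposition~\ref{prop:semipos:approximation}, for any $\delta > 0$ there exists $m \geq m_0$ such that
\[
|\ndot|_{h^m}(x) \;\leq\; |\ndot|_{h^m}^{\mathrm{quot}}(x) \;\leq\; e^{m\delta}\,|\ndot|_{h^m}(x) \qquad \text{for all } x \in X^{\mathrm{an}},
\]
where $|\ndot|_{h^m}^{\mathrm{quot}}(x)$ is the quotient metric attached to $(H^0(X, L^{\otimes m}), \|\ndot\|_{h^m})$. Write $h'$ for this quotient metric, regarded as a continuous metric on $L^{\otimes m}$. Since $L^{\otimes m}$ is very ample, Theorem~\ref{thm:extension:special} applies to $(L^{\otimes m}, h')$, the closed subscheme $Y$, and the section $l^{\otimes m}$: for any $\delta' > 0$ there are a positive integer $n$ and $t \in H^0(X, L^{\otimes mn})$ with $t|_Y = l^{\otimes mn}$ and
\[
\|t\|_{(h')^n} \;\leq\; e^{mn\delta'}\,\bigl(\|l^{\otimes m}\|_{Y, h'}\bigr)^n.
\]
Raising the two-sided inequality above to the $n$-th power yields $|\ndot|_{h^{mn}}(x) \leq |\ndot|_{(h')^n}(x) \leq e^{mn\delta}\,|\ndot|_{h^{mn}}(x)$, and combining this with $\|l^{\otimes m}\|_{Y, h'} \leq e^{m\delta}\|l\|_{Y, h}^m$ gives $\|t\|_{h^{mn}} \leq e^{mn(\delta+\delta')}\|l\|_{Y, h}^{mn}$, i.e.\ $a_{mn} \leq mn(\delta + \delta')$.

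Since $\delta, \delta'$ can be taken as small as one pleases, this construction produces integers $N$ with $a_N/N$ arbitrarily small. The sequence $(a_n)_{n \geq m_0}$ being subadditive (Proposition~\ref{Pro:convergence}), Fekete's lemma then gives
\[
\lim_{n\to\infty} \frac{a_n}{n} \;=\; \inf_{n \geq m_0} \frac{a_n}{n} \;=\; 0,
\]
so that there is an $n_0$ with $a_n \leq n\epsilon$ for every $n \geq n_0$, which is exactly the desired conclusion. The principal technical point I expect is the bookkeeping of multiplicative constants when passing between $h^{mn}$, $(h')^n$, and the quotient approximation $|\ndot|_{h^m}^{\mathrm{quot}}$: in particular, one must control $\|l^{\otimes m}\|_{Y, h'}$ against $\|l\|_{Y, h}^m$, and this is exactly where the \emph{upper} bound in Proposition~\ref{prop:semipos:approximation}, genuinely requiring semipositivity, enters decisively.
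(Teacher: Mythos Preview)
Your proof is correct and follows essentially the same route as the paper's: approximate $h^m$ by the quotient metric via Proposition~\ref{prop:semipos:approximation}, invoke Theorem~\ref{thm:extension:special} on $(L^{\otimes m},h')$, and then upgrade to all large $n$ by subadditivity. The only cosmetic difference is that the paper packages the last step as an appeal to Corollary~\ref{Cor:extension}, whereas you unwind that corollary and cite Proposition~\ref{Pro:convergence} and Fekete's lemma directly.
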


\begin{proof}
Clearly we may assume that $l \not= 0$.
Let us begin with the following claim:

\begin{Claim}
For any ${\epsilon'}>0$, there are a positive integer $N$ and $s_N \in H^0(X, L^{\otimes N})$ such that
\[
\rest{s_N}{Y} = l^{\otimes N}\quad\text{and}\quad
\Vert s_N \Vert_{h^{N}} \leq e^{N {\epsilon'}} (\Vert l \Vert_{Y, h})^{N}.
\]
\end{Claim}

\begin{proof}
By using Proposition~\ref{prop:semipos:approximation}, we can find a positive integer $a$ such that
$L^{\otimes a}$ is very ample and 
\[
|\ndot|_{h^a}(x) \leq |\ndot|^{\mathrm{quot}}_{h^a}(x) \leq e^{a{\epsilon'/2}} |\ndot|_{h^a}(x)
\]
for all $x \in X^{\mathrm{an}}$.
We set $h' = \{ |\ndot|^{\mathrm{quot}}_{h^a}(x) \}$. Then, the above inequalities
means that
\begin{equation}
\label{eqn:thm:extension:01}
|\ndot|_{h^a}(x) \leq |\ndot|_{h'}(x) \leq e^{a{\epsilon'/2}} |\ndot|_{h^a}(x)
\end{equation}
for all $x \in X^{\mathrm{an}}$. Further, by Theorem~\ref{thm:extension:special},
there are a positive integer $b$ and $s_{ab} \in H^0(X, L^{\otimes ab})$ such that $\rest{s_{ab}}{Y} = l^{\otimes ab}$ and
\[
\| s_{ab} \|_{{h'}^b} \leq e^{ab{\epsilon'/2}}(\| l^{\otimes a} \|_{Y, h'})^b.
\]
By \eqref{eqn:thm:extension:01}, 
\[
\| l^{\otimes a} \|_{Y, h'} \leq e^{a{\epsilon'/2}} \| l^{\otimes a} \|_{Y, h^a} = e^{a{\epsilon'/2}} (\| l \|_{Y, h})^a.
\]
Moreover, as $|\ndot|_{h^{ab}}(x) \leq |\ndot|_{{h'}^b}(x)$ by \eqref{eqn:thm:extension:01},
we have $\| s_{ab} \|_{h^{ab}} \leq \| s_{ab} \|_{{h'}^b}$, so that
\begin{align*}
\| s_{ab} \|_{h^{ab}} & \leq \| s_{ab} \|_{{h'}^b} \leq e^{ab{\epsilon'/2}}(\| l^{\otimes a} \|_{Y, h'})^b \\
& \leq e^{ab{\epsilon'/2}}( e^{a{\epsilon'/2}} (\| l \|_{Y, h})^a )^b
\leq e^{ab{\epsilon'}}( \| l \|_{Y, h})^{ab}.
\end{align*}
Therefore, if we set $N = ab$, then we have the assertion of the claim.
\end{proof}

Since $L$ is ample, by Corollary~\ref{Cor:extension}, the above claim is actually equivalent to the assertion of the theorem. Thus the theorem is proved.
\end{proof}

\section{Arithmetic Nakai-Moishezon criterion over a number field}
In this section, as an application of the extension property 
(cf. \cite{MoSemiample} and Theorem~\ref{thm:extension}),
we consider the arithmetic Nakai-Moishezon criterion over a number field under a weaker assumption
(adelically normed vector space)
than Zhang's paper \cite{ZhPos}.

\subsection{Adelically normed vector space over a number field}
Fix a number field $K$.
Let $\mathcal O_K$ be the ring of integers in $K$.
We set 
\[
\begin{cases}
M_K^{\mathrm{fin}} := \Spec(\mathcal O_K) \setminus \{ (0) \}, \\ 
M_K^{\infty} := K(\mathbb C) \ (= \text{the set of all embeddings $K \hookrightarrow \mathbb C$}).
\end{cases}
\]
Moreover, $M_K := M_K^{\mathrm{fin}} \cup M_K^{\infty}$.
For $\mathfrak p \in M_K^{\mathrm{fin}}$ and $\sigma \in M_K^{\infty}$, the absolute values $|\ndot|_{\mathfrak p}$ 
and $|\ndot|_{\sigma}$ of
$K$ are defined by
\[
| x |_{\mathfrak p} := \#(\mathcal O_K/\mathfrak p)^{-\ord_{\mathfrak p}(x)}\quad\text{and}\quad | x |_{\sigma} := |\sigma(x)| \quad (x \in K),
\]
respectively.
Further, for $\mathfrak p \in M_K^{\mathrm{fin}}$, the completion of $K$ with respect to $|\ndot|_{\mathfrak p}$ is
denoted by $K_{\mathfrak p}$. 
In addition, $K_{\sigma}$ and $K \hookrightarrow K_{\sigma}$ 
($\sigma \in M_K^{\infty}$) are defined to be $\CC$ and $\sigma$, respectively.
By abuse of notation, for $v \in M_K$,
the extension absolute of $|\ndot|_v$ to $K_v$ is also denoted by $|\ndot|_v$.
In the case where $v = \sigma \in M_K^{\infty}$,
$|\ndot|_{\sigma}$ on $K_{\sigma} = \CC$ is the usual absolute value.
If $\mathfrak p\in M_K^{\mathrm{fin}}$, the valuation rings of $(K, |\ndot|_{\mathfrak p})$ and $(K_{\mathfrak p}, |\ndot|_{\mathfrak p})$ are denoted by 
$\mathcal O_{\mathfrak p}$ and $\widehat{\mathcal O}_{\mathfrak p}$, respectively.
Note that $\mathcal O_{\mathfrak p}$ is the localization of $\mathcal O_K$ with respect to $\mathcal O_K \setminus \mathfrak p$, and $\widehat{\mathcal O}_{\mathfrak p}$ is the completion of the local ring $\mathcal O_{\mathfrak p}$.

\begin{Definition}\label{Def:adelicallynormed}
Let $H$ be a finite-dimensional vector space over $K$.
For $v \in M_K$, $H \otimes_K K_v$ is denoted by $H_v$.
For each $v \in M_K$, let $\|\ndot\|_v$ be a norm of $H_v$ over $(K_v, |\ndot|_v)$.
In the case where $v \in M_{K}^{\mathrm{fin}}$, the norm $\|\ndot\|_v$ is
always assumed to be {\em ultrametric}. Moreover, we assume that the family $(\|\ndot\|_\sigma)_{\sigma\in M_K^\infty}$ is invariant under the complex conjugation, namely for any finite family of vectors $(s_i)_{i=1}^n$ in $H$ and vector $(\lambda_i)_{i=1}^n$ of complex numbers, one has \[\|\overline{\lambda_1}\otimes s_1+\cdots+\overline{\lambda_n}\otimes s_n\|_{\overline{\sigma}}=\|\lambda_1\otimes s_1+\cdots+\lambda_n\otimes s_n\|_\sigma.\]
The family $\{ \|\ndot\|_v \}_{v \in M_K}$ of norms is often denoted by $\|\ndot\|$.
The pair $(H, \|\ndot\|)$ is called an {\em adelically normed vector space over $K$}
if, for any $x \in H$, $\| x \|_{\mathfrak p} \leq 1$ except finitely many $\mathfrak p \in M^{\mathrm{fin}}_K$
{
(cf. \cite[Definition~2.1]{BC} and \cite[Definition~2.10]{BMPS}).
}
We set
\[
\begin{cases}
(H, \|\ndot\|)^{\mathrm{fin}}_{\leq 1} := \left\{ x \in H \mid \text{$\| x \|_{\mathfrak p} \leq 1$ for all $\mathfrak p \in M_K^{\mathrm{fin}}$} \right\},\\
(H, \|\ndot\|)^{\mathfrak p}_{\leq 1} := \left\{ x \in H \mid \text{$\| x \|_{\mathfrak p} \leq 1$ } \right\}. 
\end{cases}
\]
\end{Definition}

\begin{Lemma}
\label{lem:adelic:normed:vector:space}
We assume that $(H, \|\ndot\|)$ is an adelically normed vector space over $K$.
\begin{enumerate}
\renewcommand{\labelenumi}{(\arabic{enumi})}
\item
For $\mathfrak p \in M_K^{\mathrm{fin}}$,
$(H, \|\ndot\|)^{\mathfrak p}_{\leq 1} = (H, \|\ndot\|)^{\mathrm{fin}}_{\leq 1} \otimes_{\mathcal O_K} \mathcal O_{\mathfrak p}$.

\item $(H, \|\ndot\|)^{\mathrm{fin}}_{\leq 1}$ is a finitely generated $\mathcal O_K$-module and 
$(H, \|\ndot\|)^{\mathrm{fin}}_{\leq 1}
\otimes_{\mathcal O_K} K = H$. Moreover,  $(H, \|\ndot\|)^{\mathrm{fin}}_{\leq 1}
\otimes_{\ZZ} \QQ = H$.

\item
Let $f : H \to H'$ be a surjective homomorphism of finite-dimensional vector spaces over $K$.
Let $\|\ndot\|_v^{\quot}$ be the quotient norm of $H'_v$ induced by
the surjection $f_v : H_v \to H'_v$ and the norm $\|\ndot\|_v$ on $H_v$.
Then $(H', \|\ndot\|^{\quot})$ is an adelically normed vector space over $K$ and
\[
f\left((H, \|\ndot\|)^{\mathrm{fin}}_{\leq 1}\right) = (H', \|\ndot\|^{\quot})^{\mathrm{fin}}_{\leq 1},
\]
where $\|\ndot\|^{\quot} = \{ \|\ndot\|_v^{\quot} \}_{v \in M_K}$.
\end{enumerate}
\end{Lemma}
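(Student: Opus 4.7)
For part~(1), the containment $\supseteq$ is immediate from the ultrametric inequality together with $|\alpha|_{\mathfrak{p}}\leq 1$ for $\alpha\in\mathcal{O}_{\mathfrak{p}}$: any $\mathcal{O}_{\mathfrak{p}}$-linear combination of elements of $(H,\|\ndot\|)^{\mathrm{fin}}_{\leq 1}$ still has $\|\ndot\|_{\mathfrak{p}}\leq 1$. For $\subseteq$, given $x\in H$ with $\|x\|_{\mathfrak{p}}\leq 1$, the adelic hypothesis shows that $T:=\{\mathfrak{q}\in M_K^{\mathrm{fin}}:\|x\|_{\mathfrak{q}}>1\}$ is finite and does not contain $\mathfrak{p}$. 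By the Chinese Remainder Theorem in $\mathcal{O}_K$ I pick $\lambda\in\mathcal{O}_K\setminus\mathfrak{p}$ whose $\mathfrak{q}$-adic valuation is large enough that $|\lambda|_{\mathfrak{q}}\leq\|x\|_{\mathfrak{q}}^{-1}$ for every $\mathfrak{q}\in T$; then $\lambda x\in(H,\|\ndot\|)^{\mathrm{fin}}_{\leq 1}$ and, since $\lambda^{-1}\in\mathcal{O}_{\mathfrak{p}}$, the identity $x=\lambda^{-1}(\lambda x)$ writes $x$ as an element of $(H,\|\ndot\|)^{\mathrm{fin}}_{\leq 1}\otimes_{\mathcal{O}_K}\mathcal{O}_{\mathfrak{p}}$.

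For part~(2), applying the same CRT argument coordinate-by-coordinate to any $K$-basis $(e_1,\ldots,e_r)$ of $H$ produces $\mu_i\in\mathcal{O}_K\setminus\{0\}$ with $\mu_i e_i\in M:=(H,\|\ndot\|)^{\mathrm{fin}}_{\leq 1}$, hence $M\otimes_{\mathcal{O}_K}K=H$; the statement $M\otimes_{\mathbb{Z}}\mathbb{Q}=H$ then follows from $\mathcal{O}_K\otimes_{\mathbb{Z}}\mathbb{Q}=K$. For finite generation of $M$ over $\mathcal{O}_K$, the idea is to combine local finiteness with adelic coherence. By part~(1), $M_{\mathfrak{p}}=M\otimes_{\mathcal{O}_K}\mathcal{O}_{\mathfrak{p}}$ equals $(H,\|\ndot\|)^{\mathfrak{p}}_{\leq 1}$, which is the $K$-rational part of the finitely generated $\widehat{\mathcal{O}}_{\mathfrak{p}}$-lattice $(H_{\mathfrak{p}},\|\ndot\|_{\mathfrak{p}})_{\leq 1}$ furnished by Proposition~\ref{cor:finite:gen:less:1} and is therefore a finitely generated $\mathcal{O}_{\mathfrak{p}}$-module. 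The adelic definition (\emph{cf.}~\cite{BC,BMPS}) moreover singles out a finite exceptional set outside of which $M_{\mathfrak{p}}$ coincides with the fixed free lattice $\bigoplus_i\mathcal{O}_{\mathfrak{p}}\mu_i e_i$; so $M$ is sandwiched between $\bigoplus_i\mathcal{O}_K\mu_i e_i$ and a finitely generated $\mathcal{O}_K$-module obtained by enlarging at those finitely many primes, and the Dedekind structure theorem concludes.

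For part~(3), adelicity of $(H',\|\ndot\|^{\mathrm{quot}})$ follows by lifting: any $y\in H'$ has a $K$-lift $x\in H$, and $\|y\|_{\mathfrak{p}}^{\mathrm{quot}}\leq\|x\|_{\mathfrak{p}}\leq 1$ for all but finitely many $\mathfrak{p}$. The inclusion $f(M)\subseteq M':=(H',\|\ndot\|^{\mathrm{quot}})^{\mathrm{fin}}_{\leq 1}$ is clear from $\|f(x)\|_v^{\mathrm{quot}}\leq\|x\|_v$. For the reverse inclusion, fix $y\in M'$; Proposition~\ref{Pro:quotientnorminf}, applied at each finite place since $|\ndot|_{\mathfrak{p}}$ is discrete, produces for every $\mathfrak{p}\in M_K^{\mathrm{fin}}$ an element $x_{\mathfrak{p}}\in H$ with $f(x_{\mathfrak{p}})=y$ and $\|x_{\mathfrak{p}}\|_{\mathfrak{p}}=\|y\|_{\mathfrak{p}}^{\mathrm{quot}}\leq 1$. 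Pick any $x_0\in H$ with $f(x_0)=y$; then $n_{\mathfrak{p}}:=x_{\mathfrak{p}}-x_0$ belongs to $N:=\ker f$, and $(N,\rest{\|\ndot\|}{N})$ inherits an adelic normed structure. Applying~(1) and~(2) to $N$ gives a finitely generated $\mathcal{O}_K$-lattice $(N,\rest{\|\ndot\|}{N})^{\mathrm{fin}}_{\leq 1}$, and a Chinese Remainder argument inside this lattice furnishes $n\in N$ with $n-n_{\mathfrak{p}}\in(N,\rest{\|\ndot\|}{N})^{\mathfrak{p}}_{\leq 1}$ at the finitely many $\mathfrak{p}$ where $\|x_0\|_{\mathfrak{p}}>1$, and $n\in(N,\rest{\|\ndot\|}{N})^{\mathfrak{q}}_{\leq 1}$ at every other finite place. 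The ultrametric inequality then delivers $\|x_0+n\|_{\mathfrak{p}}\leq 1$ at every finite place, so $x:=x_0+n$ lies in $M$ and satisfies $f(x)=y$.

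The main obstacle I anticipate is the Chinese Remainder/strong-approximation step in~(3) that globalizes the locally chosen corrections $n_{\mathfrak{p}}$ into a single element of the global integer lattice of $N$; closely related is the use in~(2) of adelic coherence to pin down $M_{\mathfrak{p}}$ at all but finitely many primes.
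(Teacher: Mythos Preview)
Your treatment of (1) matches the paper's: both scale by an element of $\mathcal O_K$ supported exactly on the offending primes (the paper packages this as Lemma~\ref{lem:supp:given:primes} rather than invoking CRT directly).

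For (2), the paper's argument is terser than yours: it observes via Proposition~\ref{cor:finite:gen:less:1} and part~(1) that $M_{\mathfrak p}=(H,\|\ndot\|)^{\mathfrak p}_{\leq 1}$ is finitely generated over $\mathcal O_{\mathfrak p}$ for every $\mathfrak p$, and then simply asserts that global finite generation follows. You instead invoke a coherence property---that outside a finite set of primes $M_{\mathfrak p}$ agrees with a fixed lattice---justified by appeal to the references \cite{BC,BMPS} rather than to Definition~\ref{Def:adelicallynormed} as stated. Be aware that the pointwise condition in Definition~\ref{Def:adelicallynormed} does not by itself yield such a finite exceptional set, and that local finite generation at every prime does not in general imply global finite generation for a submodule of $H$; so both your argument and the paper's lean on something not made fully explicit here.

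The substantive divergence is in (3). The paper does \emph{not} try to manufacture a single global lift by approximation. Instead it works prime by prime: Proposition~\ref{Pro:quotientnorminf} gives
\[
f\big((H,\|\ndot\|)^{\mathfrak p}_{\leq 1}\big)=(H',\|\ndot\|^{\quot})^{\mathfrak p}_{\leq 1}
\]
for each finite place $\mathfrak p$, whence by (1) one has $f(M)\otimes_{\mathcal O_K}\mathcal O_{\mathfrak p}=M'\otimes_{\mathcal O_K}\mathcal O_{\mathfrak p}$ for every $\mathfrak p$; applying \cite[Proposition~3.8]{Atiyah-Macdonald} to $M'/f(M)$ then gives $f(M)=M'$ immediately. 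This completely bypasses the strong-approximation obstacle you flag. Your constructive route can be salvaged---once $L:=(N,\|\ndot\|)^{\mathrm{fin}}_{\leq 1}$ is known to be a full lattice, the torsion $\mathcal O_K$-module $N/L$ splits as $\bigoplus_{\mathfrak p}(N/L)_{\mathfrak p}$ and the finitely many classes $\bar n_{\mathfrak p}$ assemble into a single global $\bar n$---but that is precisely the step you left open, and the paper's local-to-global device is both shorter and avoids it entirely.
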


\begin{proof}
(1) Obviously $(H, \|\ndot\|)^{\mathrm{fin}}_{\leq 1} \otimes_{\mathcal O_K} \mathcal O_{\mathfrak p} \subseteq (H, \|\ndot\|)^{\mathfrak p}_{\leq 1}$.
Conversely, we assume that $x \in H$ and $\| x \|_{\mathfrak p} \leq 1$.
We set 
\[
\{ \mathfrak q \in M_K^{\mathrm{fin}} \mid \| x \|_{\mathfrak q} > 1 \} = \{ \mathfrak q_1, \ldots, \mathfrak q_r \}.
\]
By Lemma~\ref{lem:supp:given:primes} as below, there is $\alpha \in K^{\times}$ such that
\[
\ord_{\mathfrak q_i}(\alpha) > 0\ (\forall i = 1, \ldots, r)\quad\text{and}\quad
\ord_{\mathfrak q}(\alpha) = 0\ (\forall \mathfrak q \in M_K^{\mathrm{fin}} \setminus \{ \mathfrak q_1, \ldots, \mathfrak q_r\}).
\]
We choose a positive integer $n$ such that $\| \alpha^{n} x \|_{\mathfrak q_i} \leq 1$ for all $i=1, \ldots, r$.
Note that 
$\alpha^n \in \mathcal O_{\mathfrak p}^{\times}$ and $\alpha^n x \in (H, \|\ndot\|)^{\mathrm{fin}}_{\leq 1}$, so that
$x = \alpha^{-n} \alpha^n x \in (H, \|\ndot\|)^{\mathrm{fin}}_{\leq 1} \otimes_{\mathcal O_K} \mathcal O_{\mathfrak p}$.

\bigskip
(2) {Since $(H, \|\ndot\|)^{\mathfrak p}_{\leq 1}$ is a finitely generated $\mathcal O_{\mathfrak p}$-module
by Proposition~\ref{cor:finite:gen:less:1}, 
(1) implies that $(H, \|\ndot\|)^{\mathrm{fin}}_{\leq 1} \otimes_{\mathcal O_K} \mathcal O_{\mathfrak p}$ is finitely generated for all $\mathfrak p \in M_{K}^{\mathrm{fin}}$.}
Thus the first assertion follows.

For $x \in H$, by using Lemma~\ref{lem:supp:given:primes},
we can find $\beta \in \mathcal O_{K} \setminus \{ 0 \}$ with $\beta x \in (H, \|\ndot\|)^{\mathrm{fin}}_{\leq 1}$,
which means that the second assertion holds.

Let $\gamma \in \mathcal O_{K} \setminus \{ 0 \}$. Then there are $a_1, \ldots, a_n \in \ZZ$ such that
\[
\gamma^n + a_1 \gamma^{n-1} + \cdots + a_n = 0.
\]
Clearly we may assume that $a_n \not= 0$.
Thus, if we set 
\[
\gamma' = -(\gamma^{n-1} + a_1 \gamma^{n-1} + \cdots + a_{n-1}),
\]
then $\gamma' \in \mathcal O_K$ and
$\gamma \gamma' = a_n$. 
Note that $(H, \|\ndot\|)^{\mathrm{fin}}_{\leq 1}
\otimes_{\mathcal O_K} K$ and $(H, \|\ndot\|)^{\mathrm{fin}}_{\leq 1}
\otimes_{\ZZ} \QQ$ are the localizations of $(H, \|\ndot\|)^{\mathrm{fin}}_{\leq 1}$ with respect to
$\mathcal O_K \setminus \{ 0 \}$ and $\ZZ \setminus \{ 0 \}$, respectively.
Therefore the last assertion follows.

\bigskip
(3) The first assertion is obvious.
Let us see that 
\begin{equation}
\label{eqn:lem:adelic:normed:vector:space:01}
f\left((H, \|\ndot\|)^{\mathfrak p}_{\leq 1}\right) =  (H', \|\ndot\|^{\quot})^{\mathfrak p}_{\leq 1}
\end{equation}
for all $\mathfrak p \in M_K^{\mathrm{fin}}$. Clearly one has $f\left((H, \|\ndot\|)^{\mathfrak p}_{\leq 1}\right) \subseteq  (H', \|\ndot\|^{\quot})^{\mathfrak p}_{\leq 1}$. The converse inclusion follows from Proposition \ref{Pro:quotientnorminf}.
By using (1) together with the equation \eqref{eqn:lem:adelic:normed:vector:space:01}, we obtain 
\[
f\left((H, \|\ndot\|)^{\mathrm{fin}}_{\leq 1}\right) \otimes_{\mathcal O_K} \mathcal O_{\mathfrak p}
= (H', \|\ndot\|^{\quot})^{\mathrm{fin}}_{\leq 1}
\otimes_{\mathcal O_K} \mathcal O_{\mathfrak p}.
\]
Therefore (3) follows from \cite[Proposition~3.8]{Atiyah-Macdonald}.
\end{proof}

\begin{Lemma}
\label{lem:supp:given:primes}
Let $\Sigma$ be a finite subset of $M_K^{\mathrm{fin}}$.
Then there is $\alpha \in K^{\times}$ such that
\[
\ord_{\mathfrak p}(\alpha) \begin{cases}
> 0 & \text{if $\mathfrak p \in \Sigma$}, \\
= 0 & \text{if $\mathfrak p \in M_K^{\mathrm{fin}} \setminus \Sigma$}.
\end{cases}
\]
\end{Lemma}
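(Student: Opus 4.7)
My plan is to use the finiteness of the ideal class group of $\mathcal{O}_K$. This is the standard way to produce elements of $K^\times$ with prescribed divisor support: any fractional ideal becomes principal after raising it to a multiple of the class number, and a generator of such a principal ideal then has exactly the same factorization as the original ideal.

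Concretely, I would set $I := \prod_{\mathfrak p \in \Sigma} \mathfrak p$, viewed as an integral ideal of $\mathcal{O}_K$ (if $\Sigma = \emptyset$ we take $I = \mathcal{O}_K$ and the claim holds trivially with $\alpha = 1$). Let $h$ denote the order of the finite group $\mathrm{Cl}(\mathcal{O}_K)$. Then $I^h$ is principal, so there exists $\alpha \in \mathcal{O}_K \setminus \{0\} \subset K^\times$ with $I^h = \alpha \mathcal{O}_K$. Unique factorization of ideals in the Dedekind domain $\mathcal{O}_K$ immediately yields $\ord_{\mathfrak p}(\alpha) = h > 0$ for every $\mathfrak p \in \Sigma$ and $\ord_{\mathfrak q}(\alpha) = 0$ for every $\mathfrak q \in M_K^{\mathrm{fin}} \setminus \Sigma$, which is exactly the required conclusion.

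There is no real obstacle here: the only external ingredient beyond elementary manipulation is the classical finiteness of the class group of a number field, which is used in the most standard way. A natural alternative would be to invoke weak approximation at the finite places in $\Sigma$, but this produces an element only with prescribed local behavior on $\Sigma$ and does not control the factorization at primes outside $\Sigma$, so it does not suffice on its own. The class-group argument is strictly stronger and exactly tailored to the statement.
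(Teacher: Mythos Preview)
Your argument is correct and is essentially the same as the paper's: both rely on the finiteness of the class group to make the relevant ideal principal. The only cosmetic difference is that the paper applies this prime-by-prime (choosing $n_i$ with $\mathfrak p_i^{n_i}=\alpha_i\mathcal O_K$ and setting $\alpha=\alpha_1\cdots\alpha_e$), whereas you raise the product ideal $\prod_{\mathfrak p\in\Sigma}\mathfrak p$ to the class number in one step.
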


\begin{proof}
We set $\Sigma = \{ \mathfrak p_1, \ldots, \mathfrak p_e \}$. As the class group of $K$ is finite,
for each $i$, there are a positive integer $n_i$ and $\alpha_i \in \mathcal O_K \setminus \{ 0 \}$ 
with $\mathfrak p_i^{n_i} = \alpha_i \mathcal O_K$.
Thus, if we set $\alpha = \alpha_1 \cdots \alpha_e$, then the assertion follows.
\end{proof}

\subsection{Estimation of $\lambda_{\QQ}$ for a graded algebra}
\label{sec:norm:grad:algebra}

A {\em normed $\mathbb Z$-module} is a pair $(\mathscr M, \Vert\ndot\Vert)$ of a finitely generated $\mathbb Z$-module $\mathscr M$ 
and a norm $\Vert\ndot\Vert$ of $\mathscr M \otimes_{\mathbb Z} \mathbb R$.
We define $\lambda_{\mathbb Q}(\mathscr M, \Vert\ndot\Vert)$ and $\lambda_{\mathbb Z}(\mathscr M, \Vert\ndot\Vert)$ as follows.
If $\mathscr M$ is a torsion module, then 
\[
\lambda_{\mathbb Q}(\mathscr M, \Vert\ndot\Vert) = \lambda_{\mathbb Z}(\mathscr M, \Vert\ndot\Vert) = 0.
\] 
{Otherwise,}
let $\lambda_{\mathbb Q}(\mathscr M, \Vert\ndot\Vert)$ (resp. $\lambda_{\mathbb Z}(\mathscr M, \Vert\ndot\Vert)$) be the infimum of the set of non-negative real numbers $\lambda$ such that
we can find a $\mathbb Q$-basis $e_1, \ldots, e_r$ of $\mathscr M_{\mathbb Q} := \mathscr M \otimes_{\mathbb Z} \mathbb Q$ which is contained in $\mathscr M$
(resp. a free basis of $\mathscr M/\mathscr M_{\rm tor}$) with $\Vert e_i \Vert \leq \lambda$ for all $i = 1, \ldots, r$.
Note that
\begin{equation}
\label{eqn:lambdaQ:lambdaZ}
\lambda_{\mathbb Q}(\mathscr M, \Vert\ndot\Vert) \leq \lambda_{\mathbb Z}(\mathscr M, \Vert\ndot\Vert) \leq \mathrm{rk} (\mathscr M) \lambda_{\mathbb Q}(\mathscr M, \Vert\ndot\Vert)
\end{equation}
(cf. \cite[Lemma~1.2]{MoFree}).

Let $R = \bigoplus_{n=0}^{\infty} R_n$ be a graded $\mathbb Q$-algebra of finite type such that
$R$ is an integral noetherian domain and $\dim_{\mathbb Q} R_n < \infty$ for all $n\geq 0$.
Let $\mathscr R = \bigoplus_{n=0}^{\infty} \mathscr R_n$ be a graded subalgebra of $R$ such that
$\mathscr R_n$ is a finitely generated $\mathbb Z$-module and $\mathscr R_n \otimes_{\mathbb Z} \mathbb Q = R_n$ for all $n \geq 0$.
For each $n \geq 0$, let $\|\ndot\|_n$ be a norm of $R_n \otimes_{\mathbb Q} \mathbb R (= \mathscr R_n \otimes_{\mathbb Z} \mathbb R)$.
We assume that 
\[
\left(\mathscr R, \Vert\ndot\Vert\right) = \bigoplus_{n=0}^{\infty} \left(\mathscr R_n, \Vert\ndot\Vert_n\right)
\]
is a {\em normed graded $\mathbb Z$-algebra}, that is,
for $a \in \mathscr R_n$ and $b \in \mathscr R_{n'}$, $\Vert a \cdot b \Vert_{n+n'} \leq \Vert a \Vert_n \cdot \Vert b \Vert_{n'}$.

Let $X := \mathrm{Proj}(R)$ and $Y$ a closed subvariety of $X$ over $\mathbb Q$, that is,
$Y$ is a closed, reduced and irreducible subscheme of $X$ over $\mathbb Q$.
Let $P = \bigoplus_{n=0}^{\infty} P_n$ be the corresponding homogeneous prime ideal of $R$ to $Y$.
We set
\[
R_{Y,n} := R_n/P_n,\ \mathscr R_{Y,n} := \mathscr R_n/P_n \cap \mathscr R_n,\ 
R_Y := \bigoplus_{n=0}^{\infty} R_{Y, n}\ \text{and}\  
\mathscr R_Y := \bigoplus_{n=0}^{\infty} \mathscr R_{Y, n}.
\]
Let $\|\ndot\|^{\mathrm{quot}}_{Y,n}$ be the quotient norm of $R_{Y,n} \otimes_{\mathbb Q} \mathbb R$ induced by
the surjective homomorphism $R_n \otimes_{\mathbb Q} \mathbb R \to R_{Y,n} \otimes_{\mathbb Q} \mathbb R$ and 
the norm $\|\ndot\|_n$ on $R_n \otimes_{\mathbb Q} \mathbb R$.
Note that $\mathscr R_{Y,n} \otimes_{\mathbb Z} \mathbb Q = R_{Y,n}$ for all $n \geq 0$ and
\[
\left(\mathscr R_Y, \|\ndot\|^{\mathrm{quot}}_Y\right) = \bigoplus_{n=0}^{\infty} \left(\mathscr R_{Y,n}, \|\ndot\|^{\mathrm{quot}}_{Y,n}\right)
\]
is a normed graded $\mathbb Z$-algebra.
Then we have the following:

\begin{Theorem}
\label{thm:lambda:estimate}
Let $\mathfrak{S}_X$ be the set of all subvarieties of $X$ and let $\upsilon : \mathfrak{S}_X \to \mathbb R_{>0}$ be a map.
We assume that,
for every $Y \in \mathfrak{S}_X$, there are a positive integer $n(Y)$ and $s_Y \in \mathscr R_{Y, n(Y)} \setminus \{ 0 \}$ with
$\Vert s_Y \Vert^{\mathrm{quot}}_{Y, n(Y)} \leq \upsilon(Y)^{n(Y)}$.
Then there are a positive number $B$ and a finite subset $S$ of $\mathfrak{S}_{X}$ such that
\[
\lambda_{\mathbb Q}(\mathscr R_n, \Vert\ndot\Vert_n) \leq B n^{d(d+1)/2} \left( \max \{ \upsilon(Y) \mid Y \in S \} \right)^n
\]
for all $n \geq 1$, where $d = \dim X$.
\end{Theorem}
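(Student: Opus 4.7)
The plan is to proceed by induction on $d = \dim X$. The base case $d = 0$ can be handled directly: $R$ then has Krull dimension one, and the hypothesis applied at the finitely many closed points of $X$ produces $\mathbb Q$-bases of $\mathscr R_n$ of uniformly bounded norm in every degree. For the inductive step, assume the statement holds for every graded $\mathbb Q$-algebra whose Proj has dimension strictly less than $d$.

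First, I apply the hypothesis at $Y = X$ to obtain $n_0 := n(X)$ and $s_X \in \mathscr R_{n_0} \setminus \{0\}$ with $\Vert s_X \Vert_{n_0} \leq \upsilon(X)^{n_0}$ (using that $\Vert \ndot \Vert^{\quot}_X = \Vert \ndot \Vert$). Let $Y_1, \ldots, Y_r$ be the irreducible components of $V(s_X) \subset X$, each of dimension at most $d-1$. The key observation is that the hypothesis transfers verbatim from $X$ to each $Y_j$: for any subvariety $Z \subseteq Y_j$, transitivity of quotient norms along the composition $R_{n(Z)} \twoheadrightarrow R_{Y_j, n(Z)} \twoheadrightarrow R_{Z, n(Z)}$ gives
\[
\Vert s_Z \Vert^{\quot, Y_j}_{Z, n(Z)} = \Vert s_Z \Vert^{\quot}_{Z, n(Z)} \leq \upsilon(Z)^{n(Z)}.
\]
Hence the inductive hypothesis applies to the normed graded algebra $(\mathscr R_{Y_j}, \Vert \ndot \Vert^{\quot}_{Y_j})$ with the same map $\upsilon$, yielding finite subsets $S_j \subseteq \mathfrak{S}_X$ and constants $B_j$ such that
\[
\lambda_{\QQ}\bigl(\mathscr R_{Y_j, n}, \Vert \ndot \Vert^{\quot}_{Y_j, n}\bigr) \leq B_j\, n^{(d-1)d/2}\, \bigl( \max_{Z \in S_j} \upsilon(Z) \bigr)^n.
\]

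To combine these into a bound on $\mathscr R_n$ I use the multiplication-by-$s_X$ filtration. The natural injections
\[
R_n / s_X R_{n - n_0} \hookrightarrow R_{V(s_X), n} \hookrightarrow \bigoplus_{j=1}^r R_{Y_j, n}
\]
together with a lifting argument based on $\alpha$-orthogonal bases (Proposition~\ref{prop:orthogonal:basis}), used to control the norm loss when pulling representatives back to $R_n$, produce the recursion
\[
\lambda_{\QQ}(\mathscr R_n, \Vert \ndot \Vert_n) \leq C \max\Bigl( \upsilon(X)^{n_0}\, \lambda_{\QQ}(\mathscr R_{n - n_0}),\ \max_j \lambda_{\QQ}(\mathscr R_{Y_j, n}) \Bigr)
\]
for a constant $C$ independent of $n$. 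Set $S := \{X\} \cup \bigcup_j S_j$ and $M := \max_{Y \in S} \upsilon(Y)$, both finite. Iterating the recursion down through the arithmetic progression $n, n-n_0, n-2n_0, \ldots$ and using $\upsilon(X) \leq M$ upgrades the exponent from $(d-1)d/2$ to $(d-1)d/2 + d = d(d+1)/2$, where the extra power of $n$ is produced by the $O(n/n_0)$ iteration steps. This yields the claimed estimate $\lambda_{\QQ}(\mathscr R_n) \leq B\, n^{d(d+1)/2}\, M^n$.

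The main technical obstacle lies in the careful bookkeeping behind the iteration. Lifting a $\mathbb Q$-basis of the quotient $R_n / s_X R_{n - n_0}$ back to $\mathscr R_n$ must be accomplished with a norm loss independent of $n$, so that iterating the recursion $O(n)$ times does not multiply by an exponentially growing constant; this is precisely the point where one must use $\alpha$-orthogonality with $\alpha$ close to $1$ rather than brute truncation. Equally delicate is showing that the resulting polynomial factor is exactly $n^{d(d+1)/2}$ and uniform across residue classes of $n$ modulo $n_0$; these verifications follow the pattern of the argument of \cite[Theorem~4.2]{MoFree}, adapted here to the setting of an arbitrary normed graded $\mathbb Z$-algebra.
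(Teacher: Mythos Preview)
Your overall plan---induction on $d$, using the section $s_X$ to set up a d\'evissage---matches the paper's. But there is a genuine gap in the inductive step. You claim natural injections
\[
R_n / s_X R_{n - n_0}\ \hookrightarrow\ R_{V(s_X), n}\ \hookrightarrow\ \bigoplus_{j=1}^r R_{Y_j, n},
\]
and use these to bound $\lambda_{\QQ}$ of the left-hand side by the $\lambda_{\QQ}(\mathscr R_{Y_j, n})$. This fails: if $V(s_X)$ denotes the subscheme cut out by $(s_X)$, the first arrow is an isomorphism but the second has kernel equal to the graded nilradical, which is nonzero whenever $(s_X)$ is not radical; if instead $V(s_X)$ means the reduced subscheme, the first arrow is a surjection with that same kernel. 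Either way a nontrivial piece of $R_n/s_X R_{n-n_0}$ has been discarded, and your recursion says nothing about it (e.g.\ already for $R=\QQ[x,y]$, $s_X=x^2$, the class of $xy^{n-1}$ is lost). The paper handles exactly this: after reducing (Step~1) to $s_X \in \mathscr R_1$ via passage to $R^{(h)}$, it chooses in Step~3 a filtration $I_0=(s_X)\subsetneq I_1\subsetneq\cdots\subsetneq I_r=R$ by homogeneous ideals whose successive quotients $I_i/I_{i-1}$ are modules over integral domains $R/P_i$---the standard prime-filtration d\'evissage of \cite[Chapter~I, Proposition~7.4]{Hartshorne}---and only then applies induction together with \cite[Proposition~2.3]{MoFree} to each factor.

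A second, smaller issue: you invoke Proposition~\ref{prop:orthogonal:basis} to control the lifting, but that proposition concerns ultrametric norms over a non-archimedean field, whereas $\Vert\ndot\Vert_n$ here is a norm on the real vector space $R_n\otimes_{\QQ}\RR$. The relevant archimedean lifting estimate is \cite[Proposition~1.4]{MoFree}, which is what the paper uses and which produces the additive inequality \eqref{eqn:thm:lambda:estimate:01}; note that it involves sums weighted by $\dim_{\QQ}(I_{i,j}/I_{i-1,j})$ rather than a bare maximum, so your recursion $\lambda_{\QQ}(\mathscr R_n)\leq C\max(\ldots)$ is also too optimistic as written.
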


\begin{proof}
It is a generalization of \cite[Theorem~3.1]{MoFree}. 
However, it can be proved in the similar way as
\cite[the proof of Theorem~3.1]{MoFree}. 
For reader's convenience, we give a sketch of the proof.

\medskip
{\bf Step~1}:
For a positive integer $h$, we set 
\[
R^{(h)}_n := R_{hn},\quad \mathscr R^{(h)}_{n} := \mathscr R_{hn},\quad
R^{(h)} = \bigoplus_{n=0}^{\infty} R^{(h)}_n\quad \text{and}\quad 
\mathscr R^{(h)} = \bigoplus_{n=0}^{\infty} \mathscr R^{(h)}_n.
\]
By using \cite[Lemma~2.2 and Lemma~2.4]{MoFree}, we can see that if the theorem holds for $\mathscr R^{(h)}$ and $\upsilon^h$,
then it holds for $\mathscr R$ and $\upsilon$.
Therefore, by \cite[Chapitre~III, \S1, Proposition~3]{Bourbaki}, 
we may assume that $R$ is generated by $R_1$ over $R_0$ and
$s := s_X \in \mathscr R_{1}$. Let $\mathscr O_{X}(1)$ be the tautological invertible sheaf of $X$ arising from $R_1$.

\medskip
We prove this theorem by induction on $d$.

\medskip
{\bf Step~2}:
In the case where $d = 0$, $X = \Spec(K)$ for some number field $K$, so that
$R_n \subseteq H^0(X, \mathscr O_X(n)) \cong K$.
Therefore, $\dim_{\mathbb Q} R_n  \leq [K : \mathbb Q]$ for all $n \geq 1$, and hence
the assertion can be checked by the same arguments as in \cite[Claim~3.1.2]{MoFree}.

\medskip
{\bf Step~3}:
We assume $d > 0$.
Let $I$ be the homogeneous ideal generated by $s := s_X$, that is, $I = Rs$. By using the same ideas as in 
\cite[Chapter~I, Proposition~7.4]{Hartshorne},
we can find a sequence 
\[
I = I_0 \subsetneq I_1 \subsetneq \cdots \subsetneq I_r = R
\]
of homogeneous ideals of $R$ and non-zero homogeneous prime ideals $P_1, \ldots, P_r$ of $R$ such that
$P_i \cdot I_i \subseteq I_{i-1}$ for $i=1, \ldots, r$.

\medskip
{\bf Step~4}:
We set $\overline{\mathscr R}_n = (\mathscr R_n, \|\ndot\|_n)$ and $\overline{\mathscr I}_{i, n} = (\mathscr I_{i, n}, \|\ndot\|_{i, n})$,
where $\mathscr I_{i, n} := \mathscr R_n \cap I_{i, n}$ and
$\|\ndot\|_{i, n}$ is the subnorm induced by $\|\ndot\|_n$ and $I_{i, n} \hookrightarrow R_n$.
Here we consider the following sequence:
\[
\begin{array}{ccccccc}
\overline{\mathscr R}_{0}  & \overset{\cdot s}{\longrightarrow} & \overline{\mathscr I}_{0, 1} & \hookrightarrow \cdots \hookrightarrow &
\overline{\mathscr I}_{i, 1} & \hookrightarrow \cdots \hookrightarrow & \overline{\mathscr I}_{r, 1} = \overline{\mathscr R}_1 \\
 & \vdots &  \vdots & \vdots &  \vdots & \vdots & \vdots \\
& \overset{\cdot s}{\longrightarrow} & \overline{\mathscr I}_{0, j} & \hookrightarrow \cdots \hookrightarrow &
\overline{\mathscr I}_{i, j} & \hookrightarrow \cdots \hookrightarrow & \overline{\mathscr I}_{r, j} = \overline{\mathscr R}_j \\
& \overset{\cdot s}{\longrightarrow} & \overline{\mathscr I}_{0, j+1} & \hookrightarrow \cdots \hookrightarrow &
\overline{\mathscr I}_{i, j+1} & \hookrightarrow \cdots \hookrightarrow & \overline{\mathscr I}_{r, j+1} = \overline{\mathscr R}_{j+1} \\
 & \vdots &  \vdots & \vdots &  \vdots & \vdots & \vdots \\
& \overset{\cdot s}{\longrightarrow} & \overline{\mathscr I}_{0, n} & \hookrightarrow \cdots \hookrightarrow &
\overline{\mathscr I}_{i, n} & \hookrightarrow \cdots \hookrightarrow & \overline{\mathscr I}_{r, n} = \overline{\mathscr R}_n
\end{array}
\]
Let $\|\ndot\|^{\mathrm{quot}}_{i, n}$ be the quotient norm of $I_{i,n}/I_{i-1, n}$ induced by
$\|\ndot\|_{i, n}$ and $I_{i, n} \to I_{i,n}/I_{i-1, n}$. 
Note that $\mathscr I_{0, n}/\mathscr R_{n-1} s$ is a torsion module for all $n \geq 1$, so that,
applying \cite[Proposition~1.4]{MoFree} to the above sequence,
we have
\begin{multline}
\label{eqn:thm:lambda:estimate:01}
\lambda_{\mathbb Q}(\overline{\mathscr R}_n) \leq \sum_{j=1}^n \left( \sum_{i=1}^r \|s\|_1^{n-i}
\lambda_{\mathbb Q}(\mathscr I_{i,j}/\mathscr I_{i-1, j}, \|\ndot\|^{\mathrm{quot}}_{i,j}) \dim_{\mathbb Q} (I_{i,j}/I_{i-1, j}) \right) \\
+ \|s \|_1^n \lambda_{\mathbb Q}(\overline{\mathscr R}_0) \dim_{\mathbb Q} R_0.
\end{multline}

\medskip
{\bf Step~5}:
Here we claim the following:

\begin{Claim}
\label{claim:thm:lambda:estimate:01}
\begin{enumerate}
\renewcommand{\labelenumi}{(\arabic{enumi})}
\item
If $P_i \in \mathrm{Proj}(R)$, then
there are positive constants $B_i$ and $C_i$, and
a finite subset $S_i$ of $\mathfrak{S}_X$ such that
\[
\lambda_{\mathbb Q}(\mathscr I_{i,n}/\mathscr I_{i-1, n}, \|\ndot\|^{\mathrm{quot}}_{i,n}) \leq
B_i n^{d(d-1)/2} \left( \max \{ \upsilon(Y) \mid Y \in S_i \} \right)^n
\]
and $\dim_{\mathbb Q} (I_{i,n}/I_{i-1, n}) \leq C_i n^{d-1}$
for all $n \geq 1$.

\item 
If $P_i \not\in \mathrm{Proj}(R)$, then
there is a positive integer $n_i$ such that $I_{i,n}/I_{i-1,n} = 0$ for $n \geq n_i$.
In particular, 
$\lambda_{\mathbb Q}(\mathscr I_{i,n}/\mathscr I_{i-1, n}, \|\ndot\|^{\mathrm{quot}}_{i,n}) = 0$
and $\dim_{\mathbb Q} (I_{i,n}/I_{i-1, n}) = 0$
for all $n \geq n_i$.
\end{enumerate}
\end{Claim}

\begin{proof}
(1) follows from \cite[Proposition~2.3]{MoFree} and the hypothesis of induction.
In the case (2), $P_i = \bigoplus_{n=1}^{\infty} R_n$ because $R_0$ is a number field.
As $I_{i}/I_{i-1}$ is a finitely generated $(R/P_i)$-module, we can find a positive integer $n_i$ such that
$I_{i,n}/I_{i-1,n} = 0$ for $n \geq n_i$.
\end{proof}

\medskip
{\bf Step~6}:
The assertion of the theorem follows from \eqref{eqn:thm:lambda:estimate:01} by using (1) and (2) of 
Claim~\ref{claim:thm:lambda:estimate:01}.
\end{proof}

\subsection{Nakai-Moishezon's criterion}

Let $X$ be a geometrically integral projective variety over a number field $K$.
For a closed subvariety $Y$ of $X$ and $v \in M_K$, we set $Y_v := Y \times_{\Spec(K)} \Spec(K_v)$.
Let $L$ be an invertible sheaf on $X$. For $v \in M_K$, let $h_v$ be a continuous metric
of $L_v^{\mathrm{an}}$ on $X_v^{\mathrm{an}}$, where 
$L_v := L \otimes_K K_v$. 
Note that $X(\CC)$ is canonically identified with $\coprod_{\sigma \in M_K^{\infty}} X_{\sigma}(\CC)$, so that
$h_{\infty} := \{ h_{\sigma} \}_{\sigma \in M_K^{\infty}}$ yields a metric on $L_{\infty}$.
We assume that $h_{\infty}$ is invariant by the complex conjugation map $F_{\infty}$ on $X(\CC)$.
Moreover, for $s \in H^0(Y, \rest{L}{Y})$, we set
\[
\| s \|_{Y_v, h_v} := \sup \{ |s|_{h_v}(x) \mid x \in Y_v^{\mathrm{an}} \}.
\]

\begin{Theorem}
\label{thm:Arith:Nakai:Moishezon}
We assume the following:
\begin{enumerate}
\renewcommand{\labelenumi}{(\alph{enumi})}
\item
For any $n \in \ZZ_{\geq 0}$,
$\left(H^0(X, L^{\otimes n}), \{ \|\ndot\|_{X_v,h_v^n} \}_{v \in M_K}\right)$ is an adelically normed vector space over $K$.

\item
$\rest{\overline{L}}{Y}$ is big for all subvarieties $Y$ of $X$, that is,
$\rest{L}{Y}$ is big on $Y$ and there are a positive integer $n$ and $s \in H^0(Y, \rest{L}{Y}^{\otimes n}) \setminus \{ 0 \}$
such that $\| s \|_{Y_{\mathfrak p}, h^n_{\mathfrak p}} \leq 1$ for all $\mathfrak p \in M_K^{\mathrm{fin}}$ and
$\| s \|_{Y_{\sigma}, h^n_{\sigma}} < 1$ for all $\sigma \in M_K^{\infty}$.

\item
$h_v$ is semipositive\footnote{In the case where $v \in M_K^{\infty}$, the semipositivity of $h_v$ 
can be defined as the
{uniform} 
limit of the quotient metrics as described in \S\ref{subsec:semipositive}.
This semipositivity coincides with the positivity of the first Chern current of $(L_v, h_v)$.
For details, see \cite{MoSemiample}.} 
for all $v \in M_K$.
\end{enumerate}
Then there are positive numbers $B$ and $\upsilon$ such that $\upsilon < 1$ and
\[
\lambda_{\QQ}\left( \left(H^0(X, L^{\otimes n}), \|\ndot\|_{h^n}\right)^{\mathrm{fin}}_{\leq 1},\ 
\max_{\sigma \in M_K^{\infty}}\left\{ \|\ndot\|_{X_{\sigma},h^n_{\sigma}} \right\} \right) \leq B n^{d(d+1)/2} \upsilon^n
\]
for all $n \geq 1$.
\end{Theorem}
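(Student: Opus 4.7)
My plan is to apply the graded-algebra estimate Theorem~\ref{thm:lambda:estimate} to the section ring $R := \bigoplus_{n \geq 0} H^0(X, L^{\otimes n})_{\QQ}$ with the integral lattices $\mathscr R_n := (H^0(X, L^{\otimes n}), \|\ndot\|)^{\mathrm{fin}}_{\leq 1}$ and the norms $\|\ndot\|_n := \max_{\sigma \in M_K^{\infty}} \|\ndot\|_{X_\sigma, h_\sigma^n}$. By Lemma~\ref{lem:adelic:normed:vector:space}, each $\mathscr R_n$ is a finitely generated $\ZZ$-module with $\mathscr R_n \otimes_{\ZZ} \QQ = H^0(X, L^{\otimes n})$, and submultiplicativity $\|s\cdot t\|_{n+n'}\leq \|s\|_n\|t\|_{n'}$ is immediate from the corresponding sup-norm inequality. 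Assumption~(b) applied to every curve together with the classical Nakai--Moishezon criterion forces $L$ to be ample, so after passing to a suitable Veronese $R^{(h)}$ the hypotheses of Theorem~\ref{thm:lambda:estimate} (a finitely generated noetherian integral graded $\QQ$-algebra with finite-dimensional graded pieces) are satisfied.

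The bulk of the work is to verify the positivity hypothesis of Theorem~\ref{thm:lambda:estimate}: for every subvariety $Y \subseteq X$ one must produce a non-zero $s_Y \in \mathscr R_{Y, n(Y)}$ with $\|s_Y\|^{\mathrm{quot}}_{Y, n(Y)} \leq \upsilon(Y)^{n(Y)}$ for some $\upsilon(Y) < 1$. Starting from $l \in H^0(Y, L|_Y^{\otimes m})$ afforded by assumption~(b), I would combine the non-archimedean extension theorem (Theorem~\ref{thm:extension}) at each finite place with its archimedean counterpart from \cite{MoSemiample} at each infinite place. For $n$ sufficiently large and $\epsilon > 0$ arbitrarily small, these yield place-by-place upper bounds
\[
\|l^{\otimes n}\|^{\mathrm{quot}}_{Y_v, h_v^{mn}} \leq e^{mn\epsilon}\bigl(\|l\|_{Y_v, h_v^m}\bigr)^n,
\]
strictly less than $1$ at every infinite place (uniformly, since $M_K^\infty$ is finite) and at most $e^{mn\epsilon}$ at every finite place.

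The delicate step is to assemble these local data into a single global section $t \in H^0(X, L^{\otimes mn})$ with $t|_Y = l^{\otimes n}$, lying in the integral lattice $\mathscr R_{mn}$ at every finite place simultaneously, and whose max-infinite-place norm is $\leq \upsilon(Y)^{mn}$. By Lemma~\ref{lem:adelic:normed:vector:space}(3) applied to the surjection $H^0(X, L^{\otimes mn}) \twoheadrightarrow \mathscr R_{Y, mn}$, it suffices to show that for $n$ large, $\|l^{\otimes n}\|^{\mathrm{quot}}_{Y_\mathfrak p, h^{mn}_\mathfrak p} \leq 1$ at every finite place $\mathfrak p$ at once; the section can then be produced inside the integral lattice, and optimization within the affine fibre of preimages of $l^{\otimes n}$ delivers the infinite-place bound. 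For almost all $\mathfrak p$ the metric $h_\mathfrak p$ is the model metric of a common integral model of $(X,L)$ and Serre's vanishing theorem furnishes integral extensions automatically; for the finitely many remaining finite places, one rescales $l$ by a global scalar $\alpha \in K^{\times}$ chosen by weak approximation to absorb the $e^{mn\epsilon}$ factor, at the cost of worsening the infinite-place bound by an arbitrarily small amount.

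The main obstacle, as I see it, is precisely this adelic assembly, where the essential new ingredient is the non-archimedean extension theorem of Section~4: without it the finite-place quotient norms of $l^{\otimes n}$ cannot be pushed down to $1$ uniformly in $n$, and the required simultaneously-integral global extension does not exist. Once this construction is carried out, setting $\upsilon := \max_Y \upsilon(Y) < 1$ over the finite set $S$ produced by Theorem~\ref{thm:lambda:estimate} yields the announced estimate $\lambda_{\QQ}(\mathscr R_n, \|\ndot\|_n) \leq B n^{d(d+1)/2} \upsilon^n$.
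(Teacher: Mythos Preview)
Your overall strategy---reduce to Theorem~\ref{thm:lambda:estimate} and verify its positivity hypothesis for each subvariety $Y$ by combining assumption~(b) with the extension theorems---is exactly the paper's. The gap is in how you handle the infinitely many finite places. You assert that ``for almost all $\mathfrak p$ the metric $h_{\mathfrak p}$ is the model metric of a common integral model of $(X,L)$,'' but this is \emph{not} among the hypotheses and is not deducible from them. Hypothesis~(a) says only that each $H^0(X,L^{\otimes n})$ is an adelically normed vector space, i.e.\ every \emph{fixed} global section has sup-norm $\leq 1$ away from finitely many $\mathfrak p$; this is strictly weaker than Zhang's adelic-metric framework (which does build in the model condition at almost all places), and relaxing that assumption is precisely the advance this theorem claims over \cite{ZhPos}. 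Moreover, even granting your claim, applying the extension theorem place-by-place would require the threshold $n_0(\epsilon,\mathfrak p)$ to be uniform in $\mathfrak p$, which you do not address.

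The paper's fix is to use hypothesis~(a) directly rather than via any model. Take the section $s'$ on $Y$ from~(b), choose $n_0$ so that $H^0(X,L^{\otimes n_0 n_1})\to H^0(Y,L|_Y^{\otimes n_0 n_1})$ is surjective, and let $l'$ be \emph{any} global lift of $(s')^{\otimes n_0}$. By~(a) applied to the single element $l'$, one has $\|l'\|_{X_{\mathfrak p}}\leq 1$ for all $\mathfrak p$ outside a finite set $\{\mathfrak p_1,\dots,\mathfrak p_e\}$, whence $\|(s')^{\otimes n_0}\|^{\mathrm{quot}}_{Y_{\mathfrak p}}\leq 1$ at all those places with no further work. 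At the bad $\mathfrak p_i$ one still has $\|s'\|_{Y_{\mathfrak p_i}}\leq 1$ from~(b); multiplying by a suitable $\beta\in\mathcal O_K$ (Lemma~\ref{lem:supp:given:primes}) and a high enough power yields strict inequality of the \emph{sup}-norm there and at each infinite place. The extension theorem (Theorem~\ref{thm:extension} and its archimedean analogue) is then invoked only at the \emph{finitely many} places $\mathfrak p_1,\dots,\mathfrak p_e$ and $\sigma\in M_K^\infty$, to upgrade ``sup-norm $<1$'' to ``quotient-norm $<1$'' after a further tensor power. This sidesteps both the model-metric assumption and the uniformity issue.
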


\begin{proof}
First note that $L$ is nef because $\rest{L}{C}$ is big for all curves $C$ on $X$. Moreover,
as $\rest{L}{Y}$ is big on $Y$ and $L$ is nef, we have
$(\rest{L}{Y}^{\dim Y}) > 0$.
Therefore, $L$ is ample on $X$ by virtue of the Nakai-Moishezon criterion for projective algebraic varieties.

We set 
\[
R_n := H^0(X, L^{\otimes n}),\quad \RRR_n := (R_n, \|\ndot\|_{h^n})^{\mathrm{fin}}_{\leq 1},\quad
\Vert\ndot\Vert_n := \max_{\sigma \in M_K^{\infty}}\{ \|\ndot\|_{X_{\sigma},h^n_{\sigma}} \}. \\
\]
Note that $\RRR_n$ is a finitely generated $\ZZ$-module by (2) in Lemma~\ref{lem:adelic:normed:vector:space}.
We use the same notation as in Section~\ref{sec:norm:grad:algebra}.
Note that $X = \Proj(R)$ because $L$ is ample.
Fix a closed subvariety $Y$.
For $v \in M_K$, the norm $\Vert\ndot\Vert_{X_v, h^n_v}$ on $H^0(X_v, L_v^{\otimes n})$
(resp. the norm $\Vert\ndot\Vert_{Y_v, h^n_v}$ on $H^0(Y_v, \rest{L}{Y_v}^{\otimes n})$) is denoted by $\Vert\ndot\Vert_{X_v, n}$ 
(resp. $\Vert\ndot\Vert_{Y_v, n}$). Note that $\Vert\ndot\Vert_n = \max_{\sigma \in M_K^{\infty}} \{ \Vert\ndot\Vert_{X_{\sigma}, n} \}$.
Let $\Vert\ndot\Vert_{Y_v,n}^{\quot}$ be the quotient norm of $R_{Y, n} \otimes_K K_v$ induced by
$\Vert\ndot\Vert_{X_v, n}$ and the surjective homomorphism $R_n \otimes_K K_v \to R_{Y, n} \otimes_K K_v$.
We also fix a positive integer $n_0$ such that,
for all $n \geq n_0$, $H^0(X, L^{\otimes n}) \to H^0(Y, \rest{L}{Y}^{\otimes n})$
is surjective. 

By (3) in Lemma~\ref{lem:adelic:normed:vector:space} and Theorem~\ref{thm:lambda:estimate},
it is sufficient to show that there are a positive integer $n(Y) \geq n_0$ and $s_Y \in H^0(Y, \rest{L}{Y}^{\otimes n(Y)}) \setminus \{ 0 \}$ 
such that $\| s \|^{\quot}_{Y_{\mathfrak p}, n(Y)} \leq 1$ for all $\mathfrak p \in M_K^{\mathrm{fin}}$ and
$\| s \|^{\quot}_{Y_{\sigma}, n(Y)} < 1$ for all $\sigma \in M_K^{\infty}$.

As $\rest{\overline{L}}{Y}$ is big, there are $n_1 > 0$ and $s' \in H^0(Y, \rest{L}{Y}^{\otimes n_1})$
such that $\| s '\|_{Y_{\mathfrak p}, n_1} \leq 1$ for all $\mathfrak p \in M_K^{\mathrm{fin}}$ and
$\| s' \|_{Y_{\sigma}, n_1} < 1$ for all $\sigma \in M_K^{\infty}$.
Since $H^0(X, L^{\otimes n_0n_1}) \to H^0(Y, \rest{L}{Y}^{\otimes n_0n_1})$ is surjective,
we can find $l' \in H^0(X, L^{\otimes n_0n_1})$ such that $\rest{l'}{Y} = {s'}^{\otimes n_0}$, so that
there are $\mathfrak p_1, \ldots, \mathfrak p_e \in M_K^{\mathrm{fin}}$
such that $\| l' \|_{X_{\mathfrak p}, n_0n_1} \leq 1$ for all $\mathfrak p \in M_K^{\mathrm{fin}} \setminus \{ \mathfrak p_1, \ldots, \mathfrak p_e \}$. In particular, $\| {s'}^{\otimes n_0} \|_{Y_{\mathfrak p}, n_0n_1}^{\quot} \leq 1$ for all $\mathfrak p \in M_K^{\mathrm{fin}} \setminus \{ \mathfrak p_1, \ldots, \mathfrak p_e \}$. 
By Lemma~\ref{lem:supp:given:primes}, 
we can choose $\beta \in \mathcal O_K \setminus \{ 0 \}$ such that 
\[
\ord_{v}(\beta)
\begin{cases}
> 0 & \text{if $v \in \{  \mathfrak p_1, \ldots, \mathfrak p_e  \}$}, \\
= 0 & \text{if $v \in M_K \setminus \{ \mathfrak p_1, \ldots, \mathfrak p_e  \}$}.
\end{cases}
\]
Since $\| s \|_{Y_{\sigma}, n_1} < 1$ for all $\sigma \in 
 M_K^{\infty}$, we can find a positive integer $n_2$ such that
\begin{equation}
\label{eqn:thm:Arith:Nakai:Moishezon:02}
\left(\max_{\sigma \in M_K^{\infty}} \{ \| s \|_{Y_{\sigma}, n_1} \} \right)^{n_0n_2} \max \left\{ |\sigma(\beta)| \mid \sigma \in 
 M_K^{\infty} \right\} < 1.
\end{equation}

\begin{Claim}
\label{claim:thm:Arith:Nakai:Moishezon:01}
If we set $s = \beta {s'}^{\otimes n_0n_2}$, then $s$ satisfies the following properties:
\begin{enumerate}
\renewcommand{\labelenumi}{(\roman{enumi})}
\item $\| s \|^{\quot}_{Y_{\mathfrak p}, n_2n_1n_0} \leq 1$ for all $\mathfrak p \in
M_K^{\mathrm{fin}} \setminus \{ \mathfrak p_1, \ldots, \mathfrak p_e \}$.

\item $\| s \|_{Y_{\mathfrak p_i}, n_2n_1n_0} < 1$ for all $i=1,\ldots,e$.

\item $\| s \|_{Y_{\sigma}, n_2n_1n_0} < 1$ for all $\sigma \in M_K^{\infty}$.
\end{enumerate}
\end{Claim}

\begin{proof}
(i) is obvious. (iii) follows from \eqref{eqn:thm:Arith:Nakai:Moishezon:02}.
Let us consider (ii). 
As $\ord_{\mathfrak p_i}(\beta) > 0$ and $\| s' \|_{Y_{\mathfrak p_i}, n_1} \leq 1$, we have
\begin{align*}
\| s \|_{Y_{\mathfrak p_i}, n_2n_1n_0} & = \#(\mathcal O_K/\mathfrak p_i)^{-\ord_{\mathfrak p_i}(\beta)} \| {s'}^{\otimes n_0n_2} \|_{Y_{\mathfrak p_i}, n_0n_1n_2} \\
& = 
\#(\mathcal O_K/\mathfrak p_i)^{-\ord_{\mathfrak p_i}(\beta)} \left(\| s' \|_{Y_{\mathfrak p_i}, n_1}\right)^{n_0n_2} < 1,
\end{align*}
as required.
\end{proof}

Next let us see the following claim:

\begin{Claim}
\label{claim:thm:Arith:Nakai:Moishezon:02}
If $\| t \|_{Y_{v}, m} < 1$ for $v \in M_K$ and $t \in H^0(Y_v, \rest{L_v}{Y_v}^{\otimes m})$, 
then there is $m_0$ such that, for all $m' \geq m_0$,
\[
\| t^{\otimes m'} \|^{\quot}_{Y_{v}, mm'} < 1.
\]
\end{Claim}

\begin{proof}
Choose $\epsilon > 0$ such that $e^{\epsilon} \| t \|_{Y_{v}, m} < 1$.
By virtue of the extension property (cf. \cite{MoSemiample} and Theorem~\ref{thm:extension}),
there is $m_0$ such that, for all $m' \geq m_0$, we can find  $t' \in H^0(X_v, L_v^{\otimes mm'})$ with
$\rest{t'}{Y_v} = t^{\otimes m'}$ and $\| t' \|_{X_{v}, mm'} \leq e^{m' \epsilon} (\| t \|_{Y_{v}, m})^{m'}$.
In particular, $\| t' \|_{X_{v}, mm'} < 1$, so that the assertion follows.
\end{proof}

By the above claim, for each $i=1, \ldots, e$ and $\sigma \in M_K^{\infty}$,
there is a positive integer $n_3$ such that
\[
\| s^{\otimes n_3} \|^{\quot}_{Y_{\mathfrak p_i}, n_3n_2n_1n_0} < 1 \quad\text{and}\quad
\| s^{\otimes n_3} \|^{\quot}_{Y_{\sigma}, n_3n_2n_1n_0} < 1.
\]
We set $n(Y) := n_3 n_2n_1n_0$ and $s_Y := s^{\otimes n_3}$,
then $\| s_Y \|^{\quot}_{Y_{\mathfrak p}, n(Y)} \leq 1$ for all $\mathfrak p \in M_K^{\mathrm{fin}}$ and
$\| s_Y \|^{\quot}_{Y_{\sigma}, n(Y)} <  1$ for all $\sigma \in M_K^{\infty}$.
\end{proof}

\begin{Corollary}
\label{cor:Arith:Nakai:Moishezon}
We assume (a), (b) and (c) in Theorem~\ref{thm:Arith:Nakai:Moishezon}.
Let $(N, g)$ be a pair of an invertible sheaf $N$ on $X$ and a family $g = \{ g_v \}_{v \in M_K}$
of continuous metrics $g_v$ of $N_v^{\mathrm{an}}$ on $X_v^{\mathrm{an}}$.
We assume that $g_{\infty} := \{ g_{\sigma} \}_{\sigma \in M_K^{\infty}}$ is compatible with respect to
$F_{\infty}$ and
\[
\left( H^0(X, L^{\otimes n} \otimes N), \{ \|\ndot\|_{X_v, h_v^n g_v} \}_{v \in M_K} \right)
\]
is an adelically normed vector space over $K$ for all $n \geq 0$.
Then there is a positive integer $n_0$ such that, for $n \geq n_0$,
$\left( H^0(X, L^{\otimes n} \otimes N), \|\ndot\|_{h^n g} \right)^{\mathrm{fin}}_{\leq 1}$ has a free basis
$(\omega_1, \ldots, \omega_{r_n})$ over $\ZZ$ with
$\| \omega_i \|_{h_{\sigma}^ng_{\sigma}} < 1$ for all $i =1, \ldots, r_n$ and $\sigma \in M_K^{\infty}$, where $r_n$ is the rank of $H^0(X,L^{\otimes n}\otimes N)$ over $\mathbb Q$.
\end{Corollary}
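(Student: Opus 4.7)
The plan is to reduce the corollary to Theorem~\ref{thm:Arith:Nakai:Moishezon} via the elementary inequality $\lambda_\mathbb Z \leq r\lambda_\mathbb Q$ from \eqref{eqn:lambdaQ:lambdaZ}. Setting $\mathscr M_n := (H^0(X, L^{\otimes n}\otimes N), \|\ndot\|_{h^n g})^{\mathrm{fin}}_{\leq 1}$, its rank $r_n$ grows as $O(n^d)$ since $L$ is ample (as observed at the start of the proof of Theorem~\ref{thm:Arith:Nakai:Moishezon}). It therefore suffices to establish an estimate
\[
\lambda_\mathbb Q\bigl(\mathscr M_n,\ \max\nolimits_{\sigma \in M_K^\infty}\|\ndot\|_{X_\sigma, h_\sigma^n g_\sigma}\bigr) \leq B n^{d(d+1)/2} \upsilon^n
\]
with $B>0$ and $\upsilon<1$ independent of $n$; the inequality $\lambda_\mathbb Z \leq r_n \lambda_\mathbb Q$ then forces $\lambda_\mathbb Z < 1$ for all sufficiently large $n$, producing the required free $\mathbb Z$-basis of $\mathscr M_n$ with all infinite-place sup-norms strictly less than $1$.

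To obtain this twisted $\lambda_\mathbb Q$-estimate, I would prove a graded module analog of Theorem~\ref{thm:lambda:estimate}. Set $R := \bigoplus_{n\geq 0} H^0(X, L^{\otimes n})$ and view $M := \bigoplus_{n\geq 0} H^0(X, L^{\otimes n}\otimes N)$ as a finitely generated graded $R$-module (finiteness uses that $L$ is ample). A standard prime filtration $0 = M^{(0)} \subsetneq M^{(1)} \subsetneq \cdots \subsetneq M^{(s)} = M$ of graded $R$-submodules, whose successive quotients are isomorphic (up to degree shift) to $R/P_j$ for homogeneous primes $P_j$ of $R$, reduces the problem to the algebra setting, and the induction-on-dimension argument in Steps~2--6 of the proof of Theorem~\ref{thm:lambda:estimate}---centered on the key estimate \eqref{eqn:thm:lambda:estimate:01} derived from \cite[Proposition~1.4]{MoFree}---then carries through essentially unchanged.

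The input for this module version is, for each subvariety $Y\subseteq X$, a non-zero section $s_Y \in H^0(Y, (L^{\otimes n(Y)}\otimes N)|_Y)$ with $\|s_Y\|^{\mathrm{quot}}_{Y_\mathfrak p, n(Y)}\leq 1$ at every finite place and $<1$ at every infinite place. To construct $s_Y$, I would take the untwisted section $s$ of $L^{\otimes n_1}|_Y$ produced in the proof of Theorem~\ref{thm:Arith:Nakai:Moishezon} (satisfying $\|s\|_{Y_\sigma}<1$ at infinite places and $\|s\|_{Y_\mathfrak p}\leq 1$ elsewhere, strictly so at finitely many exceptional finite places after rescaling), fix $m_0$ such that $N\otimes L^{\otimes m_0}$ is globally generated, pick a non-zero $t\in H^0(X, N\otimes L^{\otimes m_0})$ with $t|_Y\neq 0$ rescaled via Lemma~\ref{lem:supp:given:primes} so that $\|t\|_{X_\mathfrak p}\leq 1$ at every finite place, and set $s_Y := s^{\otimes k}\cdot t|_Y$ for $k$ sufficiently large. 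At each non-exceptional finite place the $X$-wide lift of $s^{\otimes k}$ supplied by the construction in the theorem combines with $t$ to give a lift of $s_Y$ with sup-norm $\leq 1$; at each exceptional finite place and each infinite place, the strict inequality $\|s\|_{Y_v}<1$ together with the extension property---\cite{MoSemiample} at archimedean places and Theorem~\ref{thm:extension} at non-archimedean ones, applied to $(L,h)$ over $X_v$---yields a lift $\tilde s_v$ of $s^{\otimes k}$ with $\|\tilde s_v\|_{X_v}\cdot\|t\|_{X_v}<1$, so that $\tilde s_v\cdot t$ controls $\|s_Y\|^{\mathrm{quot}}_{Y_v}$.

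The main obstacle is carrying out the module version of Theorem~\ref{thm:lambda:estimate} in full detail: while the prime filtration is classical, one must verify that the subnorms on $M^{(j)}$ and the quotient norms on $M^{(j)}/M^{(j-1)}$ induced by the ambient norms on $M_n \otimes \mathbb R$ behave submultiplicatively under multiplication by $R$ in the same way as in the algebra case of \cite[Theorem~3.1]{MoFree}, so that \eqref{eqn:thm:lambda:estimate:01} and the ensuing polynomial accumulation in $n$ still apply. With this in hand, combining the module $\lambda_\mathbb Q$-bound with the construction of $s_Y$ above and the inequality $\lambda_\mathbb Z \leq r_n \lambda_\mathbb Q$ immediately yields the corollary.
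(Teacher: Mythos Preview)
Your high-level plan---bound $\lambda_{\mathbb Q}$ of the twisted module and then invoke \eqref{eqn:lambdaQ:lambdaZ}---matches the paper's. But your execution is much heavier than necessary, and contains an internal mismatch.

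The paper's proof is a single paragraph. One observes that $A := \bigoplus_{n\geq 0} H^0(X, L^{\otimes n}\otimes N)$ is a finitely generated normed graded module over $R := \bigoplus_{n\geq 0} H^0(X, L^{\otimes n})$ (finiteness from ampleness of $L$). Theorem~\ref{thm:Arith:Nakai:Moishezon} already gives the bound $\lambda_{\mathbb Q}(\mathscr R_n,\|\ndot\|_n)\leq B n^{d(d+1)/2}\upsilon^n$ for the \emph{algebra}, and \cite[Lemma~2.2]{MoFree}---which is precisely the statement that such bounds pass from a normed graded ring to any finitely generated normed graded module over it---immediately yields $\lambda_{\mathbb Q}(\mathscr A_n,\|\ndot\|'_n)\leq B' n^{d(d+1)/2}\upsilon^n$. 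Then \eqref{eqn:lambdaQ:lambdaZ} finishes as you say. No new sections, no prime filtration, no re-running of the induction in Theorem~\ref{thm:lambda:estimate}.

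Your approach via a prime filtration of $M$ with quotients $(R/P_j)[\ell_j]$ is essentially a proof of that very lemma, so it would work, but note the mismatch in your own outline: once the filtration reduces you to the algebra quotients $R/P_j$, the sections you need as input are \emph{untwisted} sections of $L^{\otimes n(Y)}|_Y$---exactly what is already supplied by the proof of Theorem~\ref{thm:Arith:Nakai:Moishezon}---not twisted sections of $(L^{\otimes n(Y)}\otimes N)|_Y$. The norm on $(R/P_j)_{n-\ell_j}$ inherited as a subquotient of $M_n$ is bounded above by a constant (the norm of a generator of the cyclic module) times the algebra norm, because $\|r\cdot m\|\leq\|r\|\,\|m\|$; so the algebra-side hypothesis of Theorem~\ref{thm:lambda:estimate} suffices. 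Your entire paragraph constructing $s_Y = s^{\otimes k}\cdot t|_Y$ and invoking the extension property for the twist is unnecessary and does not fit the reduction you set up.
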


\begin{proof}
We use the same notation in the proof of Theorem~\ref{thm:Arith:Nakai:Moishezon}.
Moreover, we set
\[
\begin{cases}
A_n := H^0(X, L^{\otimes n} \otimes N), \\[1ex]
\AAA_n := \left( H^0(X, L^{\otimes n} \otimes N), \|\ndot\|_{h^n g} \right)^{\mathrm{fin}}_{\leq 1}, \quad
{\displaystyle \|\ndot\|'_n := \max_{\sigma \in M_K^{\infty}} \{ \|\ndot\|_{X_{\sigma},h_{\sigma}^ng_{\sigma}} \}_{\sigma \in M_K^{\infty}}}, \\
A := \bigoplus_{n=0}^{\infty} A_n, \\[1ex]
(\AAA, \|\ndot\|') := \bigoplus_{n=0}^{\infty} (\AAA_n, \|\ndot\|'_n).
\end{cases}
\]
Note that $(\AAA, \|\ndot\|')$ is a normed graded $(\RRR, \|\ndot\|)$-module (cf. \cite[Section~2]{MoFree}).
Further $A$ is a finitely generated over $R$ because $L$ is ample.
Therefore, by Theorem~\ref{thm:Arith:Nakai:Moishezon} together with \cite[Lemma~2.2]{MoFree},
there is a positive number $B'$ such that
$\lambda_{\QQ}\left( \AAA_n, \|\ndot\|'_n \right) \leq B' n^{d(d+1)/2} \upsilon^n$
for all $n \geq 1$, so that, by \eqref{eqn:lambdaQ:lambdaZ},
\[
\lambda_{\ZZ}\left( \AAA_n, \|\ndot\|'_n \right) 
\leq 
\dim_{\QQ} H^0(X, L^{\otimes n} \otimes N) \cdot B' n^{d(d+1)/2} \upsilon^n
\]
for all $n \geq 1$. Thus we can find a positive integer $n_0$ such that
$\lambda_{\ZZ}\left( \AAA_n, \|\ndot\|'_n \right) < 1$ for $n \geq n_0$, as required.
\end{proof}

\bigskip

\end{document}